\DeclareSymbolFont{AMSb}{U}{msb}{m}{n}
\definecolor{britishracinggreen}{rgb}{0.0, 0.26, 0.15}
\definecolor{cobalt}{rgb}{0.0, 0.28, 0.67}
    \DeclareSymbolFont{usualmathcal}{OMS}{cmsy}{m}{n}
    \DeclareSymbolFontAlphabet{\mathcal}{usualmathcal}
   \numberwithin{equation}{section}
\newenvironment{proofof}[1]{\par
  \pushQED{\qed}%
  \normalfont \topsep6\p@\@plus6\p@\relax
  \trivlist
  \item[\hskip3\labelsep
        \itshape
    Proof of #1\@addpunct{.}]\ignorespaces
}{%
  \popQED\endtrivlist\@endpefalse
}
\def\be{\begin{equation}}    
\def\ee{\end{equation}}
\def\bitem{\begin{itemize}}
\def\eitem{\end{itemize}}
\def\benum{\begin{enumerate}}
\def\eenum{\end{enumerate}}
\def\ra{\rightarrow}
\def\onto{\twoheadrightarrow}
\def\L{\mathbb L}
\def\A{\mathbb A}
\def\C{\mathbb C}
\def\P{\mathbb P}
\def\Q{\mathbb Q}
\def\G{\mathbb G}
\def\L{\mathbb{L}}
\def\Z{\mathbb Z}
\def\HS{\textrm{HS}}
\def\O{\mathscr O}
\def\DDT{\mathsf{DT}}
\def\PPT{\mathsf{PT}}
\def\MF{\mathsf{MF}}
\def\con{\textrm{con}}
\newcommand{\ff}{\mathbf{f}}
\newcommand{\dd}{\mathbf{d}}
\newcommand{\muhat}{\hat{\mu}}
\newcommand{\Db}[1]{\mathrm{D}^{\mathrm{b}}(#1)}
\newcommand{\prsigma}{{}^{\diamond}\!\!\sigma}
\newcommand{\prExp}{{}^{\diamond}\!\!\Exp}
\DeclareMathOperator{\pr}{\diamond}
\DeclareMathOperator{\Quot}{Quot}
\DeclareMathOperator{\mon}{mon}
\newcommand{\phim}[1]{\phi^{\mon}_{#1}}
\DeclareMathOperator{\Hilb}{Hilb}
\DeclareMathOperator{\Var}{Var}
\newcommand{\Staff}[1]{\mathrm{St}^{\mathrm{aff}}_{#1}}
\DeclareMathOperator{\id}{id}
\DeclareMathOperator{\KK}{K}
\DeclareMathOperator{\BBS}{BBS}
\DeclareMathOperator{\fra}{fr}
\DeclareMathOperator{\Tate}{\mathbb{T}}
\DeclareMathOperator{\Coker}{Coker}
\DeclareMathOperator{\Image}{Image}
\DeclareMathOperator{\Rep}{rMod}
\DeclareMathOperator{\Bl}{Bl}
\DeclareMathOperator{\vir}{vir}
\DeclareMathOperator{\HO}{H}
\DeclareMathOperator{\QCoh}{QCoh}
\DeclareMathOperator{\Coh}{Coh}
\DeclareMathOperator{\crit}{crit}
\DeclareMathOperator{\Span}{Span}
\DeclareMathOperator{\Id}{Id}
\DeclareMathOperator{\Spec}{Spec\,}
\DeclareMathOperator{\Proj}{Proj\,}
\DeclareMathOperator{\Supp}{Supp\,}
\DeclareMathOperator{\DT}{DT}
\DeclareMathOperator{\MHM}{MHM}
\DeclareMathOperator{\MMHM}{MMHM}
\DeclareMathOperator{\PT}{PT}
\DeclareMathOperator{\pt}{pt}
\DeclareMathOperator{\twist}{sh}
\DeclareMathOperator{\sym}{Sym}
\DeclareMathOperator{\Boxtimes}{\mathlarger{\mathlarger{\boxtimes}}}
\DeclareMathOperator{\curv}{crv}
\DeclareMathOperator{\GL}{GL}
\DeclareMathOperator{\HC}{\mathbf{hc}}
\DeclareMathOperator{\unip}{uni}
\DeclareMathOperator{\Tr}{Tr}
\DeclareMathOperator{\NCHilb}{NCHilb}
\DeclareMathOperator{\Sym}{Sym}
\DeclareMathOperator{\nilp}{nilp}
\DeclareMathOperator{\Hom}{Hom}
\DeclareMathOperator{\RHom}{\mathcal{R}Hom}
\DeclareMathOperator{\End}{End}
\DeclareMathOperator{\Exp}{Exp}
\DeclareMathOperator{\rk}{rk}
\DeclareMathOperator{\Tot}{Tot}
\theoremstyle{definition}
\newtheorem*{lemma*}{Lemma}
\newtheorem*{theorem*}{Theorem}
\newtheorem*{example*}{Example}
\newtheorem*{fact*}{Fact}
\newtheorem*{notation*}{Notation}
\newtheorem*{definition*}{Definition}
\newtheorem*{prop*}{Proposition}
\newtheorem*{remark*}{Remark}
\newtheorem*{corollary*}{Corollary}
\newtheorem{definition}{Definition}[section]
\newtheorem{example}[definition]{Example}
\newtheorem{notation}[definition]{Notation}
\newtheorem{remark}[definition]{Remark}
\newtheorem{conjecture}[definition]{Conjecture}
\newtheoremstyle{thm} 
        {3mm}
        {3mm}
        {\slshape}
        {0mm}
        {\bfseries}
        {.}
        {1mm}
        {}
\theoremstyle{thm}
\newtheorem{theorem}[definition]{Theorem}
\newtheorem{corollary}[definition]{Corollary}
\newtheorem{lemma}[definition]{Lemma}
\newtheorem{prop}[definition]{Proposition}
\newtheorem{thm}{Theorem}
\tikzset{commutative diagrams/arrow style=math font}
\tikzset{commutative diagrams/.cd,
mysymbol/.style={start anchor=center,end anchor=center,draw=none}}
\newcommand\MySymb[2][\square]{%
  \arrow[mysymbol]{#2}[description]{#1}}
\tikzset{
shift up/.style={
to path={([yshift=#1]\tikztostart.east) -- ([yshift=#1]\tikztotarget.west) \tikztonodes}
}
}
\DeclareMathAlphabet{\mathpzc}{OT1}{pzc}{m}{it}
\newcommand*{\defeq}{\mathrel{\vcenter{\baselineskip0.5ex \lineskiplimit0pt
                     \hbox{\scriptsize.}\hbox{\scriptsize.}}}%
                     =}
\newcounter{x}
\newcounter{y}
\newcounter{z}
\newcommand\xaxis{210}
\newcommand\yaxis{-30}
\newcommand\zaxis{90}
\newcommand\topside[3]{
  \fill[fill=cubecolor, draw=black,shift={(\xaxis:#1)},shift={(\yaxis:#2)},
  shift={(\zaxis:#3)}] (0,0) -- (30:1) -- (0,1) --(150:1)--(0,0);
}
\newcommand\leftside[3]{
  \fill[fill=cubecolor, draw=black,shift={(\xaxis:#1)},shift={(\yaxis:#2)},
  shift={(\zaxis:#3)}] (0,0) -- (0,-1) -- (210:1) --(150:1)--(0,0);
}
\newcommand\rightside[3]{
  \fill[fill=cubecolor, draw=black,shift={(\xaxis:#1)},shift={(\yaxis:#2)},
  shift={(\zaxis:#3)}] (0,0) -- (30:1) -- (-30:1) --(0,-1)--(0,0);
}
\newcommand\cube[3]{
  \topside{#1}{#2}{#3} \leftside{#1}{#2}{#3} \rightside{#1}{#2}{#3}
}
\newcommand*\cubecolors[1]{%
  \ifcase#1\relax
  \or\colorlet{cubecolor}{green}%
  \or\colorlet{cubecolor}{green}%
  \or\colorlet{cubecolor}{green}%
  \or\colorlet{cubecolor}{yellow}%
  \or\colorlet{cubecolor}{yellow}%
  \or\colorlet{cubecolor}{yellow}%
  \else
    \colorlet{cubecolor}{yellow}%
  \fi
}
\newcommand*\cubecolorss[1]{%
  \ifcase#1\relax
  \or\colorlet{cubecolor}{green}%
  \or\colorlet{cubecolor}{yellow}%
  \or\colorlet{cubecolor}{yellow}%
  \or\colorlet{cubecolor}{yellow}%
  \or\colorlet{cubecolor}{yellow}%
  \or\colorlet{cubecolor}{yellow}%
  \else
    \colorlet{cubecolor}{yellow}%
  \fi
}
\newcommand\planepartitionn[1]{
 \setcounter{x}{-1}
  \foreach \a in {#1} {
    \addtocounter{x}{1}
    \setcounter{y}{-1}
    \foreach \b in \a {
      \addtocounter{y}{1}
      \setcounter{z}{-1}
      \foreach \c in {1,...,\b} {
        \addtocounter{z}{1}
        \cubecolorss{\c}
        \cube{\value{x}}{\value{y}}{\value{z}}
      }
    }
  }
}
\title[The local motivic DT/PT correspondence]{The local motivic DT/PT correspondence}
\author[B. Davison]{Ben Davison}
\address{School of Mathematics and Hodge Institute,
University of Edinburgh,
United Kingdom}
\email[Ben Davison]{ben.davison@ed.ac.uk}
\author[A. T. Ricolfi]{Andrea T. Ricolfi}
\address{SISSA Trieste,
Via Bonomea 265, 
34136 Trieste,
Italy}
\email[Andrea Ricolfi]{aricolfi@sissa.it}
\keywords{Motivic Donaldson--Thomas invariants, Quot schemes, wall-crossing.}
\subjclass[2010]{Primary 14N35; Secondary 14C05.}
\begin{document}

\begin{abstract}
We show that the Quot scheme $Q_L^n = \Quot_{\A^3}(\mathscr I_L,n)$ parameterising length $n$ quotients of the ideal sheaf of a line in $\mathbb{A}^3$ is a global critical locus, and calculate the resulting motivic partition function (varying $n$), in the ring of relative motives over the configuration space of points in $\mathbb{A}^3$.  As in the work of Behrend--Bryan--Szendr\H{o}i this enables us to define a virtual motive for the Quot scheme of $n$ points of the ideal sheaf $\mathscr I_C\subset \O_Y$, where $C\subset Y$ is a smooth curve embedded in a smooth 3-fold $Y$, and we compute the associated motivic partition function. The result fits into a motivic wall-crossing type formula, refining the relation between Behrend's virtual Euler characteristic of $\Quot_Y(\mathscr I_C,n)$ and of the symmetric product $\Sym^nC$.  Our ``relative'' analysis leads to results and conjectures regarding the pushforward of the sheaf of vanishing cycles along the Hilbert--Chow map $Q_L^n \ra \Sym^n(\mathbb{A}^3)$, and connections with cohomological Hall algebra representations.
\end{abstract}

\maketitle
{\hypersetup{linkcolor=black}
\tableofcontents}

\section{Introduction}

\subsection{Overview}
Let $C$ be a smooth curve embedded in a smooth $3$-fold $Y$ with ideal sheaf $\mathscr I_C\subset \O_Y$. For an integer $n\geq 0$, the Quot scheme
\[
Q^n_C = \Quot_Y(\mathscr I_C,n)
\]
parameterises closed subschemes $Z\subset Y$ containing $C$ and differing from it by an effective zero-cycle of length $n$. The main purpose of this paper is to construct a \emph{virtual motive} 
\be\label{qncvir}
\bigl[Q^n_C\bigr]_{\vir}\in\mathcal M_\C
\ee
for this Quot scheme, that we view as a $1$-dimensional analogue of the degree zero \emph{motivic Donaldson--Thomas invariant} $[\Hilb^nY]_{\vir}$ defined by Behrend, Bryan and Szendr\H{o}i \cite{BBS}. 

The Quot scheme $Q^n_C$ can be seen as a moduli space of \emph{curves and points} in $Y$, where the curve $C$ is fixed. This geometric situation presents a new feature that was absent in the purely $0$-dimensional case: wall-crossing. More precisely, it is proved in \cite[Prop.~5.1]{LocalDT} that the generating function of the Behrend weighted Euler characteristics $\widetilde\chi(Q^n_C)$ satisfies the wall-crossing type formula
\be\label{WCformula}
\sum_{n\geq 0} \widetilde\chi(Q^n_C)t^n = M(-t)^{\chi(Y)}\cdot (1+t)^{-\chi(C)},
\ee
where $M(t) = \prod_{m\geq 1}(1-t^m)^{-m}$ is the MacMahon function. We show that the motivic partition function encoding the motivic classes \eqref{qncvir} admits a factorisation similar to \eqref{WCformula}, where the \emph{point contribution}, refining the factor $M(-t)^{\chi(Y)}$, is precisely the motivic partition of the Hilbert schemes $\Hilb^nY$ computed in \cite{BBS}. The \emph{curve contribution}, on the other hand, refines the factor $(1+t)^{-\chi(C)}$ and is given by the (shifted) motivic zeta function of the curve $C$, namely
\be\label{curve_contribution}
\sum_{n\geq 0}\,\L^{-\frac{n}{2}}\bigl[\Sym^nC\bigr]_{\vir} t^n.
\ee
Our approach to the problem is a natural extension of the approach of Behrend, Bryan and Szendr\H{o}i, in that our definitions and calculations take place with respect to the natural local model $L\subset \mathbb{A}^3$ given by fixing a line in affine space --- since they consider only finite-dimensional quotients of $\mathscr O_{\mathbb{A}^3}$, their local model is simply $\mathbb{A}^3$.  As in their work, for general embeddings $C\subset Y$ we build $[Q^n_C]_{\vir}$ out of the local model via power structures.  We leave for another day the question of whether this virtual motive accords with the virtual motive one obtains from the machinery of ($-1$)-shifted symplectic stacks, and concentrate on calculating everything in sight for the local theory.  Furthermore, since the key to gluing local models appears to be the direct image of the vanishing cycles sheaf to the configuration space of points on $\mathbb{A}^3$, we prove all of our results in the lambda ring of motives relative to this configuration space.  We conjecture, moreover, that the wall-crossing type identity \eqref{WCformula} can be categorified, i.e.~lifted to an isomorphism between the vanishing cycle cohomologies of the relevant moduli spaces (see Section \ref{further_sec_1}).

Just as Behrend, Bryan and Szendr\H{o}i realise the local model $\Hilb^n\A^3$ as a \emph{critical locus} and show that the associated motivic Donaldson--Thomas invariants $[\Hilb^n \A^3]_{\vir}$ are determined, via power structures, by the motivic weights of the punctual Hilbert schemes $\Hilb^n(\A^3)_0$, we show, for two convenient local models that can also be realised as critical loci, that the induced virtual motives are determined by the motivic contribution of the punctual loci. In our case, we also need to consider the contribution of points \emph{embedded} on the curve $C\subset Y$, and this is what gives rise to the factor \eqref{curve_contribution} in our motivic wall-crossing formula.

The appearance of symmetric products is pretty natural and has a neat interpretation in terms of the ($C$-local) DT/PT correspondence: on a Calabi--Yau $3$-fold $Y$, the symmetric products $\Sym^nC\subset P_{\chi(\O_C)+n}(Y,[C])$ are precisely the $C$-local moduli spaces in the stable pair theory of $Y$, just as $Q^n_C\subset I_{\chi(\O_C)+n}(Y,[C])$ are the $C$-local moduli spaces in Donaldson--Thomas theory.

For a rigid curve $C\subset Y$ in a Calabi--Yau $3$-fold, one can interpret the classes \eqref{qncvir} as motivic Donaldson--Thomas invariants, in the same spirit as in the zero-dimensional case.

\smallbreak
We next give an overview of our main results. The main technical tool we use is a \emph{motivic stratification} technique, that we apply to the study of the (relative) motivic Donaldson--Thomas invariants of the Quot schemes $Q^n_{C_0}$, where $C_0\subset X$ is the exceptional curve in the resolved conifold $X = \Tot(\O_{\P^1}(-1)\oplus \O_{\P^1}(-1))$.

\subsection{Main results}

The first step towards the construction of the motivic classes \eqref{qncvir} consists in setting up a convenient local model. With respect to the local model
\[
L\subset \A^3
\]
we then prove the following as part of Theorem \ref{mainthm1}.

\begin{thm}\label{thm:thm1}
The Quot scheme $Q^n_L$ is a global critical locus.
\end{thm}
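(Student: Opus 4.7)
\emph{Proof plan.} I plan to imitate the Behrend--Bryan--Szendr\H{o}i realisation of $\Hilb^n(\A^3)$ as a critical locus, enlarging the standard noncommutative Hilbert model by an auxiliary covector in order to accommodate the Koszul syzygy of $\mathscr I_L$.

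First I would linearise. With coordinates so that $L = V(x,y) \subset \A^3$, the presentation $0 \to \O_{\A^3} \xrightarrow{(y,-x)} \O_{\A^3}^{\oplus 2} \to \mathscr I_L \to 0$ identifies a length-$n$ quotient $\mathscr I_L \twoheadrightarrow F$, modulo $\GL_n$, with a tuple $(X,Y,Z,v_1,v_2)$ on $V = \C^n$ such that $X, Y, Z \in \End(V)$ pairwise commute, the pair $v_1, v_2 \in V$ generates $V$ over $\C[X,Y,Z]$, and the syzygy $Yv_1 = X v_2$ holds.

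Second, I would introduce an extra coordinate $\alpha \in V^*$ and consider the open subvariety $\mathcal U^n \subset \End(V)^{\oplus 3} \times V^{\oplus 2} \times V^*$ where $(v_1,v_2)$ generate $V$ cyclically for the \emph{free} algebra $\C\langle X,Y,Z\rangle$. The $\GL_n$-action is free on $\mathcal U^n$, so $\mathcal M^n \defeq \mathcal U^n / \GL_n$ is a smooth quasi-projective variety. On $\mathcal M^n$ I would put the regular function
\[
f = \Tr(X[Y,Z]) + \alpha\cdot(Yv_1 - X v_2).
\]
Setting $df = 0$ yields the system $[X,Y]=0$, $[X,Z]=v_1\alpha$, $[Y,Z]=v_2\alpha$, $Yv_1 = Xv_2$, $\alpha X = 0$, $\alpha Y = 0$, so once $\alpha$ is forced to vanish scheme-theoretically on $\crit(f)$ the remaining relations recover exactly the defining data of $Q^n_L$ inside $\{\alpha=0\}\subset\mathcal M^n$.

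Third, I would verify the vanishing of $\alpha$ on $\crit(f)$. Taking the trace of $[X,Z] = v_1\alpha$ (resp.\ $[Y,Z] = v_2\alpha$) immediately gives $\alpha v_i = 0$; more generally, the ambient identity $v_1\alpha = [X,Z] + \partial_Y f$ yields
\[
\alpha M v_1 = \Tr\bigl(X [Z,M]\bigr) + \Tr\bigl(M\,\partial_Y f\bigr)
\]
for every word $M$ in $X,Y,Z$, with an analogous formula for $v_2$ in terms of $\partial_X f$. Expanding $[Z,M]$ by Leibniz and using $\alpha X = -\partial_{v_2}f$ and $\alpha Y = \partial_{v_1}f$, I would inductively place every $\alpha M v_i$ into the Jacobian ideal $(\partial f)$; cyclicity of $(v_1,v_2)$ then moves every component of $\alpha$ into $(\partial f)$, giving $\crit(f) \cong Q^n_L$ as claimed.

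The main technical obstacle is that the naive monomial length of $M$ does not decrease under the Leibniz expansion of $[Z,M]$ (factors get cyclically shuffled via the trace), so the induction must be organised by a finer invariant, for instance the number of $Z$-factors weighted by position, or a lexicographic ordering on normal forms that is compatible both with trace cyclicity and with the free $\C\langle X,Y,Z\rangle$-module structure on $V$.
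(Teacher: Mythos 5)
Your critical\--locus model is, up to relabelling, exactly the one the paper arrives at in Section \ref{direct_QW}: your ambient space is the moduli space of $\zeta'$\--stable $(n,1)$\--dimensional representations of the quiver $Q_r$ (three loops $X,Y,Z$ at the gauged vertex, two framing vectors $v_1,v_2$, one coframing covector $\alpha$), and your $f$ is $\Tr(W_r)$. The real difference is in how the identification $\crit(f)\cong Q^n_L$ is established. The paper deliberately does \emph{not} argue directly: it deduces the scheme\--theoretic vanishing of $\alpha$ (there called $\rho(b_1')$) from the Nagao--Nakajima realisation of $Q^n_{C_0}$ as a critical locus on the resolved conifold, restricted to the open locus where $\rho(b_2)$ is injective, via the derived equivalence and a tautological\--family argument (see the footnote opening Section \ref{direct_QW} and Proposition \ref{prop:critQL}). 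Your proposal to prove the vanishing of $\alpha$ by a direct Jacobian\--ideal computation is therefore a genuinely different, and potentially more elementary, route to Theorem \ref{mainthm1}\,(A).

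As written, however, the key induction does not close, and the repairs you suggest cannot work. Write $A_i(M)\defeq\alpha Mv_i$. Your recursion substitutes $v_1\alpha\equiv[X,Z]$ (resp.\ $v_2\alpha\equiv[Y,Z]$) under the trace and expands $[Z,M]$ by Leibniz; every resulting term $A_j(M')$ has $M'$ of the \emph{same} length and the \emph{same} number of $Z$'s as $M$ (one letter of $M$ is deleted and one $X$ or $Y$ is inserted), and the recursion genuinely cycles:
\[
\alpha ZYZv_1\;\equiv\;\Tr\bigl([X,Z]\,ZYZ\bigr)\;=\;\Tr\bigl(XZ\,[Z,Y]\,Z\bigr)\;\equiv\;-\alpha ZXZv_2\;\equiv\;-\Tr\bigl([Y,Z]\,ZXZ\bigr)\;\equiv\;\alpha ZYZv_1,
\]
a tautology. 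Since no invariant can strictly decrease along a cycle of the recursion, ``organising the induction by a finer invariant'' is not a viable fix. What does work is a different mechanism: induct on the length of $M$, and instead of expanding traces, use the relations $ZX\equiv XZ-v_1\alpha$ and $ZY\equiv YZ-v_2\alpha$ to sort $M$ into the form $X^bY^cZ^a$. Each transposition produces an error term $(\alpha M_1v_j)(\alpha M_2v_i)$ with $M_1$ and $M_2$ both strictly shorter than $M$, hence lying in the Jacobian ideal by the inductive hypothesis; the sorted word is then killed by $\alpha X\equiv\alpha Y\equiv 0$ when $b+c>0$, and by $\alpha Z^av_1\equiv\Tr\bigl([X,Z]Z^a\bigr)=0$ (similarly with $Y$, $v_2$) when $b=c=0$, which also supplies the base case $\alpha v_i\equiv 0$. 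With that replacement your argument is complete: cyclicity of $(v_1,v_2)$ forces $\alpha=0$ scheme\--theoretically on $\crit(f)$, the commutators then vanish, and one recovers the ADHM\--type description of $Q^n_L$, i.e.\ the content of Proposition \ref{prop:critQL}.
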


An analogous statement is proven in \cite[Prop.~3.1]{BBS} for the Hilbert scheme $\Hilb^n(\A^3)$, which is realised as the critical locus of a function on the  \emph{non-commutative Hilbert scheme}.

Via the theory of motivic vanishing cycles \cite{DenefLoeser1}, Theorem \ref{thm:thm1} produces a relative virtual motive
\[
\mathsf Q_{L/\A^3}^{\mathrm{rel}} = \sum_{n\geq 0}\,(-1)^n\bigl[Q^n_L \xrightarrow{\HC_n} \Sym^n \A^3\bigr]_{\vir}\in\mathcal M_{\Sym(\A^3)}
\]
where the maps $\HC_n$ are Hilbert--Chow morphisms. The following result, proven in Section \ref{sec:Thm_B&C}, follows from Corollary \ref{Omega_curv_calc} and the main calculation of Section \ref{sec:mstr}. 

\begin{thm}\label{thm:newB}
There is an identity 
\be\label{Quot_affine_factorised}
\mathsf Q_{L/\A^3}^{\mathrm{rel}} = \Exp_\cup\left(\sum_{n>0} \Delta_{n\,!}\left(\bigl[\A^3\xrightarrow{\id}\A^3\bigr] \boxtimes \Omega_n^{\mathrm{BBS}} \right)\right) \boxtimes_{\cup} 
\Exp_\cup\left(\Delta_{1\,!}\left([L\hookrightarrow \A^3]\boxtimes \bigl(-\L^{-\frac{1}{2}}\bigr)\right)\right),
\ee
where $\Delta_n\colon \A^3\ra \Sym^n\A^3$ is the small diagonal, and
\[
\Omega_n^{\mathrm{BBS}} = (-1)^n\mathbb{L}^{-\frac{3}{2}}\frac{\L^{\frac{n}{2}}-\L^{-\frac{n}{2}}}{\L^{\frac{1}{2}}-\L^{-\frac{1}{2}}} \in \mathcal M_{\C}.
\]
\end{thm}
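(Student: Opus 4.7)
The plan is to combine Theorem \ref{thm:thm1}, the theory of motivic vanishing cycles, and a stratification of the Hilbert--Chow morphism $\HC_n\colon Q^n_L\to \Sym^n\A^3$, and then to repackage the resulting local contributions using the $\lambda$-ring operations $\boxtimes_\cup$ and $\Exp_\cup$ on the ring $\mathcal M_{\Sym\A^3}$ of motives relative to the configuration space. By Theorem \ref{thm:thm1}, each $Q^n_L$ is a global critical locus, so motivic vanishing cycles produce a well-defined virtual class $[Q^n_L\xrightarrow{\HC_n}\Sym^n\A^3]_{\vir}$, and the alternating sum of these assembles to $\mathsf Q^{\mathrm{rel}}_{L/\A^3}$. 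What needs to be shown is that this class is the $\boxtimes_\cup$-product of two $\Exp_\cup$-exponentials supported, respectively, on the family of small diagonals $\Delta_n(\A^3)$ and on the single small diagonal $\Delta_1(L)$.

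The decisive structural input is a stratification of $Q^n_L$ according to how the support of the quotient interacts with $L$: a point of $Q^n_L$ corresponds to a subscheme $Z \supset L$ whose length-$n$ excess decomposes as embedded structure along $L$ together with isolated $0$-dimensional pieces supported away from $L$. This yields a stratification compatible with $\HC_n$, and via the motivic power structure on $\mathcal M_{\Sym\A^3}$, the class $\mathsf Q^{\mathrm{rel}}_{L/\A^3}$ becomes the $\boxtimes_\cup$-product of two independent generating series of punctual contributions: one for base points in $\A^3\setminus L$, supported on $\Delta_n(\A^3)$, and one for base points in $L$, supported on $\Delta_n(L)$.

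For the off-$L$ punctual contribution, one uses that $\mathscr I_L$ is locally isomorphic to $\O_{\A^3}$ at any $p\in\A^3\setminus L$, so the local Quot model coincides with the punctual Hilbert scheme, and the critical function from Theorem \ref{thm:thm1} reduces (locally and up to stabilisation) to the non-commutative Hilbert scheme potential of \cite{BBS}. The punctual vanishing cycle class is therefore $\Omega_n^{\mathrm{BBS}}$, and pushing forward along $\Delta_n\colon\A^3\to\Sym^n\A^3$ produces the first factor in \eqref{Quot_affine_factorised}. For the on-$L$ punctual contribution, one must analyse the critical chart around a point $p\in L$, and use the motivic stratification of Section \ref{sec:mstr} to compute the resulting vanishing cycle class. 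The computation collapses the infinite tower of embedded multiplicities at $p\in L$ to the exponential of a single class in length one, identified in Corollary \ref{Omega_curv_calc} as $-\L^{-\frac{1}{2}}$; pushing forward along $\Delta_1\colon L\hookrightarrow \Sym^1\A^3$ and applying $\Exp_\cup$ produces the second factor.

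The main obstacle is the punctual-on-$L$ calculation. While the off-$L$ factor is essentially a citation of \cite{BBS}, the on-$L$ factor requires genuinely new input: a careful motivic Thom--Sebastiani or dimensional reduction argument splitting directions transverse to $L$ from directions along $L$, and a verification that the ``along-$L$'' piece contributes trivially once the plethystic exponential is factored out, leaving behind only the clean class $-\L^{-\frac{1}{2}}$ at the diagonal. This is precisely the content of the main calculation of Section \ref{sec:mstr}, and it is what ultimately refines the factor $(1+t)^{-\chi(C)}$ of \eqref{WCformula} to the shifted motivic zeta function \eqref{curve_contribution} in the global statement.
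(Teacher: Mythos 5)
Your overall skeleton --- realise $Q_L^n$ as a critical locus, stratify by how the support meets $L$, reduce to punctual classes off $L$ and on $L$, and identify the off-$L$ classes with the BBS classes --- matches the paper's route through Theorem \ref{punctual_thm} and Corollaries \ref{Omega_pt_calc} and \ref{Omega_curv_calc}. But the final assembly in your proposal contains a genuine error. The factorisation \eqref{Quot_affine_factorised} is \emph{not} the decomposition by support locus. The intermediate statement \eqref{firstpu} gives
\[
\mathsf Q^{\mathrm{rel}}_{L/\A^3}=\prExp_{\cup}\left(\sum_{n\geq 1}\Omega^n_{\pt}\boxtimes\bigl[\A^3\setminus L\xrightarrow{\Delta_n u_L}\Sym^n\A^3\bigr]+\Omega^n_{\curv}\boxtimes\bigl[L\xrightarrow{\Delta_n\iota_L}\Sym^n\A^3\bigr]\right),
\]
with the first summand supported on the small diagonal of $\A^3\setminus L$ (not of $\A^3$, as you write) and the second supported on the small diagonal of $L$ \emph{in every degree $n$}. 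Corollary \ref{Omega_curv_calc} does not say the on-$L$ tower collapses to a single class in length one; it says $\Omega^n_{\curv}=\Omega_n^{\mathrm{BBS}}$ for all $n\neq 1$, while $\Omega^1_{\curv}=-\L^{-\frac{1}{2}}-\L^{-\frac{3}{2}}=\Omega_1^{\mathrm{BBS}}-\L^{-\frac{1}{2}}$. The passage to \eqref{Quot_affine_factorised} is a regrouping: the BBS part of the on-$L$ classes recombines with the off-$L$ classes via $[\A^3\setminus L]+[L]=[\A^3]$ inside the exponential, producing the first factor over all of $\A^3$, and the only leftover curve correction is the single class $-\L^{-\frac{1}{2}}$ in degree one on $L$, which is the second factor. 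Your version --- first factor from off-$L$ points alone, second factor equal to the entire on-$L$ contribution and asserted to vanish in degrees $\geq 2$ --- contradicts Corollary \ref{Omega_curv_calc}, and would not even specialise correctly to the Euler-characteristic identity \eqref{WCformula}.

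Relatedly, your sketch of how the on-$L$ classes are computed (a local Thom--Sebastiani splitting of directions transverse to and along $L$, with the along-$L$ piece contributing trivially) would fail, precisely because the along-$L$ piece is not trivial. The paper instead determines $\mathsf F_{\curv}$ globally: it inputs the known motivic DT and PT partition functions of the resolved conifold from \cite{RefConifold}, uses the wall-crossing identity \eqref{WC_formula} together with Corollary \ref{corollary_Punctual} to derive $\mathsf F_{\curv}(-s)=\mathsf Z_0(-s)\cdot(1+\L^{-\frac{1}{2}}s)^{-1}$ (Lemma \ref{lemma_F(s)}, which also requires the effectivity and injectivity arguments needed to strip the $\pr$ decorations), and then extracts $\Omega^n_{\curv}$ by injectivity of $\Exp$. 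Any repair of your argument must either reproduce this global wall-crossing input or supply a genuinely local computation of the full tower $\Omega^n_{\curv}$ for all $n$, not just $n=1$.
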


Passing to absolute motives, the first factor in \eqref{Quot_affine_factorised} becomes the (signed) motivic partition function of $\Hilb^n(\A^3)$ computed in \cite{BBS} and reviewed in Section \ref{Section:BBS}.  The operation $\Exp_{\cup}$ in Theorem \ref{thm:newB} is a lift to the lambda ring $\mathcal{M}_{\Sym(\mathbb{A}^3)}$ of the usual plethystic exponential for power series with coefficients in the ring of absolute motives.  
These motivic exponentials are reviewed in Section \ref{subsec:Exp}. We let 
\[
\mathsf Q_{L/\A^3}(t) = \sum_{n\geq 0}\,\bigl[Q^n_L\bigr]_{\vir}\cdot t^n
\]
be the absolute partition function. Up to a sign, it is obtained by pushing \eqref{Quot_affine_factorised} forward to a point. The absolute version of Theorem \ref{thm:newB} then reads
\be\label{local_quot}
\mathsf Q_{L/\A^3}(-t) = \Exp\left(\frac{-\L^{\frac{3}{2}}t}{\bigl(1+\L^{-\frac{1}{2}}t\bigr)\bigl(1+\L^{\frac{1}{2}}t\bigr)}-\L^{\frac{1}{2}}t\right).
\ee

\smallbreak
Let $\mathcal P_{\curv}^n\subset Q^n_L$ be the closed subset parameterising quotients $\mathscr I_L\onto \mathscr F$ such that $\mathscr F$ is entirely supported at the origin $0\in L$. We assign a motivic weight 
\[
\bigl[\mathcal P_{\curv}^n\bigr]_{\vir} \in \mathcal M_{\C}
\]
to this locus. The subscript `crv' stands for `curve'. The punctual Hilbert scheme $\mathcal P_{\pt}^n \subset Q^n_L$, which we view as parameterising quotients supported at a single point in $\A^3\setminus L$, also inherits a motivic weight, that agrees (as shown in Proposition \ref{prop:remove_pr_pt}) with the class $[\Hilb^n(\A^3)_0]_{\vir}$ defined in \cite{BBS} starting from the critical structure on $\Hilb^n(\A^3)$. We show in Theorem \ref{Thm:VM_local} that $[Q^n_L]_{\vir}$ is determined by these two types of ``punctual'' motivic classes. They moreover allow us to define a virtual motive $[Q^n_C]_{\vir}\in\mathcal M_{\C}$ for every smooth curve $C\subset Y$ in a smooth quasi-projective $3$-fold $Y$. In other words, the class we define satisfies $\chi[Q^n_C]_{\vir} = \widetilde\chi(Q^n_C)$.

We then consider the generating function
\be\label{quotGF}
\mathsf Q_{C/Y}(t) = \sum_{n\geq 0}\,\bigl[Q^n_C\bigr]_{\vir}\cdot t^n.
\ee
For a smooth quasi-projective variety $X$ of dimension at most $3$, let 
\[
\mathsf Z_X(t) = \sum_{n\geq 0}\,\bigl[\Hilb^nX\bigr]_{\vir}\cdot t^n
\]
be the motivic partition function of the Hilbert scheme of points. In Theorem \ref{thm:C} we prove the following explicit formula, generalising \eqref{local_quot}.

\begin{thm}\label{thm:thm2}
Let $Y$ be a smooth quasi-projective $3$-fold, $C\subset Y$ a smooth curve. Then
\be\label{mainformula}
\bigl[Q^n_C\bigr]_{\vir} = \sum_{j=0}^n\,\bigl[\Hilb^{n-j}Y\bigr]_{\vir}\cdot \bigl[\Sym^jC\bigr]_{\vir}
\ee
in $\mathcal M_\C$. In other words, we have a factorisation
\[
\mathsf Q_{C/Y} = \mathsf Z_Y\cdot \mathsf Z_C,
\]
that, rewritten in terms of the motivic exponential, reads
\[
\mathsf Q_{C/Y}(-t) = \Exp\left(-t[Y]_{\vir}\Exp(-t[\P^1]_{\vir})-t[C]_{\vir}\right).
\]
\end{thm}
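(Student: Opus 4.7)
The plan is to deduce the statement from the local computation \eqref{local_quot} by globalizing via the power structure on the lambda ring $\mathcal{M}_\C$, exactly as \cite{BBS} pass from $\Hilb^n(\A^3)$ to $\Hilb^n Y$. The starting observation is that the plethystic logarithm of the local partition function $\mathsf{Q}_{L/\A^3}(-t)$ already splits as a sum of two independent pieces,
\[
\frac{-\L^{3/2}t}{(1+\L^{-1/2}t)(1+\L^{1/2}t)} \;+\; \bigl(-\L^{1/2}t\bigr),
\]
the first being the plethystic log of $\mathsf{Z}_{\A^3}(-t)$ and the second being $-t[L]_{\vir}$, i.e.\ the plethystic log of the motivic zeta function $\mathsf{Z}_L(-t)$ of the line $L\cong\A^1$ (indeed $[L]_{\vir} = \L^{-1/2}[L] = \L^{1/2}$). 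So locally the factorization $\mathsf{Q}_{L/\A^3} = \mathsf{Z}_{\A^3}\cdot\mathsf{Z}_L$ is already visible in \eqref{local_quot}.

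Next I would invoke the construction of $[Q^n_C]_{\vir}$ provided by Theorem \ref{Thm:VM_local}: by definition this virtual motive is assembled, via the power structure on $\mathcal{M}_\C$, from the two punctual contributions $[\mathcal{P}^n_{\pt}]_{\vir}$ and $[\mathcal{P}^n_{\curv}]_{\vir}$, with the ``point'' contribution spread over all of $Y$ and the ``curve'' contribution spread over $C$. The two punctual generating functions are already known: the first is the BBS series $\sum_n [\Hilb^n(\A^3)_0]_{\vir} t^n$, by Proposition \ref{prop:remove_pr_pt}, and the second can be read off the curve factor of \eqref{Quot_affine_factorised} by pushforward from the small diagonal $L\hookrightarrow\A^3$, giving a generating series whose plethystic log is $-\L^{1/2}t$.

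Combining these two inputs with the BBS power-structure identity
\[
\mathsf{Z}_Y(-t) = \Exp\bigl(-t[Y]_{\vir}\Exp(-t[\P^1]_{\vir})\bigr)
\]
and with the elementary identity $\mathsf{Z}_C(-t) = \Exp(-t[C]_{\vir})$ valid for any smooth curve (since $\Hilb^nC = \Sym^nC$ and $[\Sym^nC]_{\vir} = \L^{-n/2}[\Sym^nC]$), the multiplicativity $\Exp(A+B) = \Exp(A)\cdot\Exp(B)$ of the plethystic exponential yields
\[
\mathsf{Q}_{C/Y}(-t) = \mathsf{Z}_Y(-t)\cdot\mathsf{Z}_C(-t).
\]
Expanding the right-hand side coefficientwise in $t$ gives the formula \eqref{mainformula} for $[Q^n_C]_{\vir}$, and the plethystic-exponential form stated at the end of the theorem is then just a re-application of $\Exp(A+B)=\Exp(A)\Exp(B)$.

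The main obstacle will be justifying rigorously that the power structure used to \emph{define} $[Q^n_C]_{\vir}$ really does separate the two punctual data multiplicatively, so that point-contributions and curve-contributions globalize independently over $Y$ and $C$ respectively. This should fall out of the relative factorization \eqref{Quot_affine_factorised}: its two $\Exp_\cup$ factors are supported on disjoint small diagonals into $\A^3$ and $L$ respectively, so their $\boxtimes_\cup$ product corresponds, after pushforward along the globalized Hilbert--Chow map $Q^n_C\to\Sym^nY$, to the ordinary plethystic product of the two absolute partition functions. Once this bookkeeping on $\mathcal{M}_{\Sym(\A^3)}$ is carried out, the theorem follows without further calculation.
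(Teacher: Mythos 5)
Your proposal is correct and follows essentially the same route as the paper: the paper \emph{defines} $\mathsf Q_{C/Y}(-t)=\mathsf Z_{Y\setminus C}(-t)\cdot\mathsf F_{\curv}(-t)^{[C]}$ and then the proof is exactly the power-structure regrouping you describe, with the key input being Lemma \ref{lemma_F(s)} ($\mathsf F_{\curv}(-t)=\mathsf Z_0(-t)\cdot(1+\L^{-1/2}t)^{-1}$), which lets one absorb the point-punctual part of the curve contribution into $\mathsf Z_0(-t)^{[Y]}=\mathsf Z_Y(-t)$ and leaves the Kapranov zeta factor $(1+\L^{-1/2}t)^{-[C]}=\mathsf Z_C(-t)$. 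Your phrasing "point contribution spread over all of $Y$, curve contribution spread over $C$" is the already-regrouped form of that definition (the literal definition spreads $\mathsf F_{\pt}$ only over $Y\setminus C$), so just be explicit that Lemma \ref{lemma_F(s)} is what makes the two forms agree and prevents double-counting along $C$.
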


In the above formulas, one has $[U]_{\vir} = \L^{-(\dim U)/2}[U] \in \mathcal M_{\C}$ for a smooth scheme $U$. One can view the factorisation $\mathsf Q_{C/Y} = \mathsf Z_Y\cdot \mathsf Z_C$ as a motivic refinement of the identity \eqref{WCformula}. Indeed, we have $M(-t)^{\chi(Y)}=\chi\mathsf Z_Y(t)$, and $(1+t)^{-\chi(C)}=\sum_{n}\widetilde\chi(\Sym^nC)t^n=\chi\mathsf Z_C(t)$. 
The relation \eqref{WCformula} says that Quot schemes and symmetric products are related by a $\widetilde\chi$-weighted wall-crossing type formula, and Theorem \ref{thm:thm2} upgrades this statement to the motivic level.

\subsection{Calabi--Yau $3$-folds}
Let $Y$ be a smooth projective Calabi--Yau $3$-fold. 
For an integer $m\in \Z$ and a homology class $\beta\in H_2(Y,\Z)$, the moduli space $I_m(Y,\beta)$ of ideal sheaves $\mathscr I_Z\subset \O_Y$ with Chern character $(1,0,-\beta,-m)$ carries a symmetric perfect obstruction theory and the Donaldson--Thomas invariant $\DDT^m_{\beta} \in \Z$ is by definition the degree of the associated virtual fundamental class. These invariants are related to the stable pair invariants of Pandharipande--Thomas \cite{PT} by a well known wall-crossing formula \cite{Bri,Toda1}, and the same is true for the $C$-local invariants $\DDT^n_{C}\in\mathbb Z$. The numbers $\DDT^{\bullet}_{C}$ represent the \emph{contribution} of $C$ to the full virtual invariants $\DDT^{\bullet}_{[C]}$. The $C$-local wall-crossing formula \cite[Theorem 1.1]{Ricolfi2018}, written term by term, reads
\be\label{formula:DTnC}
\DDT^n_{C} = \sum_{j=0}^n\DDT^{n-j}_{0}\cdot \PPT^j_{C},
\ee
where $\DDT^k_{0} = \widetilde\chi(\Hilb^kY)$ are the degree zero DT invariants of $Y$, $\PPT^j_{C} = n_{g,C}\cdot \widetilde\chi(\Sym^jC)$ are the $C$-local stable pair invariants of $Y$ and $n_{g,C}$ is the BPS number of $C$ (see \cite{BPS} and Section \ref{Sec:DT}). There is an identity $\chi [Q^n_C]_{\vir} = \DDT^n_{C}$ when $n_{g,C} = 1$ (Corollary \ref{cor_BPS}). Indeed, in this case equation \eqref{formula:DTnC} is equivalent to \eqref{WCformula}. This is especially meaningful from the point of view of motivic DT theory in the situation of the following example.

\begin{example}
Assume $C\subset Y$ is a smooth \emph{rigid} curve, i.e.~$H^0(C,N_{C/Y})=0$. Then $C$ has BPS number $1$, the Quot scheme $Q^n_C$ is a \emph{connected component} of the Hilbert scheme $I_{\chi(\O_C)+n}(Y,[C])$, and the motivic class $[Q^n_C]_{\vir}$ is a \emph{motivic Donaldson--Thomas invariant} in the sense that its Euler characteristic computes the degree of the virtual fundamental class of $Q^n_C$. In this case, the formula $\mathsf Q_{C/Y} = \mathsf Z_Y\cdot \mathsf Z_C$ of Theorem \ref{thm:thm2} can be regarded as a $C$-local motivic DT/PT correspondence, refining the enumerative correspondence $\DDT_C = \DDT_0\cdot \PPT_C$ spelled out in \eqref{formula:DTnC}.
\end{example}

\subsection{Organisation of contents}
The paper is organised as follows. In Section \ref{sec:Prelim} we recall foundational material on rings of motivic weights and we revisit the main formula of \cite{BBS} expressing the virtual motive of $\Hilb^nX$ for $3$-folds. In Section \ref{CriticalLocus} we prove Theorem \ref{thm:thm1} by restricting the critical structure on $Q^n_{C_0}$, where $C_0 \cong \P^1$ is the exceptional curve in the resolved conifold $X = \Tot(\O_{\P^1}(-1)\oplus \O_{\P^1}(-1))$. In Section \ref{sec:mstr} we prove that the virtual motives of $Q^n_L$ and $Q^n_{C_0}$ are determined by motivic classes  $\Omega_{\pt}^n$, $\Omega_{\curv}^n$ expressing the contributions of ``fully punctual loci'' (cf.~Definition \ref{defin:Fully_Punctual} and Theorem \ref{punctual_thm}). By explicitly calculating these motives in Section \ref{sec:Thm_B&C} we finally prove Theorem \ref{thm:newB}.  We then use these classes to \emph{define} (cf.~Definition \ref{def:VM_arbitrary_curve}) a virtual motive of $Q^n_C$ for every smooth curve $C$ in a smooth $3$-fold $Y$, and in Section \ref{sec:Thm_B&C} we also prove Theorem \ref{thm:thm2}.

\subsection*{Acknowledgements}
This collaboration was supported by the National Science Foundation under Grant No 1440140, while BD was in residence at the
Mathematical Sciences Research Institute in Berkeley and AR was visiting.   During the writing of the paper, BD was supported by
the starter grant ``Categorified Donaldson-Thomas theory'' No.~759967 of the European Research Council, which also enabled AR to visit Edinburgh.  BD was also supported by a Royal Society 
university research fellowship.  Much of the work was done while BD was a guest of the University of British Columbia in the 
summer of 2018, supported by a Royal Society research fellows enhancement award.  He would like to thank Jim Bryan and Kai 
Behrend for helping to make this stay so enjoyable and productive. AR would like to thank Martin Gulbrandsen and Lars Halle for the many helpful discussions related to the subject of this paper. He would also like to thank Max-Planck Institut f\"{u}r Mathematik (Bonn) and SISSA (Trieste) for the excellent working conditions offered during the completion of this project.

\section{Background material}\label{sec:Prelim}

In this section we set up the notation and introduce the main tools that will be used in the rest of the paper. 

\subsection{Grothendieck rings of varieties}\label{K_Rings}

\begin{definition}
Let $S$ be a locally finite type algebraic space over $\mathbb C$. 
\bitem
\item [(i)] If $S$ is a variety, the \emph{Grothendieck group of $S$-varieties} is the free abelian group $K_0(\Var_S)$ generated by isomorphism
classes $[X\ra S]$ of finite type varieties over $S$, modulo the scissor relations, namely the identities 
$[p\colon Y\ra S]=[p|_X\colon X\ra S]+[p|_{Y\setminus X}\colon Y\setminus X\ra S]$ whenever $X\hookrightarrow Y$ is a closed $S$-subvariety of $Y$.  For general $S$, we impose the \textit{locality} relation $[f\colon X\xrightarrow{} S]=[g\colon X'\xrightarrow{} S]$ if for all varieties $U\subset S$ there is an identity $[f\lvert_U\colon X\times_S U\xrightarrow{} U]=[g\lvert_U\colon X'\times_S U\xrightarrow{} U]$ in $K_0(\Var_U)$. The group $K_0(\Var_S)$ is a ring via $[Y\ra S]\cdot [Z\ra S]=[Y\times_SZ\ra S]$. 
\item [(ii)] We denote by $\L=[\A^1_S]\in K_0(\Var_S)$ the \emph{Lefschetz motive}, the class of the affine line over $S$.
\item [(iii)]
The \emph{Grothendieck group of $S$-stacks} is the free abelian group $K_0(\Staff{S})$ generated by isomorphism
classes $[X\ra S]$ of locally finite type Artin $S$-stacks $X\ra S$ with affine stabilizers, modulo the scissor and locality relations, and the following additional relation: if $f\colon X\rightarrow S$ is an $S$-stack, such that $f$ factors as $g\circ \pi$ for $g\colon Y\rightarrow S$ an $S$-stack and $\pi$ the projection from the total space of a rank $r$ vector bundle, then
\[
[X\xrightarrow{f}S]=\mathbb{L}^r\cdot [Y\xrightarrow{g}S].
\]
\item [(iv)]
Define the group $K(\Var_S)=\Image\left(K_0(\Var_S)\rightarrow K_0(\Staff{S})\right)$, and give it the induced ring structure.
\eitem
\end{definition}
Where $S=\coprod_{i\in I} S_i$ is a possibly infinite union of algebraic spaces, we will write 
\[
\sum_{i\in I}\,\bigl[X_i\xrightarrow{f_i}S_i\bigr]\defeq \left[\coprod_{i\in I}(X_i\xrightarrow{f_i}S_i)\right].
\]
By results of Kresch \cite[Section 4]{kreschcycle}, we have
\[
K_0(\Staff{S})=K_0(\Var_S)\bigl[\L^{-1},(\mathbb{L}^{n}-1)^{-1} \,\big|\, n\geq 1\bigr]
\]
and so we can alternatively define $K(\Var_S)$ as the quotient of $K_0(\Var_S)$ by the ideal
\be\label{ideal_J}
\mathsf J_S = \ker(\cdot \L) + \sum_{n\geq 1} \ker(\cdot (\L^n-1)) \subset K_0(\Var_S).
\ee

For $S$ and $S'$ two varieties, there is an external product
\[
K(\Var_{S})\times K(\Var_{S'})\xrightarrow{\boxtimes} K(\Var_{S\times S'}),
\]
defined on generators by
\[
[g\colon Y\ra S]\boxtimes [h\colon Z\ra S'] = [g\times h\colon Y\times Z \ra S\times S'].
\]
In particular, $K(\Var_S)$ is a $K(\Var_\C)$-module.  When we are considering the action of absolute motives on relative motives, we will often abbreviate
\begin{align*}
[X][X'\xrightarrow{f} S']&=[X\rightarrow \pt]\boxtimes[X'\xrightarrow{f} S']
\\ &=[X\times X'\xrightarrow{f\circ \pi_{X'}} S'].
\end{align*}
Often for a relative motive $[X\ra S]\in K(\Var_S)$ we will denote it by $[X]_S$, retaining the subscript to at least remind the reader of which motivic ring it lives in.  

Given a morphism $f\colon S\ra T$ of varieties, there is an induced \emph{pullback} map
\[
f^\ast\colon K(\Var_T)\ra K(\Var_S)
\]
which is a ring homomorphism given by $f^\ast[X]_T=[X\times_TS]_S$ on generators. 
Composition with $f$ defines a \emph{direct image} 
homomorphism $f_!\colon K(\Var_S)\ra K(\Var_T)$, which is $K(\Var_T)$-linear. 

If $S$ comes with an associative map $\nu\colon S\times S\ra S$, we define the convolution ring structure via $\boxtimes_{\nu}=\nu_!\circ\boxtimes$, i.e.~we set
\begin{equation}\label{def:boxtimes_nu}
A \boxtimes_\nu B = \nu_!(A\boxtimes B) \in  K(\Var_S).
\end{equation}
The resulting associative product on $K(\Var_S)$ is commutative if $\nu$ commutes with the symmetrising isomorphism.

The ring
\[
\mathcal M_S=K(\Var_S)\bigl[\L^{-\frac{1}{2}}\bigr]
\]
is called the \emph{ring of motivic weights} over $S$. The structures $f^*$, $f_!$, $\boxtimes$ and $\boxtimes_{\nu}$ carry over to $\mathcal M_S$ without change. 
When $f\colon S\ra \Spec \C$ is the structure morphism of $S$, we use the special notation $\int_S$ for the pushforward $f_!$.

\begin{definition}\label{effectiveness_defn}
We define $S_0(\Var_S)\subset \mathcal{M}_S$ to be the sub semigroup of \textit{effective} motives, i.e.~the subset of sums of elements of the form
\[
(-\mathbb{L}^{\frac{1}{2}})^n[X\rightarrow S].
\]
\end{definition}
\begin{remark}
By Definition \ref{effectiveness_defn} the motive $-\mathbb{L}^{1/2}$ is effective, as opposed to $\mathbb{L}^{1/2}$.  This is dictated by the fact that in the language of lambda rings (Section \ref{sec:lambda_rings}), we make definitions so that $\mathbb{L}^{1/2}$ is not a line element, while $-\mathbb{L}^{1/2}$ is.
\end{remark}

\subsection{Equivariant $K$-groups, quotient and power maps}\label{sec:equivariant}
Let $G$ be a finite group.
A $G$-action on a variety $X$ is said to be \emph{good} if every point of $X$ has a $G$-invariant affine open neighborhood; all actions are assumed to be good throughout. For instance, any $G$-action on a quasi-projective variety is good. Moreover, for a good $G$-action, an orbit space $X/G$ exists as a variety. 

\begin{definition}
Let $S$ be a variety with good $G$-action. We let $\widetilde{K}_0^{G}(\Var_S)$ denote the abelian group generated by isomorphism classes $[X\ra S]$ of $G$-equivariant $S$-varieties, modulo the $G$-scissor relations (over $S$).  The \emph{equivariant Grothendieck group} $K_0^{G}(\Var_S)$ is defined by imposing the further relations $[V\ra X\ra S]= [\A^r_X]$,
whenever $V\ra X$ is a $G$-equivariant vector bundle of rank $r$, with $X\ra S$ a $G$-equivariant $S$-variety.
The element $[\A^r_X]$ in the right hand side is taken with the $G$-action induced by the trivial action on $\A^r$ and the isomorphism $\A^r_X=\A^r\times X$.
\end{definition}

There is a natural ring structure on $\widetilde K_0^{G}(\Var_S)$ given by 
fibre product; if $X$ and $Y$ are $G$-equivariant $S$-varieties we give $X\times_S Y$ the diagonal $G$-action.

We shall consider the quotient rings
\[
\widetilde{K}_0^{G}(\Var_S) \onto \widetilde{K}^{G}(\Var_S), \quad K_0^{G}(\Var_S) \onto K^{G}(\Var_S)
\]
obtained by modding out the ideal 
\[
\widetilde{\mathsf J}^G = \ker(\cdot \L) + \sum_{n\geq 1} \ker(\cdot (\L^n-1)) \subset \widetilde{K}_0^{G}(\Var_S)
\]
and its image $\mathsf J^G \subset K_0^{G}(\Var_S)$, respectively.
We let 
\[
\widetilde{\mathcal M}^G_S = \widetilde{K}^{G}(\Var_S)\bigl[\L^{-\frac{1}{2}}\bigr],\quad \mathcal M^G_S = K^{G}(\Var_S)\bigl[\L^{-\frac{1}{2}}\bigr]
\]
be the rings of $G$-\emph{equivariant motivic weights}.

There exists a natural ``quotient map'' 
\be\label{map:quotient_map}
\pi_G\colon \widetilde{K}_0^{G}(\Var_{S})\ra K_0(\Var_{S/G}),
\ee
defined on generators by taking the orbit space:
\[
\pi_G[X\rightarrow S]=[X/G\rightarrow S/G].
\]
If the $G$-action on $S$ is trivial, $\widetilde{K}_0^{G}(\Var_S)$ becomes a $K_0(\Var_S)$-algebra, and $\pi_G$ is $K_0(\Var_S)$-linear. More generally, we have the following:

\begin{lemma}\label{pi_linearity_lemma}
The map \eqref{map:quotient_map} is $K_0(\Var_{S/G})$-linear.
\end{lemma}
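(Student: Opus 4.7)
The plan is to reduce the linearity to a standard geometric fact about the compatibility of good $G$-quotients with fibre products over $S/G$, and to verify it on generators. Let $p\colon S\onto S/G$ denote the quotient map. The $K_0(\Var_{S/G})$-module structure on $\widetilde K_0^G(\Var_S)$ is given by pullback along $p$: for a generator $[Y\to S/G]$ one sets
\[
p^*[Y\to S/G]=[Y\times_{S/G}S\to S],
\]
where the $G$-action is inherited from that on $S$. Multiplying such a pullback by $[X\to S]\in\widetilde K_0^G(\Var_S)$ yields $[Y\times_{S/G}X\to S]$, on which $G$ acts trivially on the $Y$ factor and as prescribed on $X$. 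The question is thus intrinsic to a comparison of orbit spaces.

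It then suffices, on generators, to compare
\[
\pi_G\bigl([Y\to S/G]\cdot[X\to S]\bigr)=\bigl[(Y\times_{S/G}X)/G\to S/G\bigr]
\]
with
\[
[Y\to S/G]\cdot\pi_G[X\to S]=\bigl[Y\times_{S/G}(X/G)\to S/G\bigr].
\]
The core of the proof is therefore the construction of a canonical $S/G$-isomorphism $(Y\times_{S/G}X)/G\cong Y\times_{S/G}(X/G)$. Since $G$ acts trivially on $Y$, this identification can be checked Zariski-locally on $G$-invariant affine open charts of $X$ and $S$, whose existence is guaranteed by the goodness hypothesis; on such charts it reduces to the elementary identity $(B\otimes_C A)^G\cong B\otimes_C A^G$ for $C$-algebras $B$, $A$ with $G$ acting trivially on $B$. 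The universal property of the orbit space then glues the local isomorphisms into a global one.

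The main obstacle is precisely the geometric isomorphism just described: one must verify that good $G$-quotients are compatible with the base change $Y\to S/G$, which is where the goodness assumption is essential, both to ensure that $X/G$ and $(Y\times_{S/G}X)/G$ exist as varieties and to enable the local computation via $G$-invariant affine covers. Once this isomorphism is established, the extension from generators to arbitrary classes is automatic: $\pi_G$ is already well-defined on $\widetilde K_0^G(\Var_S)$, respecting the $G$-equivariant scissor relations by construction, while $p^*$ transports scissor relations on $S/G$ to equivariant ones on $S$, so both sides of the linearity statement extend bilinearly from generators.
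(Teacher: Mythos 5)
Your proof is correct and follows essentially the same route as the paper's: both compute the module action on a generator $[U\to S/G]$ as $[U\times_{S/G}X\to S]$ and reduce the linearity to the canonical identification $(U\times_{S/G}X)/G\cong U\times_{S/G}(X/G)$, which holds because $G$ acts trivially on $U$. The only difference is that you spell out the local verification of this isomorphism on $G$-invariant affine charts (where exactness of $(-)^G$ over $\mathbb{C}$ for finite $G$ makes the identity $(B\otimes_C A)^G\cong B\otimes_C A^G$ work), a point the paper simply asserts.
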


\begin{proof}
The action of a generator $u = [U\ra S/G]\in K_0(\Var_{S/G})$ on a $G$-equivariant motive $x = [h\colon X\ra S] \in \widetilde{K}_0^{G}(\Var_{S})$ is given by 
\[
u\cdot x = h_!h^*q^*(u) = [U\times_{S/G}X\xrightarrow{\mathrm{pr}_2} X\xrightarrow{h}S],
\]
where $q\colon S\ra S/G$ is the quotient map. We have 
\[
u\cdot \pi_G(x) = u\cdot [X/G\ra S/G] = [U\times_{S/G}X/G \ra X/G \ra S/G],
\]
and this is the same motive as $\pi_G(u\cdot x) = \pi_G[U\times_{S/G}X \ra X \ra S]$, since $G$ does not act on $U$.
\end{proof}

By Lemma \ref{pi_linearity_lemma} the map \eqref{map:quotient_map} sends the ideal $\widetilde{\mathsf J}^G$ onto the ideal  $\mathsf J_{S/G} \subset K_0(\Var_{S/G})$ defined in \eqref{ideal_J}, therefore it descends to a $K(\Var_{S/G})$-linear map 
\[
\pi_G\colon \widetilde{K}^{G}(\Var_S)\ra K(\Var_{S/G}).
\]
This map extends to a map $\widetilde{\mathcal M}^G_S\rightarrow \mathcal M_{S/G}$, still denoted $\pi_G$, by setting $\pi_G(\mathbb{L}^{n/2}\cdot [X\rightarrow S])=\mathbb{L}^{n/2}\cdot \pi_G([X\rightarrow S])$. 

Furthermore, by \cite[Lemma 3.2]{Bittner05}, if the $G$-action on $S$ is free,\footnote{Without freeness, the naive quotient map may fail to respect the relation identifying $G$-bundles with trivial $G$-bundles.} $\pi_G$ descends to a $K(\Var_{S/G})$-linear map
\[
\pi_{G}\colon K^{G}(\Var_S)\rightarrow K(\Var_{S/G}),
\]
which again extends to a morphism $\pi_G\colon \mathcal M_S^G \ra \mathcal M_{S/G}$.

\smallbreak
Let $\mathfrak S_n$ be the symmetric group on $n$ elements.

\begin{lemma}[{\cite[Lemma $2.4$]{BBS}}]\label{lemma:majdgap}
For every $n>0$ there exists a $n$-th power map
\[
(\,\cdot\,)^{\otimes n}\colon \mathcal M_S\ra \widetilde{\mathcal M}_{S^n}^{\,\mathfrak S_n}
\]
where $S^n$ carries the natural $\mathfrak{S}_n$-action, defined by the property that for 
\[
T=(-\mathbb{L}^{\frac{1}{2}})^{\alpha}\cdot [A\xrightarrow{f} S]+\mathbb{L}^\beta\cdot [B\xrightarrow{g} S]-(-\mathbb{L}^{\frac{1}{2}})^{\gamma}\cdot [C\xrightarrow{h}S]-\mathbb{L}^\delta\cdot[D\xrightarrow{i}S]\in \mathcal M_S,
\]
we have 
\[
T^{\otimes n}\defeq \sum_{a+b+c+d=n}(-1)^{c+d}(-\mathbb{L}^{\frac{1}{2}})^{a\alpha/2+b\beta+c\gamma/2+d\delta}[X_{a,b,c,d}\rightarrow S^n]
\]
where $X_{a,b,c,d}$ is the space of homomorphisms of schemes 
\[
s:\{1,\ldots, n\}\rightarrow A\cup B\cup C\cup D
\]
with the domain considered as a scheme with $n$ points, $a$ of which are sent to $A$, $b$ of which are sent to $B$, and so on.  We consider this variety as a $\mathfrak{S}_n$-equivariant variety over $S^n$, sending $s$ to the point $(j(s(1)),\ldots, j(s(n)))$, where $j\colon A\cup B\cup C\cup D\rightarrow S$ is the union of the maps $f,g,h,i$.
\end{lemma}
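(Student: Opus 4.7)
The formula is, up to notation, the multinomial expansion of an $n$-fold tensor power, with $-\mathbb{L}^{1/2}$ treated as a line element so that $(-\mathbb{L}^{1/2})^{\otimes n} = (-\mathbb{L}^{1/2})^n$ (cf.\ Definition \ref{effectiveness_defn}). My plan is to define the operation first on effective classes presented as signed sums of monomials of the shape $\pm\,\mathbb{L}^{r/2}[A\to S]$, then to check compatibility with the defining relations of $K_0(\Var_S)$, with the ideal $\widetilde{\mathsf J}^{\mathfrak{S}_n}$, and finally with the inversion of $\mathbb{L}^{1/2}$.

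For a single generator $[A\xrightarrow{f}S]$ I would declare $[A\to S]^{\otimes n} = [A^n\xrightarrow{f^n}S^n]$ equipped with the tautological permutation action of $\mathfrak{S}_n$. For a signed sum $T = \sum_{i}t_i[A_i\to S]$ with scalar coefficients $t_i\in\{\pm 1\}\cdot\mathbb{L}^{r_i/2}$, I then extend by multinomial expansion
\[
T^{\otimes n} = \sum_{(m_i):\,\sum_i m_i=n}\Bigl(\prod_i t_i^{m_i}\Bigr)\,\bigl[Y_{(m_i)}\to S^n\bigr],
\]
where $Y_{(m_i)}$ parametrises maps $\{1,\ldots,n\}\to\bigsqcup_i A_i$ hitting $A_i$ in exactly $m_i$ points, carrying its natural $\mathfrak{S}_n$-action. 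The formula in the statement is the four-term special case of this definition: the scheme $X_{a,b,c,d}$ is $Y_{(a,b,c,d)}$, and the sign $(-1)^{c+d}$ accounts for the two summands of $T$ that enter with an explicit minus sign, while the sign absorbed in $(-\mathbb{L}^{1/2})^{\alpha}$ and $(-\mathbb{L}^{1/2})^{\gamma}$ records the parity of the $A$- and $C$-monomials.

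The main content is well-definedness. For the scissor relation $[A]=[A']+[A'']$, I would stratify $A^n$ according to which coordinates land in $A'$, observe that the strata are indexed by subsets $I\subset\{1,\ldots,n\}$ carrying $\mathfrak{S}_n$-equivariant classes $[(A')^{|I|}\times(A'')^{n-|I|}\to S^n]$, and group them into $\mathfrak{S}_n$-orbits to match the expansion of $([A']+[A''])^{\otimes n}$. The locality relation is handled by restriction, since products $U_1\times\cdots\times U_n\subset S^n$ of subvarieties of $S$ form a covering family preserved by $\mathfrak{S}_n$. The relations defining $\widetilde{\mathsf J}^{\mathfrak{S}_n}$ are preserved because $\mathbb{L}^{\otimes n}=\mathbb{L}^n$ and that ideal is designed precisely to annihilate such scalars. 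Inverting $\mathbb{L}^{1/2}$ on the target makes the extension to $\mathcal{M}_S$ automatic, since every element of $\mathcal{M}_S$ differs from an effective class by a power of $\mathbb{L}^{1/2}$.

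The main obstacle is the scissor-relation check, which is a categorified multinomial identity; this is essentially the content of \cite[Lemma 2.4]{BBS}, which I would import, simply spelling out the four-term expansion explicitly demanded by the statement.
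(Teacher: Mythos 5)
Your proposal is correct and follows essentially the same route as the paper, which itself gives no independent argument: it simply cites \cite[Lemma 2.4]{BBS} and remarks that the proof for general $S$ is identical to the absolute case, exactly the reduction you make at the end. Your added detail (defining the power on generators via $[A^n\to S^n]$ with the permutation action, extending by multinomial expansion, and checking scissor, locality, the ideal $\widetilde{\mathsf J}^{\mathfrak S_n}$, and $\mathbb{L}^{1/2}$-inversion) is a faithful unwinding of the BBS construction rather than a different approach.
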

The above lemma is proved in \cite{BBS} in the case $S=\Spec \C$, but the proof for the general case is the same.  We remark that, by definition, there is an identity 
\begin{equation}
    \label{SymSign}
(-[A\rightarrow S])^{\otimes n}=(-1)^n[A\rightarrow S]^{\otimes n}.
\end{equation}

\subsubsection{The monodromic motivic ring}\label{sec:Monodromic}
Let $\mu_n=\Spec \C[x]/(x^n-1)$ be the group of $n$-th roots of unity. We define a good action of the procyclic group
\[
\hat\mu=\varprojlim \mu_n
\]
as an action that factors through a good $\mu_n$-action for some $n$.
The additive group 
\[
\mathcal M_S^{\hat\mu}
\]
carries a commutative bilinear associative product `$\star$' called the \emph{convolution product}. 
See \cite[Section $5$]{DenefLoeser1}
or \cite[Section $7$]{LooijengaMM} for its definition. The product `$\star$'
provides an alternative ring structure on the group of $\hat\mu$-equivariant motivic weights, and it restricts to the usual 
product `$\cdot$' on the subring
\[
\mathcal M_S\subset\mathcal M_S^{\hat\mu}
\]
of classes with trivial $\hat\mu$-action. The main role of `$\star$' will be played through the motivic Thom--Sebastiani theorem (cf.~Theorem \ref{mtseb190}).

\subsection{Lambda ring structures}\label{sec:lambda_rings}

Let $A\in\mathcal{M}_{S}$. We define 
\[
\prsigma^n(A)=\pi_{\mathfrak{S}_n}(A^{\otimes n}) \in \mathcal M_{S^n/\mathfrak S_n}.
\]
The \textit{lambda ring} operations on $K_0(\Var_{\mathbb{C}})[\mathbb{L}^{-1/2}]$ are defined by setting $\sigma^n(A)=\prsigma^n(A)$ for $A$ effective, and then taking the unique extension to a lambda ring on $K_0(\Var_{\mathbb{C}})[\mathbb{L}^{-1/2}]$, determined by the relation
\begin{equation}
\label{lambda_rel}
\sum_{i=0}^n\sigma^i([X]-[Y])\sigma^{n-i}([Y])=\sigma^n([X]).
\end{equation}
Note that $\sigma^n(-\L^{1/2})=(-\L^{1/2})^n$.  By \cite[Rem.~3.5\,(4)]{BenSven3} these operations induce a lambda ring structure on the localization $K_0(\Staff{\mathbb{C}})[\mathbb{L}^{-1/2}]$, and thus a lambda ring structure on $\mathcal{M}_{\mathbb{C}}$.
\begin{remark}
Note that by definition, $\prsigma^n(A)=\sigma^n(A)$ for $A$ effective.  The logical structure of the paper is such that we will often end up proving relations involving $\prsigma^n$ first, and then using them to prove that the motives we consider are effective, so that we can state those same relations in terms of the more well-behaved operations $\sigma^n$.
\end{remark}

If $S$ comes with a commutative associative map $\nu\colon S\times S\ra S$, and $A \in \mathcal M_S$, we likewise define 
\[
\prsigma_{\nu}^n(A)=\nu_! \left(\prsigma^n(A)\right)=\nu_!\left(\pi_{\mathfrak{S}_n}(A^{\otimes n})\right)\in \mathcal M_S,
\]
where we abuse notation by denoting by $\nu$ the map $S^n/\mathfrak{S}_n\rightarrow S$.  As above, using the analogue of the relation (\ref{lambda_rel}) there is a unique set of lambda ring operators $\sigma_{\nu}^n$ agreeing with $\prsigma_{\nu}$ on effective motives. 

As a special case, we obtain operations $\prsigma^n$ and $\sigma^n$ on $\mathcal{M}_{\C}\llbracket t\rrbracket$ via the isomorphism
\be\label{Iso_Power_series}
\mathcal{M}_{\C}\llbracket t\rrbracket \,\widetilde{\ra}\,\mathcal{M}_{\mathbb{N}}
\ee
defined by
\be
\sum_{n\geq 0}\,[X_n]t^n\mapsto \left[\coprod_{n\in\mathbb{N}}X_n\rightarrow \{n\}\right]
\ee
for $X_0,X_1,\ldots$ varieties, and then extending by linearity.  Here, $\mathbb{N}$ is a considered as a scheme by identifying each natural number with a distinct closed point, and this scheme is considered as a commutative monoid under the addition map.

\subsection{Motivic measures}

Ring homomorphisms with source $K(\Var_\C)$ or $\mathcal M_\C$ 
are frequently called \emph{motivic measures},
realisations, or generalised Euler characteristics. We recall 
some of them here. 

Let $K_0(\HS)$ be the Grothendieck ring of the abelian category $\HS$ of Hodge structures.  For a complex variety $X$, taking its \emph{Hodge characteristic}
\[
\chi_{\mathrm{h}}(X)=\sum_{i\geq 0}(-1)^i\bigl[\mathrm{H}^i_c(X,\Q)\bigr]\in K_0(\HS)
\]
defines a motivic measure. The $E$-\emph{polynomial} is the specialisation
\[
E(X)=\sum_{p,q\geq 0}(-1)^{p+q}h^{p,q}\bigl(\mathrm{H}^{p+q}_c(X,\Q)\bigr)u^pv^q\in \Z[u,v].
\]
As $E(\A^1_\C)=uv$, the $E$-polynomial can be extended to a motivic measure
\[
E\colon \mathcal M_\C\ra \Z\,\bigl[u,v,(uv)^{-\frac{1}{2}}\bigr]
\]
satisfying $E(\L^{1/2})=-(uv)^{1/2}$.
The further specialisations $u=v=q^{1/2}$, $(uv)^{1/2}=q^{1/2}$
define the \emph{weight polynomial} $W\colon \mathcal M_\C\ra \Z[q^{\pm 1/2}]$ and one has $W(\L)=q$. Finally, specialising to $q^{1/2}=1$ recovers the Euler characteristic $\chi\colon K(\Var_{\C})\ra \Z$,
extending to $\chi\colon \mathcal{M}_{\mathbb{C}}\ra \Z$ after setting
\[
\chi(\L^{-\frac{1}{2}})=-1.
\]
See \cite[Section $2$]{DenefLoeser1} for a natural extension to a ring homomorphism 
\[
\chi\colon \mathcal M_\C^{\hat\mu}\ra \Z.
\] 
\begin{remark}\label{Rmk:Conventions_BBS}
Our sign conventions differ slightly from \cite{BBS}.  We have chosen them so that all specialisations are homomorphisms of pre $\lambda$-rings.  Note that, putting all the changes together, our convention that $\chi(\L^{-1/2})=-1$ is the same as theirs.
\end{remark}

\subsection{Power structures and motivic exponentials}\label{Sec:Power_Structures}

We recall the notion of a \emph{power structure} on a commutative ring $R$, mainly following \cite{GLMps,GLMHilb}.

\begin{definition}\label{def:power1026}
A \emph{power structure} on a ring $R$ is a map 
\begin{align*}
(1+tR\llbracket t\rrbracket)\times R&\ra 1+tR\llbracket t\rrbracket\\
(A(t),X)&\mapsto A(t)^X
\end{align*}
satisfying the following conditions: 
\begin{enumerate}
    \item $A(t)^0=1$,
    \item $A(t)^1=A(t)$, 
    \item $(A(t)\cdot B(t))^X=A(t)^X\cdot B(t)^X$, 
    \item $A(t)^{X+Y}=A(t)^X\cdot A(t)^Y$, 
    \item $A(t)^{XY}=(A(t)^X)^Y$, 
    \item $(1+t)^X=1+Xt+O(t^2)$, 
    \item $A(t)^X\big{|}_{t\ra t^k}=A(t^k)^X$.
\end{enumerate}  
\end{definition}

\begin{notation}
If $\alpha$ is a partition of an integer $n$, which we indicate $\alpha\vdash n$, by writing $\alpha=(1^{\alpha_1}\cdots i^{\alpha_i}\cdots r^{\alpha_r})$ we mean that there are $\alpha_i$ parts of size $i$, so that we recover $n$ as the sum $|\alpha| = \sum_ii\alpha_i$. The number of distinct parts of $\alpha$ is denoted $l(\alpha) = \sum_{i}\alpha_i$. The automorphism group of $\alpha$ is the product of symmetric groups $G_\alpha=\prod_i\mathfrak S_{\alpha_i}$. 
\end{notation}

Let us focus on $R = K_0(\Var_\C)$. If $X$ is a variety and $A(t)=1+\sum_{n>0}A_nt^n$ is a power series in $K_0(\Var_\C)\llbracket t\rrbracket$, we define
\be\label{power12}
(A(t))_{\pr}^{[X]}=1+\sum_{\alpha}\pi_{G_\alpha}\Biggl(\Biggl[\prod_i X^{\alpha_i}\setminus \Delta\Biggr]\cdot \prod_i A_i^{\otimes \alpha_i}\Biggr)t^{|\alpha|}.
\ee
In the above formula, $\Delta\subset \prod_i X^{\alpha_i}=X^{l(\alpha)}$  is the ``big diagonal'' (where at least two entries are equal), and the class
\[
\Biggl[\prod_i X^{\alpha_i}\setminus \Delta\Biggr]\cdot \prod_i A_i^{\otimes \alpha_i} \in \widetilde K_0^{G_\alpha}(\Var_\C)
\]
is $G_\alpha$-equivariant thanks to the  ``power map'' of Lemma \ref{lemma:majdgap}. 
Gusein-Zade, Luengo and Melle-Hern{\'a}ndez have proved \cite[Thm.~2]{GLMps} that there is a unique power structure on $K_0(\Var_{\C})$ for which the restriction to the case where all $A_i$ are effective is given by the formula (\ref{power12}), for every variety $X$.  
Moreover, by \cite[Thm.~1]{GLMps}, such a power structure is determined by the relation
\[
(1-t)^{-[X]}=\zeta_{[X]}(t)
\]
where
\be\label{eqn:kapranov1}
\zeta_{[X]}(t)=\sum_{n\geq 0}\,\bigl[\Sym^nX\bigr]\cdot t^n\in K_0(\Var_\C)\llbracket t\rrbracket
\ee
is the Kapranov motivic zeta function of $X$.  Since we always consider effective exponents when taking powers, we just recall the recipe for dealing with general $A(t)$ and effective exponent $[X]$. First, note that for any such $A(t)$ there is an effective $B(t)$ such that $A(t)\cdot B(t)=C(t)$ is effective. Then we have
\[
A(t)^{[X]} \defeq (C(t))^{[X]}_{\pr}\cdot ((B(t))^{[X]}_{\pr})^{-1},
\]
where both factors in the right hand side are defined via \eqref{power12}.
\begin{lemma}
\label{pr_inj_lemma}
Let $[X]\in K_0(\Var_{\mathbb{C}})$ be invertible in $K_0(\Staff{\C})$.  Then $(-)_{\pr}^{[X]}$ and $(-)^{[X]}$ are injective maps.
\end{lemma}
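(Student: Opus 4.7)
The plan is to exploit the fact that the formula \eqref{power12} defining $(A(t))^{[X]}_{\pr}$ is triangular in the coefficients of $A(t)$, so that $(-)^{[X]}_{\pr}$ admits a coefficient-wise inverse whenever $[X]$ acts as a non-zero-divisor (which is automatic once $[X]$ is invertible in $K_0(\Staff{\C})$).

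First I would write $A(t) = 1 + \sum_{n\geq 1} A_n t^n$ with effective $A_i$ and examine the coefficient of $t^n$ in $(A(t))^{[X]}_{\pr}$. By \eqref{power12} this coefficient is a sum over partitions $\alpha \vdash n$. The unique partition of length $l(\alpha)=1$ is $\alpha=(n)$, whose contribution is $\pi_{\mathfrak{S}_1}([X]\cdot A_n) = [X]\cdot A_n$. Every other $\alpha$ has $l(\alpha)\geq 2$, so each part is strictly less than $n$, and its contribution depends only on $A_1,\dots,A_{n-1}$ together with classes built from $X$. Hence the $t^n$ coefficient has the shape $[X]\cdot A_n + R_n(A_1,\dots,A_{n-1};[X])$ for a universal expression $R_n$.

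Given this triangular shape, injectivity of $(-)^{[X]}_{\pr}$ follows by induction: if $(A(t))^{[X]}_{\pr} = (B(t))^{[X]}_{\pr}$, comparing $t^1$ coefficients gives $[X]\cdot A_1 = [X]\cdot B_1$, and invertibility of $[X]$ gives $A_1 = B_1$; if $A_i = B_i$ for all $i<n$, the remainders $R_n$ match, so $[X]\cdot A_n = [X]\cdot B_n$ and hence $A_n = B_n$. For $(-)^{[X]}$, suppose $A(t)^{[X]} = B(t)^{[X]}$. Then $C(t) := A(t)/B(t) \in 1+tR\llbracket t\rrbracket$ satisfies $C(t)^{[X]} = 1$ by axioms (3)--(4) of Definition \ref{def:power1026}. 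Writing $C(t) = P(t) Q(t)^{-1}$ as a ratio of effective series (always possible), the recipe for $(-)^{[X]}$ on general coefficients recorded just before the lemma yields $P(t)^{[X]}_{\pr} = Q(t)^{[X]}_{\pr}$, whence $P(t)=Q(t)$ by the diamond case and $C(t)=1$, so $A(t)=B(t)$.

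The main obstacle I anticipate is simply pinning down the triangular shape rigorously: verifying that $\alpha=(n)$ is the only partition whose contribution can involve $A_n$, and that the coefficient of $A_n$ is exactly $[X]$ and not some related symmetric function of $X$. Once this combinatorial point is recorded, both injectivity statements collapse to the same one-line induction. A cleaner alternative, if one grants that the Gusein-Zade--Luengo--Melle-Hern\'andez power structure on $K_0(\Var_\C)$ extends to $K_0(\Staff{\C})$, is to invoke axiom (5) directly, so that $(-)^{[X]^{-1}}$ provides a left inverse to $(-)^{[X]}$, bypassing the combinatorial analysis entirely.
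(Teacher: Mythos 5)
Your proof of the first statement coincides with the paper's: the same observation that in \eqref{power12} the only partition of $n$ whose contribution involves $A_n$ is $\alpha=(n)$, contributing exactly $[X]\cdot A_n$, followed by the same induction using injectivity of multiplication by $[X]$. For the second statement the paper takes what you call the ``cleaner alternative'': it cites the extension of the power structure to $K_0(\Staff{\C})$ and concludes immediately from $(A(t)^{[X]})^{[X]^{-1}}=A(t)$. Your primary route --- setting $C(t)=A(t)/B(t)$, noting $C(t)^{[X]}=1$, writing $C(t)=P(t)Q(t)^{-1}$ with $P,Q$ effective, and reducing to $P(t)^{[X]}_{\pr}=Q(t)^{[X]}_{\pr}$ via the defining recipe --- is also valid and has the small advantage of not invoking the stacky extension of the power structure, deriving the second statement directly from the first; the trade-off is that it leans on the well-definedness of the recipe $A(t)^{[X]}=(C(t))^{[X]}_{\pr}\cdot((B(t))^{[X]}_{\pr})^{-1}$ independent of the chosen effective factorisation, which the paper's route sidesteps. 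Both arguments share the same unstated point (present in the paper's proof as well) that invertibility of $[X]$ in $K_0(\Staff{\C})$ guarantees that multiplication by $[X]$ is injective on the ring in which the coefficients live.
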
 
\begin{proof}
By \cite[Rem.~3.7]{BenSven2} the power structure can be extended to $K_0(\Staff{\C})$, and so the second statement follows from $(A(t)^{[X]})^{[X]^{-1}}=A(t)$.  

Next we consider the first statement.  Assume that $A(t)_{\pr}^{[X]}=B(t)_{\pr}^{[X]}$. Write $A(t)=\sum_{i\geq 0} A_i t$ and $B(t)=\sum_{i\geq 0} B_i t$, where $A_0 = B_0 = 1$,  and assume that we have shown that $A_i=B_i$ for $i<n$.  Let $\alpha\vdash n$. Comparing the contributions from $\alpha$ in the $t^n$ coefficients of $A(t)_{\pr}^{[X]}$ and $B(t)_{\pr}^{[X]}$, by assumption they agree for $\alpha\neq (n)$, since these terms only involve $A_i$ and $B_i$ for $i<n$.  We deduce that the terms for $\alpha=(n^1)$ agree, and so $[X]\cdot A_n=[X]\cdot B_n$, and the result follows by injectivity of $ [X]\cdot$.
\end{proof}

As noted in \cite{BBS}, there is an extension of the power structure to $\mathcal{M}_{\C}$ uniquely determined by the substitution rules 
\[
A((-\mathbb{L}^{\frac{1}{2}})^nt)^{[X]}=A(t)^{(-\mathbb{L}^{\frac{1}{2}})^n[X]}=A(t)\big|_{t\mapsto(-\mathbb{L}^{\frac{1}{2}})^nt}.
\]

\subsubsection{Motivic exponential}\label{subsec:Exp}
It is often handy to rephrase motivic identities in terms of the motivic exponential, which is a group isomorphism\footnote{The group structures are the additive one on the source and the multiplicative one on the target.}
\[
\Exp\colon t\mathcal{M}_\C\llbracket t\rrbracket\,\widetilde{\ra}\,1+t\mathcal{M}_\C\llbracket t\rrbracket.
\]
Under \eqref{Iso_Power_series}, this can be seen as an inclusion of groups 
\[
K_0(\Var_{\mathbb N\setminus 0})\,\hookrightarrow\,K_0(\Var_{\mathbb N})^\times.
\]
First, define $\prExp=\sum_{n\geq 0}\prsigma^n$, relative to the monoid $(\mathbb{N},+)$.  Then if $A$ and $B$ are effective classes, we set 
\[
\Exp(A-B)=\prExp(A)\cdot \prExp(B)^{-1}.
\]
As in the proof of Lemma \ref{pr_inj_lemma}, $\prExp$ and $\Exp$ are injective.

Now if $(S, \nu\colon S\times S\rightarrow S)$ is a commutative monoid in the category of schemes, with a submonoid $S_+$ such that the induced map $\coprod_{n\geq 1}S_+^{\times n}\rightarrow S$ is of finite type, we similarly define
\[
\prExp_{\nu}(A)=\sum_{n\geq 0} \prsigma^n_\nu(A),
\]
and for $A$ and $B$ effective classes, we define
\[
\Exp_{\nu}(A-B)=\prExp_{\nu}(A)\cdot \prExp_{\nu}(B)^{-1}.
\]
The principal example will be 
\[
S = \Sym(U) = \coprod_{n\geq 0}\Sym^n(U)
\]
for $U$ a variety, and $S_+=\coprod_{n\geq 1}\Sym^n(U)$.  We define 
\[
\cup:\Sym(U)\times\Sym(U)\rightarrow \Sym(U)
\]
to be the morphism sending a pair of sets of unordered points with multiplicity to their union.

Note that $\Exp_{\nu}$ sends effective motives to effective motives, as the same is true of $\sigma_{\nu}^n$ for each $n$.

In order to recover a formal power series from a relative motive over $\Sym(U)$, we consider the operation 
\[
\#_!\left(\sum_{n\geq 0}\,\bigl[A_n\xrightarrow{f_n}\Sym^n(U)\bigr]\right)\defeq \sum_{n\geq 0}\, [A_n]t^n.
\]
In other words we take the direct image along the ``tautological'' map $\#\colon \Sym(U)\rightarrow\mathbb{N}$ which sends $\Sym^n(U)$ to the point $n$ --- recall that via \eqref{Iso_Power_series} we consider power series in $t$ with coefficients in $\mathcal{M}_{\C}$ as the same thing as elements of $\mathcal{M}_{\mathbb{N}}$.

\begin{prop}
\label{exp_prop}
Let $U,V$ be varieties.  Set $S=\Sym(U\times V)=\coprod_{n\geq 0}\Sym^n(U\times V)$, and for $i\in\mathbb{N}$ denote by $\cup\colon \Sym(U\times V)^i\rightarrow \Sym(U\times V)$ the map taking $i$ sets of points (with multiplicity) to their union (with multiplicity). Let 
\[
\tilde{\iota}_n\colon U\times\Sym^n(V)\rightarrow \Sym^n(U\times V)
\]
be the inclusion of the $n$-tuples $((u_1,v_1),\ldots,(u_n,v_n))$ such that $u_1=\cdots=u_n$.  Write $B=1+\sum_{n>0}B_n=\prExp_{\cup}\left(\sum_{n>0}A_n\right)=\prExp_{\cup}(A)$ for some set of $A_n,B_n\in K(\mathrm{Var}_{\Sym^n(V)})$.  Define the $S$-motive
\[
\mathsf Z=\sum_{n\geq 0}\sum_{\alpha \vdash n}\cup_!\pi_{G_{\alpha}}j^*_{\alpha}\left(\underset{i\lvert\alpha_i\neq 0}{\Boxtimes}\tilde{\iota}_{i,!}\bigl([U\xrightarrow{\id} U]\boxtimes B_i\bigr)^{\otimes \alpha_i}\right)
\]
where $j_{\alpha}$ is the $G_{\alpha}$-equivariant inclusion from the space of points in $\prod_{i\lvert \alpha_i\neq 0}\Sym^i(U\times V)^{\alpha_i}$ that are not sent to the big diagonal after projection to $\prod_{i\lvert \alpha_i\neq 0}\Sym^i(U)^{\alpha_i}$. Then
\[
\mathsf Z=\prExp_{\cup}\left(\sum_{n>0} \tilde{\iota}_{n,!}\bigl([U\xrightarrow{\id} U]\boxtimes A_n\bigr)  \right).
\]
and if $A$ is effective one has
\[
\#_!\mathsf Z=(\#_!B)^{[U]}.
\]
\end{prop}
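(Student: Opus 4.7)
The two assertions of the Proposition are tightly linked: once the first (motivic) identity for $\mathsf{Z}$ is established, the power-series identity $\#_!\mathsf{Z} = (\#_!B)^{[U]}$ follows by applying $\#_!$ to the $\alpha$-summand of $\mathsf{Z}$, which collapses to $\pi_{G_\alpha}\bigl([\prod_i U^{\alpha_i}\setminus \Delta]\cdot \prod_i (\#_! B_i)^{\otimes \alpha_i}\bigr)\cdot t^{|\alpha|}$; this is precisely the defining formula \eqref{power12} for $(\#_!B)^{[U]}_{\pr}$, and effectiveness of $A$ (hence of $B$, since $\prExp_\cup$ preserves effectiveness) guarantees that $(\#_! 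B)^{[U]}_{\pr}=(\#_! B)^{[U]}$. The bulk of the work therefore concerns the first identity.

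My plan for the first identity is to stratify $\Sym^n(U\times V)$ by the coincidence partition of the projection to $\Sym^n U$: for each partition $\alpha\vdash n$, the locally closed stratum $\bigl(\prod_{i}(U\times\Sym^i V)^{\alpha_i}\setminus \Delta_U\bigr)/G_\alpha$ parameterises configurations whose $U$-images consist of $\alpha_i$ distinct points appearing with multiplicity $i$. The $\alpha$-summand of $\mathsf Z$ is, by construction, supported on this stratum, so it suffices to match strata. To do this I would first invoke the multiplicativity of the plethystic exponential (a formal consequence of the defining relation for $\sigma^n$ applied to the convolution operators $\prsigma_\cup^n$) to rewrite the LHS as the $\cup$-convolution product of the factors $\prExp_\cup(\tilde\iota_{n,!}([U]\boxtimes A_n))$ over $n>0$, and expand each factor as $\sum_k \prsigma_\cup^k(\tilde{\iota}_{n,!}([U]\boxtimes A_n))$. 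On the stratum labelled $\alpha$, the $\alpha_i$ ``clusters of size $i$'' together contribute the portion of $\prExp_\cup(\sum A_n)$ supported on multiplicity-$i$ configurations over a single point of $U$, and this portion is precisely $B_i$ by the very definition $B=\prExp_\cup(A)$. The distinctness restriction $j_\alpha^*$ prevents over-counting when two such clusters sit over the same point of $U$, and reproduces the $\alpha$-summand of $\mathsf Z$.

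The main technical obstacle is the careful bookkeeping of the interactions between the external tensor power $(\,\cdot\,)^{\otimes k}$ of Lemma~\ref{lemma:majdgap}, the closed immersion $\tilde\iota_i$, and the pushforwards $\cup_!$, $\pi_{G_\alpha}$. Concretely, one must verify that the power operation commutes with $\tilde\iota_{i,!}$ (which is clear at the level of varieties for closed immersions and extends to the equivariant motivic level), and that the open locus cut out by $j_\alpha^*$ is exactly the complement of the locus where clusters belonging to different parts of $\alpha$ would collide in $U$. Once these compatibilities are in place, the identification proceeds mechanically from the definition $\prsigma^k_\cup = \cup_!\pi_{\mathfrak S_k}\bigl((\,\cdot\,)^{\otimes k}\bigr)$.
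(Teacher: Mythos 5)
Your proposal is correct and follows essentially the same route as the paper, whose proof simply states that the second identity follows directly from the definition of the power structure and that the first arises from decomposing the right hand side according to the incidence partition in the $U$ factor. Your write-up is a faithful (and considerably more detailed) elaboration of exactly that stratification argument.
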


\begin{proof}
The second statement follows directly from the definition of the power structure.  The first arises from the decomposition of the right hand side according to incidence partition in the $U$ factor.
\end{proof}

\subsection{The virtual motive of a critical locus}\label{sec:900}
Let $X$ be a complex scheme of finite type, and let $\nu_X\colon X(\C)\ra \Z$ be the canonical constructible function introduced by Behrend \cite{Beh}. The \emph{weighted} (or \emph{virtual}) \emph{Euler characteristic} of $X$ is defined via $\nu_X$ as
\[
\widetilde\chi(X) = \int_X\nu_X\,\mathrm{d}\chi =
\sum_{r\in\Z}r\cdot\chi(\nu_X^{-1}(r)).
\]
When $X$ is a proper moduli space of stable sheaves on a Calabi--Yau $3$-fold, this number agrees with the Donaldson--Thomas invariant of $X$ by the main result of \emph{loc.~cit}.
The following definition is central to this paper.

\begin{definition}[{\cite{BBS}}]\label{def:vm}
A \emph{virtual motive} of a scheme $X$ is a motivic weight $\xi\in \mathcal M_\C^{\hat\mu}$ such that $\chi(\xi) = \widetilde\chi(X)$.
\end{definition}

\begin{definition}
A scheme $X$ is a \emph{critical locus} if it is of the form 
\[
X = \crit(f) = Z(\mathrm{d} f),
\]
where $f\colon U\ra \A^1$ is a regular function on a smooth scheme $U$.
\end{definition}

The Behrend function of a critical locus $X = \crit(f) \subset U$ agrees with the Milnor function $\mu_f$, the function counting the number of vanishing cycles \cite[Cor. 2.4 (iii)]{APPP1}. In particular, $\nu_X(x) = (-1)^{\dim U-1}(\chi(\textrm{MF}_{f,x})-1)$, where $\textrm{MF}_{f,x}$ is the Milnor fibre of $f$ at $x$. More globally, one can write
\[
\nu_X = \chi\left(\Phi_f[\dim U-1]\right),
\]
where $\Phi_f[\dim U-1]\in \mathrm{Perv}(X)$ is the perverse sheaf of vanishing cycles, the image of the constant perverse sheaf $\underline{\mathbb{Q}}_U[\dim U]$ under the vanishing cycle functor $\varphi_f[-1]\colon \mathrm{Perv}(U)\ra \mathrm{Perv}(U_0)$. Here $U_0=f^{-1}(0)$ denotes the hypersurface determined by $f$.
The pair $(U,f)$ also determines a canonical \emph{relative} virtual motive
\be\label{def:relvm}
\MF_{U,f} = \L^{-\frac{d}{2}}\left[-\phi_f\right]_X\in\mathcal M_{X}^{\hat\mu} \subset \mathcal{M}^{\hat{\mu}}_{U}
\ee
where $d = \dim U$ and $[\phi_f]_X$ is the relative motivic vanishing cycle class introduced by Denef and Loeser \cite{DenefLoeser1}. It is a class in $K_0^{\muhat}(\Var_{U_0})$, supported on $X=\crit(f)$, that we view as an element of $\mathcal M_{X}^{\hat\mu}$. We write $[\phi_f]$ for the pushforward of $[\phi_f]_X$ to a point.  We will repeatedly use the following proposition, due to Bittner \cite{Bittner05}.
\begin{prop}
\label{BittnerProp}
Let $G$ be a finite group acting freely on a smooth variety $U$, let $q\colon U\rightarrow U/G$ be the quotient map, and let $f$ be a regular function on $U/G$.  Then 
\begin{enumerate}
    \item 
    There is a well defined \textit{equivariant} motivic vanishing cycle $[\phi_{fq}]^G_{U_0}\in K_0^{G\times \muhat}(\Var_{U_0})$ such that the relative motive in $K_0^{\muhat}(\Var_{U_0})$ induced by forgetting the $G$-action is $[\phi_{fq}]_{U_0}$.
    \item
    There is an equality of motives 
    \[
    \pi_{G}\left([\phi_{fq}]^G_{U_0}\right)=[\phi_{f}]_{U_0/G} \in K_0^{\hat\mu}(\Var_{U_0/G}).
    \]
\end{enumerate}
\end{prop}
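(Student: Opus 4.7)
The plan is to define the equivariant vanishing cycle via a $G$-equivariant log resolution and then verify compatibility with $\pi_G$ term by term in the Denef--Loeser formula.

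For (1), since $G$ acts freely on $U$ and $fq$ is $G$-invariant, an equivariant embedded resolution of singularities (as used systematically in \cite{Bittner05}) produces a $G$-equivariant log resolution $\sigma\colon \widetilde U\rightarrow U$ of the divisor $(fq)^{-1}(0)\subset U$, on which $(fq)\circ\sigma$ has simple normal crossings with $G$-invariant components $E_i$ of multiplicities $N_i$. For each non-empty index set $I$ let $E_I^{\circ}$ be the usual open stratum, and $\widetilde E_I^{\circ}\rightarrow E_I^{\circ}$ the canonical $\mu_{N_I}$-cover with $N_I=\gcd\{N_i : i\in I\}$. Because the $E_i$ and the covers are $G$-equivariant, the Denef--Loeser formula
\[
[\psi_{fq}]^G_{U_0}=\sum_{I\neq \emptyset}(1-\L)^{|I|-1}\bigl[\widetilde E_I^{\circ}\rightarrow U_0\bigr]
\]
defines a well-defined class in $K_0^{G\times \hat\mu}(\Var_{U_0})$, and subtracting the manifestly $G$-equivariant term $[U_0\xrightarrow{\id} U_0]$ (carrying trivial $\hat\mu$-action) yields $[\phi_{fq}]^G_{U_0}$. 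Forgetting the $G$-action reproduces the standard class $[\phi_{fq}]_{U_0}$, proving (1).

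For (2), the free $G$-action on $\widetilde U$ descends $\sigma$ to a log resolution $\widetilde U/G \rightarrow U/G$ of $f^{-1}(0)$; the divisors $E_i$ descend to divisors $E_i/G$ of the same multiplicities $N_i$, the open strata $E_I^{\circ}$ descend to the analogous strata for $f$, and the cyclic $\mu_{N_I}$-covers descend to the corresponding $\mu_{N_I}$-covers of those strata (their formation is canonical, so it commutes with a free $G$-quotient). Applying $\pi_G$ term by term to the formula above, using its $K(\Var_{U_0/G})$-linearity established in Lemma \ref{pi_linearity_lemma}, gives precisely the Denef--Loeser formula for $[\phi_f]_{U_0/G}$. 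Subtracting the equivariant constant term and noting $\pi_G[U_0\xrightarrow{\id} U_0]=[U_0/G\xrightarrow{\id} U_0/G]$ produces the sought identity.

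The main obstacle is the existence of the $G$-equivariant log resolution with $G$-stable exceptional divisors, together with the assertion that the Galois $\mu_{N_I}$-covers commute with the free quotient by $G$. Both are standard once one works \'etale-locally on $\widetilde U$: freeness of the $G$-action means every point has a $G$-invariant open on which the action is free, reducing the required compatibilities to non-equivariant statements on $\widetilde U/G$. An alternative route, avoiding resolutions, is to define $[\phi_{fq}]^G_{U_0}$ via the $G\times\hat\mu$-action on the arc spaces $\mathcal X_n(fq)\subset \mathcal L_n(U)$ and to use the identification $\mathcal L_n(U)/G\cong \mathcal L_n(U/G)$ to match the corresponding motivic zeta functions; the difficulty there is rather that $\pi_G$ must be shown to commute with the specialisation $T\to\infty$ defining the vanishing cycle, which again is cleanest to check via an equivariant resolution.
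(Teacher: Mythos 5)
The paper does not actually prove this proposition: it is stated as a result of Bittner and the reader is simply referred to \cite{Bittner05}. The relevant comparison is therefore with the argument one finds there, and your sketch does follow that route --- define the equivariant class through a $G$-equivariant log resolution and the Denef--Loeser formula, then descend along the free quotient using the $K(\Var_{U_0/G})$-linearity of $\pi_G$ (Lemma \ref{pi_linearity_lemma}) and the fact that the cyclic covers $\widetilde E_I^{\circ}$ are canonical, hence commute with the \'etale quotient by $G$. Granting part (1), your treatment of part (2) is correct.

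The genuine gap is in part (1): the claim that the Denef--Loeser sum ``defines a well-defined class'' is precisely the content of the statement, and you do not address it. The formula you write down depends on the chosen equivariant resolution, and the substance of Bittner's theorem is that the resulting class in $K_0^{G\times\hat\mu}(\Var_{U_0})$ is independent of that choice; this requires the $G$-equivariant form of weak factorization (or an equivariant arc-space argument carried through to the specialisation defining the nearby fibre), neither of which your sketch supplies. A second, more minor, point: the irreducible components $E_i$ of the pullback divisor need not be individually $G$-invariant even when the resolution is equivariant --- for instance the preimage in $\widetilde U$ of an irreducible component downstairs may be disconnected, with $G$ permuting its pieces --- so the displayed sum does not literally produce a $G$-equivariant motive term by term. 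One must either index the strata by $G$-orbits of subsets $I$, or note that the unions $\coprod_{I'\in G\cdot I}\widetilde E_{I'}^{\circ}$ are the objects carrying $G$-actions. This is easily repaired, but the well-definedness issue is the real mathematical content and cannot be asserted away.
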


\begin{notation}\label{notation:vir}
If $X = \crit(f)\rightarrow Y$ is a morphism of varieties, we denote by
\[
\left[\crit(f)\rightarrow Y\right]_{\vir}=(\crit(f)\rightarrow Y)_!\MF_{U,f}\in\mathcal{M}^{\hat{\mu}}_Y
\]
the induced \textit{relative} virtual motive.
More generally, if $\iota\colon Z\hookrightarrow \crit(f)$ is a locally closed subscheme and $Z\ra Y$ is a morphism, we let
\[
[Z \ra Y]_{\vir} = (Z \ra Y)_!\iota^\ast \MF_{U,f} \in \mathcal{M}^{\hat{\mu}}_Y.
\]
When $Y=\Spec \C$ we denote $[Z\ra \Spec \C]_{\vir}$ simply by $[Z]_{\vir}$.
\end{notation}

Since the fibrewise Euler characteristic of $\MF_{U,f}$ equals $\nu_X$ as a function on $X$ \cite[Prop.~2.16]{BBS}, the absolute class
\be\label{eqn:kjadhjka}
[X]_{\vir} = \int_X \MF_{U,f} = \L^{-\frac{d}{2}}\cdot \left[-\phi_f\right]\in\mathcal M_{\C}^{\hat\mu}
\ee
is a virtual motive for $X$ in the sense of Definition \ref{def:vm}. 

\begin{remark}
A critical locus $X = \crit(f)\subset U$ has a canonical virtual fundamental class $[X]^{\vir}\in A_0X$, attached to the symmetric perfect obstruction theory determined by the Hessian of $f$.
When $X$ is proper, Behrend's theorem \cite{Beh} can be phrased as
\[
\chi [X]_{\vir} = \int_{[X]^{\vir}}1 \in \Z.
\]
\end{remark}

\begin{remark}\label{rmk:smvm}
If $X$ is a smooth scheme, it can be considered as a critical locus via the zero function $f = 0 \in  \Gamma(\O_X)$. The associated virtual motive is
\[
[X]_{\vir} = \L^{-\frac{\dim X}{2}}\cdot [X]\in \mathcal M_\C.
\]
Via the stated sign conventions, we see that $\chi [X]_{\vir}$ recovers the virtual Euler characteristic of the smooth scheme $X$, namely $\widetilde\chi(X) = (-1)^{\dim X}\chi(X)$.
\end{remark}

We end this subsection with two results that are of crucial importance in calculations involving motivic vanishing cycles. Recall from Section \ref{sec:Monodromic} that the groups $\mathcal M_{S}^{\hat\mu}$ carry the convolution product `$\star$' besides the ordinary product.

\begin{theorem}[Motivic Thom--Sebastiani \cite{DenefLoeser2,LooijengaMM}]\label{mtseb190}
Let $f\colon U\ra \A^1$ and $g\colon V\ra \A^1$ be regular functions on smooth  varieties $U$ and $V$. Consider the function $f\oplus g\colon U\times V\ra \A^1$ given by $(x,y)\mapsto f(x)+g(y)$.
Let $i\colon U_0\times V_0\ra (U\times V)_0$ be the inclusion of the zero fibres, and let $p_U$ and $p_V$ be the projections from $U_0\times V_0$. Then one has
\[
i^\ast\bigl[\phi_{f\oplus g}\bigr]_{(U\times V)_0}=p_U^\ast\bigl[\phi_f\bigr]_{U_0}\star p_V^\ast\bigl[\phi_g\bigr]_{V_0}\in \mathcal M_{U_0\times V_0}^{\hat\mu}.
\]
\end{theorem}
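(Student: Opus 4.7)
The plan is to proceed via the arc-theoretic definition of the motivic nearby cycle and then promote the absolute statement to the relative one by base change. Recall that the motivic vanishing cycle $[\phi_f]_{U_0}$ is defined as $[\psi_f]_{U_0} - [U_0 \to U_0]$, where the motivic nearby fiber $[\psi_f]_{U_0}$ is the limit of a generating series of truncated arc spaces
\[
\mathcal X_n(f) = \bigl\{\gamma \in \mathcal L_n(U) \,\bigl|\, f(\gamma) = t^n \bmod t^{n+1}\bigr\},
\]
each of which carries a natural $\mu_n$-action by rotation of the arc parameter, viewed as a variety over $U_0$ via the constant-term map. Both sides of the asserted identity are classes in $\mathcal M^{\hat\mu}_{U_0 \times V_0}$, so it suffices, by the $K(\Var_{U_0 \times V_0})$-linearity of the convolution product and pullbacks, to verify the identity fibrewise over closed points $(x,y) \in U_0 \times V_0$.

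The core calculation is to decompose $\mathcal X_n(f \oplus g)$, over a neighborhood of $(x,y)$, according to the bi-order $(\operatorname{ord} f(\alpha), \operatorname{ord} g(\beta))$ of a test arc $\gamma = (\alpha,\beta)$. First I would isolate the ``main'' stratum where both orders equal $n$ and the leading coefficients $u, v$ satisfy $u + v = 1$; there the fiber decomposes as a $\mathbb G_m$-family over the product $\mathcal X_n(f) \times_{U_0 \times V_0} \mathcal X_n(g)$ of the constraint $u + v = 1$ inside $\mathbb G_m \times \mathbb G_m$. Then I would check that the boundary strata (where one order exceeds $n$) do not contribute to the limit, either because they are of smaller dimension after taking the motivic measure or because they cancel against the subtracted class in the definition of $\phi$. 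The $\mu_n$-action on the ``main'' stratum is the diagonal rotation action, twisted by the condition $u + v = 1$ on leading coefficients, and this is precisely the geometric datum encoding the convolution product $\star$ as defined by Looijenga and Denef--Loeser.

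The main obstacle, in my view, is keeping track of the $\hat\mu$-equivariance throughout. In the classical (non-equivariant) Thom--Sebastiani statement for Milnor fibers, the Milnor fiber of $f \oplus g$ is homotopy equivalent to the join of the Milnor fibers of $f$ and $g$, and the monodromy is the induced diagonal action; the convolution product $\star$ on $\mathcal M^{\hat\mu}_{\bullet}$ was engineered precisely so that this identity lifts to motives. Making this rigorous requires tracking the $\mu_n$-torsors governing the leading-coefficient conditions at each truncation level, and passing to the limit compatibly. I would also have to argue, using a Bittner-style presentation such as Proposition \ref{BittnerProp}, that the relative identity over $U_0 \times V_0$ really does follow from the fibrewise one, or alternatively carry out the above arc-theoretic decomposition globally, exploiting the fact that the maps $p_U, p_V$ and $i$ are themselves very simple (projections and a closed immersion).

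Finally, I would conclude by assembling the fibrewise convolution identity, observing that after multiplying by $\L^{-(\dim U + \dim V)/2}$ and reintroducing the signs hidden in \eqref{def:relvm}, one recovers the stated equality of relative virtual motives in $\mathcal M^{\hat\mu}_{U_0 \times V_0}$. Alternatively, one could bypass the arc computation entirely by invoking a log-resolution presentation of $[\phi_f]$ and $[\phi_g]$ on compatible resolutions of $U_0$ and $V_0$, pulling back to a common resolution of $(U \times V)_0$ obtained by blowing up the product, and matching strata; but this route seems to require more bookkeeping than the arc-theoretic one.
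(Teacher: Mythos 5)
This theorem is not proved in the paper at all: it is imported verbatim from Denef--Loeser and Looijenga, so there is no internal argument to compare yours against. Judged on its own terms, your sketch does follow the broad strategy of the cited proofs (decompose the truncated arc spaces of $f\oplus g$ by the pair of vanishing orders of $f$ and $g$ along an arc, and recognise the convolution product in the $u+v=1$ locus on leading coefficients), but two of your steps have genuine gaps. First, the reduction to ``verifying the identity fibrewise over closed points'' is not valid: a class in $\mathcal M^{\hat\mu}_{U_0\times V_0}$ is not determined by its pullbacks to closed points (the locality relation in the paper's Grothendieck ring only identifies classes whose restrictions to all subvarieties of the base agree, which is a much stronger condition than agreement of fibres), so the relative statement cannot be deduced from the absolute one this way; the actual proofs work relatively from the start.

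Second, the claim that the strata where the two orders differ ``do not contribute to the limit'' is the opposite of what happens. In the Denef--Loeser computation those off-diagonal strata contribute terms of the shape $[\psi_f]\cdot[V_0]$ and $[U_0]\cdot[\psi_g]$, and it is precisely the interaction of these with the subtracted constant classes in $[\phi] = [\psi] - [\,\cdot\,]$ and with the diagonal stratum that assembles into the convolution $\star$ (whose very definition involves the two loci $u+v=1$ and $u+v=0$). Dismissing them removes the content of the theorem. You would also need to address rationality of the relevant zeta functions and the compatibility of the $\mu_n$-actions across truncation levels, which your sketch acknowledges but does not resolve. In short: the outline points at the right literature proof, but as written it would not close.
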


The following result will be used in Propositions \ref{thmequiva792} and \ref{prop:mcv10}.

\begin{theorem}[{\cite[Thm.~B.1]{BBS}}]\label{thm:equivfam738}
Let $f\colon U\ra \A^1$ be a regular function on a smooth complex quasi-projective  variety, with critical locus $X$. 
Assume $U$ is acted on by a connected complex torus $\mathbb T$ in such a way that $f$ is
$\mathbb T$-equivariant with respect to a primitive character $\chi\colon \mathbb T\ra \G_m$. 
\bitem
\item [\normalfont{(i)}] If there is a one parameter subgroup $\G_m\subset \mathbb T$ such that the induced action is circle compact, then
\[
\bigl[\phi_f\bigr]=\bigl[f^{-1}(1)\bigr]-\bigl[f^{-1}(0)\bigr]\in\mathcal M_\C\subset \mathcal M_{\C}^{\hat\mu}.
\]
\item [\normalfont{(ii)}] Let $\tau \colon X\ra Y$ be a map to an affine variety. 
If, in addition to the assumption in $\mathrm{(i)}$, the hypersurface $f^{-1}(0)\subset U$ is reduced, then the relative class $[\phi_f]_{Y}=\tau_![\phi_f]_{X}$ lies in the subring $\mathcal M_{Y}\subset \mathcal M_{Y}^{\hat\mu}$ of classes with trivial monodromy.
\eitem
\end{theorem}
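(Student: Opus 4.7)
The plan is to prove part (i) directly, then deduce part (ii) from it with the help of the reducedness hypothesis on the zero fibre. The main input is the standard relation
\[
[\phi_f]_{U_0} = [\psi_f]_{U_0} - [U_0 \xrightarrow{\id} U_0] \in \mathcal M^{\hat\mu}_{U_0}
\]
between the motivic vanishing cycle class and the motivic nearby fibre class $[\psi_f]_{U_0}$; hence (i) is equivalent to showing that $[\psi_f] = [f^{-1}(1)]$ in $\mathcal M_\C$ (with trivial $\hat\mu$-action).

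For (i), the primitivity of $\chi$ means that the associated subgroup $\G_m \subset \mathbb T$ acts on $U$ so that $f$ is homogeneous of weight one; in particular, multiplication by $s \in \G_m$ induces an isomorphism $f^{-1}(1) \xrightarrow{\sim} f^{-1}(s)$, and hence a trivialisation $f^{-1}(\G_m) \cong f^{-1}(1) \times \G_m$. The next step is to compute $[\psi_f]$ using Denef--Loeser's motivic integration formula on the arc space of $U$: the $\G_m$-scaling rescales the arc-order stratification of $f \circ \gamma$, endowing each contributing stratum with the structure of a $\G_m$-fibration over the analogous stratum in $f^{-1}(1)$. The role of the auxiliary circle-compact $\G_m$-subgroup of $\mathbb T$ is to guarantee convergence: it provides a proper attractor $U \to U^{\G_m} \subset f^{-1}(0)$, whose induced stratification of $U$ allows the motivic integral to be computed as a finite sum telescoping to $[f^{-1}(1)]$, with no residual $\hat\mu$-weight contributions. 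The necessity of circle compactness is visible in simple examples such as $f = xy$ on $\A^2 \setminus \{0\}$ with the action $s\cdot(x,y) = (sx,y)$, where the critical locus is empty and $[\phi_f] = 0$ but $[f^{-1}(1)] - [f^{-1}(0)] \neq 0$.

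For (ii), the reducedness of $f^{-1}(0)$ is used precisely to exclude the cyclic-cover contributions that would produce a non-trivial $\hat\mu$-action in the relative setting: along any log resolution of the pair $(U, f^{-1}(0))$, the strict transform of the zero fibre enters with multiplicity one, so the only components of multiplicity greater than one appearing in the Denef--Loeser formula for $[\psi_f]_{U_0}$ come from exceptional divisors, and these are killed by the attractor argument of (i) after pushforward along $\tau$. The affineness of $Y$ guarantees that the relative motivic integration can be checked locally on $Y$ and that pushforward along $\tau$ respects the weight grading, yielding $\tau_![\phi_f]_X \in \mathcal M_Y \subset \mathcal M^{\hat\mu}_Y$. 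The main technical obstacle is giving a rigorous, convergent arc-space computation in (i); the primitivity of $\chi$ together with the proper attractor from circle compactness are exactly the hypotheses required to control this integration, and the examples above show that both cannot be weakened.
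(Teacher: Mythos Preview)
The paper does not give its own proof of this theorem: it is quoted verbatim from \cite[Thm.~B.1]{BBS}, with only the remark afterwards explaining that the stated version for arbitrary affine $Y$ follows from the original (where $Y$ is the affinisation of $X$) because $\tau$ necessarily factors through the affinisation. So there is no in-paper argument to compare your proposal against.

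Evaluating your sketch on its own merits: the opening moves are right --- the identity $[\phi_f]=[\psi_f]-[f^{-1}(0)]$, and the observation that primitivity of $\chi$ yields a one-parameter subgroup for which $f$ has weight one, hence trivialising $f^{-1}(\G_m)$ --- but after that the argument becomes assertion rather than proof. The sentence ``the motivic integral [can] be computed as a finite sum telescoping to $[f^{-1}(1)]$'' is the entire content of part (i), and you have not said what is telescoping or why. The actual BBS argument does not proceed via the arc space directly: it takes a $\mathbb T$-equivariant log resolution and uses Bittner's resolution formula for $[\psi_f]$, then exploits the weight-one $\G_m$ to trivialise the $\mu_{N_i}$-covers appearing over the strata, and the circle-compact $\G_m$ (via Bia\l ynicki--Birula) to control the contributions from exceptional divisors. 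Your ``arc-order stratification'' language suggests a different route that you have not carried out.

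For (ii), your explanation of the role of reducedness is close in spirit (multiplicity one on the strict transform is indeed the point), but ``killed by the attractor argument of (i) after pushforward along $\tau$'' is again just a placeholder. And the claimed role of affineness of $Y$ --- that it lets one ``check locally'' and ``respects the weight grading'' --- is not correct; as the paper's remark notes, affineness enters only because the map from $X$ to any affine target factors through the affinisation, so the relative monodromy-freeness there pushes forward.
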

\begin{remark}
The original statement of this theorem in \cite{BBS} fixed $Y$ to be the affinisation of $X$ --- the statement above then follows from the fact that $\tau$ must factor through the affinisation, and the direct image of a monodromy-free motive is monodromy-free.
\end{remark}

\subsection{The virtual motive of the Hilbert scheme of points}\label{Section:BBS}
Quivers with potentials provide a large class of examples of critical loci. For instance, consider the framed $3$-loop quiver $Q_{\mathrm{BBS}}$ (studied by Behrend--Bryan--Szendr\H{o}i) depicted in Figure \ref{3LoopQuiver}. The arrow $1 \ra \infty$ is called a \emph{framing}, and $\infty$ is the \textit{framing vertex}.  Throughout the paper, the vertices of a framed quiver are ordered so that the framing vertex is last.

\begin{figure}[ht]
\begin{tikzpicture}[>=stealth,->,shorten >=2pt,looseness=.5,auto]
  \matrix [matrix of math nodes,
           column sep={3cm,between origins},
           row sep={3cm,between origins},
           nodes={circle, draw, minimum size=7.5mm}]
{ 
|(A)| \infty & |(B)| 1 \\         
};
\tikzstyle{every node}=[font=\small\itshape]
\path[->] (B) edge [loop above] node {$x$} ()
              edge [loop right] node {$y$} ()
              edge [loop below] node {$z$} ();            
\draw (B) -- (A) node {};
\end{tikzpicture}
\caption{The framed $3$-loop quiver $Q_{\mathrm{BBS}}$.}\label{3LoopQuiver}
\end{figure}

The space of $(n,1)$-dimensional right $\mathbb{C}Q_{\mathrm{BBS}}$-modules is the affine space $\End(\C^n)^3\times \C^n$ parameterising triples of $n\times n$ matrices $(A,B,C)$ and vectors $v\in \C^n$. Consider the potential $W = x[y,z]$, viewed as an element of the path algebra $\C\langle x,y,z\rangle$ of the (unframed) $3$-loop quiver. Then by \cite[Prop.~3.1]{BBS} one has, as schemes,
\[
\Hilb^n(\A^3) = \crit(\Tr W) \subset \NCHilb^n,
\]
where $\NCHilb^n$ is the \emph{non-commutative Hilbert scheme}, defined as follows. The open subscheme
\[
V_n \subset \End(\C^n)^3 \times \C^n = \Rep_{(n,1)}(\mathbb{C}Q_{\mathrm{BBS}}),
\]
parameterising tuples $(A,B,C,v)$ such that $v\in \C^n$ generates the $\C\langle x,y,z\rangle$-module defined by the triple $(A,B,C)$, carries a free $\GL_n$-action, and $\NCHilb^n = V_n/\GL_n$ is a smooth quasi-projective variety of dimension $2n^2+n$. The generating function $\mathsf Z_{\A^3}(t)$ for which the $t^n$ coefficient is the virtual motive
\[
\left[\Hilb^n(\A^3)\right]_{\vir} = \L^{-\frac{2n^2+n}{2}}\left[-\phi_{\Tr W}\right] \in \mathcal M_{\C}
\]
was computed in \cite[Thm.~3.7]{BBS}. The result is the equation
\[
\mathsf Z_{\A^3}(t) = \prod_{m\geq 1}\prod_{k=0}^{m-1}\,\bigl(1-\L^{k+2-\frac{m}{2}}t^m\bigr)^{-1} \in \mathcal M_{\C}\llbracket t \rrbracket.
\]
Let 
\be\label{Punctual_Hilb_scheme}
\left[\Hilb^n(\A^3)_0\right]_{\vir} = \int_{\Hilb^n(\A^3)_0} \iota^\ast \MF_{\NCHilb^n,\Tr W}\in \mathcal M_{\C}
\ee
be the virtual motive of the \emph{punctual Hilbert scheme} $\iota\colon \Hilb^n(\A^3)_0\hookrightarrow \Hilb^n(\A^3)$ (cf.~Notation \ref{notation:vir}), the closed subscheme parameterising subschemes entirely supported at the origin $0\in \A^3$. Then the generating series
\[
\mathsf Z_0(t) = \sum_{n\geq 0}\,\left[\Hilb^n(\A^3)_0\right]_{\vir}\cdot t^n
\]
satifies the relation
\begin{equation}
    \label{punctual_BBS}
\mathsf Z_0(-t) = \Exp\left(\frac{-\L^{-\frac{3}{2}}t}{\bigl(1+\L^{-\frac{1}{2}}t\bigr)\bigl(1+\L^{\frac{1}{2}}t\bigr)} \right).
\end{equation}
\begin{remark}
\label{punctualBBS_effective}
As a corollary of Formula $(\ref{punctual_BBS})$ the motive $(-1)^n\left[\Hilb^n(\A^3)_0\right]_{\vir}$ is effective, i.e.~it belongs to the sub semigroup $S_0(\Var_{\C})\subset \mathcal M_{\C}$.
\end{remark}

Behrend, Bryan and Szendr\H{o}i also define a virtual motive $[\Hilb^nX]_{\vir}$ for arbitrary smooth $3$-folds. The motivic partition function
\[
\mathsf Z_X(t) = \sum_{n\geq 0} \,\left[\Hilb^nX\right]_{\vir} \cdot t^n \in \mathcal M_{\C}\llbracket t \rrbracket
\]
is again fully determined by the punctual contributions, i.e.~by \cite[Prop.~4.2]{BBS} one has
\begin{equation}\label{Hilb_Partition_Function}
\mathsf Z_X(-t) = \mathsf Z_0(-t)^{[X]}.
\end{equation}

\begin{remark}[Related work on Quot schemes]
The identity \eqref{Hilb_Partition_Function}, as well as its reformulation in terms of the motivic exponential, has been generalised  in \cite{Ob_Mot} to the case of Quot schemes $\Quot_Y(F,n)$ where $F$ is an arbitrary locally free sheaf on a smooth $3$-fold. See also \cite{mot_quot} for the ``non-virtual'' setup. In higher rank, the starting point of motivic DT theory is the observation that $\Quot_{\mathbb A^3}(\mathscr O^{\oplus r},n)$ is a global critical locus \cite[Theorem 2.6]{BR18}. This has also been exploited to prove a plethystic formula (the Awata--Kanno conjecture in String Theory) for the partition function of higher rank K-theoretic DT invariants \cite{FMR_K-DT}.
\end{remark}


\section{The local model as a critical locus}\label{CriticalLocus}
For a smooth curve $C$ embedded in a smooth $3$-fold $Y$ with ideal sheaf $\mathscr I_C\subset \O_Y$, we let
\[
Q^n_C = \Quot_Y(\mathscr I_C,n) = \Set{\mathscr I_C\onto \mathscr F | \dim (\Supp \mathscr F) = 0,\,\chi(\mathscr F)=n}
\]
denote the Quot scheme of $n$ points of $\mathscr I_C$. Given a surjection $\theta\colon\mathscr I_C\onto \mathscr F$, we can consider its kernel $\mathscr I_Z\subset \mathscr I_C$, and thus think of $[\theta]\in Q^n_C$ as a closed one-dimensional subscheme $Z\subset Y$ containing $C$ as its maximal purely one-dimensional subscheme. We will switch between these two interpretations of $Q^n_C$ without further comment. Note that we generally suppress the ambient $3$-fold $Y$ from the notation.

\begin{remark}
When $Y$ is projective, the association $[\theta]\mapsto \ker\theta$ defines a closed immersion into the moduli space of ideal sheaves
\[
Q^n_C\hookrightarrow I_{\chi(\O_C)+n}(Y,[C]),
\]
as proved in \cite[Lemma 5.1]{LocalDT}. This closed immersion can be generalised to higher rank sheaves, see \cite[Prop.~5.1]{BR18}.
\end{remark}

Let now $L\subset\mathbb{A}^3$ be a line; for concreteness set $\mathscr{I}_L=(x,y)\subset \mathbb{C}[x,y,z]$. The scheme $Q^n_L$ parameterises surjections $\mathscr{I}_L\onto N$ of $\mathbb{C}[x,y,z]$-modules, where $N$ is $n$-dimensional as a $\C$-vector space.  As such there is a forgetful map from $Q_L^n$ to the stack of $n$-dimensional $\mathbb{C}[x,y,z]$-modules, and postcomposing with the affinization map for this stack, a ``Hilbert-to-Chow'' morphism
\begin{equation}
\label{HC_def}
\HC_{\A^3}\colon Q_L^n\rightarrow \sym^n(\mathbb{A}^3).
\end{equation}

This map is a special case of \cite[Cor.~$7.15$]{Rydh1}.

The goal of this section is to prove the following result, which is a one-dimensional counterpart of the analogous statement for $\Hilb^n(\A^3)$, considered in \cite{BBS}.  Part (A) is Theorem \ref{thm:thm1} from the introduction.

\begin{theorem}\label{mainthm1}
Consider the embedding $L\subset \mathbb A^3$ from above. Then 
\bitem
\item [(A)] the Quot scheme $Q^n_{L}$ is a global critical locus, i.e.~there is a smooth variety $U$ and a function $f\colon U\ra \A^1$ such that $Q_L^n\cong \crit(f)$.
\item [(B)] the relative motive $[\phi_f]_{\Sym^n(\mathbb{A}^3)}$ is an element of the subring $\mathcal M_{\sym^n(\mathbb{A}^3)}\subset \mathcal M_{\sym^n(\mathbb{A}^3)}^{\hat\mu}$.
\eitem
\end{theorem}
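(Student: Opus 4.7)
The plan for (A) is to first identify a point $[\mathscr{I}_L \twoheadrightarrow N]$ in $Q^n_L$ with the linear algebra data of a quintuple $(X, Y, Z, v_1, v_2)$, where $X, Y, Z \in \End(\C^n)$ are pairwise commuting operators and $v_1, v_2 \in \C^n$ are vectors that cyclically generate $\C^n$ over $\C[X, Y, Z]$ and satisfy the syzygy $Y v_1 = X v_2$ (inherited from $x \cdot y = y \cdot x$ in $\mathscr{I}_L$). This presents $Q^n_L$ as a GIT quotient of a closed subscheme of an affine variety, cut out by $[X, Y] = [Y, Z] = [Z, X] = 0$ together with $Y v_1 - X v_2 = 0$. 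The commutator relations arise as the partial derivatives of $\Tr(X[Y, Z])$ (this is the BBS mechanism for $\Hilb^n(\A^3)$); the challenge is to find a potential whose critical equations \emph{simultaneously} yield the framing syzygy.

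The strategy, indicated in the organisation of contents, is to reverse-engineer the critical structure from the ambient setting of the resolved conifold $X = \Tot(\O_{\P^1}(-1)^{\oplus 2})$. I would use the conifold quiver $Q_{\con}$ with vertices $\{0, 1\}$, arrows $a_1, a_2 \colon 0 \to 1$ and $b_1, b_2 \colon 1 \to 0$, and Klebanov--Witten potential $W_{\con} = a_1 b_1 a_2 b_2 - a_1 b_2 a_2 b_1$. With an appropriate framing structure and cyclicity stability, the NC Quot scheme $\widetilde U$ is smooth, and the critical locus of $\Tr \widetilde W$ realizes $Q^n_{C_0}$ scheme-theoretically. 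Next, one identifies an open subvariety $U \subset \widetilde U$ cut out by the invertibility of a distinguished arrow (say $a_1$), corresponding to quotients supported on the affine chart $\A^3 \subset X$ containing $L$. Using $a_1$ to identify the two vertex spaces, the remaining quiver data reduce to a quintuple $(X, Y, Z, v_1, v_2)$ of the form described above, and the potential $W_{\con}$ restricts to a function $f\colon U \to \A^1$ whose critical locus is $Q^n_L$, establishing (A).

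For part (B), the plan is to invoke Theorem \ref{thm:equivfam738}(ii), applied to the Hilbert--Chow morphism $\HC \colon Q^n_L \to \Sym^n(\A^3)$, whose target is affine. The requisite torus action comes from the natural $\G_m^4$-action on representations of the framed conifold quiver rescaling each arrow $a_1, a_2, b_1, b_2$ independently; both monomials of $W_{\con}$ have weight $t_1 t_2 t_3 t_4$, so the potential is equivariant with respect to a primitive character of this torus. A suitable $\G_m \subset \G_m^4$ acts on $U$ with positive weights on all coordinates, yielding circle compactness. The remaining hypothesis --- reducedness of the zero fibre $f^{-1}(0) \subset U$ --- can be verified by a direct Jacobian calculation, exploiting that $\Tr W_{\con}$ is separately linear in each individual arrow $a_i, b_i$. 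With these conditions in hand, Theorem \ref{thm:equivfam738}(ii) yields (B).

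I expect the main obstacle to lie in identifying the critical locus structure on $U$ with $Q^n_L$ as schemes (not merely set-theoretically), and in verifying reducedness of $f^{-1}(0)$. The former requires a careful comparison between the Jacobian ideal of $\Tr \widetilde W|_U$ and the scheme-theoretic defining ideal of $Q^n_L$ as a Quot scheme; the subtlety is to ensure that after restricting to the open chart determined by $a_1$-invertibility, no spurious components or embedded structure are introduced into the critical locus. The reducedness check is the direct analogue of a non-trivial calculation for $\Hilb^n(\A^3)$ in \cite{BBS}, and is expected to follow by a similar argument, though the two-vertex structure of the conifold quiver and the pair of framing arrows will require somewhat more bookkeeping.
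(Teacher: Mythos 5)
Your overall architecture is the paper's: realise $Q^n_{C_0}$ as $\crit(\Tr W_{\con})$ on the Nagao--Nakajima moduli space of $\zeta$-stable framed $(\tilde Q_{\con},W_{\con})$-representations of dimension vector $(n+1,n,1)$, restrict to an open locus corresponding to the affine chart, and reduce to the linear-algebra model $(X,Y,Z,v_1,v_2)$ with $[X,Y]=[Y,Z]=[Z,X]=0$ and the syzygy $Yv_1=Xv_2$ (which is exactly the paper's endpoint, via the reduced quiver $Q_r$). But your choice of open condition would fail. First, with dimension vector $(n+1,n)$ at the two unframed vertices no arrow between them can be invertible. More seriously, the arrows $a_1,a_2$ encode the two \emph{fibre} directions of $\O_{\P^1}(-1)\oplus\O_{\P^1}(-1)$, which act nilpotently on any quotient supported on the zero section $C_0$; imposing invertibility (or surjectivity) of $a_1$ therefore throws away every quotient of $\mathscr I_L$ supported on $L$ itself -- precisely the locus the whole paper is about. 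The correct condition is injectivity of the \emph{base}-direction arrow $b_2\colon\C^n\to\C^{n+1}$, which corresponds to the support avoiding $\pi^{-1}(\infty)$; combined with the span condition forced by stability this gives a canonical splitting $\rho_1\cong\Image(\rho(b_2))\oplus\Image(\rho(\iota))$, and it is $b_2$ (not an $a_i$) that identifies the two vertex spaces and produces the reduced quiver with three loops and two framing arrows. The scheme-theoretic identification of the resulting critical locus with $Q_L^n$, which you rightly flag as the delicate point, is handled in the paper by \cite[Prop.~3.8]{Seg} together with an argument with the tautological module showing the relevant subsheaf is locally free.

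For part (B) your plan (Theorem \ref{thm:equivfam738}(ii), $\G_m^4$-equivariance with primitive character $t_1t_2t_3t_4$, reducedness of the zero fibre) is the paper's, and your reducedness argument -- $\Tr W_{\con}$ has multidegree $(1,1,1,1)$, hence no repeated factor -- is essentially theirs. However, the circle-compactness claim does not hold on the open locus as you describe it: any one-parameter subgroup contracting the arrow coordinates sends the distinguished arrow to zero in the limit, so limits escape the open locus where that arrow is required to be injective/invertible, and circle-compactness fails there. The paper's fix is exactly the reduced-quiver presentation: $\mathcal N_n^{\circ\circ}\cong\Rep^{\zeta'}_{(n,1)}(Q_r)/\GL_n$ absorbs $b_2$ into the identification of vertex spaces, after which the remaining arrows admit a genuinely contracting $\C^*$-action (with $\Tr(W_r)$ of weight three) and Theorem \ref{thm:equivfam738} applies. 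Without this re-presentation, both the circle-compactness hypothesis in (i) and hence the monodromy-freeness conclusion in (ii) are unjustified.
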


Part (A) is proved at the end of Section \ref{conifold_section}, and part (B) is proved in Proposition \ref{prop:mcv10}.  The main step in the proof consists of realising $Q^n_L$ as a suitable open 
subscheme of the Quot scheme $Q^n_{C_0}$, where $C_0\subset X$ is the exceptional curve in the resolved conifold $X$. In the next subsection we review the critical locus structure on $Q^n_{C_0}$, and more generally, the noncommutative Donaldson--Thomas theory of the conifold as introduced in \cite{MR2403807}.

\subsection{Conifold Geometry}
\label{conifold_section}
Given a quiver $Q = (Q_0,Q_1)$ with potential $W$ and a field $K$ we define $K(Q,W)$ to be the associated Jacobi algebra, i.e.~the quotient of the free path algebra $KQ$ by the noncommutative derivatives $\partial W/\partial a$ for $a\in Q_1$ ranging over the arrows of $Q$.  

Given a dimension vector $\dd = (\dd(i))_{i\in Q_0}\in \mathbb{N}^{Q_0}$, we define
\[
\Rep_{\dd}(\C Q)=\prod_{(i\ra j)\in Q_1}\Hom(\C^{\dd(j)},\C^{\dd(i)}) 
\]
to be the affine space of $\dd$-dimensional right $\C Q$-modules.  Given a King stability condition $\zeta\in\mathbb{Q}^{Q_0}$ we denote by 
\[
\Rep_{\dd}^{\zeta}(\C Q)\subset \Rep_{\dd}(\C Q)
\]
the open subscheme of $\zeta$-semistable $\C Q$-modules.  Both schemes are acted on by the gauge group 
\[
\GL_{\dd}\defeq \prod_{i\in Q_0}\GL_{\dd(i)}
\]
by change of basis.

\smallbreak
Throughout the paper, we let $X$ denote the \emph{resolved conifold}, namely the total space of the rank $2$ locally free sheaf $\O_{\P^1}(-1)\oplus\O_{\P^1}(-1)$. We denote by $C_0\subset X$ the zero section of this vector bundle, so $C_0\cong \mathbb{P}^1$.
Let 
\[
\pi\colon X\ra C_0
\]
denote the projection onto the zero section. The local Calabi--Yau $3$-fold $X$ is the crepant resolution of the conifold singularity 
\[
\Spec \C[x,y,z,w]/(xy-zw)\subset \A^4,
\]
and $C_0$ is the exceptional curve, the only strictly positive dimensional proper subvariety of $X$. Since $C_0$ is a \emph{rigid} smooth rational curve in $X$, the Hilbert scheme $I_{n+1}(X,[C_0])$ of one-dimensional subschemes $Z\subset X$ with homology class $[C_0]$ and $\chi(\O_Z) = n+1$ agrees (as a scheme) with the Quot scheme $Q^n_{C_0}$. This space was realised as a global critical locus by Nagao and Nakajima \cite{NagNak}, by studying representations of the Jacobi algebra $\mathbb{C}(\tilde{Q}_{\con},W_{\con})$. Here, $\tilde{Q}_{\con}$ is the (framed) \emph{conifold quiver} depicted in Figure \ref{FramedConifoldQuiver}, and 
\be\label{KWpotential}
W_{\con} = a_1b_1a_2b_2-a_1b_2a_2b_1
\ee
is the Klebanov--Witten potential \cite{KW1}. We denote by $Q_{\con}$ the quiver obtained by removing from $\tilde{Q}_{\con}$ the framing vertex $\infty$ and the arrow $\iota$.  Since we consider right modules, a $K\tilde{Q}_{\con}$-module $\tilde{\rho}$ is defined by the following data.
\begin{enumerate}
    \item A right $KQ_{\con}$-module $\rho$, with $K Q_{\con}$-action defined via the $K\tilde{Q}_{\con}$-action on $\tilde{\rho}$ and the inclusion $KQ_{\con}\hookrightarrow K\tilde{Q}_{\con}$.
    \item A linear map $V\rightarrow \rho_1$, with $V=\tilde{\rho}_\infty$.
\end{enumerate}

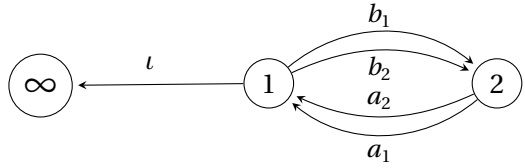
\begin{figure}[ht]
\begin{tikzpicture}[>=stealth,->,shorten >=2pt,looseness=.5,auto]
  \matrix [matrix of math nodes,
           column sep={3cm,between origins},
           row sep={3cm,between origins},
           nodes={circle, draw, minimum size=3.5mm}]
{ 
|(I)| \tiny{\infty} & |(A)| 1 & |(B)| 2 \\         
};
\draw (A) -- (I) node {}; 
\tikzstyle{every node}=[font=\small\itshape]
\node [anchor=west,right] at (1.2,0.93) {$b_1$};            
\node [anchor=west,right] at (1.2,0.23) {$b_2$};            
\node [anchor=west,right] at (1.2,-0.2) {$a_2$};
\node [anchor=west,right] at (1.2,-0.9) {$a_1$}; 
\node [anchor=west,right] at (-1.7,0.3) {$\iota$};

\draw (A) to [bend left=25,looseness=1] (B) node {};
\draw (A) to [bend left=40,looseness=1] (B) node {};
\draw (B) to [bend left=25,looseness=1] (A) node {};
\draw (B) to [bend left=40,looseness=1] (A) node {};
\end{tikzpicture}
\caption{The framed conifold quiver $\tilde{Q}_{\con}$.}\label{FramedConifoldQuiver}
\end{figure}

\begin{remark}
The orientation of $\tilde{Q}_{\con}$ differs from Figure 4 of \cite{NagNak}, but our notion of framing is the same as \emph{loc.~cit.} (see (1) and (2) above) --- note that we are considering \emph{right} $\mathbb{C}\tilde{Q}_{\con}$-modules throughout --- see Remark \ref{OrientationRemark}.
\end{remark}

We identify quasicoherent $\mathscr{O}_X$-modules with triples $(\mathscr{F},\alpha_1,\alpha_2)$ where $\mathscr{F}\in\QCoh(\mathbb{P}^1)$ and $\alpha_1$, $\alpha_2\in\Hom_{\mathscr{O}_{\mathbb{P}^1}}(\mathscr{F},\mathscr{F}(-1))$ commute in the sense that $\alpha_1(-1)\circ \alpha_2=\alpha_2(-1)\circ \alpha_1$.  Then the above noncommutative conifold is the natural enhancement of Beilinson's noncommutative $\mathbb{P}^1$: given a complex of quasicoherent sheaves $\mathscr{F}$ on $\mathbb{P}^1$ we set $\rho_1$ and $\rho_2$ to be the complexes of vector spaces $\RHom_{\mathscr{O}_{\mathbb{P}^1}}(\mathscr{O}_{\mathbb{P}^1},\mathscr{F})$ and $\RHom_{\mathscr{O}_{\mathbb{P}^1}}(\mathscr{O}_{\mathbb{P}^1}(1),\mathscr{F})$.  We let 
\[
\rho(b_1),\rho(b_2)\colon \RHom_{\mathscr{O}_{\mathbb{P}^1}}(\mathscr{O}_{\mathbb{P}^1}(1),\mathscr{F})\rightarrow \RHom_{\mathscr{O}_{\mathbb{P}^1}}(\mathscr{O}_{\mathbb{P}^1},\mathscr{F})
\]
be the maps induced by the two sections $x$ and $y$ of  $\mathscr{O}_{\mathbb{P}^1}(1)$. We set $\rho(a_i)=\alpha_i$ for $i=1,2$, and it is easy to check that the commutativity conditions are given precisely by the superpotential relations for $W_{\con}$, so that we obtain in this way a right $\mathbb{C}(Q_{\con},W_{\con})$-module.  

This description of the noncommutative conifold makes the translation of various geometrically defined functors rather transparent.  For instance, the direct image along the projection map $\pi_*\colon\Db{\QCoh(X)}\rightarrow \Db{\QCoh(\P^1)}$ becomes the forgetful map from the noncommutative conifold to Beilinson's noncommutative $\mathbb{P}^1$, forgetting the action of the arrows $a_1,a_2$, and the direct image along the inclusion $C_0\hookrightarrow X$ becomes the extension by zero functor.

Let 
\[
\mathscr{E}=\pi^*(\mathscr{O}_{\mathbb{P}^1}\oplus\mathscr{O}_{\mathbb{P}^1}(1))
\]
and let 
\begin{align*}
A_{\con}&=\End_{\mathscr{O}_X}(\mathscr{E})
\\&\cong \mathbb{C}(Q_{\con},W_{\con}).
\end{align*} 
We denote by 
\[
\Phi=\RHom(\mathscr{E},\bullet)\colon \Db{\Coh X}\,\widetilde{\rightarrow}\, \Db{A_{\con}\textrm{-mod}} 
\]
the equivalence of derived categories, where on the right hand side of the equivalence we have the derived category of right $A_{\con}$-modules with finitely generated total cohomology.  We denote by 
\[
\Psi\colon M\rightarrow M\overset{\mathbf{L}}{\otimes}_{A_{\con}} \mathscr{E}
\]
the quasi-inverse.
\begin{remark}\label{OrientationRemark}
As one sees from the above, the general setup leads us to consider \textit{right} $A_{\con}$-modules.  On the other hand, for an arbitrary quiver $Q$, there is an equivalence of categories between $K$-linear $Q$-representations and \textit{left} $KQ$-modules.  Since we consider right modules (as in \cite{NagNak}), if one wants to think of modules over algebras such as $A_{\con}$ as quiver representations, one should reverse the orientation of the underlying quiver.
\end{remark}
The chamber decomposition of the space of stability parameters for $\tilde{Q}_{\con}$ was worked out by Nagao and Nakajima in \cite{NagNak}, where the DT and PT chambers were precisely characterised. For a generic stability condition
\[
\zeta=\left(\zeta_1,\zeta_2,-(\zeta_1(n+1)+\zeta_2n)\right)\in \mathbb R^3
\]
in the ``DT region'' for $X$, defined by the conditions $\zeta_1<\zeta_2$ and $\zeta_1+\zeta_2<0$, we consider the moduli space
\begin{equation}
    \label{Ndef}
\mathcal N_n \defeq \Rep^\zeta_{(n+1,n,1)}(\tilde Q_{\con})/\GL_{n+1}\times \GL_n
\end{equation}
of $\zeta$-stable framed representations of $Q_{\con}$, having dimension vector $(n+1,n,1)$. Here the dimension vector $(n+1,n,1)$ refers to the vertices ordered as $(1,2,\infty)$. 

The work of Nagao--Nakajima then implies that $Q^n_{C_0}$ is isomorphic to the subscheme of $\mathcal N_n$ defined by the defining relations of $A_{\con}$.  Since these are exactly the noncommutative derivatives of the Klebanov--Witten potential \eqref{KWpotential}, it follows from \cite[Prop.~3.8]{Seg} that $Q^n_{C_0}$ is identified with the critical locus of the function 
\[
g_n\colon \mathcal N_n\ra \A^1
\]
given by taking the trace of \eqref{KWpotential}.

Consider the open subset 
\[
\mathcal N_n^\circ\subset \mathcal N_n
\]
parameterising stable representations $\rho$ such that the linear map $\rho(b_2)\colon\C^n\ra \C^{n+1}$ is injective, and let 
\[
f_n\colon \mathcal N_n^\circ \ra \A^1
\]
be the restriction of the function $g_n$. We now prove that
\[
Q^n_L \cong \crit(f_n) \subset \mathcal N_n^\circ.
\]

\begin{proofof}{Theorem \ref{mainthm1}\,(A)}
For skyscraper sheaves of points $\mathscr{O}_x$, corresponding to representations of dimension vector $(1,1)$ under $\Phi$, injectivity of $\rho(b_2)$ corresponds to the condition that $\pi(x)\neq\infty$.  We identify $\mathbb{A}^3$ with $X\setminus \pi^{-1}(\infty)$, and we have the following Cartesian diagram
\[
\begin{tikzcd}
L\MySymb{dr}\arrow[hook]{r}{}\arrow[hook]{d}{}&\mathbb{A}^3\arrow[hook]{d}{u}\\
C_0\arrow[hook]{r}{}&X
\end{tikzcd}
\]
where the horizontal maps are closed inclusions, and vertical maps open inclusions.  

Let $\mathscr{L}$ be the tautological $\mathbb{C}(Q_{\con},W_{\con})\otimes \mathscr{O}_{\crit(g_n)}$-module.  Let $\mathscr{G}$ be the submodule generated by
\[
\rho(b_1)\mathscr{L}+\rho(b_2)\mathscr{L}.
\]
Consider the exact sequence 
\[
0\rightarrow \mathscr{G}\rightarrow \mathscr{L}\rightarrow V\rightarrow 0
\]
of $\mathbb{C}(Q_{\con},W_{\con})\otimes \mathscr{O}_{\crit(g_n)}$-modules.  Let $\Spec(K)\hookrightarrow \mathcal{N}_n$ be a geometric point of $\crit(g_n)$, corresponding to a $K(Q_{\con},W_{\con})$-module $\rho$.  By the above-mentioned result of Nagao--Nakajima, $\Psi(\rho)$ corresponds to a $K$-point of $Q_C^n$, and so admits a unique (up to scalar) surjective map to $\mathscr{O}_{C_0}\otimes K$, with kernel a coherent sheaf $\mathscr{F}$ with zero-dimensional support.  It follows that $\rho$ admits a unique (up to scalar) nonzero map to the nilpotent simple module at vertex $1$, with kernel isomorphic to $\Phi(\mathscr{F})$.  In particular, the space spanned by the image of $\rho(b_1)$ and $\rho(b_2)$ is $n$-dimensional.  It follows that $\dim(V_K)=1$ for all $K$-points, and so $V$ is a locally free $\O_{\crit(g_n)}$-module of rank 1, and thus  $\mathscr{G}$ is locally free of rank $(n,n)$.  

Let $\Spec(K)\hookrightarrow \crit(g_n)$ be the inclusion of a point, and let $i\colon X\times \Spec(K)\hookrightarrow X\times \crit(g_n)$ be the induced inclusion.  The $\mathscr{O}_{X\times\crit(g_n)}$-module $\Psi(\mathscr{G})$ is a coherent sheaf after pullback along $i$, and so is a coherent sheaf by Lemma 4.3 of  \cite{BriFMT}.  The coherent sheaf $\Psi(\mathscr{L})$ is equipped with a tautological map from $\mathscr{O}_{X\times\crit(g_n)}$, surjective by stability, inducing the tautological surjective map 
\[
\mathtt{fr}\colon \mathscr{I}_{C_0}\boxtimes \mathscr{O}_{\crit(g_n)}\onto \mathscr{G}.
\]

The condition on $\rho(b_2)$ defining $\mathcal{N}_n^{\circ}$ implies that each $i^*\Psi(\mathscr{G})$ has support away from $\pi^{-1}(\infty)$.  Since $\crit(f_n)=\crit(g_n)\cap \mathcal{N}_n^{\circ}$, the inverse image sheaf of $\Psi(\mathscr{G})$ on $\crit(f_n)\times\mathbb{A}^3$ is a vector bundle on $\crit(f_n)$, equipped with a surjection from $\mathscr{I}_L\boxtimes\mathscr{O}_{\crit(f_n)} $.  This defines the map $\crit(f_n)\rightarrow Q_L^n$.  The inverse is defined similarly: given a family of surjections $\mathscr{I}_L\onto \mathscr{F}$ we obtain a family of surjections $\mathscr{I}_{C_0}\rightarrow u_*\mathscr{I}_L\ra u_*\mathscr{F}$.
\end{proofof}

This completes the proof of Theorem \ref{thm:thm1} from the Introduction.

\subsection{Relative virtual motives}
Via the coherent sheaf $\Psi(\mathscr{G})$ of $\O_{X\times \crit(g_n)}$-modules constructed in the proof we obtain the map
\be\label{map:qc}
\HC_{X}\colon Q_{C_0}^n\rightarrow \Sym^n(X)
\ee
extending the map \eqref{HC_def}. It is again a special case of the Quot-to-Chow map \cite[Cor.~$7.15$]{Rydh1}.  In particular, we can write
\begin{equation}
\label{HCX_desc}
Q_L^n \cong Q_{C_0}^n\times_{\Sym^n(X)} \Sym^n(\mathbb{A}^3)
\end{equation}
via the map $\HC_X$ and the inclusion $u\colon \A^3\ra X$.  Via projection to the $\Sym^n(\mathbb{A}^3)$-factor, we recover the map
\[
\HC_{\mathbb{A}^3}\colon Q_L^n\rightarrow \Sym^n(\mathbb{A}^3).
\]
Where it is clear which of the two Hilbert--Chow maps we mean, we will drop the subscript.  
\begin{remark}
\label{hcRem}
In a little more detail, at a $K$-valued point of $Q_{C_0}^n$, the corresponding $K(Q_{\con},W_{\con})$-module admits a filtration by $(1,1)$-dimensional $(\zeta_1,\zeta_2)$-stable $K(Q_{\con},W_{\con})$-modules, i.e.~quadruples $(\alpha_1,\alpha_2,\beta_1,\beta_2)\in\mathbb{C}^4$ such that $\beta_1\neq 0$ or $\beta_2\neq 0$, modulo the equivalence relation 
\[
(\alpha_1,\alpha_2,\beta_1,\beta_2)\sim (z\cdot \alpha_1,z\cdot \alpha_2,z^{-1}\cdot \beta_1,z^{-1}\cdot\beta_2)
\]
for $z\in\mathbb{C}^*$.  This is the fine moduli space of point sheaves on $X$, and we identify it with $X$.  Then taking the union of the supports of the subquotients in the filtration gives the map \eqref{map:qc} to $\Sym^n(X)$. 
\end{remark}

Theorem \ref{mainthm1} (A) has the following immediate consequence.

\begin{corollary}\label{cor:virn}
The function $f_n\colon \mathcal N_n^\circ\ra \A^1$ induces a relative virtual motive
\[
\L^{-\frac{2n^2+3n}{2}}\left[-\phi_{f_n}\right]_{Q^n_L}\in \mathcal M_{Q^n_L}^{\hat\mu}
\]
that is the pullback under the inclusion $Q_L^n\hookrightarrow Q_{C_0}^n$ of the relative virtual motive 
\[
\L^{-\frac{2n^2+3n}{2}}\left[-\phi_{g_n}\right]_{Q_{C_0}^n}\in \mathcal{M}_{Q^n_{C_0}}^{\hat\mu}.
\]
\end{corollary}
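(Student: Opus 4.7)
The plan is to deduce Corollary~\ref{cor:virn} directly from the definition~\eqref{def:relvm} of the relative virtual motive, using two ingredients: the dimensions of $\mathcal{N}_n^\circ$ and $\mathcal{N}_n$ agree, and the Denef--Loeser motivic vanishing cycle is functorial under open immersions. A preliminary dimension check is straightforward: counting in~\eqref{Ndef}, the total representation space $\Rep_{(n+1,n,1)}(\tilde Q_{\con})$ has dimension $4n(n+1)+(n+1)$ (four arrows between vertices $1$ and $2$ of size $n(n+1)$ each, plus a framing arrow of size $n+1$), and the gauge group $\GL_{n+1}\times\GL_n$ acts freely on the $\zeta$-stable locus by genericity of $\zeta$, so $\dim\mathcal{N}_n = (n+1)(4n+1)-(n+1)^2-n^2 = 2n^2+3n$. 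Since $\mathcal{N}_n^\circ\subset \mathcal{N}_n$ is open, its dimension is the same, and hence the normalising prefactor $\L^{-(2n^2+3n)/2}$ in~\eqref{def:relvm} is shared by both relative virtual motives.

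For the core statement, I would invoke the following standard compatibility: for any open immersion $j\colon U'\hookrightarrow U$ of smooth varieties and any $f\colon U\to\A^1$, setting $f'=f\circ j$, one has $\crit(f') = j^{-1}(\crit(f))$ (the condition $\mathrm{d}f'=0$ is local on $U$) and
\[
[\phi_{f'}]_{U'_0} = \bar j^*[\phi_f]_{U_0}\in K_0^{\hat\mu}(\Var_{U'_0}),
\]
where $\bar j\colon U'_0\hookrightarrow U_0$ is the induced open immersion on zero fibres. This is immediate from the construction of the motivic vanishing cycle via the motivic zeta function on the arc space of $U$, which localises along open subsets of $U$. Applied to $j\colon \mathcal{N}_n^\circ\hookrightarrow \mathcal{N}_n$ with $f_n = g_n\circ j$, and multiplied by $\L^{-(2n^2+3n)/2}$, this delivers exactly the claimed pullback identity of relative virtual motives.

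The only remaining point is that the open immersion $\crit(f_n)\hookrightarrow \crit(g_n)$ coincides, under the isomorphisms $\crit(f_n)\cong Q^n_L$ and $\crit(g_n)\cong Q^n_{C_0}$ from Theorem~\ref{mainthm1}(A), with the geometric inclusion $Q^n_L\hookrightarrow Q^n_{C_0}$ arising from~\eqref{HCX_desc}. This is immediate from the proof of Theorem~\ref{mainthm1}(A): the family on $Q^n_L$ is built there as the restriction along $\id_X\times j$ of the $Q^n_{C_0}$-family $\Psi(\mathscr{G})$, so the two identifications intertwine the open immersions at the level of the universal families. I do not foresee any real obstacle; the corollary is essentially the functoriality of $[\phi_f]$ under open immersions, together with the coincidence of dimension shifts recorded above.
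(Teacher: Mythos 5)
Your proposal is correct and follows essentially the same route as the paper's proof: the paper simply records that $\mathcal N_n$ is smooth of dimension $2n^2+3n$ and that $f_n$ is the restriction of $g_n$ to the smooth open subscheme $\mathcal N_n^\circ$, leaving the open-immersion functoriality of $[\phi_f]$ implicit. Your explicit dimension count $(n+1)(4n+1)-(n+1)^2-n^2=2n^2+3n$ and your check that the immersion of critical loci matches the geometric inclusion $Q^n_L\hookrightarrow Q^n_{C_0}$ are both correct and just make explicit what the paper takes for granted.
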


\begin{proof}
The space $\mathcal N_n$ is a smooth scheme of dimension $2n^2+3n$, so the machinery recalled in Section \ref{sec:900} gives the relative motive 
\[
\MF_{\mathcal N_n,g_n} = \L^{-\frac{2n^2+3n}{2}}\left[-\phi_{g_n}\right]_{Q_{C_0}^n}
\]
according to \eqref{def:relvm}. The statement follows since $f_n$ is defined as the restriction of $g_n$ to the smooth open subscheme $\mathcal{N}^\circ_n\subset \mathcal{N}_n$.
\end{proof}

According to Notation \ref{notation:vir}, we can write down the absolute motives
\begin{equation}
\label{vC_def}
\begin{split}
\left[Q_{C_0}^n\right]_{\vir}&=\int_{Q_{C_0}^n} \L^{-\frac{2n^2+3n}{2}}\left[-\phi_{g_n}\right]_{Q_{C_0}^n}\\
\left[Q_{L}^n\right]_{\vir}&=\int_{Q_{L}^n} \L^{-\frac{2n^2+3n}{2}}\left[-\phi_{f_n}\right]_{Q_{L}^n}.
\end{split}
\end{equation}

We end this subsection with two examples of the geometric and motivic behavior of $Q^n_L$ for low $n$.

\begin{example}\label{ex:singptquot}
The scaling action of the torus $\G_m^3$ on $\A^3$ lifts to an action on $Q^n_L$. Let us consider the Quot scheme $Q^2_L$.
We will exhibit a singular point belonging to the torus fixed locus.
First of all, we have $\dim Q^2_L=6$. Consider the point $p\in Q^2_L$ corresponding to 
\[
\mathscr I_Z=(x^2,y^2,xy,xz,yz)\subset \mathscr I_L\subset \mathbb C[x,y,z].
\]
This is depicted in Figure \ref{fig:M1} below.
It is easy to check that $\Hom_{\A^3}(\mathscr I_Z,\mathscr I_L/\mathscr I_Z)$, the tangent space of $Q^2_L$ at $p$, is $10$-dimensional, so that $p$ is a singular point.
\begin{figure}[h]
\captionsetup{width=0.68\textwidth}
\centering
\begin{tikzpicture}[scale=0.26]
\planepartitionn{{9,1},{1}}
\end{tikzpicture}
\caption{\small{A singular point of the Quot scheme $Q^2_L$.}} \label{fig:M1}
\end{figure}
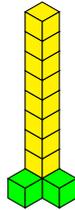
\end{example}

\begin{example}\label{ex:sjfoqh}
The Hilbert scheme $\Hilb^n\A^3$ is nonsingular if $n\leq 3$ and singular otherwise, whereas $Q^n_L$ is already singular if $n = 2$, by Example \ref{ex:singptquot}. Let us fix $n=1$, so that both trace functions (the ones giving rise to $\Hilb^1\A^3$ and to $Q^1_L$ respectively) vanish, and the virtual motives are a shift of the naive motives by $\L^{-3/2}$. On the Hilbert scheme side we have, using Remark \ref{rmk:smvm},
\[
\bigl[\Hilb^1\A^3\bigr]_{\vir}=\L^{-\frac{3}{2}}\cdot\L^3=\L^{\frac{3}{2}},
\]
while on the Quot scheme side we find, using that $Q^1_L = \Bl_L\A^3$,
\begin{align*}
\bigl[Q^1_L\bigr]_{\vir}&=\L^{-\frac{3}{2}}\cdot\bigl[\Bl_L\A^3\bigr]\\
&=\L^{-\frac{3}{2}}\cdot\bigl([\A^3\setminus L]+[L\times \P^1]\bigr)\\
&=\L^{\frac{3}{2}}+\L^{\frac{1}{2}}.
\end{align*}
This is the first instance of formula \eqref{mainformula}.
\end{example}

\subsection{Equivariance of the family}\label{sec:equiv3729}

Let $\mathcal R_n=\Rep_{(n+1,n,1)}(\C \tilde Q_{\con})$ be the affine space parameterising framed $\tilde{Q}_{\con}$-modules of dimension vector $(n+1,n,1)$.
Let us set $G = \GL_{n+1}\times \GL_n$ and let $S \subset \Gamma(\mathscr O_{\mathcal R_n})$ be the subalgebra of functions scaling, under the $G$-action, as a positive power of the given character realising framed stability. Then by general GIT we have $\mathcal N_n=\Proj S$, and the natural inclusion $\Gamma(\mathscr O_{\mathcal R_n})^{G}\subset S$ induces a \emph{projective} morphism 
\be\label{projmor}
\mathsf p_n\colon\mathcal N_n\ra Y_0 = \Spec \Gamma(\mathscr O_{\mathcal R_n})^{G},
\ee
where the affine scheme $Y_0$  can be viewed as the GIT quotient $\mathcal R_n\sslash_{0} G$ at the trivial character.
\begin{prop}\label{thmequiva792}
There is an identity
\begin{align*}
\bigl[\phi_{g_n}\bigr]&=\bigl[g_n^{-1}(1)\bigr]-\bigl[g_n^{-1}(0)\bigr]\in \mathcal M_{\C}\subset \mathcal M_{\C}^{\hat\mu}.
\end{align*}
In particular, $[Q^n_{C_0}]_{\vir}$ lies in $\mathcal M_\C$. 
\end{prop}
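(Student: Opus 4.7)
The plan is to apply Theorem \ref{thm:equivfam738}(i) to the pair $(\mathcal{N}_n, g_n)$, for a suitable torus action. Let $\mathbb{T}=\mathbb{G}_m^2$ act on $\mathcal{R}_n=\Rep_{(n+1,n,1)}(\C\tilde{Q}_{\con})$ by scaling the arrow $a_1$ with weight $(1,0)$, the arrow $a_2$ with weight $(0,1)$, and fixing the remaining arrows $b_1,b_2,\iota$. Since this action rescales each arrow by a scalar, it commutes with the gauge group $G=\GL_{n+1}\times\GL_n$, and therefore descends to an action on the GIT quotient $\mathcal{N}_n$ and on the affine hull $Y_0=\Spec\Gamma(\mathscr{O}_{\mathcal{R}_n})^G$, making the projective morphism $\mathsf{p}_n\colon \mathcal{N}_n\to Y_0$ of \eqref{projmor} $\mathbb{T}$-equivariant.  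Under this action, both monomials in $W_{\con}=a_1b_1a_2b_2-a_1b_2a_2b_1$ transform with weight $(1,1)$, and so $g_n$ is $\mathbb{T}$-equivariant with respect to the primitive character $\chi(s,t)=st$.

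I would then verify circle compactness for the diagonal one-parameter subgroup $\lambda(t)=(t,t)$, which acts by scaling $a_1,a_2$ with weight $1$ and fixing everything else. The ring $\Gamma(\mathscr{O}_{\mathcal{R}_n})^G$ is generated by traces of cyclic paths in the unframed quiver $Q_{\con}$; since the vertices $1$ and $2$ alternate along any such path, every such cycle contains at least one of the arrows $a_1,a_2$, and so has strictly positive $\lambda$-weight.  It follows that $Y_0$ is a positively graded affine variety with a unique fixed point $0\in Y_0$, and every point of $Y_0$ has $\lim_{t\to 0}\lambda(t)\cdot y=0$. Combined with the projectivity of $\mathsf{p}_n$, this yields the existence of the limit $\lim_{t\to 0}\lambda(t)\cdot x$ for every $x\in\mathcal{N}_n$, and also shows that $\mathcal{N}_n^{\lambda}\subset\mathsf{p}_n^{-1}(0)$ is proper.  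Hence $\lambda$ induces a circle compact action.

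Having verified the hypotheses, Theorem \ref{thm:equivfam738}(i) gives
\[
[\phi_{g_n}]=[g_n^{-1}(1)]-[g_n^{-1}(0)]\in\mathcal{M}_{\C}\subset\mathcal{M}_{\C}^{\hat\mu},
\]
which is the first assertion. The second assertion is immediate from Corollary \ref{cor:virn} and \eqref{vC_def}, which express
\[
[Q^n_{C_0}]_{\vir}=\L^{-\frac{2n^2+3n}{2}}\bigl([g_n^{-1}(0)]-[g_n^{-1}(1)]\bigr)\in\mathcal{M}_{\C}.
\]

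The main technical point is the verification of circle compactness, which reduces to identifying the generators of the $G$-invariant subring $\Gamma(\mathscr{O}_{\mathcal{R}_n})^G$ with traces of cyclic paths in $Q_{\con}$ (so that the framing arrow $\iota$, which cannot participate in any cycle, plays no role), and checking that the chosen $\lambda$ acts with strictly positive weights on every such generator. Once this positivity is in place, circle compactness follows formally from the projectivity of $\mathsf{p}_n$.
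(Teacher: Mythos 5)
Your proof is correct and follows the same strategy as the paper's: exhibit a torus action making $g_n$ equivariant with respect to a primitive character, verify circle compactness of a one-parameter subgroup via the projective morphism $\mathsf p_n\colon\mathcal N_n\to Y_0$ together with the positive grading of $\Gamma(\mathscr O_{\mathcal R_n})^{G}$, and then invoke Theorem \ref{thm:equivfam738}\,(i). The only (cosmetic) difference is that the paper uses the larger torus $\mathbb G_m^4$ scaling all four arrows and the framing vector with character $t_1t_2t_3t_4$, so positivity of the weights on the trace generators is immediate, whereas your two-dimensional torus requires the (correct) extra observation that every cycle in the bipartite quiver $Q_{\con}$ contains one of $a_1,a_2$.
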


\begin{proof}
The four-dimensional torus $T=\mathbb G_m^4$ acts on $\mathcal N_n$ by 
\[
t\cdot (A_1,A_2,B_1,B_2,v)=(t_1A_1,t_2A_2,t_3B_1,t_4B_2,t_1t_2t_3t_4v).
\]
Moreover, the trace function $g_n\colon \mathcal N_n\ra\A^1$ is $T$-equivariant with respect to the primitive character $\chi(t)=t_1t_2t_3t_4$. This means that for all $P\in \mathcal N_n$, we have $g_n(t\cdot P)=\chi(t)g_n(P)$. We claim that the induced action on $\mathcal N_n$ by the diagonal torus $\mathbb G_m\subset T$ is circle compact, that is, it has compact fixed locus and the limits $\lim_{t\ra 0}t\cdot P$ exist in $\mathcal N_n$ for all $P\in\mathcal N_n$.
Following the proof of~\cite[Lemma $3.4$]{BBS}, we notice that \eqref{projmor} is a projective $\mathbb G_m$-equivariant map, where $Y_0$ has a unique $\mathbb G_m$-fixed point, and moreover all orbits have this point in their closure. In other words, limits exist in $Y_0$. Therefore, by properness of $\mathsf p_n$, we conclude that the $\mathbb G_m$-fixed locus in $\mathcal N_n$ is compact and limits exist. This proves the claim.  

Then the equation involving $g_n$ follows by part (i) of Theorem \ref{thm:equivfam738}, proved by Behrend, Bryan and Szendr\H{o}i. In particular, the absolute virtual motive of $Q_{C_0}^n$ carries no monodromy,
\begin{align*}
\bigl[Q^n_{C_0}\bigr]_{\vir}&=\L^{-\frac{2n^2+3n}{2}}\bigl[-\phi_{g_n}\bigr]\in \mathcal M_\C.\qedhere
\end{align*}
\end{proof}

\subsection{A direct critical locus description}
\label{direct_QW}
There is a way of writing down the above critical locus description of $Q_L^n$ that does not involve pulling back from a moduli space of representations for the noncommutative conifold.\footnote{Although, to be precise, for the proof that the description really does recover $Q_L^n$, the only method we offer will rely on the geometry of the noncommutative conifold.}  Consider again the space $\mathcal{N}^{\circ}_n$.  In the definition, we have imposed an open condition on representations in $\mathcal N_n$, namely
\begin{enumerate}
\item
$\rho(b_2)$ is injective.  
\end{enumerate}
We have seen above that the points of $\crit(f_n)$ correspond to $\tilde{Q}_{\con}$-representations $\rho$ satisfying the extra condition that the short exact sequence of vector spaces
\[
0\rightarrow \Image(\rho(b_2))\rightarrow \rho_1\rightarrow\Coker(\rho(b_2))\rightarrow 0
\]
is induced by a short exact sequence of $\mathbb{C}(Q_\con,W_{\con})$-modules, and in particular, $\Image(\rho(b_1))\subset \Image(\rho(b_2))$.  Stability then imposes the extra open condition
\begin{enumerate}
\setcounter{enumi}{1}
\item $\Span(\Image(\rho(\iota)),\Image(\rho(b_2)))=\rho_1$.
\end{enumerate}
Let $\mathcal{N}_n^{\circ\circ}\subset \mathcal{N}_n$ be the open substack defined by the open conditions (1) and (2) above.  For a $\mathbb{C}\tilde{Q}_\con$-module parameterised by a point in $\mathcal{N}_n^{\circ\circ}$, there is a canonical direct sum decomposition $\rho_1\cong\Image(\rho(b_2))\oplus\Image(\rho(\iota))$ and moreover an identification between $\Image(\rho(b_2))$ and $\rho_2$ via the action of $\rho(b_2)$, and an identification of $\Image(\rho(\iota))$ and $\rho_{\infty}$ via $\rho(\iota)$.  It follows that there is an isomorphism 
\be\label{Isom:Qr}
\Gamma\colon \mathcal{N}^{\circ\circ}_n\,\widetilde{\ra}\, \Rep^{\zeta'}_{(n,1)}(Q_r)/\GL_n
\ee
with the fine moduli space of $\zeta'$-semistable $(n,1)$-dimensional $\C Q_r$-representations of the ``reduced'' quiver $Q_r$ depicted in Figure \ref{quiverQr}.

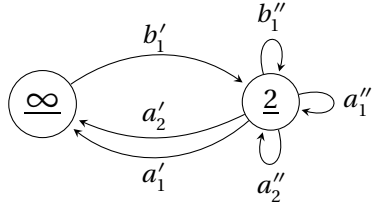
\begin{figure}[ht]
\begin{tikzpicture}[>=stealth,->,shorten >=2pt,looseness=.5,auto]
  \matrix [matrix of math nodes,
           column sep={3cm,between origins},
           row sep={3cm,between origins},
           nodes={circle, draw, minimum size=7.5mm}]
{ 
|(A)| \underline{\infty} & |(B)| \underline{2} \\         
};
\tikzstyle{every node}=[font=\small\itshape]
\path[->] (B) edge [loop above] node {$b_1''$} ()
              edge [loop right] node {$a_1''$} ()
              edge [loop below] node {$a_2''$} ();
\node [anchor=west,right] at (-0.3,0.88) {$b_1'$};              
\node [anchor=west,right] at (-0.3,-0.15) {$a_2'$};              
\node [anchor=west,right] at (-0.3,-0.96) {$a_1'$};              
\draw (B) to [bend left=25,looseness=1] (A) node [midway,above] {};
\draw (B) to [bend left=40,looseness=1] (A) node [midway] {};
\draw (A) to [bend left=35,looseness=1] (B) node [midway,below] {};
\end{tikzpicture}
\caption{The quiver $Q_r$.}\label{quiverQr}
\end{figure}

In \eqref{Isom:Qr} we have put $\zeta'=(-1,n)$.  In a little more detail, given $\rho$ a $\mathbb{C}\tilde{Q}_\con$-module corresponding to a point in $\mathcal{N}^{\circ\circ}_n$,  we set 
\begin{align*}
\rho_{\underline{\infty}}&=\Image(\rho(\iota))\\
\rho_{\underline{2}}&=\rho_2\quad (\cong\Image(\rho(b_2))).
\end{align*}
Then for $s=1,2$ we set
\begin{align*}
\rho(a'_s)=&\rho(a_s)\lvert_{\Image(\rho(\iota))}\\
\rho(b'_1)=&\pi_{\Image(\rho(\iota))}\circ \rho(b_1)\\
\rho(b''_1)=&\pi_{\Image(\rho(b_2))}\circ \rho(b_1)\\
\rho(a''_s)=&\rho(a_s)\lvert_{\Image(\rho(b_2))},
\end{align*}
where $\pi$ denotes the projection maps.
Then 
\be\label{function_hn}
h_n \defeq  f_n\big|_{\mathcal N_n^{\circ\circ}} = \Tr(W_r)\circ \Gamma,
\ee
for $W_r$ the potential
\[
W_r=a''_1b'_1a'_2-a''_2b'_1a'_1+a''_1b''_1a''_2-a''_2b''_1a''_1.
\]

\begin{prop}\label{prop:critQL}
There is an isomorphism of schemes
\[
Q_L^n\cong \crit(h_n).
\]
\end{prop}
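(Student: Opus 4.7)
The plan is to reduce the proposition to the already-established isomorphism $Q_L^n \cong \crit(f_n)$ from Theorem \ref{mainthm1}(A). Since $h_n$ is by definition the restriction of $f_n$ to the open subscheme $\mathcal{N}_n^{\circ\circ} \subset \mathcal{N}_n^{\circ}$, and since the critical locus of a function behaves well under restriction to open subschemes, one has a scheme-theoretic identification
\[
\crit(h_n) = \crit(f_n) \cap \mathcal{N}_n^{\circ\circ}.
\]
It therefore suffices to show the set-theoretic inclusion $\crit(f_n) \subset \mathcal{N}_n^{\circ\circ}$. The first condition cutting out $\mathcal{N}_n^{\circ\circ}$ — injectivity of $\rho(b_2)$ — is already imposed on $\mathcal{N}_n^{\circ}$, so only the second condition (2), namely $\Span(\Image(\rho(\iota)),\Image(\rho(b_2))) = \rho_1$, has to be verified at each critical point.

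To check condition (2), I would invoke the structural description of critical points obtained in the proof of Theorem \ref{mainthm1}(A). At a $K$-point of $\crit(f_n)$, the corresponding $K(Q_{\con},W_{\con})$-module $\rho$ fits into a short exact sequence
\[
0 \to \mathscr{G}_\rho \to \rho \to V_\rho \to 0
\]
where $V_\rho$ is the nilpotent simple module supported at vertex $1$, and $\dim_K(\rho_1) = n+1$ while $\dim_K(\mathscr{G}_\rho\lvert_1) = n$. Since $\rho(b_2)$ is injective and both $\Image(\rho(b_1))$ and $\Image(\rho(b_2))$ lie in $\mathscr{G}_\rho\lvert_1$ (where, as observed in the proof of Theorem \ref{mainthm1}(A), $\Image(\rho(b_1)) \subset \Image(\rho(b_2))$), we deduce $\Image(\rho(b_2)) = \mathscr{G}_\rho\lvert_1$. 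Thus condition (2) is equivalent to requiring that $\Image(\rho(\iota))$ is not contained in $\mathscr{G}_\rho\lvert_1$. If it were, then the $K Q_{\con}$-submodule of $\rho$ generated by $\Image(\rho(\iota))$ would be contained in $\mathscr{G}_\rho \subsetneq \rho$, contradicting $\zeta$-stability of $\rho$ in the DT chamber, which forces $\rho$ to be cyclically generated by $\Image(\rho(\iota))$ as a $K Q_{\con}$-module.

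With $\crit(f_n) \subset \mathcal{N}_n^{\circ\circ}$ established, the equality $\crit(h_n) = \crit(f_n)$ follows, and composing with the isomorphism $\crit(f_n) \cong Q_L^n$ of Theorem \ref{mainthm1}(A) yields the desired scheme-theoretic isomorphism $Q_L^n \cong \crit(h_n)$. Under the isomorphism $\Gamma$ of \eqref{Isom:Qr} together with the identity $h_n = \Tr(W_r) \circ \Gamma$, this also identifies $Q_L^n$ with the critical locus of $\Tr(W_r)$ on the fine moduli space $\Rep^{\zeta'}_{(n,1)}(Q_r)/\GL_n$, giving the promised direct quiver-with-potential description.

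The main obstacle is the stability-theoretic verification in the middle paragraph: one must combine the structural short exact sequence coming from the critical point condition with the cyclicity characterisation of $\zeta$-stability for framed $\tilde{Q}_{\con}$-modules in the DT chamber. The former is essentially recorded in the proof of Theorem \ref{mainthm1}(A), and the latter is standard for framed quiver representations, so once both are invoked the argument is short.
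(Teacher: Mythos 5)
Your argument is correct, and it is genuinely a different (more economical) route than the one the paper writes out. You reduce everything to Theorem \ref{mainthm1}(A) by proving the containment $\crit(f_n)\subset\mathcal{N}_n^{\circ\circ}$ — i.e.\ that condition (2) holds at every critical point — using the two facts that $\mathscr{G}\lvert_1=\Image(\rho(b_2))$ is an $n$-dimensional proper subspace of $\rho_1$ and that framed stability in the DT chamber forces cyclic generation by $\Image(\rho(\iota))$; openness of $\mathcal{N}_n^{\circ\circ}$ in $\mathcal{N}_n^{\circ}$ then gives $\crit(h_n)=\crit(f_n)\cong Q_L^n$ as schemes. This is exactly the observation the paper makes in the \emph{preamble} to the proposition (``Stability then imposes the extra open condition (2)\ldots''), but the paper's official proof goes a different way: it identifies $\crit(h_n)=\crit(\Tr(W_r))$ with $Q_L^n$ directly, by noting that $\rho(b_1')=0$ on the stable critical locus, so that the remaining superpotential relations of $W_r$ make $\rho_{\underline{2}}$ into a $\C[x,y,z]$-module $N$ equipped with two generating vectors $v_x,v_y$ satisfying $xv_y=yv_x$, which is the same data as a surjection $(x,y)\onto N$. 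Your reduction is logically sufficient for the bare isomorphism and avoids re-deriving the module structure; the paper's direct description buys the explicit ``modules with marked generators'' model of $\crit(\Tr(W_r))$, which is what gets used downstream (e.g.\ in Proposition \ref{prop:remove_pr_pt} and in the CoHA-module discussion), so it is worth recording even if one proves the proposition your way.
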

The result follows directly from our analysis of the noncommutative conifold.  From that description, we see that in the stable locus, we have $\rho(b_1')=0$.  Then the superpotential relations become
\begin{align*}
a'_2a''_1&=a'_1a''_2\\
a''_1a''_2&=a''_2a''_1\\
a''_1b''_1&=b''_1a''_1\\
a''_2b''_1&=b''_1a''_2
\end{align*}
and the stable locus is identified with the moduli space of triples $(v_x,v_y,N)$ where $N$ is a $\mathbb{C}[x,y,z]$-module, $v_x,v_y\in N$ generate under the action of $\mathbb{C}[x,y,z]$, and $xv_y=yv_x$.  This is the same data as a $\mathbb{C}[x,y,z]$-linear surjection $\theta\colon (x,y)\onto N$, by setting $v_x=\theta(x)$ and $v_y=\theta(y)$.

\smallbreak
The next result contains the statement of Theorem \ref{mainthm1} (B).

\begin{prop}\label{prop:mcv10}
There is an identity
\[
\bigl[\phi_{f_n}\bigr]=\bigl[h_n^{-1}(1)\bigr]-\bigl[h_n^{-1}(0)\bigr]\in \mathcal M_{\C}\subset \mathcal M_{\C}^{\hat\mu}.
\]
In particular, $[Q^n_L]_{\vir}$ lies in $\mathcal M_\C$.  Moreover, the relative motive $[\phi_{f_n}]_{\Sym^n(\mathbb{A}^3)}$ belongs to the subring $\mathcal M_{\Sym^n(\mathbb{A}^3)}\subset \mathcal M_{\Sym^n(\mathbb{A}^3)}^{\hat\mu}$ of monodromy-free motives.
\end{prop}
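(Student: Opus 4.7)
The plan is to apply Theorem \ref{thm:equivfam738} to the function $\Tr(W_r)$ on the smooth quasi-projective variety $M_r \defeq \Rep^{\zeta'}_{(n,1)}(Q_r)/\GL_n$, identified with $\mathcal{N}_n^{\circ\circ}$ via $\Gamma$. Since $\crit(f_n) = \crit(h_n) = Q^n_L \subset \mathcal{N}_n^{\circ\circ}$ and motivic vanishing cycles are supported on the critical locus, locality gives $[\phi_{f_n}]_{Q^n_L} = [\phi_{h_n}]_{Q^n_L}$, and therefore the absolute motive $[\phi_{f_n}]$ and the relative motive $[\phi_{f_n}]_{\Sym^n(\mathbb{A}^3)}$ agree with their $h_n$-counterparts. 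It thus suffices to prove all three conclusions for $\Tr(W_r)$ on $M_r$.

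For part (i) of Theorem \ref{thm:equivfam738} I would take $\mathbb{T}$ to be the $4$-dimensional subtorus of the arrow-scaling torus $\mathbb{G}_m^6$ on $\Rep(Q_r)$ cut out by the equations equating the weights of the four cyclic monomials of $W_r$; on this $\mathbb{T}$, the function $\Tr(W_r)$ is equivariant with respect to a primitive character $\chi$, and the action commutes with $\GL_n$ so it descends to $M_r$. For the circle-compact $1$-parameter subgroup I would take the uniform scaling $\lambda\colon t \mapsto (t, \ldots, t) \in \mathbb{T}$, observing that the natural GIT morphism $M_r \to \Spec \Gamma(\mathcal{O}_{\Rep(Q_r)})^{\GL_n}$ is projective and $\lambda$-equivariant, and that every non-constant $\GL_n$-invariant on $\Rep(Q_r)$ is a trace of a closed path of positive length in $Q_r$ and hence has strictly positive $\lambda$-weight. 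So the induced $\lambda$-action on the affine target attracts every orbit to the origin; properness of the GIT morphism then implies circle compactness on $M_r$, and Theorem \ref{thm:equivfam738}(i) gives the formula $[\phi_{h_n}] = [h_n^{-1}(1)] - [h_n^{-1}(0)]$ in $\mathcal M_\C$. The claim $[Q^n_L]_{\vir} \in \mathcal M_\C$ then follows from definition \eqref{vC_def}.

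The relative monodromy-freeness follows from Theorem \ref{thm:equivfam738}(ii) with the same data, together with the Hilbert--Chow map $\tau = \HC_{\mathbb{A}^3}\colon Q^n_L \to \Sym^n(\mathbb{A}^3)$ to the affine target $\Sym^n(\mathbb{A}^3)$. The only additional input needed is reducedness of the hypersurface $h_n^{-1}(0) \subset M_r$, which I would verify using the identity
\[
\Tr(W_r) = b'_1 \cdot (a''_1 a'_2 - a''_2 a'_1) + \Tr\bigl(b''_1\,[a''_1, a''_2]\bigr):
\]
this exhibits $\Tr(W_r)$ as separately linear in the entries of $b'_1$ and $b''_1$, with coefficient expressions that do not vanish identically on $M_r$, so $d\Tr(W_r)$ is non-zero on a dense open subset of $h_n^{-1}(0)$, which is therefore generically smooth and (being a Cartier divisor in the smooth variety $M_r$) reduced. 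The main obstacle is the circle-compactness step: the original $4$-torus from Proposition \ref{thmequiva792} admits no $1$-parameter subgroup that is simultaneously circle-compact on the open subscheme $\mathcal{N}_n^{\circ\circ}$ and of nonzero weight on $W_{\con}$, because forcing the limits of $\rho(b_2)$ and $\rho(\iota)$ to remain in $\mathcal{N}_n^{\circ\circ}$ pins down the $1$-PS so that the $W_{\con}$-weight must vanish. Passing through $\Gamma$ to the reduced-quiver picture provides the essential flexibility, since the scaling torus on $Q_r$ is strictly larger than the image of the original $4$-torus and supplies the uniform $\lambda$ that does the job.
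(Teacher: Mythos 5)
Your overall strategy coincides with the paper's: replace $f_n$ by $h_n=\Tr(W_r)$ on $\mathcal N_n^{\circ\circ}\cong\Rep^{\zeta'}_{(n,1)}(Q_r)/\GL_n$ (legitimate since the two functions have the same critical locus and the vanishing cycle is supported there), apply Theorem \ref{thm:equivfam738}\,(i) with a torus scaling the arrows of $Q_r$ so that $\Tr(W_r)$ is equivariant for a primitive character, use the uniform-scaling one-parameter subgroup together with the projective GIT morphism to the affine quotient for circle compactness, and then invoke part (ii) with $\tau=\HC_{\A^3}$ once the zero fibre is known to be reduced. All of that is correct and is exactly how the paper argues (the paper phrases the first step as ``$h_n$ is a potential on a GIT moduli space of quiver representations admitting a contracting $\mathbb{C}^*$-action''); your closing remark about why the conifold-quiver torus of Proposition \ref{thmequiva792} cannot be reused on $\mathcal N_n^{\circ\circ}$ is also accurate.

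The one genuine gap is in your reducedness argument. The inference ``$\Tr(W_r)$ is linear in the entries of $b_1',b_1''$ with coefficients that do not all vanish identically, hence $d\Tr(W_r)\neq 0$ on a dense open subset of $h_n^{-1}(0)$'' is not valid as a general principle: the function $f=a^2t$ on $\mathbb{A}^2$ is linear in $t$ with nonzero coefficient, yet $df$ vanishes identically along the component $\{a=0\}$ of $f^{-1}(0)$, which is non-reduced. Since $df=0$ forces all the $b$-coefficients to vanish, what you actually need is that no irreducible component of the hypersurface is contained in the common zero locus $Z=\{a_1''a_2'-a_2''a_1'=0,\ [a_1'',a_2'']=0\}$ (times the $b$-directions), together with non-vanishing of the $a$-derivatives there otherwise. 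For $n\geq 2$ this holds because $Z$ has codimension at least $2$ (the commuting variety already does), but for $n=1$ the commutator term vanishes identically, $Z$ has codimension $1$, and one must check separately that $df$ is generically nonzero along the component $\{a_1''a_2'=a_2''a_1'\}$. Equivalently, one needs the gcd of the coefficient polynomials to be square-free, which is not supplied by ``not identically zero.'' The paper sidesteps all of this by pulling reducedness back to the affine space $\Rep_{(n+1,n,1)}(\tilde Q_{\con})$: there $\Tr(W_{\con})$ is multihomogeneous of degree $(1,1,1,1)$ for the torus scaling the four arrow families $a_1,a_2,b_1,b_2$, so it cannot have a repeated factor, its zero scheme is reduced, and reducedness survives restriction to the stable locus, the free $\GL$-quotient, and the open subscheme $\mathcal N_n^{\circ\circ}$. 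I would either adopt that multigrading argument or add the codimension/$n=1$ analysis to close your version.
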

\begin{proof}
The first statement is proved the same way as Proposition \ref{thmequiva792}, noting that by construction, $[\phi_{f_n}]=[\phi_{h_n}]$, and by (\ref{Isom:Qr}), $h_n$ is a potential on a GIT moduli space of quiver representations admitting a contracting $\mathbb{C}^*$-action for which $\Tr(W_r)$ has weight three.

Next, we claim that the hypersurface 
$h_n^{-1}(0)\subset \mathcal N^{\circ\circ}_n$ is reduced.  
This will follow from the claim that the variety cut out by the function $\Tr(W_\con)$ on $\Rep^\zeta_{(n+1,n,1)}(\tilde{Q}_{\con})$ is reduced, which is in turn weaker than the claim that the variety cut out by $\Tr(W_{\con})$ on $\Rep_{(n+1,n,1)}(\tilde{Q}_{\con})$ is reduced.  This variety is affine, and so it is enough to show that the function $\Tr(W_\con)$ is reduced.  With respect to the $\mathbb{T}$-grading this function has degree $(1,1,1,1)$, and so it cannot be factorised with a repeated factor.

The statement regarding $[\phi_{f_n}]_{\Sym^n(\mathbb{A}^3)}$ then follows from part (ii) of Theorem \ref{thm:equivfam738}.  
\end{proof}

The proof of Theorem \ref{mainthm1} is complete.

\section{Relative DT theory of the conifold}\label{sec:mstr}

The goal of this section is to express the motive $[Q^n_{C_0}]_{\vir}$ defined in \eqref{vC_def} in terms of motivic contributions coming from the ``punctual loci'' inside $Q^n_{C_0}$. For $n\in\mathbb{N}$ we define:
\bitem
\item [(i)] $\mathcal{P}_{\pt}^{n}\subset Q^n_{C_0}$, the subvariety parameterising quotients $\mathscr I_{C_0}\onto \mathscr F$ such that $\mathscr F$ is set-theoretically supported at a \textit{fixed} single point, away from $C_0$, and
\item [(ii)] $\mathcal{P}_{\curv}^n\subset Q^n_{C_0}$, the subvariety parameterising quotients $\mathscr I_{C_0}\onto \mathscr F$ such that $\mathscr F$ is set-theoretically supported at a \textit{fixed} single point on the curve $C_0$.
\eitem
Items (i), (ii) respectively will lead (cf.~Definition \ref{defin:Fully_Punctual}) to the definition of motivic weights
\[
\Omega_{\pt}^n,\,\,\,\Omega_{\curv}^n\,\in \,\mathcal M_{\C},
\]
which will be the basic building blocks for the construction of a virtual motive of the Quot scheme $Q^n_{C}$ for an arbitrary curve $C\subset Y$ in a $3$-fold.

\subsection{Motivic stratifications I: partially punctual strata}\label{section:qnl}

In what follows, we streamline proofs by deducing results for the embedded curve $C_0\subset X$ from the embedded curve $L\subset \mathbb{A}^3$.  In order to achieve this we introduce two automorphisms of $\mathcal{N}_n$ (see \eqref{Ndef}):
\begin{align*}
\overline{\sigma}_\lambda\cdot \left(\rho(\iota),\rho(a_1),\rho(a_2),\rho(b_1),\rho(b_2)\right)&=\left(\rho(\iota),\rho(a_1),\rho(a_2),\rho(b_1)+\lambda\rho(b_2),\rho(b_2)\right),\\
\overline{\xi}_\lambda\cdot \left(\rho(\iota),\rho(a_1),\rho(a_2),\rho(b_1),\rho(b_2)\right)&=\left(\rho(\iota),\rho(a_1),\rho(a_2),\rho(b_1),\rho(b_2)+\lambda\rho(b_1)\right).
\end{align*}
Note that both of these automorphisms preserve $\Tr(W_\con)$ and the $\zeta$-stable locus, and so they induce automorphisms of $Q^n_{C_0}$ preserving $[\phi_{g_n}]_{Q^n_{C_0}}$.  The two automorphisms of $\mathbb{C}^4$ defined by $\sigma_\lambda(x,y,z,w)=(x,y,z+\lambda w,w)$ and $\xi_{\lambda}(x,y,z,w)=(x,y,z,w+\lambda z)$ induce automorphisms of $X$ commuting with $\HC_X$ in the sense that $\Sym^n(\sigma_{\lambda})\circ\HC_X=\HC_X\circ \overline{\sigma}_\lambda$ and $\Sym^n(\xi_{\lambda})\circ\HC_X=\HC_X\circ\overline{\xi}_\lambda$ (see Remark \ref{hcRem}).
\begin{lemma}
\label{res_lemma}
Let $\alpha\in \mathcal{M}_{\Sym^n(X)}^{\hat\mu}$ satisfy 
\begin{itemize}
\item [(1)]
$(\Sym^n(\mathbb{A}^3)\hookrightarrow \Sym^n(X))^*\alpha=0$,
\item [(2)]
$\sigma_{\lambda}^*\alpha=\alpha$ for all $\lambda\in\mathbb{C}$,
\item [(3)]
$\xi_{\lambda}^*\alpha=\alpha$ for all $\lambda\in\mathbb{C}$.
\end{itemize}
Then $\alpha=0$.
\end{lemma}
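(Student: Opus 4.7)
The plan is to combine conditions $(2)$ and $(3)$ to obtain invariance of $\alpha$ under a group $G$ acting transitively on the base $C_0\cong\P^1$ of $\pi\colon X\to C_0$, and then to use $(1)$ together with a scissor-relation argument on a \emph{finite} open cover of $\Sym^n(X)$.

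Both $\sigma_\lambda$ and $\xi_\lambda$ descend via $\pi$ to automorphisms of $C_0$: they come from linear changes of basis of the two sections $b_1,b_2\in H^0(\O_{C_0}(1))$, and act on $C_0$ as the M\"{o}bius transformations $[b_1:b_2]\mapsto[b_1+\lambda b_2:b_2]$ and $[b_1:b_2]\mapsto[b_1:b_2+\lambda b_1]$, i.e.\ as the upper- and lower-triangular unipotent subgroups of $\mathrm{PSL}_2(\C)$. Together they generate all of $\mathrm{PSL}_2(\C)$, and consequently the orbit of $\infty\in C_0$ under the subgroup $G\subset\Aut(X)$ generated by the $\sigma_\lambda$'s and $\xi_\lambda$'s is the whole of $C_0$. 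By $(2)$ and $(3)$, the motive $\alpha$ is $G$-invariant in $\mathcal M^{\hat\mu}_{\Sym^n(X)}$; given any $p\in C_0$, choose $g\in G$ with $g(\infty)=p$, so that $g$ sends $\A^3=X\setminus\pi^{-1}(\infty)$ onto $X\setminus\pi^{-1}(p)$. Then $(1)$ yields
\[
\alpha|_{\Sym^n(X\setminus\pi^{-1}(p))}=\Sym^n(g)_*\bigl(\alpha|_{\Sym^n(\A^3)}\bigr)=0 \quad\text{for every } p\in C_0.
\]

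To conclude, pick $n+1$ pairwise distinct points $p_0,\dots,p_n\in C_0$ and set $U_i=\Sym^n(X\setminus\pi^{-1}(p_i))$, each open in $\Sym^n(X)$. The complement $\Sym^n(X)\setminus\bigcup_{i=0}^nU_i$ set-theoretically consists of unordered $n$-tuples $\{x_1,\dots,x_n\}$ with $\{p_0,\dots,p_n\}\subseteq\{\pi(x_1),\dots,\pi(x_n)\}$, which is empty since $n+1>n$. Hence $\{U_i\}_{i=0}^n$ is a \emph{finite} open cover of $\Sym^n(X)$, and iterating the elementary scissor identity
\[
\alpha|_{U\cup V}=\alpha|_U+\alpha|_{V\setminus U}
\]
(valid because $V\setminus U$ is open in $V$, with both summands vanishing by hypothesis) forces $\alpha|_{\Sym^n(X)}=0$, so $\alpha=0$.

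The main subtlety is the finiteness of the cover: motivic classes do not satisfy sheaf-like locality over arbitrary families of opens, so one cannot directly deduce vanishing from the set-theoretic emptiness of the infinite intersection of closed supports $\bigcap_{p\in C_0}\{$tuples meeting $\pi^{-1}(p)\}$. What makes the argument work is that the pigeonhole bound $n+1>n$ provides \emph{exactly} the finite subcover to which the scissor relations can be applied.
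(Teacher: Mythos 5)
Your proof is correct and follows essentially the same route as the paper's: translate the vanishing on $\Sym^n(\A^3)=\Sym^n(X\setminus\pi^{-1}(\infty))$ to all $U_t=\Sym^n(X\setminus\pi^{-1}(t))$ using the transitive action on $C_0$ generated by the two unipotent families, then use the pigeonhole bound $|T|>n$ to get a finite open cover and conclude by the scissor relations. The only difference is that you spell out the transitivity and finiteness points that the paper leaves implicit.
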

\begin{proof}
The space $\Sym^n(X)$ has an open cover by open subsets of the form $U_t = \Sym^n(X\setminus \pi^{-1}(t))$ for $t\in\mathbb{P}^1$: if $\gamma\in\Sym^n(X)$ is in the complement of $\cup_{t\in T}U_t$ for $|T| > n$ then the support of $\gamma$ is spread across more than $n$ fibres of the projection $\pi\colon X\rightarrow \mathbb{P}^1$.  By our assumptions, the restriction of $\alpha$ to $U_t$ is zero.  By the scissor relations, it follows that $\alpha=0$.
\end{proof}
\begin{corollary}
\label{res_cor}
Let $\alpha,\beta\in\mathcal{M}^{\hat{\mu}}_{\Sym^n(X)}$ satisfy conditions (2) and (3) from Lemma \ref{res_lemma}, and $(\Sym^n(\mathbb{A}^3)\hookrightarrow \Sym^n(X))^*\alpha=(\Sym^n(\mathbb{A}^3)\hookrightarrow \Sym^n(X))^*\beta$.  Then $\alpha=\beta$.
\end{corollary}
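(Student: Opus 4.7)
The plan is to deduce this corollary from Lemma \ref{res_lemma} by a standard difference argument. Consider the motive $\gamma = \alpha - \beta \in \mathcal{M}^{\hat{\mu}}_{\Sym^n(X)}$. I would first check that $\gamma$ satisfies the three hypotheses of Lemma \ref{res_lemma}. Condition (1) for $\gamma$, namely $(\Sym^n(\mathbb{A}^3)\hookrightarrow \Sym^n(X))^*\gamma = 0$, is immediate from the assumed equality of restrictions, since pullback is linear. Conditions (2) and (3) for $\gamma$ follow because pullback along $\sigma_\lambda$ and $\xi_\lambda$ is linear, and both $\alpha$ and $\beta$ are separately invariant under these pullbacks.

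Once these hypotheses are in place, applying Lemma \ref{res_lemma} directly yields $\gamma = 0$, i.e., $\alpha = \beta$. There is no substantive obstacle here, since the work has already been done in the preceding lemma: the open cover of $\Sym^n(X)$ by translates of $\Sym^n(\mathbb{A}^3)$ under the $\sigma_\lambda, \xi_\lambda$ actions ensures that vanishing on $\Sym^n(\mathbb{A}^3)$, together with invariance under these two one-parameter families of automorphisms, propagates to vanishing on all of $\Sym^n(X)$.

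The only point I would want to double-check is that the scissor relations in the localised ring $\mathcal{M}^{\hat{\mu}}_{\Sym^n(X)}$ are indeed strong enough to support the open-cover argument used in Lemma \ref{res_lemma}; this is immediate because $\mathcal{M}^{\hat{\mu}}_{\Sym^n(X)}$ is obtained from the Grothendieck ring by localisation, and restriction along open inclusions passes through the localisation. Hence the proof is essentially a one-line application of the lemma.
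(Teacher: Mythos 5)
Your proof is correct and is precisely the argument the paper intends (the corollary is stated without proof, as it follows from Lemma \ref{res_lemma} by exactly the linearity/difference argument you give: apply the lemma to $\alpha-\beta$, whose hypotheses follow from linearity of pullback). Nothing further is needed.
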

\smallbreak
The next lemma is an incarnation of the fact that taking box sum with a quadratic function (locally) does not change the vanishing cycle complex, while for global triviality one has to be mindful of monodromy.  The implication is that we can pass to a ``minimal'' potential at the expense of keeping track of some extra monodromy data, which in the Kontsevich--Soibelman framework, and then elsewhere, is called orientation data.  In the language of potentials on 3-Calabi--Yau categories, one can think of the proof of part (2) of Lemma \ref{fixed_BBS} as working by proceeding to a ``partially minimized'' potential.
\begin{lemma}
\label{quad_lemma}
Let $\pi\colon \Tot(V)\ra X$ be the projection from the total space of a vector bundle on a smooth connected variety $X$, and let $f\colon \Tot(V)\rightarrow \mathbb{C}$ be a function that is quadratic in the fibres, i.e.~$f(z\cdot v)=z^2f(v)$ for $z\in\mathbb{C}$, where we have given $\Tot(V)$ the scaling action of  $\mathbb{C}^*$.  Assume $X=\crit(f)$, where we have identified $X$ with the zero section of $\Tot(V)$.
\begin{enumerate}
\item
For $x\in X$, there is an equality 
\begin{equation}
[x]_{\vir}=\L^{-\frac{\dim(X)}{2}}\in \mathcal{M}_{\C}\subset\mathcal{M}_{\C}^{\muhat}.
\end{equation}
\item
Assume that $V\cong V_-\oplus V_+$ where $f\lvert_{\Tot(V_-)}=f\lvert_{\Tot(V_+)}=0$.  Then there is an equality
\begin{equation}
\label{glob_no_mon}
\bigl[-\phi_f\bigr]_X=\bigl[\id\colon X\rightarrow X\bigr]\in\mathcal{M}_X\subset \mathcal{M}_X^{\muhat}.
\end{equation}
\item
Under the conditions of (2), let $G$ be a finite group acting freely on $X$, let $V\cong V_-\oplus V_+$ be a direct sum decomposition of $G$-equivariant vector bundles, and assume that $f$ is $G$-invariant.  Then 
\[
\bigl[-\phi_f\bigr]_X^G=\bigl[\id\colon X\rightarrow X\bigr]\in\mathcal{M}^G_X\subset \mathcal{M}^{G\times\muhat}_X.
\]
\end{enumerate}
\end{lemma}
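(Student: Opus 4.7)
My plan is to prove the three parts in the order (2), (1), (3), since (1) follows from (2) by localization and (3) follows from (2) by adding $G$-equivariance via Bittner's Proposition \ref{BittnerProp}. The key observation for part (2) is that the hypothesis $f\lvert_{\Tot(V_-)} = f\lvert_{\Tot(V_+)} = 0$ forces $f$ to have the form $f(v_-, v_+) = B(v_-, v_+)$ for a bilinear pairing $B \colon V_- \otimes V_+ \to \mathcal O_X$, and the condition $X = \crit(f)$ forces $B$ to be everywhere nondegenerate, so that $V_+ \cong V_-^\vee$. I would then introduce the torus $\mathbb{T} = \G_m^2$ acting on $\Tot(V)$ by independent scalings of the two summands, with respect to which $f$ becomes equivariant for the primitive character $(s,t) \mapsto st$; the diagonal one-parameter subgroup $\G_m \hookrightarrow \mathbb{T}$ contracts each fibre of $\Tot(V) \to X$ onto the zero section, providing the fibrewise circle-compactness needed for a relative application of Theorem \ref{thm:equivfam738}(i). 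The calculation of $[-\phi_f]_X$ then reduces to an explicit motivic count of $[f^{-1}(1)]_X$ and $[f^{-1}(0)]_X$ using the projection $\Tot(V) \to \Tot(V_+)$: by nondegeneracy of $B$, the fibre of $f^{-1}(0)$ over a nonzero $v_+ \in \Tot(V_+)$ is an affine hyperplane in $V_-$, and a parallel analysis for $f^{-1}(1)$ allows the resulting relative motives to combine to the asserted identity.

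For part (1), I would localize near $x \in X$ and, after passing to a suitable cover if necessary to exhibit a hyperbolic splitting of $V$, invoke (2) to conclude. Restricting $[-\phi_f]_X = [\id\colon X \to X]$ to the point $x$ and substituting into the defining formula $[x]_{\vir} = \L^{-\dim \Tot(V)/2} \iota^\ast [-\phi_f]_X$ then produces $[x]_{\vir} = \L^{-\dim X/2}$, the fibre-dimension contribution being absorbed into the overall normalization. For part (3), the argument for (2) is manifestly $G$-equivariant: since $G$ preserves both $V = V_- \oplus V_+$ and $f$, the torus action described above commutes with the $G$-action, and the motivic counts lift to the $G$-equivariant Grothendieck group. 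Bittner's Proposition \ref{BittnerProp} then provides the required equivariant refinement.

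The main obstacle will be to promote the circle-compactness hypothesis of Theorem \ref{thm:equivfam738}(i) from the absolute to the relative setting, as the $\G_m$-action used here is not circle-compact on the whole of $\Tot(V)$ when $X$ fails to be proper. However, it does contract each fibre of $\Tot(V) \to X$ onto a single point of the zero section, which is the correct fibrewise analogue; I would verify that this is enough to produce the relative identity $[\phi_f]_X = [f^{-1}(1)]_X - [f^{-1}(0)]_X$ in $\mathcal M_X$, either by adapting the proof of Theorem \ref{thm:equivfam738}(i) directly or by combining a local splitting of $V$ with motivic Thom--Sebastiani (Theorem \ref{mtseb190}) and reducing to the model case $f(u,v) = uv$ on $\A^2$, which can be computed by hand.
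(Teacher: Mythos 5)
Your first step---that the hypotheses force $f$ to be a nondegenerate pairing $V_-\otimes V_+\to\O_X$, so that $V_+\cong V_-^\vee$---is exactly how the paper begins parts (2) and (3), and your fibrewise count of $f^{-1}(0)$ and $f^{-1}(1)$ over $\Tot(V_+)$ is correct. But the route you propose from there diverges from the paper's and has genuine gaps. The paper does \emph{not} use Theorem \ref{thm:equivfam738} here at all: it observes that after the identification $V_+\cong V_-^\vee$ the function $f$ is \emph{linear} in the fibres of the projection $\Tot(V)\to X\times\Tot(V_-)$, and invokes the dimensional reduction statement of \cite[Thm.~5.9]{BenSven3} (for a function $g=\sum g_it_i$ on the total space of a vector bundle over $Y$, $\pi_{Y,!}[\phi_g]=[V(g_1,\dots,g_r)]$), with $Y=X\times\Tot(V_-)$; nondegeneracy identifies $V(g_1,\dots,g_r)$ with the zero section. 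This is the key idea your proposal is missing. Your main tool---a relative version of Theorem \ref{thm:equivfam738}(i) over a non-proper base $X$---is not available in the paper, and as you yourself note the circle-compactness hypothesis genuinely fails; ``fibrewise circle-compactness'' is not a hypothesis under which that theorem (whose proof goes through compactness of the fixed locus of the ambient space) has been established, so this step would require a substantial new argument rather than a verification.

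Two further points. First, part (1) cannot be deduced from part (2): a nondegenerate quadratic form on a fibre of odd rank admits no isotropic splitting $V=V_-\oplus V_+$, so ``passing to a suitable cover to exhibit a hyperbolic splitting'' is obstructed by parity, not just by topology. The paper proves (1) directly, by noting that $X=\crit(f)$ forces $\det(f_{ij})$ to be invertible, diagonalising the form in an analytic neighbourhood, and quoting the explicit Denef--Loeser formula. Second, for part (3) the claim that ``the argument for (2) is manifestly $G$-equivariant'' glosses over precisely the subtlety the paper flags in the remark following the lemma: for $f=xy$ on $\A^2$ with $G=\Z/2$ swapping $x$ and $y$, the function is $G$-invariant and hyperbolic, yet the equivariant class is \emph{not} $[\pt]$. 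The hypothesis that $G$ preserves the decomposition $V=V_-\oplus V_+$ must therefore enter the proof in an essential way; in the paper it does so because the dimensional reduction is applied to a $G$-equivariant vector bundle projection whose base $X\times\Tot(V_-)$ is itself $G$-stable. Relatedly, your fallback of reducing Zariski-locally to the model $uv$ via Thom--Sebastiani can be made to work for the non-equivariant statement (2) after trivialising $V_-$, since the scissor relations determine a class in $\mathcal{M}_X$ from its restrictions to a stratification, but it does not obviously produce the $G$-equivariant class in $\mathcal{M}^G_X$ from non-invariant local charts.
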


In part (3) we include the assumption that $G$ acts freely so that we may apply Proposition \ref{BittnerProp}.

\begin{proof}
Part (1): Zariski locally, we can write $\Tot(V)=X\times \mathbb{A}^m$, and $f=\sum_{1\leq i,j\leq m}f_{ij}t_it_j$ for $F=(f_{ij})_{1\leq i,j\leq m}$ a matrix of functions on $X$, and $t_1,\ldots,t_m$ coordinates on $\mathbb{A}^m$.  We can, and will, assume that $f_{ij}=f_{ji}$ for all $i$ and $j$.  For every closed point $x\in X$, the functions $(\partial/\partial t_i) f$ and $(\partial/\partial s) f$, for $s$ local coordinates at $x$, generate 
\[(t_1,\ldots,t_m)\mathscr{O}_{X,x}[t_1,\ldots,t_m]/(t_1,\ldots,t_m)^2\mathscr{O}_{X,x}[t_1,\ldots,t_m].
\]
On the other hand, $(\partial/\partial s)f=0$ in this quotient.  It follows that $\det(f_{ij})_{1\leq i,j\leq m}$ is an invertible function on $X$.  In an analytic open neighborhood of $x$ we may thus apply a change of coordinates and obtain $f_{ij}=\delta_{i-j}$, and then the first result follows from direct calculation, or the explicit formula of Denef and Loeser \cite[Theorem 4.3.1]{DenefLoeser1}.
\smallbreak
Part (2+3): By nondegeneracy of $F$, if $V\cong V_-\oplus V_+$ is a decomposition as in the statement of the lemma, then $\dim(V_-)=m/2$, and the symmetric bilinear form $F$ establishes an isomorphism of vector bundles $V_+\cong (V_-)^*$.  Replacing $V_+$ by $(V_-)^*$ we obtain
\[
F=\left(\begin{array}{cc}0& \mathrm{Id}_{m/2\times m/2}\\ \mathrm{Id}_{m/2\times m/2} & 0\end{array}\right).
\]
Letting $\mathbb{C}^*$ act by scaling $V_+$, the function $f$ is equivariant of weight one.  The proof of \cite[Theorem 5.9]{BenSven3} shows that if we have a smooth variety $Y$, an integer $r$ and a function $g=\sum g_i t_i$ on $Y\times \mathbb{A}^r$ then $\pi_{Y,!}([\phi_g]_{g^{-1}(0)})=[V(g_1,\ldots,g_r)]$.  Moreover the proof generalises without modification to the case in which $\pi_Y\colon Y\times \mathbb{A}^r\rightarrow Y$ is a projection from the total space of a $G$-equivariant vector bundle.  The second and the third parts follow, putting $Y=X\times \Tot(V_-)$.
\end{proof}
\begin{remark}
If we relax the condition on $f$, part (2) may fail.  For instance consider the function $f=zx^2$ on $\Spec \mathbb{C}[z^{\pm 1},x]\cong \mathbb{C}^*\times\mathbb{A}^1_{\mathbb{C}}$.  It is easy to check that the associated virtual motive satisfies $[\mathbb{C}^*]_{\vir}=0$.
\end{remark}
\begin{remark}
Likewise, if we relax the condition on the $G$-action on $V$, part (3) may fail.  For instance let $G$ be the cyclic group of order 2, let $\Tot(V)=\Spec \mathbb{C}[x,y]\cong \mathbb{A}^2$, with $f=xy$ and $G$ swapping $x$ and $y$.  Then it is easy to check that for the associated equivariant critical structure we have $[\pt]_{\vir}=[G]-[\pt]$, where on the right hand side $[G]$ is a pair of points, permuted by the $G$-action, and $\pt$ carries the trivial $G$-action.  This is a consequence of the fact that the $G$-action on $\mathrm{H}(\mathbb{A}^2,\Phi_{xy})$ is the sign representation, see for instance \cite[Lemma 4.1]{RS17}.
\end{remark}
The ultimate goal of this section is to show that as a relative motive over $\Sym(X)$, the virtual motive 
\be\label{relative_virtual_conifold}
\sum_{n\geq 0} \,(-1)^n\bigl[Q^n_{C_0}\xrightarrow{\HC_X} \Sym^n(X)\bigr]_{\vir}
\ee
is generated under $\Exp_{\cup}$ by motives that are supported on the punctual locus, and constant away from $C_0$, as well as constant on $C_0$.  To get to this point will require some work, and we break the proof up by showing first that \eqref{relative_virtual_conifold} is at least generated by motives supported on a ``partially punctual'' locus.  By Lemma \ref{res_lemma} it is enough to prove the analogous result for $\sum_n (-1)^n [Q^n_{L}\xrightarrow{\HC} \Sym^n(\mathbb{A}^3)]_{\vir}$.

We explain here what we mean by the partially punctual locus.  Consider again the map 
\[
\HC\colon Q_L^n\rightarrow \Sym^n(\mathbb{A}^3).
\]
The embedding of the $xy$-plane in $\A^3$ induces an embedding of varieties
\be
\label{iota_def}
\iota_n\colon \Sym^n(\mathbb{A}^2)\hookrightarrow \Sym^n(\mathbb{A}^3)
\ee
and we denote by $Q^{\bullet,\bullet,\nilp,n}_L$ the preimage of $\Sym^n(\mathbb{A}^2)$ under $\HC$, i.e.
\be\label{Def:Nilp}
Q^{\bullet,\bullet,\nilp,n}_L = \HC^{-1}(\Sym^n(\mathbb{A}^2)) \subset Q^n_L.
\ee
The map $\iota_n$ is the inclusion of the subspace of configurations of points which all have $z$-coordinate $0$, which explains the notation $(\bullet,\bullet,\nilp)$ --- the scheme-theoretic version of this condition is that the operator corresponding to the action of $z$ is nilpotent.  So, ordering the operators corresponding to $x,y,z$ alphabetically, the first two are unconstrained, and the third is nilpotent.  

\begin{notation}
More generally, for $\#_x,\#_y,\#_z \in\{\unip,\nilp,\bullet\}$ we define $Q_L^{\#_x,\#_y,\#_z,n}\subset Q^n_L$ by imposing the closed conditions that for $w\in \set{x,y,z}$ the operator $\cdot w$ is nilpotent if $\#_w=\nilp$, or unipotent if $\#_w=\unip$ --- so for instance $Q^n_L=Q_L^{\bullet,\bullet,\bullet,n}$.
\end{notation}

There is an action of $\mathbb{A}^1$ on $\Sym^n(\mathbb{A}^3)$ via simultaneous addition on the $z$ coordinate of all points in a configuration, and we let
\be
\label{tiota_def}
\tilde{\iota}_n\colon \Sym^n(\mathbb{A}^2)\times\mathbb{A}^1\rightarrow \Sym^n(\mathbb{A}^3)
\ee
be the restriction of this action.  It is again an embedding, this time of the subvariety of $n$-tuples of points which all have the same $z$-coordinate (not necessarily zero).  It is this locus that we call ``partially punctual''.  Finally, consider the morphism
\[
q_z\colon \Sym^n(\mathbb{A}^3)\rightarrow \Sym^n(\mathbb{A}^1)
\]
obtained by projecting onto the $z$ coordinate.  We define $\HC^z=q_z\circ \HC$.  More generally, for $a_1,\ldots,a_r$ distinct elements of $\{x,y,z\}$ and $T\subset Q_L^n$ we denote by 
\[
\HC^{a_1\cdots a_r}:T\rightarrow \Sym^n(\mathbb{A}^r)
\]
the map given by composing the restriction of $\HC$ to $T$ with the projection $\Sym(\mathbb{A}^3)\rightarrow \Sym(\mathbb{A}^r)$ induced by the projection $\mathbb{A}^3\rightarrow \mathbb{A}^r$ defined by forgetting the coordinates not contained in $a_1,\ldots,a_r$.  

The space $\Sym^n(\mathbb{A}^1)$ is stratified according to partitions of $n$, and for $\alpha\vdash n$ we denote by $Q^{\bullet,\bullet,\alpha}_L\subset Q_L^n$ the corresponding stratum of the stratification of $Q_L^n$ given by pulling back along $\HC^z \colon Q^n_L\ra\Sym^n(\A^1)$.  So for instance
\[
Q^{\bullet,\bullet,(n)}_L=(\Sym^n(\mathbb{A}^2)\times\mathbb{A}^1)\times_{\Sym^n(\mathbb{A}^3)} Q^n_L\subset Q^n_L
\]
is the fibre product of $\tilde{\iota}_n$ and $\HC$.
\begin{lemma}
\label{fixed_BBS}
\begin{enumerate}
\item
There is an equality
\[
\tilde\iota_n^\ast\bigl[Q_L^{\bullet,\bullet,(n)}\xrightarrow{\HC} \Sym^n(\mathbb{A}^3)\bigr]_{\vir}=\iota_n^*\bigl[Q_L^n\xrightarrow{\HC } \Sym^n(\mathbb{A}^3)\bigr]_{\vir}\boxtimes \bigl[\mathbb{A}^1\xrightarrow{\id_{\mathbb{A}^1}}\mathbb{A}^1\bigr]
\]
in $\mathcal{M}_{\Sym^n(\mathbb{A}^2)\times\mathbb{A}^1}$.  In other words, the motive on the left hand side is constant in the $\mathbb{A}^1$-factor.
\item
Moreover, for $\alpha\vdash n$ there is an equality
\begin{align}
\label{second_eq}
\left[Q^{\bullet,\bullet,\alpha}_L\rightarrow \Sym^n(\mathbb{A}^3)\right]_{\vir}=\cup_!\pi_{G_{\alpha}}j^*_{\alpha}\left(\underset{i\lvert\alpha_i\neq 0}{\Boxtimes}\left[Q^{\bullet,\bullet,(i)}_L \rightarrow \Sym^i(\mathbb{A}^2)\times\mathbb{A}^1\right]^{\otimes \alpha_i}_{\vir}\right)
\end{align}
where $j_{\alpha}$ is the $G_{\alpha}$-equivariant inclusion of the complement of the pullback of the big diagonal under the $G_{\alpha}$-equivariant projection onto $z$-coordinates: 
\[
\prod_{i\lvert \alpha_i\neq 0}(\Sym^i(\mathbb{A}^2)\times\mathbb{A}^1)^{\alpha_i}\rightarrow \mathbb{A}^{\sum \alpha_i}.
\]
Here $G_{\alpha}$ is the automorphism group for the partition type $\alpha$, and $\cup$ is the union of points map on $\Sym(\mathbb{A}^3)$.
\end{enumerate}
\end{lemma}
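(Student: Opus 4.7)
For part (1), the strategy is to exploit the $\mathbb{A}^1$-action on $\mathbb{A}^3$ by translation in the $z$-direction, $(x,y,z)\mapsto(x,y,z+\lambda)$. Since this translation preserves the line $L=V(x,y)$, it lifts to an $\mathbb{A}^1$-action on $Q_L^n$ that covers the translation action on $\Sym^n(\mathbb{A}^3)$. In the critical locus description $Q_L^n=\crit(h_n)$ from Section~\ref{direct_QW}, translation by $\lambda$ corresponds to adding $\lambda\cdot\Id$ to the operator $b''_1$ encoding the $z$-action, and a direct check shows that the two summands of $W_r$ involving $b''_1$ together contribute $\lambda\Tr[a''_1,a''_2]=0$, so $h_n$ is $\mathbb{A}^1$-invariant. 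Consequently the relative motivic vanishing cycle is $\mathbb{A}^1$-equivariant on $Q_L^n$ over $\Sym^n(\mathbb{A}^3)$. Since $\tilde\iota_n$ covers the standard translation action on its $\mathbb{A}^1$-factor, equivariance forces the pullback along $\tilde\iota_n$ to be the external product of $\iota_n^*[Q_L^n\to\Sym^n(\mathbb{A}^3)]_{\vir}$ with $[\mathbb{A}^1\xrightarrow{\id}\mathbb{A}^1]$.

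For part (2), stratify $Q_L^n$ via $\HC^z$ according to the partition type $\alpha\vdash n$ of the multiset of $z$-coordinates. Over a configuration with $l(\alpha)$ distinct $z$-values, the generalised $z$-eigenspace decomposition of a quotient $\mathscr I_L\onto\mathscr F$ splits $\mathscr F$ as a direct sum $\bigoplus_j\mathscr F_j$, each summand supported in a single $z$-slab of prescribed length. This produces an identification of the stratum $Q_L^{\bullet,\bullet,\alpha}$ with
\[
\left(\prod_{i\mid\alpha_i\ne 0}(Q_L^{\bullet,\bullet,(i)})^{\alpha_i}\right)\big/G_\alpha
\]
restricted to the open subset (namely $j_\alpha$) where the $z$-coordinates of distinct blocks are pairwise different. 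Crucially, along this stratum the operator data becomes block diagonal with respect to the $z$-spectral decomposition, so the potential $h_n$ splits as the sum of the potentials $h_{n_j}$ of the individual factors. The motivic Thom--Sebastiani theorem (Theorem~\ref{mtseb190}), combined with Proposition~\ref{prop:mcv10} ensuring that all classes live in the monodromy-free subring (so the convolution product $\star$ reduces to the ordinary external product $\boxtimes$), converts this potential splitting into an external product of partially-punctual vanishing cycles. Applying Proposition~\ref{BittnerProp} to pass to the equivariant $G_\alpha$-quotient (the action is free on $j_\alpha$), and pushing forward along the union map $\cup$, yields \eqref{second_eq}.

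\textbf{Expected obstacle.} The main technical hurdle is in part (2): to justify, not merely pointwise but globally over the stratum, that the $z$-spectral decomposition of quotients gives a decomposition of the critical structure itself, i.e.~that there is an open identification of stratum and product inside the ambient smooth quiver moduli space $\mathcal N_n^{\circ\circ}$ under which the global function $h_n$ pulls back to a sum of the $h_{n_j}$'s. One must then verify that this identification is $G_\alpha$-equivariant so that Bittner's equivariant vanishing cycle formalism (Proposition~\ref{BittnerProp}) and Thom--Sebastiani combine cleanly, and that the open complement of the big diagonal correctly captures the locally-closed stratum $Q_L^{\bullet,\bullet,\alpha}$ rather than its closure.
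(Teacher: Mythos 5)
Part (1) of your proposal is essentially the paper's argument: the paper makes your translation-invariance precise by exhibiting the trivialisation $\mathcal{T}_n\times\mathbb{A}^1\cong\mathcal{N}_n^{\circ\circ}$, where $\mathcal{T}_n$ is the slice $\Tr(\rho(b''_1))=0$, under which $h_n$ becomes $\overline{h}_n\oplus 0$; your observation that the shift $b''_1\mapsto b''_1+\lambda\Id$ changes $W_r$ by $\lambda[a''_1,a''_2]$, whose trace vanishes, is exactly the point.

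Part (2), however, has a genuine gap, and it is precisely the one you flag as the ``expected obstacle''. Your key claim --- that there is an open identification of the stratum with a product of smaller moduli spaces inside the ambient smooth space, under which $h_n$ pulls back to the sum of the factor potentials --- is false as stated. Block-diagonality of the operators with respect to the $z$-spectral decomposition holds only on the critical locus (it follows from the superpotential relations combined with the separation of eigenvalues of $\rho(b''_1)$), not on the ambient smooth model, and the relative virtual motive is computed from the function on the ambient space. One must therefore account for the off-diagonal blocks of $\rho(a''_1)$ and $\rho(a''_2)$. What is actually true, and what the paper proves, is that after passing to a $G_\alpha$-Galois cover $\tilde{S}^{\circ}\to S$ (an open piece of a moduli space $\mathcal{N}_\alpha$ of representations of an auxiliary quiver $Q_\alpha$ recording the eigenspace decomposition), this cover is a vector bundle over an open subset $\tilde{P}\subset\prod_i(\mathcal{N}_i^{\circ\circ})^{\alpha_i}$, and the potential decomposes as $g_0+g_1$, where $g_0$ is the sum of the factor potentials pulled back from $\tilde{P}$ and $g_1$ is a fibrewise nondegenerate quadratic form in the off-diagonal directions. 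One then needs Lemma \ref{quad_lemma} --- crucially including the existence of a $G_\alpha$-equivariant isotropic splitting $V_-\oplus V_+$ of the bundle, which is what controls both the monodromy and the equivariance of the vanishing cycle of the quadratic part --- together with Thom--Sebastiani and Proposition \ref{BittnerProp}, to conclude that $[\phi_{\Tr(W_\alpha)}]^{G_\alpha}_{\tilde{S}^{\circ}}=[\phi_{g_0}]^{G_\alpha}_{\tilde{P}}$. Your proposed appeal to Proposition \ref{prop:mcv10} does not substitute for this: the monodromy issue that matters here is the one attached to the transverse quadratic form, resolved by the $V_\pm$ splitting, not monodromy-freeness of the factors; and without the vector-bundle analysis the Thom--Sebastiani step has nothing to apply to.
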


Before we begin the proof, we give some guidance for how to read the right hand side of \eqref{second_eq}.  Firstly, recall from Section \ref{Sec:Power_Structures} that the infinite union of algebraic varieties $\Sym(\mathbb{A}^3)$ has a symmetric monoidal structure $\cup$, given by taking unions of unordered points with multiplicity.  We consider $\Sym^i(\mathbb{A}^2)\times\mathbb{A}^1$ as a subvariety of $\Sym^i(\mathbb{A}^3)$ via $\tilde{\iota}_i$.  We abuse notation by writing $\cup$ again for the map $\Sym(\mathbb{A}^3)^m\rightarrow \Sym(\mathbb{A}^3)$  taking an $m$-tuple of sets of unordered points with multiplicity to their union.  The term in big round brackets on the right is a $G_{\alpha}$-equivariant motive via Lemma \ref{lemma:majdgap}.

\begin{proof}
Consider again the space $\mathcal{N}^{\circ\circ}_n\subset \mathcal N_n$ from Section \ref{direct_QW}.  We define the subspace $\mathcal{T}_n\subset \mathcal{N}^{\circ\circ}_n$ by the condition that $\Tr(\rho(b''_1))=0$.  Then there is an isomorphism
\[
\mathcal{T}_n\times\mathbb{A}^1\,\widetilde{\ra}\,\mathcal{N}^{\circ\circ}_n
\]
given by
\begin{multline*}
    (\rho(a''_1),\rho(a''_2),\rho(b''_1),\rho(a'_1),\rho(a'_2),\rho(b'_1),t)\\
    \mapsto 
    (\rho(a''_1),\rho(a''_2),\rho(b''_1)+t\cdot \Id_{n\times n},\rho(a'_1),\rho(a'_2),\rho(b'_1)),
\end{multline*}
and the function $h_n = f_n|_{\mathcal N_n^{\circ\circ}}$ (cf.~\eqref{function_hn} and Proposition \ref{prop:critQL}) is pulled back from a function $\overline{h}_n$ on $\mathcal{T}_n$.  The stratification by partition type of $\rho(b_1'')$ for $Q_L^n\subset \mathcal N_n^{\circ\circ}$ is induced by a stratification of $\mathcal{T}_n$: for $\alpha$ a partition of $n$, define $\mathcal{T}_{\alpha}\subset \mathcal{T}_n$ to be the locally closed subvariety whose $\mathbb{C}$-points correspond to $Q_r$-representations for which the partition type of the generalised eigenvalues of $\rho(b_1'')$ are given by $\alpha$.   Then $\crit(\overline{h}_n)\cap \mathcal{T}_{(n)}=Q_L^{\bullet,\bullet,\nilp,n}$, i.e. the isomorphism $\mathcal T_n \times \A^1\widetilde{\ra}\mathcal N_n^{\circ\circ}$ sends $(\crit(\overline{h}_n)\cap \mathcal{T}_{(n)})\times \{0\}$ onto $Q_L^{\bullet,\bullet,\nilp,n}$.  

Let $\Sym^n_0(\mathbb{A}^1)\subset \Sym^n(\mathbb{A}^1)$ be the closed subvariety of $n$-tuples summing to zero.  Let 
\[
\lambda:\mathcal N_n^{\circ\circ}\rightarrow \Sym^n(\mathbb{A}^1)
\]
be the morphism taking a module $\rho$ to the eigenvalues (with multiplicity) of $\rho(b''_1)$.  Note that $\lambda\lvert_{Q^n_L}=\HC^z$.
Then the first equality follows from the commutativity of 
\[
\begin{tikzcd}[row sep=large]
\mathcal T_n \times \A^1 \arrow{rrrr}{\cong} \arrow{d}{\lambda\lvert_{\mathcal T_n}\times \id_{\mathbb{A}^1}} & & & &
\mathcal N_n^{\circ\circ} \arrow{d}{\lambda} \\
\Sym_0^n(\mathbb{A}^1)\times \mathbb{A}^1 \arrow{rrrr}{((t_1,\ldots,t_n),t)\mapsto (t_1+t,\ldots,t_n+t)} & & & &
\Sym^n(\mathbb{A}^1)
\end{tikzcd}
\]
and the fact that, pulling back along the top isomorphism, the function $h_n$ becomes $\overline{h}_n\oplus 0$.

Let $\alpha=(1^{\alpha_1}\cdots r^{\alpha_r})\vdash n$ be a partition.  We write $l(\alpha)=\sum_{i\leq r} \alpha_i$ for the total number of parts of $\alpha$.  For the proof of the second part of the lemma we adapt the proof of \cite[Prop 2.6]{BBS}, via the following commutative diagram, which will take some time to define and describe.  
\[
\begin{tikzcd}
\prod_i(\mathcal N_i^{\circ\circ})^{\alpha_i} &
\tilde{P}\arrow{r}\arrow[hook']{l} &
\tilde{S}^\circ\arrow[hook]{r} &
\tilde{S}\arrow[hook]{rrr}\arrow{dd} &
&
&
\mathcal N_\alpha\arrow{dd}{s} 
\\
V\arrow{rr}{\tiny{\textrm{closed}}}\arrow{dd}{\pi} & & %
\tilde{U}\arrow[hook,crossing over]{rr}\arrow{dd}\arrow[dotted]{ur}\arrow[dotted]{ul}\arrow[dotted]{u} & &
\prod_{i}(Q_L^i)^{\alpha_i} & \\
& & & S\arrow[hook]{rrr} & & & \mathcal N_n^{\circ\circ} \\
Q^\alpha_L\arrow{rr}{\tiny{\textrm{closed}}} & & U\arrow[hook]{rr}\arrow[dotted]{ur} & & Q^n_L\arrow[dotted]{urr} & 
\end{tikzcd}
\]

All of the dotted arrows correspond to the inclusion of the critical locus of a function.  All hooked arrows denote \textit{open} inclusions.  

The open subspace $\tilde{U}\subset \prod_i (Q^i_L)^{\alpha_i}$ is defined by the condition that the set of eigenvalues of $\rho(b''_1)$ in each of the $l(\alpha)$ factors of the product are distinct from the set of eigenvalues for $\rho(b''_1)$ in any other factor in the product.  The open subset $U\subset Q^n_L$ is the image of the \'etale map $\tilde U\ra Q^n_L$ given by sending the $l(\alpha)$-tuple $\{\mathscr{I}_L\twoheadrightarrow \mathscr{F}_j\}_{j\leq l(\alpha)}$ to 
\[
\mathscr{I}_L\twoheadrightarrow \bigoplus_{j\leq l(\alpha)}\mathscr{F}_j. 
\]
The closed subset $V\subset \tilde{U}$ is defined by the condition that in each of the $l(\alpha)$ factors in the product decomposition there is only one eigenvalue. It is in fact the preimage of $Q^\alpha_L=\HC^{-1}(\Sym^\alpha(\A^3))\subset U$ under the map $\tilde U\ra U$. 
\smallbreak
We form the quiver $Q_{\alpha}$ (Figure \ref{quiver_Qalpha}) and dimension vector $\dd_{\alpha}$ as follows.  Set 
\begin{align*}
(Q_{\alpha})_0&=\Set{\underline{2}_1,\ldots,\underline{2}_{l(\alpha)},\underline{\infty}}\\
(Q_{\alpha})_1&=\Set{a'_{1,i},a'_{2,i},b'_{1,i},b''_{1,i}}_{1\leq i\leq l(\alpha)}\cup \Set{a''_{1,i,j},a''_{2,i,j}}_{1\leq i,j\leq l(\alpha)}.
\end{align*}
We set 
\begin{align*}
t(a'_{1,i})=t(a'_{2,i})=s(b'_{1,i})&=\underline{\infty}\\
s(a'_{1,i})=s(a'_{2,i})=t(b'_{1,i})=t(b''_{1,i})=s(b''_{1,i})&=\underline{2}_i\\
t(a''_{1,i,j})=t(a''_{2,i,j})&=\underline{2}_j\\
s(a''_{1,i,j})=s(a''_{2,i,j})&=\underline{2}_i.
\end{align*}
\begin{figure}[ht]
\begin{tikzpicture}[>=stealth,->,shorten >=2pt,looseness=.5,auto]
  \matrix [matrix of math nodes,
           column sep={3cm,between origins},
           row sep={3cm,between origins},
           nodes={circle, draw, minimum size=7.5mm}]
{ 
|(A)| \underline{2}_1 & |(B)| \underline{2}_2 \\      
|(I)| \underline{\infty} &  \\
};
\tikzstyle{every node}=[font=\small\itshape]

\path[->] (B) edge [loop right] node {$b_{1,2}''$} ();
\path[->] (A) edge [loop left] node {$b_{1,1}''$} ();
\path[->>] (A) edge [loop above] node {$a_{k,1,1}''$} ();
\path[->>]  (B) edge [loop above] node {$a_{k,2,2}''$} ();

\node [anchor=west,right] at (1,-0.5) {$b_{1,2}'$};              
\node [anchor=west,right] at (-0.3,-0.1) {$a_{k,2}'$};
\node [anchor=west,right] at (-0.4,2.25) {$a_{k,1,2}''$};              
\node [anchor=west,right] at (-0.4,0.75) {$a_{k,2,1}''$};              
\node [anchor=west,right] at (-2.6,0.1) {$a_{k,1}'$};              
\node [anchor=west,right] at (-1.2,0.1) {$b_{1,1}'$};        

\draw[->>] (A) to [bend left=25,looseness=1] (B) node [midway,below] {};
\draw[->>] (B) to [bend left=25,looseness=1] (A) node [midway,below] {};
\draw (I) to [bend right=65] (B) node {};
\draw[->>] (B) to [bend left=35] (I) node {};
\draw[->>] (A) to [bend right=25] (I) node {};
\draw (I) to [bend right=25] (A) node {};
\end{tikzpicture}
\caption{The quiver $Q_\alpha$, for the case in which $l(\alpha)=2$. The index $k$ varies in $\set{1,2}$.}\label{quiver_Qalpha}
\end{figure}
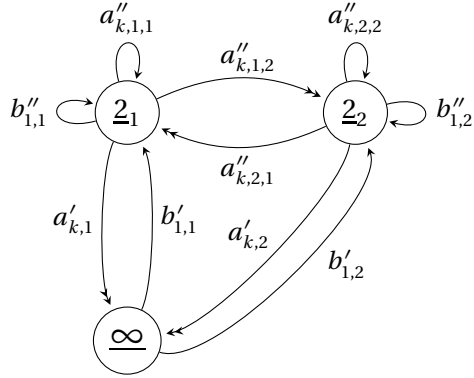
We give $Q_{\alpha}$ the dimension vector $\dd_{\alpha}=(1^{\alpha_1},\ldots,r^{\alpha_r},1)$, i.e.~we always assign the vertex $\underline{\infty}$ dimension 1, and then the rest of the dimension vector is given by the ordered set of numbers $\alpha$.  We give this quiver the stability condition
\[
\zeta_{\alpha}=(-1,\ldots,-1,n).
\]
A ($\zeta_{\alpha}$-stable) $\C Q_{\alpha}$-module is the same data as a ($\zeta'$-stable) $\C Q_r$-module $\rho$, along with an ordered vector space decomposition of the vector space $\rho_{\underline{2}}$ that is preserved by the operation $\rho(b''_1)$.  As such, there is a forgetful functor from $\C Q_{\alpha}$-modules to $\C Q_r$-modules, inducing a generically finite morphism
\[
s\colon \mathcal{N}_{\alpha} = \Rep^{\zeta_{\alpha}}_{\dd_\alpha}(Q_{\alpha})/\GL_{\alpha}\rightarrow \mathcal{N}_n^{\circ\circ}
\]
which at the level of points amounts to setting
\[
s(\rho_{\underline{2}})=\bigoplus_{i\leq l(\alpha)} \rho_{\underline{2}_i}.
\]
The scheme is $G_{\alpha}$-equivariant, via the obvious $G_{\alpha}$-action on the quiver $Q_{\alpha}$.

There is a unique potential $W_{\alpha}$ on $Q_{\alpha}$ such that $\Tr(W_{\alpha})$ is the pullback of $\Tr(W_r)$ under this forgetful map.  Precisely, we define
\[
W_{\alpha}=
\sum_{1\leq i,j\leq l(\alpha)}\left(a''_{1,j,i}b'_{1,j}a'_{2,i}-a''_{2,j,i}b'_{1,j}a'_{1,i}\right)+\sum_{1\leq i,j\leq l(\alpha)}\left(a''_{1,j,i}b''_{1,j}a''_{2,i,j}-a''_{2,j,i}b''_{1,j}a''_{1,i,j}\right).
\]
We define the open subscheme $\tilde{S}\subset \mathcal{N}_{\alpha}$ by the condition that for every $i,j\leq l(\alpha)$ the endomorphisms $\rho(b''_{1,i})$ and $\rho(b''_{1,j})$ share no eigenvalues.  The map
\[
s\colon \tilde{S}\rightarrow \mathcal{N}^{\circ\circ}_n
\]
is quasi-finite, and factors through a finite morphism $\tilde{S}\rightarrow S$ where $S\subset \mathcal{N}_n^{\circ\circ}$ is an open subscheme.

Let $\rho\in \tilde{S}\cap \crit(\Tr(W_{\alpha}))$.  Then $s(\rho)(b'_1)=0$ by stability, and so from the superpotential relations we deduce that $a''_{1,i,j}b''_{1,i}=b''_{1,j}a''_{1,i,j}$ and $a''_{2,i,j}b''_{1,i}=b''_{1,j}a''_{2,i,j}$ for all $i, j$ and so from our condition on the eigenvalues of the $b''_{1,i}$ we deduce that for all $1\leq i\neq j\leq l(\alpha)$ we have $\rho(a''_{1,i,j})=\rho(a''_{2,i,j})=0$.  As such, in calculating the relative vanishing cycle $[\phi_{\Tr(W_{\alpha})}]_{\tilde{S}}$ we can restrict to the set $\tilde{S}^{\circ}\subset \tilde{S}$ defined by the condition that $\rho$ remains stable after setting all $\rho(a''_{1,i,j})=\rho(a''_{2,i,j})=0$ for $i\neq j$.  So both the inclusions $\tilde{U}\rightarrow \tilde{S}$ and $\tilde{U}\rightarrow \tilde{S}^{\circ}$ are the inclusions of the critical locus of the function $\Tr(W_{\alpha})$ on the respective targets.

The space $\tilde{S}^{\circ}$ is a vector bundle over
\begin{equation}
\label{PZS}
\tilde{P}\subset \prod_{i}(\mathcal{N}^{\circ\circ}_{i})^{\alpha_i},
\end{equation}
the open subset defined by the condition that the generalised eigenvalues of the $\rho(b''_1)$-operators from different factors are distinct.  The projection from $\tilde{S}^{\circ}$ to $\tilde{P}$ is given by forgetting the values of $\rho(a''_{k,i,j})$ for $k=1,2$ and $i\neq j$.

The map $\pi$ is a Galois cover with Galois group $G_{\alpha}$, and the map $\tilde{S}^{\circ}\rightarrow S$ is a Galois cover in a formal neighbourhood of the morphism $\pi$.  

The space $\tilde{P}$ carries the \textit{free} $G_{\alpha}$-action inherited from $\mathcal{N}_{\alpha}$, and furthermore the vector bundle $\tilde S^{\circ}$ has a direct sum decomposition $V_{-}\oplus V_+$, where $V_-$ keeps track of the entries of $\rho(a''_{1,i,j})$ for $i\neq j$ and $V_+$ keeps track of the entries of $\rho(a''_{2,i,j})$ for $i\neq j$, and so the decomposition is preserved by the $G_{\alpha}$-action.

Let $m=\rk(V_+)=\rk(V_-)$.  If we let $G_1=\mathbb{C}^*$ act on $\tilde{S}^{\circ}$ by scaling $V_+$ with weight $1$ and $V_-$ with weight $-1$, then $\Tr(W_{\alpha})$ is $\mathbb{C}^*$-invariant.  Let $G_2=\C^*$ act by scaling both $V_+$ and $V_-$ with weight one.  Then since each term in the potential contains at most two instances of $a''_{k,i,j}$, for $k=1,2$, for every $\rho\in \tilde{S}^{\circ}$, there is a fixed constant $C$ for which $|\Tr(W_{\alpha})(z\cdot_2 \rho)| \leq C \lvert z\lvert^2$ where the action is via the $G_2$-action.  We deduce that on $\tilde{S}^{\circ}$ we can write
\[
\Tr(W_{\alpha})=g_0+g_1
\]
where $g_0$ is a function pulled back from the projection to $\tilde{P}$ and $g_1$ is a $G_1$-invariant function on $\tilde{S}^{\circ}$, quadratic in the fibres.  After passing to a $G_{\alpha}$-invariant Zariski open subset we can trivialize the vector bundle $\tilde{S}^{\circ}$ and write
\[
g_1=\sum_{1\leq i,j\leq m}g_{i,j}t_is_j
\]
where $t_i$ and $s_j$ are coordinates on the fibre of $\Tot(V_+)$ and $\Tot(V_-)$ respectively.  From the equality $\crit(\Tr(W_{\alpha}))=\crit(g_0)$, arguing as in the proof of the second part of Lemma \ref{quad_lemma}, the matrix $\{g_{ij}\}_{1\leq i,j\leq m}$ is invertible, and after a change of coordinates on $\Tot(V_+)$ we may assume that $g_{ij}=\delta_{i-j}$.  By the Thom--Sebastiani isomorphism, and the third part of Lemma \ref{quad_lemma} we deduce that $[\phi_{\Tr(W_{\alpha})}]^{G_{\alpha}}_{\tilde{S}^{\circ}}=[\phi_{g_0}]^{G_{\alpha}}_{\tilde{P}}\in\mathcal{M}^{G_{\alpha}\times\muhat}_{\tilde{P}}$. Finally, we note that $g_0$ is the sum of the potentials on the factors of $\prod_i(\mathcal{N}_i^{\circ\circ})^{\alpha_i}$, and the result follows from Proposition \ref{BittnerProp} and the Thom--Sebastiani theorem.
\end{proof}
Recall from \eqref{Def:Nilp} the subvarieties $Q_{L}^{\bullet,\bullet,\nilp,n} \subset Q^n_L$, relative over $\Sym^n(\A^2)$ via the map $\HC^{xy}$.
The next corollary follows from Proposition \ref{exp_prop}. 
\begin{corollary}
\label{firstc}
Define classes $\Phi_n\in \mathcal{M}_{\Sym^n(\mathbb{A}^2)}$ via
\[
\sum_{n\geq 0}\, (-1)^n\bigl[Q_{L}^{\bullet,\bullet,\nilp,n}\xrightarrow{\HC^{xy}}\Sym^n(\mathbb{A}^2)\bigr]_{\vir}=\prExp_{\cup}\left(\sum_{n\geq 1}\Phi_n \right).
\]
Then
\begin{multline*}
\sum_{n\geq 0}\,(-1)^n\bigl[Q^n_L\xrightarrow{\HC}\Sym^n(\mathbb{A}^3)\bigr]_{\vir} \\
=\prExp_{\cup}\left(\sum_{n\geq 1}\left(\Sym^n(\mathbb{A}^2)\times\mathbb{A}^1\xrightarrow{\tilde{\iota}_n} \Sym^n(\mathbb{A}^3)\right)_!\bigl(\Phi_n\boxtimes \bigl[\mathbb{A}^1\xrightarrow{\id}\mathbb{A}^1\bigr]\bigr)\right).
\end{multline*}
\end{corollary}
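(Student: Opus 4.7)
The plan is to feed the two parts of Lemma \ref{fixed_BBS} into Proposition \ref{exp_prop}, applied with $U=\mathbb{A}^1$ and $V=\mathbb{A}^2$ (so $U\times V=\mathbb{A}^3$ and $\tilde\iota_n$ is precisely the inclusion of the partially punctual locus in the $z$-direction).

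First I would write the tautological scissor decomposition of $Q_L^n$ into $z$-partition strata,
\[
[Q_L^n\xrightarrow{\HC}\Sym^n(\mathbb{A}^3)]_{\vir}=\sum_{\alpha\vdash n}[Q_L^{\bullet,\bullet,\alpha}\rightarrow\Sym^n(\mathbb{A}^3)]_{\vir},
\]
which one gets by pulling back the stratification of $\Sym^n(\mathbb{A}^1)$ by partitions along $\HC^z$. Next, since $Q_L^{\bullet,\bullet,(n)}$ maps through $\HC$ into the image of $\tilde\iota_n$, Lemma \ref{fixed_BBS}(1) upgrades to the factorisation
\[
[Q_L^{\bullet,\bullet,(n)}\rightarrow\Sym^n(\mathbb{A}^3)]_{\vir}=\tilde\iota_{n,!}\bigl([Q_L^{\bullet,\bullet,\nilp,n}\xrightarrow{\HC^{xy}}\Sym^n(\mathbb{A}^2)]_{\vir}\boxtimes[\mathbb{A}^1\xrightarrow{\id}\mathbb{A}^1]\bigr),
\]
so each ``atom'' $[Q_L^{\bullet,\bullet,(i)}]_{\vir}$ is the $\tilde\iota_i$-pushforward of something constant along $\mathbb{A}^1$.

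Next I would set $B_i\defeq (-1)^i[Q_L^{\bullet,\bullet,\nilp,i}\rightarrow\Sym^i(\mathbb{A}^2)]_{\vir}$, so that by the definition of $\Phi_i$ we have $\sum_{i\geq 0}B_i=\prExp_{\cup}(\sum_{i\geq 1}\Phi_i)$ in $\mathcal{M}_{\Sym(\mathbb{A}^2)}$. Using the sign-tracking identity \eqref{SymSign} in the form
\[
\bigl((-1)^i[Q_L^{\bullet,\bullet,(i)}]_{\vir}\bigr)^{\otimes \alpha_i}=(-1)^{i\alpha_i}\bigl([Q_L^{\bullet,\bullet,(i)}]_{\vir}\bigr)^{\otimes \alpha_i}
\]
and the equality $\prod_i(-1)^{i\alpha_i}=(-1)^n$ when $\alpha\vdash n$, Lemma \ref{fixed_BBS}(2) rewrites to
\[
(-1)^n[Q_L^{\bullet,\bullet,\alpha}\rightarrow\Sym^n(\mathbb{A}^3)]_{\vir}=\cup_!\pi_{G_\alpha}j_\alpha^*\Biggl(\underset{i\lvert\alpha_i\neq 0}{\Boxtimes}\tilde\iota_{i,!}\bigl([\mathbb{A}^1\xrightarrow{\id}\mathbb{A}^1]\boxtimes B_i\bigr)^{\otimes\alpha_i}\Biggr).
\]
Summing over $\alpha$ and $n$, the left hand side is $\sum_{n\geq 0}(-1)^n[Q_L^n\rightarrow\Sym^n(\mathbb{A}^3)]_{\vir}$, while the right hand side is precisely the motive $\mathsf Z$ from Proposition \ref{exp_prop} applied to the data $(U,V,A,B)=(\mathbb{A}^1,\mathbb{A}^2,\sum_i\Phi_i,\sum_iB_i)$.

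Finally, the first assertion of Proposition \ref{exp_prop} identifies this $\mathsf Z$ with $\prExp_{\cup}\bigl(\sum_{n\geq 1}\tilde\iota_{n,!}([\mathbb{A}^1\xrightarrow{\id}\mathbb{A}^1]\boxtimes\Phi_n)\bigr)$, which (up to harmless reordering of the two factors in the external product, so that the $\mathbb{A}^1$ sits on the right as in the statement) is exactly the claimed identity. I expect no real obstacle here beyond bookkeeping: the one subtle point is to verify that the support condition on $Q_L^{\bullet,\bullet,(n)}$ allows one to replace the pullback formula of Lemma \ref{fixed_BBS}(1) by the pushforward formula used above, and to check that the signs aggregate consistently via \eqref{SymSign} when one swaps between the collections $\{B_i\}$ and $\{[Q_L^{\bullet,\bullet,(i)}]_{\vir}\}$.
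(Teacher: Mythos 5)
Your proposal is correct and follows essentially the same route as the paper: the paper's proof likewise combines Lemma \ref{fixed_BBS}\,(2) with the sign identity \eqref{SymSign} to get the signed stratum-by-stratum formula, sums over partitions via the scissor decomposition along $\HC^z$, and concludes by Proposition \ref{exp_prop} (with Lemma \ref{fixed_BBS}\,(1) supplying the constancy along $\mathbb{A}^1$ needed to put the atoms in the form $\tilde\iota_{i,!}([\mathbb{A}^1\xrightarrow{\id}\mathbb{A}^1]\boxtimes B_i)$). Your write-up just makes the bookkeeping explicit where the paper leaves it implicit.
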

\begin{proof}
By Lemma \ref{fixed_BBS}\,(2) and (\ref{SymSign}) we deduce that for $\alpha\vdash n$
\[
(-1)^n\left[Q^{\bullet,\bullet,\alpha}_L\rightarrow \Sym^n(\mathbb{A}^3)\right]_{\vir}=\cup_!\pi_{G_{\alpha}}j^*_{\alpha}\left(\underset{i\lvert\alpha_i\neq 0}{\Boxtimes}(-1)^i\left[Q^{\bullet,\bullet,(i)}_L \rightarrow \Sym^i(\mathbb{A}^2)\times\mathbb{A}^1\right]^{\otimes \alpha_i}_{\vir}\right).
\]
The result then follows from Proposition \ref{exp_prop} and
\[
\left[Q^n_L\rightarrow \Sym^n(\mathbb{A}^3)\right]_{\vir}=\sum_{\alpha\vdash n}\left[Q^{\bullet,\bullet,\alpha}_L\rightarrow \Sym^n(\mathbb{A}^3)\right]_{\vir}.\qedhere
\]
\end{proof}

\subsection{Motivic stratifications II: fully punctual strata}
Corollary \ref{firstc} says that, considered as relative motives over $\Sym(\mathbb{A}^1)$, via projection to the $z$ coordinate, the DT invariants are generated by classes on the small diagonals, which are moreover \textit{constant} as relative motives over $\mathbb{A}^1$. 
In other words, there are relative motives over $\Sym^n(\mathbb{A}^2)$ that generate the motivic DT partition function under taking the external product with the constant motive on $\mathbb{A}^1$ and taking exponentials. 

We next show that these relative motives are themselves supported on the small diagonal (although they are non-constant, with a ``jump'' at the intersection $\mathbb{A}^2\cap L$) so that considered as a relative motive over $\Sym(\mathbb{A}^3)$, the virtual motive of $Q^n_{L}$ is generated on the small diagonal.

For $n'\leq n$ and $\alpha$ a partition of $n-n'$ we define 
\[
\mathcal{Q}^{n',\alpha}=Q_L^{\bullet,\bullet,\nilp,n}\cap (\HC^y)^{-1}(A_{n',\alpha})
\]
where $A_{n',\alpha}$ is the subvariety of $\Sym^n(\mathbb{A}^1)$ defined by the condition that $0$ occurs $n'$ times, and the partition defined by the $(n-n')$-tuple of points away from $0$ is $\alpha$.  We denote by 
\begin{align*}
\kappa^1_n\colon \Sym^n(\mathbb{A}^1)&\hookrightarrow \Sym^n(\mathbb{A}^2) & \kappa^0_n\colon \Sym^n(\mathbb{A}^1)&\hookrightarrow \Sym^n(\mathbb{A}^2)\\
(z_1,\ldots,z_n)&\mapsto ((z_1,1),\ldots,(z_n,1)) & (z_1,\ldots,z_n)&\mapsto ((z_1,0),\ldots,(z_n,0))
\end{align*}
and
\begin{align*}
\tilde{\kappa}_n:\Sym^n(\mathbb{A}^1)\times\mathbb{A}^1&\hookrightarrow \Sym^n(\mathbb{A}^2)\\
((z_1,\ldots,z_n),z)&\mapsto ((z_1,z),\ldots,(z_n,z))
\end{align*}
the inclusions analogous to $\iota_n$ and $\tilde{\iota}_n$ defined in \eqref{iota_def} and \eqref{tiota_def}.  Let $l_n\colon\Sym^n(\mathbb{A}^1)\times\mathbb{G}_m\hookrightarrow\Sym^n(\mathbb{A}^1)\times\mathbb{A}^1$ be the inclusion.

\begin{lemma}
\label{twothirds}
\begin{enumerate}
    \item 
    There is an equality
    \[
    l_n^*\tilde{\kappa}_n^*\bigl[Q_L^{\bullet,\bullet,\nilp,n}\xrightarrow{\HC^{xy}} \Sym^n(\mathbb{A}^2)\bigr]_{\vir}=\kappa_n^{1,*}\bigl[Q_L^{\bullet,\bullet,\nilp,n}\xrightarrow{\HC^{xy}} \Sym^n(\mathbb{A}^2)\bigr]_{\vir}\boxtimes \bigl[\mathbb{G}_m\xrightarrow{\id}\mathbb{G}_m\bigr]
    \]
    in $\mathcal{M}_{\Sym^n(\mathbb{A}^1)\times\mathbb{G}_m}$. In other words, away from $0$, the motive 
    \[
    \tilde{\kappa}_n^*\bigl[Q_L^{\bullet,\bullet,\nilp,n}\xrightarrow{\HC^{xy}} \Sym^n(\mathbb{A}^2)\bigr]_{\vir}
    \]
    is constant along the $\mathbb{A}^1$-factor.
    \item
    More generally, there is an equality
    \begin{align*}
    \left[\mathcal{Q}^{n',\alpha}\xrightarrow{\HC^{xy}}\Sym^n(\mathbb{A}^2)\right]_{\vir}=&\left[\mathcal{Q}^{n',\emptyset}\xrightarrow{\HC^{xy}}\Sym^{n'}(\mathbb{A}^2)\right]_{\vir}\boxtimes_{\cup}\\&\cup_!\pi_{G_{\alpha}}j_{\alpha}^*\left(\underset{i\lvert\alpha_i\neq 0}{\Boxtimes}\left[\mathcal{Q}^{0,(i)} \xrightarrow{\HC^{xy}} \Sym^i(\mathbb{A}^1)\times\mathbb{A}^1\right]^{\otimes \alpha_i}_{\vir}\right).
    \end{align*}
    where $j_{\alpha}$ is the inclusion of the complement to the preimage of the big diagonal in $\mathbb{A}^{\sum_i\alpha_i}$ under the projection $\prod_i (\Sym^i(\mathbb{A}^1)\times\mathbb{A}^1)^{\alpha_i} \ra \mathbb{A}^{\sum_i\alpha_i}$.
\end{enumerate}
\end{lemma}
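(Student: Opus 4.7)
The plan is to mimic the proof of Lemma~\ref{fixed_BBS} using the $y$-coordinate in place of the $z$-coordinate. Under the identification of $\mathcal{N}_n^{\circ\circ}$ with the moduli of $(n,1)$-dimensional $\zeta'$-stable $\mathbb{C}Q_r$-representations from Section~\ref{direct_QW}, the loop $a''_2$ at the vertex $\underline{2}$ corresponds to the action of the coordinate $y$ on $N=\rho_{\underline{2}}$, so that the spectrum of $\rho(a''_2)$ computes $\HC^y$. The essential difference with Lemma~\ref{fixed_BBS} is that the hyperplane $\{y=0\}$ meets $L$, so translation in $y$ is not globally defined on $\mathcal{N}_n^{\circ\circ}$; this is why the constancy claimed in part~(1) holds only after restricting to $\mathbb{G}_m$.

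For part~(1), on the open subscheme $A_\lambda\subset\mathcal{N}_n^{\circ\circ}$ where $a''_2+\lambda\Id$ is invertible, I will define the $\GL_n$-equivariant automorphism
\[
T_\lambda\colon(a''_1,a''_2,b''_1,a'_1,a'_2,b'_1)\longmapsto\bigl(a''_1,\,a''_2+\lambda\Id,\,b''_1,\,a''_2(a''_2+\lambda\Id)^{-1}a'_1,\,a'_2,\,b'_1\bigr).
\]
Expanding $\Tr(W_r)$ term by term and using cyclicity of the trace, one finds
\[
T_\lambda^{*}\Tr(W_r)-\Tr(W_r)=-\Tr\bigl(b'_1\bigl(\lambda a'_1+(a''_2+\lambda\Id)\nu\bigr)\bigr),\qquad \nu=-\lambda(a''_2+\lambda\Id)^{-1}a'_1,
\]
and this vanishes identically: the correction term in the definition of $T_\lambda$ is designed precisely to cancel the anomaly $-\lambda\Tr(b'_1a'_1)$ coming from the shift in $a''_2$. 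Thus $T_\lambda$ preserves the trace potential, the $\zeta'$-stability condition, and intertwines $\HC^y$ with the translation $y\mapsto y+\lambda$ on $\Sym^n(\mathbb{A}^1)$ while commuting with $\HC^x$. Applied with $\lambda=1-y_0$ to the fibre of $\mathcal{Q}^{0,(n)}$ over $y_0\in\mathbb{G}_m$, it yields an isomorphism, over $\Sym^n(\mathbb{A}^1)$, between that fibre and the fibre over $1$, compatible with the relative virtual motive. Globalising over $\mathbb{G}_m$ gives the trivialisation of $l_n^{*}\tilde\kappa_n^{*}[Q_L^{\bullet,\bullet,\nilp,n}\xrightarrow{\HC^{xy}}\Sym^n(\mathbb{A}^2)]_{\vir}$ asserted in part~(1).

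For part~(2), I adapt the quiver-stratification argument from the proof of Lemma~\ref{fixed_BBS}(2), now stratifying according to the spectrum of $a''_2$ rather than $b''_1$. Introduce the quiver $Q_{n',\alpha}$ with vertices $\{\underline{2}_0,\underline{2}_1,\ldots,\underline{2}_{l(\alpha)},\underline{\infty}\}$ and dimension vector $(n',\alpha_1,\ldots,\alpha_r,1)$, whose arrows are obtained by splitting those of $Q_r$ according to an ordered decomposition of $\rho_{\underline{2}}$ preserved by $a''_2$, together with the analogously induced potential $W_{n',\alpha}$. Let $\tilde{S}\subset\mathcal{N}_{n',\alpha}$ be the open subscheme where $a''_2$ acts nilpotently on $\rho_{\underline{2}_0}$, has only nonzero eigenvalues on each block $\rho_{\underline{2}_j}$ for $j\geq 1$, and has disjoint spectra across distinct blocks. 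Exactly as in Lemma~\ref{fixed_BBS}(2), the superpotential relations force all off-diagonal block arrows to vanish on $\crit(\Tr(W_{n',\alpha}))\cap\tilde{S}$, so those arrows contribute a quadratic perturbation of the potential on the fibres of a $G_\alpha$-equivariant vector bundle over the product $\prod_j\mathcal{N}^{(j)}$ of individual-block moduli. Parts~(2)--(3) of Lemma~\ref{quad_lemma}, the motivic Thom--Sebastiani theorem, and Proposition~\ref{BittnerProp} applied to the Galois $G_\alpha$-cover then produce a factorisation of the virtual motive. The block-$0$ factor is identified with $[\mathcal{Q}^{n',\emptyset}\xrightarrow{\HC^{xy}}\Sym^{n'}(\mathbb{A}^2)]_{\vir}$, while the $j\geq 1$ blocks assemble, after the symmetrisation $\pi_{G_\alpha}$ and the pushforward $\cup_!$, into the second factor of the stated formula, each block of size $i$ contributing $[\mathcal{Q}^{0,(i)}\xrightarrow{\HC^{xy}}\Sym^i(\mathbb{A}^1)\times\mathbb{A}^1]_{\vir}$.

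The main technical obstacle will be the verification, in part~(1), that $T_\lambda$ preserves $\Tr(W_r)$ exactly rather than only up to functions vanishing on the critical locus; this is made possible by the explicit correction $a''_2(a''_2+\lambda\Id)^{-1}a'_1$, which requires the invertibility hypothesis defining $A_\lambda$ and has no direct analogue for the $b''_1$-translation used in Lemma~\ref{fixed_BBS}(1).
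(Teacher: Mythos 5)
Your proposal is correct. Part (2) is essentially the paper's own argument: the paper likewise introduces a quiver $Q'_\alpha$ refining $Q_r$ by an ordered block decomposition of $\rho_{\underline{2}}$ preserved by $\rho(a''_2)$ (nilpotent block of size $n'$, invertible blocks of sizes given by $\alpha$ with disjoint spectra), kills the off-diagonal arrows on the critical locus via the superpotential relations, and concludes through Lemma \ref{quad_lemma}, Thom--Sebastiani and Proposition \ref{BittnerProp} exactly as in Lemma \ref{fixed_BBS}\,(2).

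Part (1) is where you genuinely diverge. The paper does not translate $a''_2$: it uses the multiplicative action $z\cdot\rho=(z\rho(a'_1),z^{-1}\rho(a'_2),\rho(b'_1),z\rho(a''_1),z^{-1}\rho(a''_2),\rho(b''_1))$, which preserves $\Tr(W_r)$ and rescales the spectrum of $\rho(a''_2)$; it then normalises via the slice $\mathcal{U}_n=\{\det\rho(a''_2)=1\}$, obtains a $\mu_n$-Galois cover of the locus where $\rho(a''_2)$ is invertible, and descends the resulting product decomposition of the vanishing cycle using Proposition \ref{BittnerProp}, restricting finally to the unipotent locus $\mathcal{W}_n$. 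Your additive translation $T_\lambda$ with the corrected framing $\rho(a'_1)\mapsto\rho(a''_2)(\rho(a''_2)+\lambda\Id)^{-1}\rho(a'_1)$ is a legitimate, more hands-on alternative: the anomaly computation checks out (the two $b''_1$-terms contribute $\lambda\Tr(a''_1b''_1-b''_1a''_1)=0$, and the correction cancels $-\lambda\Tr(b'_1a'_1)$ exactly by cyclicity of the trace), the map is $\GL_n$-equivariant hence descends to $\mathcal{N}_n^{\circ\circ}$, it preserves stability and the nilpotency of $\rho(b''_1)$ wherever both $\rho(a''_2)$ and $\rho(a''_2)+\lambda\Id$ are invertible, and it fixes $\HC^x$ and $\HC^z$ on the nose while translating $\HC^y$ — so no equivariant descent or $\mu_n$-cover is needed. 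The paper's scaling, in exchange, is uniform with the torus-equivariance arguments used throughout and produces auxiliary loci it reuses. The one step you should write out is the globalisation at the end: the pointwise isomorphisms must be assembled into the algebraic family $(\rho,y_0)\mapsto T_{y_0-1}(\rho)$ over the open subset of $\mathcal{N}_n^{\circ\circ}\times\mathbb{G}_m$ where $\rho(a''_2)+(y_0-1)\Id$ is invertible; this family intertwines $h_n\oplus 0$ with $h_n$, and the equality of relative virtual motives then follows from smooth-pullback compatibility of $[\phi_{h_n}]$, in the same way as the product decomposition $\mathcal{T}_n\times\mathbb{A}^1\cong\mathcal{N}_n^{\circ\circ}$ is exploited in Lemma \ref{fixed_BBS}\,(1).
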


See the remarks following the statement of Lemma \ref{fixed_BBS}, as well as Equation \eqref{def:boxtimes_nu}, for some guidance on how to read the right hand side of the second equation.
\begin{proof}
Let $\mathbb{G}_m$ act on $\mathcal{N}_n^{\circ\circ}$ via 
\begin{multline*}
z\cdot (\rho(a'_1),\rho(a'_2),\rho(b'_1),\rho(a''_1),\rho(a''_2),\rho(b''_1))\\
=(z\rho(a'_1),z^{-1}\rho(a'_2),\rho(b'_1),z\rho(a''_1),z^{-1}\rho(a''_2),\rho(b''_1)).
\end{multline*}
Then $\Tr(W_r)$ is invariant under the $\mathbb{G}_m$-action.  We define the space $\mathcal{U}_n\subset \mathcal{N}_n^{\circ\circ}$ by the condition that $\det(\rho(a''_2))=1$.  Consider the morphism 
\[
J\colon \mathcal{U}_n\times\mathbb{G}_m\rightarrow \mathcal{N}_n^{\circ\circ}
\]
given by restricting the $\mathbb{G}_m$-action.  This is a Galois cover of the open subscheme $\mathcal{Y}\subset\mathcal{N}_n^{\circ\circ}$ defined by the condition that $\rho(a''_2)$ is invertible.  Let $\mathcal{V}_n\subset\mathcal{U}_n$ be the subvariety defined by the condition that $\rho(a''_2)$ has only one eigenvalue.  Then as a $\mu_n$-equivariant variety, there is an isomorphism
\[
\mathcal{V}_n\cong\mathcal{W}_n\times\mu_n
\]
where the action on the first factor is trivial, and is the action of group multiplication on the second.  Here $\mathcal{W}_n\subset \mathcal{U}_n$ is the subvariety defined by the condition that $\rho(a_2'')$ is unipotent.

Recall from Section \ref{direct_QW} the function $h_n$ on $\mathcal{N}_n^{\circ\circ}$.  The function $h_n\circ J$ factors through a function $\tilde{h}$ on $\mathcal{U}_n$, since $h$ is $\mathbb{G}_m$-invariant.  By Proposition \ref{BittnerProp}
\[
(\mathcal{Y}\rightarrow \mathcal{N}^{\circ\circ}_n)^*\bigl[\phi_{h_n}\bigr]_{\mathcal{N}^{\circ\circ}_n}=(\mathcal{Y}\rightarrow \mathcal{N}^{\circ\circ}_n)^*\pi_{\mu_n}J_!([\phi_{\tilde{h}}]_{\mathcal{U}_n}\boxtimes [\mathbb{G}_m\xrightarrow{\id_{\mathbb{G}_m}}\mathbb{G}_m]).
\]
It follows that 
\begin{align*}
    l_n^*\tilde{\kappa}_n^*\iota_n^*\HC_![\phi_{h_n}]_{\mathcal{N}_n^{\circ\circ}}&=l_n^*\tilde{\kappa}_n^*\iota_n^*\HC_!\pi_{\mu_n}J_!([\phi_{\tilde{h}}]_{\mathcal{U}_n}\boxtimes [\mathbb{G}_m\xrightarrow{\id_{\mathbb{G}_m}}\mathbb{G}_m])\\
    &=l_n^*\tilde{\kappa}_n^*\iota_n^*\HC_!\pi_{\mu_n}J_!([\phi_{\tilde h}]_{\mathcal{V}_n}\boxtimes [\mathbb{G}_m\xrightarrow{\id_{\mathbb{G}_m}}\mathbb{G}_m])\\
    &=l_n^*\tilde{\kappa}_n^*\iota_n^*\HC_!J_!([\phi_{\tilde h}]_{\mathcal{W}_n}\boxtimes [\mathbb{G}_m\xrightarrow{\id_{\mathbb{G}_m}}\mathbb{G}_m]),
\end{align*}
implying the triviality along the $\mathbb{G}_m$-factor required for part (1).  The second equality follows from the fact that only the part of the motive over $\mathcal{V}_n$ can contribute to the pullback along $\tilde{\kappa}_n$, since by definition this is the pullback along the locus where $\rho(a''_2)$ has only one eigenvalue.

\smallbreak
For part (2), consider the quiver $Q'_{\alpha}$ defined as follows.  Set
\begin{align*}
(Q'_{\alpha})_0&=\Set{\underline{2}_0,\ldots,\underline{2}_{l(\alpha)},\underline{\infty}}\\
(Q'_{\alpha})_1&=\Set{a'_{1,i},a'_{2,i},b'_{1,i},a''_{1,i}}_{0\leq i\leq l(\alpha)}\cup\Set{b''_{1,i,j},a''_{2,i,j}}_{0\leq i,j\leq l(\alpha)}
\end{align*}
with 
\begin{align*}
t(a'_{1,i})=t(a'_{2,i})=s(b'_{1,i})&=\underline{\infty}\\
s(a'_{1,i})=s(a'_{2,i})=t(b'_{1,i})=t(a''_{2,i})=s(a''_{2,i})&=\underline{2}_i\\
t(a''_{1,i,j})=t(b''_{1,i,j})&=\underline{2}_j\\
s(a''_{1,i,j})=s(b''_{1,i,j})&=\underline{2}_i.
\end{align*}
We give this quiver the dimension vector $\underline{d}=(n',1^{\alpha_1},\ldots,r^{\alpha_r},1)$ and the stability condition $(-1,\ldots,-1,n)$.  As in the proof of Lemma \ref{fixed_BBS}, a (stable) $Q'_{\alpha}$-representation corresponds to a ($\zeta'$-stable) $Q_r$-representation $\rho$ along with a direct sum decomposition of $\rho_2$ that is respected by the operator $\rho(a''_2)$.  A point in $\mathcal{Q}^{n',\alpha}$ gives rise to a $\underline{d}$-dimensional $Q'_{\alpha}$-representation, in a formal neighborhood of which the forgetful map is a Galois covering --- from this point the proof proceeds exactly as in the proof of Lemma \ref{fixed_BBS}.
\end{proof}

The following is proved in the same way as Corollary \ref{firstc}.

\begin{corollary}
\label{secondc}
Define classes $\Psi^{\#}_n \in \mathcal{M}_{\Sym^n(\mathbb{A}^1)}$ for $\#\in \set{\nilp,\unip}$, via
\[
\sum_{n\geq 0}\,(-1)^n\bigl[Q_L^{\bullet,\#,\nilp,n}\xrightarrow{\HC^x}\Sym^n(\mathbb{A}^1)\bigr]_{\vir}=\prExp_{\cup}\left(\sum_{n\geq 1}\Psi_n^{\#}\right).
\]
Then 
\begin{multline*}
\sum_{n\geq 0}\,(-1)^n\bigl[Q_L^{\bullet,\bullet,\nilp,n}\xrightarrow{\HC^{xy}}\Sym^n(\mathbb{A}^2)\bigr]_{\vir}\\
=\prExp_{\cup}\left(\sum_{n\geq 1}\left(\Sym^n(\mathbb{A}^1)\times\mathbb{G}_m\xrightarrow{\tilde{\kappa}_n\circ l_n}\Sym^n(\mathbb{A}^2)\right)_!\bigl(\Psi^{\unip}_n\boxtimes [\mathbb{G}_m\xrightarrow{\id}\mathbb{G}_m]\bigr) \right.\\
\left. {} +\left(\Sym^n(\mathbb{A}^1)\xrightarrow{\kappa^0_n}\Sym^n(\mathbb{A}^2)\right)_!\Psi^{\nilp}_n \vphantom{} \right).
\end{multline*}
\end{corollary}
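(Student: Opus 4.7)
The plan is to mirror the proof of Corollary \ref{firstc}, now using the $\HC^y$-stratification of $Q_L^{\bullet,\bullet,\nilp,n}$ provided by Lemma \ref{twothirds}, in place of the $\HC^z$-stratification used in Corollary \ref{firstc}. First I would decompose
\[
Q_L^{\bullet,\bullet,\nilp,n}=\bigsqcup_{n'+|\alpha|=n}\mathcal{Q}^{n',\alpha}
\]
according to how many points in the image of $\HC^y$ sit at $0$ (namely $n'$) and how the remaining $n-n'$ points group into a partition $\alpha$. By Lemma \ref{twothirds}\,(2), each stratum motive factors as a $\boxtimes_\cup$-convolution of a $y=0$ piece $[\mathcal{Q}^{n',\emptyset}\to\Sym^{n'}(\mathbb{A}^2)]_{\vir}$ with pieces $[\mathcal{Q}^{0,(i)}\to\Sym^i(\mathbb{A}^2)]_{\vir}$ indexed by the parts of $\alpha$.

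Next I would identify each of these two types of pieces with data pushed forward from $\Sym(\mathbb{A}^1)$. The stratum $\mathcal{Q}^{n',\emptyset}$, consisting of configurations with all $y$-coordinates equal to zero, is the pushforward along $\kappa^0_{n'}$ of $[Q_L^{\bullet,\nilp,\nilp,n'}\xrightarrow{\HC^x}\Sym^{n'}(\mathbb{A}^1)]_{\vir}$, because imposing $y=0$ on supports amounts to requiring the operator $\cdot y$ to be nilpotent. The stratum $\mathcal{Q}^{0,(i)}$, consisting of configurations with a single common nonzero $y$-coordinate, is by Lemma \ref{twothirds}\,(1) the pushforward along $\tilde{\kappa}_i\circ l_i$ of $[Q_L^{\bullet,\unip,\nilp,i}\xrightarrow{\HC^x}\Sym^i(\mathbb{A}^1)]_{\vir}\boxtimes[\mathbb{G}_m\xrightarrow{\id}\mathbb{G}_m]$: after translating the common $y$-value to $1$ the relevant operator becomes unipotent, and constancy in the $\mathbb{G}_m$-direction is precisely the content of part (1) of that lemma.

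Third, I would invoke Proposition \ref{exp_prop} separately for the two types of contributions (with $U$ a point for the $y=0$ case and $U=\mathbb{G}_m$ for the $y\ne 0$ case), together with the multiplicativity of $\prExp_\cup$ on sums. This packages the whole stratification into a single $\prExp_\cup$ of the sum
\[
\sum_{n\geq 1}\kappa^0_{n,!}\,M_n^{\nilp}\,+\,\sum_{n\geq 1}(\tilde{\kappa}_n\circ l_n)_!\bigl(M_n^{\unip}\boxtimes[\mathbb{G}_m\xrightarrow{\id}\mathbb{G}_m]\bigr),
\]
where the $M_n^{\#}$ are the primitive classes uniquely determined by
\[
\sum_{n\geq 0}(-1)^n\bigl[Q_L^{\bullet,\#,\nilp,n}\xrightarrow{\HC^x}\Sym^n(\mathbb{A}^1)\bigr]_{\vir}=\prExp_\cup\Bigl(\sum_{n\geq 1}M_n^{\#}\Bigr).
\]
Comparing with the defining relations in the statement identifies $M_n^{\#}=\Psi_n^{\#}$, and injectivity of $\prExp_\cup$ concludes.

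The main bookkeeping obstacle is ensuring that the sign $(-1)^n$ on the left distributes correctly across the convolution, decomposing as $(-1)^{n'}\prod_i((-1)^i)^{\alpha_i}$ along $n=n'+\sum_i i\alpha_i$. I expect this to be handled by the identity $(-[A])^{\otimes k}=(-1)^k[A]^{\otimes k}$ of \eqref{SymSign}, exactly as in the proof of Corollary \ref{firstc}, so that the signs get absorbed into the alternating sums producing $\Psi^{\nilp}_n$ and $\Psi^{\unip}_n$.
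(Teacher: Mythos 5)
Your proposal is correct and follows essentially the same route as the paper: the paper simply states that Corollary \ref{secondc} "is proved in the same way as Corollary \ref{firstc}", i.e.\ one stratifies $Q_L^{\bullet,\bullet,\nilp,n}$ into the pieces $\mathcal{Q}^{n',\alpha}$, applies Lemma \ref{twothirds}\,(2) together with \eqref{SymSign} to factor each stratum, identifies the $y=0$ and common-nonzero-$y$ pieces with the $\nilp$ and $\unip$ classes pushed forward along $\kappa^0_n$ and $\tilde{\kappa}_n\circ l_n$ respectively (the latter via part (1) of the lemma), and assembles everything with Proposition \ref{exp_prop} and injectivity of $\prExp_{\cup}$. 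Your spelled-out version, including the sign bookkeeping via \eqref{SymSign}, matches this intended argument.
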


We are now two thirds of the way towards showing that the relative DT invariants are fully punctual over $\Sym(\mathbb{A}^3)$.  Since the proof of Lemma \ref{finalred1} is almost identical to that of Lemma \ref{twothirds}, while the proof of Lemma \ref{finalred2} is strictly simpler, we omit them.  Before stating them, we introduce one last stratification; for $n'\leq n$, and $\alpha\vdash n-n'$, let
\[
\mathcal{P}^{n',\alpha}\subset Q_L^{\bullet,\nilp,\nilp,n}
\]
be the preimage under the map
\[
Q_L^{\bullet,\nilp,\nilp,n}\xrightarrow{\HC^x}\Sym^n(\mathbb{A}^1)
\]
of the space of tuples of length $n'$ at the origin, and for which the partition type of the tuple away from the origin is $\alpha$.  Likewise, for $\alpha\vdash n$ we define
\[
\mathcal{U}^{\alpha}\subset Q_L^{\bullet,\unip,\nilp,n}
\]
to be the preimage of $\Sym^{\alpha}(\mathbb{A}^1)$ under $Q_L^{\bullet,\unip,\nilp,n}\xrightarrow{\HC^x}\Sym^n(\mathbb{A}^1)$. Let also $\Delta_n\colon \A^1 \ra \Sym^n(\A^1)$ be the diagonal.

\begin{lemma}\label{finalred1}
\begin{enumerate}
    \item 
    There is an equality
    \begin{multline*}
    (\mathbb{G}_m\xrightarrow{\Delta_n\lvert_{\mathbb{G}_m}}\Sym^n(\mathbb{A}^1))^*\bigl[Q_L^{\bullet,\nilp,\nilp,n}\xrightarrow{\HC^x} \Sym^n(\mathbb{A}^1)\bigr]_{\vir}\\
    =(\{1\}\xrightarrow{\Delta_n\lvert_{\{1\}}}\Sym^n(\mathbb{A}^1))^*\bigl[Q_L^{\bullet,\nilp,\nilp,n}\xrightarrow{\HC^x} \Sym^n(\mathbb{A}^1)\bigr]_{\vir}\boxtimes \bigl[\mathbb{G}_m\xrightarrow{\id}\mathbb{G}_m\bigr]
    \end{multline*}
    in $\mathcal{M}_{\G_m}$.  In other words, away from $0$, the motive 
    $\Delta_n^*[Q_L^{\bullet,\nilp,\nilp,n}\rightarrow \Sym^n(\mathbb{A}^1)]_{\vir}$ is constant.
    \item
    Moreover, there is an equality 
    \begin{multline*}
    \left[\mathcal{P}^{n',\alpha}\xrightarrow{\HC^x}\Sym^n(\mathbb{A}^1)\right]_{\vir}\\
    = \left[\mathcal{P}^{n',\emptyset}\xrightarrow{\HC^x}\Sym^{n'}(\mathbb{A}^1)\right]_{\vir}\boxtimes_{\cup}\cup_!\pi_{G_{\alpha}}\left(\underset{i\lvert \alpha_i\neq 0}{\Boxtimes}
    \left[\mathcal{P}^{0,(i)} \xrightarrow{\HC^x} \Sym^{(i)}(\mathbb{A}^1)\right]^{\otimes\alpha_i}_{\vir}\right).
    \end{multline*}
\end{enumerate}
\end{lemma}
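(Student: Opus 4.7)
The argument is parallel to that of Lemma \ref{twothirds}, with the roles of the operators $\rho(a''_1)$ and $\rho(a''_2)$ interchanged: the eigenvalues of $\rho(a''_1)$ on a stable $\C(Q_r,W_r)$-module representing a point of $Q^n_L \subset \mathcal{N}_n^{\circ\circ}$ record the $x$-coordinates of the support of the corresponding quotient sheaf, and are thus exactly what $\HC^x$ picks out.

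For part (1), we reuse the same $\G_m$-action on $\mathcal N_n^{\circ\circ}$ that was employed in Lemma \ref{twothirds},
\[
z\cdot(\rho(a'_1),\rho(a'_2),\rho(b'_1),\rho(a''_1),\rho(a''_2),\rho(b''_1))=(z\rho(a'_1),z^{-1}\rho(a'_2),\rho(b'_1),z\rho(a''_1),z^{-1}\rho(a''_2),\rho(b''_1)),
\]
which preserves $\Tr(W_r)$, stability, and the subvariety $Q_L^{\bullet,\nilp,\nilp,n}$ (nilpotency being preserved by nonzero scaling), and now scales the eigenvalues of $\rho(a''_1)$ by $z$. Define $\mathcal U'_n\subset\mathcal N_n^{\circ\circ}$ by the condition $\det(\rho(a''_1))=1$ and let $J'\colon\mathcal U'_n\times\G_m\ra\mathcal N_n^{\circ\circ}$ be the restriction of the action; this is a $\mu_n$-Galois cover of the open subscheme $\mathcal Y'\subset\mathcal N_n^{\circ\circ}$ on which $\rho(a''_1)$ is invertible. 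Inside $\mathcal U'_n$, the closed subvariety $\mathcal V'_n$ on which $\rho(a''_1)$ has only one eigenvalue splits $\mu_n$-equivariantly as $\mathcal W'_n\times\mu_n$, with $\mathcal W'_n$ the subvariety where $\rho(a''_1)$ is unipotent. Pullback along $\Delta_n|_{\G_m}$ detects only the contribution of $\mathcal V'_n\times\G_m$, so applying Proposition \ref{BittnerProp} together with the $\G_m$-invariance of the pullback of $\Tr(W_r)$ to $\mathcal U'_n\times\G_m$ produces the desired factorisation by $[\G_m\xrightarrow{\id}\G_m]$.

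For part (2), we introduce an extended quiver $Q''_\alpha$ with vertices $\Set{\underline 2_0,\ldots,\underline 2_{l(\alpha)},\underline\infty}$ and arrows arranged so that a $\dd$-dimensional $\zeta_\alpha$-stable representation, for $\dd=(n',1^{\alpha_1},\ldots,r^{\alpha_r},1)$ and $\zeta_\alpha=(-1,\ldots,-1,n)$, corresponds to a stable $Q_r$-representation $\rho$ together with an ordered direct sum decomposition of $\rho_{\underline 2}$ preserved by $\rho(a''_1)$, the $0$-indexed summand playing the role of the nilpotent block. The forgetful morphism becomes a generically $G_\alpha$-Galois cover over $\mathcal P^{n',\alpha}$ pulling back $\Tr(W_r)$ to a potential $W''_\alpha$. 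On the open stratum where the eigenvalue spectra of the blocks are pairwise disjoint, the superpotential relations force the off-diagonal arrows between distinct $\underline 2_i$-blocks to vanish on the critical locus, so $W''_\alpha$ decomposes as a sum of the block potentials plus a quadratic function in the fibres of a $G_\alpha$-equivariant vector bundle admitting a $V_-\oplus V_+$ splitting on which the potential vanishes; combining Lemma \ref{quad_lemma}\,(3) to eliminate the cross-terms, the motivic Thom--Sebastiani theorem to separate the blocks, and Proposition \ref{BittnerProp} to descend along the Galois cover yields the claimed convolution formula. The main technical point, exactly as in Lemma \ref{twothirds}\,(2), is checking that the vector bundle of cross-term arrows is $G_\alpha$-equivariantly split as $V_-\oplus V_+$ with the residual quadratic form nondegenerate, which relies on the specific shape of the Klebanov--Witten potential.
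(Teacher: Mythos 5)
Your proof is correct and is precisely the argument the paper intends: the authors omit the proof of this lemma, stating it is ``almost identical to that of Lemma \ref{twothirds}'', and your write-up carries out that adaptation faithfully --- interchanging the roles of $\rho(a''_1)$ and $\rho(a''_2)$, checking that the $\G_m$-action preserves the two nilpotency constraints defining $Q_L^{\bullet,\nilp,\nilp,n}$, and constructing the analogous block-decomposed framed quiver (with the $0$-indexed nilpotent block) for part (2). No gaps.
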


\begin{lemma}
\label{finalred2}
\begin{enumerate}
    \item 
    There is an equality
    \begin{multline*}
    (\mathbb{A}^1\xrightarrow{\Delta_n}\Sym^n(\mathbb{A}^1))^*\left[Q_L^{\bullet,\unip,\nilp,n}\xrightarrow{\HC^x} \Sym^n(\mathbb{A}^1)\right]_{\vir}\\
    =(\{0\}\xrightarrow{\Delta_n\lvert_{\{0\}}}\Sym^n(\mathbb{A}^1))^*\left[Q_L^{\bullet,\unip,\nilp,n}\xrightarrow{\HC^x} \Sym^n(\mathbb{A}^1)\right]_{\vir}\boxtimes \bigl[\mathbb{A}^1\xrightarrow{\id}\mathbb{A}^1\bigr].
    \end{multline*}
    in $\mathcal{M}_{\mathbb{A}^1}$.  In other words, the motive $\Delta_n^*[Q_L^{\bullet,\unip,\nilp,n}\rightarrow \Sym^n(\mathbb{A}^1)]_{\vir}$ is constant.
    \item
    Moreover, there is an equality
    \begin{align*}
    \left[\mathcal{U}^{\alpha}\xrightarrow{\HC^x}\Sym^n(\mathbb{A}^1)\right]_{\vir}=&\cup_!\pi_{G_{\alpha}}\left(\underset{i\lvert \alpha_i\neq 0}{\Boxtimes}\left[\mathcal{U}^{(i)} \xrightarrow{\HC^x} \Sym^{(i)}(\mathbb{A}^1)\right]^{\otimes\alpha_i}_{\vir}\right).
    \end{align*}
\end{enumerate}
\end{lemma}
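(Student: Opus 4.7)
The plan is to adapt the constructions from the proofs of Lemmas \ref{fixed_BBS}, \ref{twothirds} and \ref{finalred1} to the present situation. The key geometric observation is that the closed conditions $\unip$ on $y$ and $\nilp$ on $z$ force the set-theoretic support of a quotient parametrised by $Q_L^{\bullet,\unip,\nilp,n}$ onto the line $\{(x,1,0):x\in\mathbb{A}^1\}$, which is disjoint from $L$.  In particular, the operator $\rho(a''_2)$ is invertible (indeed unipotent) throughout, and I will exploit this to lift the $x$-translation on $\mathbb{A}^1$ to a $\mathbb{G}_a$-action on the ambient moduli space.

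For part (1), I would work over the open subscheme $\mathcal{N}_n^{\star}\subset \mathcal{N}_n^{\circ\circ}$ cut out by $\det(\rho(a''_2))\neq 0$, which contains $Q_L^{\bullet,\unip,\nilp,n}$ as a closed subvariety of the critical locus.  On $\mathcal{N}_n^{\star}$ I would define a $\mathbb{G}_a$-action by the combined translation
\[
t\cdot \rho(a''_1)=\rho(a''_1)+t\Id,\qquad t\cdot\rho(a'_1)=\rho(a'_1)+t\cdot\rho(a''_2)^{-1}\rho(a'_2),
\]
fixing all other operators.  A direct computation confirms that $\Tr(W_r)$ is invariant: translating $a''_1$ shifts the trace by $t\Tr(b'_1 a'_2)$, which is cancelled by $-t\Tr(a''_2 b'_1 \rho(a''_2)^{-1}\rho(a'_2))=-t\Tr(b'_1 a'_2)$ coming from the shift of $a'_1$, after applying cyclicity of the trace.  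One checks that the action is $\GL$-equivariant, descends to $\mathcal{N}_n^{\star}$, preserves $\zeta$-stability, and induces the additive translation by $t$ on $\Sym^n(\mathbb{A}^1)$ through $\HC^x$.  Following the template of Lemma \ref{fixed_BBS}(1), setting $\mathcal{T}'_n\subset\mathcal{N}_n^{\star}$ equal to the closed locus $\Tr(\rho(a''_1))=0$ then yields an isomorphism $\mathcal{T}'_n\times\mathbb{A}^1\,\widetilde{\ra}\,\mathcal{N}_n^{\star}$ along which $\Tr(W_r)$ is pulled back from $\mathcal{T}'_n$, and the desired constancy along the $\mathbb{A}^1$-factor is immediate.

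For part (2), I would mimic the quiver-theoretic argument of Lemma \ref{fixed_BBS}(2): given $\alpha\vdash n$, form a quiver $\tilde{Q}_\alpha$ with vertex set $\{\underline{2}_1,\ldots,\underline{2}_{l(\alpha)},\underline{\infty}\}$ and arrows obtained from $Q_r$ by splitting each of the loops $a''_1,a''_2,b''_1$ at $\underline{2}$ into the appropriate collection of arrows among the new vertices, with the decomposition of $\rho_{\underline{2}}$ now dictated by the generalised eigenspaces of $\rho(a''_1)$ rather than $\rho(b''_1)$.  The forgetful map to $Q_r$-representations becomes a Galois cover over the locally closed substack where the eigenvalue partition of $\rho(a''_1)$ matches $\alpha$, and Thom--Sebastiani together with part (3) of Lemma \ref{quad_lemma} lets me discard the quadratic piece of the lifted potential.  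This delivers the stated factorisation, and in contrast to Lemma \ref{finalred1}(2) no distinguished ``at $0$'' term appears: the constraints on $y$ and $z$ are uniform along $\mathbb{A}^1$, so every part of $\alpha$ contributes symmetrically.

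The hard part will be verifying the cancellation of the trace shifts in part (1) and confirming that the combined $\mathbb{G}_a$-action genuinely descends to the GIT quotient while preserving $\zeta$-stability.  Once these technicalities are handled, the remainder of each argument parallels the earlier lemmas nearly verbatim, in accordance with the remark in the text that the proof is strictly simpler than that of Lemma \ref{finalred1}.
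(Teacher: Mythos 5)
The paper actually omits the proof of this lemma entirely, remarking only that it is ``strictly simpler'' than that of Lemma \ref{finalred1}, so there is no written argument to compare against; your reconstruction is correct and is exactly the kind of adaptation of Lemmas \ref{fixed_BBS}, \ref{twothirds} and \ref{finalred1} that the authors are gesturing at. The one point where you genuinely had to supply a new idea is part (1): since constancy is asserted over all of $\mathbb{A}^1$ and not just $\mathbb{G}_m$, the scaling-plus-Galois-cover mechanism of Lemma \ref{twothirds} cannot work, and a translation-type action is needed even though $x$-translation does not preserve $\mathscr{I}_L$. Your compensated $\mathbb{G}_a$-action is the right fix: the trace cancellation does check out (the composite for the cyclic word $a''_2b'_1a'_1$ is forced to be $\rho(b'_1)\rho(a''_2)\rho(a'_1)$ by source/target considerations, so the substitution produces $\rho(a''_2)\rho(a''_2)^{-1}\rho(a'_2)=\rho(a'_2)$ on the nose), stability is preserved because $\rho(a''_2)^{-1}$ is a polynomial in $\rho(a''_2)$ by Cayley--Hamilton, and conceptually your action is just the conjugate of the obvious $x$-translation on $\Hilb^n(\mathbb{A}^3)$ under the identification of the $y$-invertible locus of $Q^n_L$ with the corresponding locus of the Hilbert scheme (cf.\ the coordinate change in Proposition \ref{prop:remove_pr_pt}), which explains why this case is ``simpler''. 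Part (2) is a routine rerun of the eigenvalue-splitting quiver argument with $\rho(a''_1)$ in place of $\rho(b''_1)$, and your observation that no distinguished stratum at $0$ appears—because the unipotent/nilpotent constraints pin the support to a line disjoint from $L$—is precisely the correct structural difference from Lemma \ref{finalred1}(2).
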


By the same argument as Corollary \ref{firstc} we deduce the following.
\begin{corollary}
\label{thirdc1}
Define classes $\Lambda^{\#}_n$ for $\#\in \set{\nilp,\unip}$ in $\mathcal{M}_{\mathbb{C}}$ via
\begin{equation}
    \label{tceq1}
\sum_{n\geq 0}\,(-1)^n\bigl[Q_L^{\#,\nilp,\nilp,n}\bigr]_{\vir}=\prExp\left(\sum_{n\geq 1}\Lambda_n^{\#}\right).
\end{equation}
Then
\begin{multline*}
\sum_{n\geq 0}\,(-1)^n\bigl[Q_L^{\bullet,\nilp,\nilp,n}\xrightarrow{\HC^x}\Sym^n(\mathbb{A}^1)\bigr]_{\vir}\\
=\prExp_{\cup}\left(\sum_{n\geq 1}\Lambda_n^{\unip}\boxtimes \bigl[\mathbb{G}_m\xrightarrow{\Delta_n\lvert_{\mathbb{G}_m}}\Sym^n(\A^1)\bigr]+\Lambda_n^{\nilp}\boxtimes \bigl[0\hookrightarrow \Sym^n(\A^1)\bigr]\right).
\end{multline*}
\end{corollary}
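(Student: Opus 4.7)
The plan is to mimic the proof of Corollary \ref{firstc}, with Lemma \ref{finalred1} replacing Lemma \ref{fixed_BBS}, the novel feature being that the base $\mathbb{A}^1$ now splits into the two disjoint loci $\{0\}\sqcup\mathbb{G}_m$, which is precisely why the exponent on the right-hand side has two summands.

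First, I would stratify $Q_L^{\bullet,\nilp,\nilp,n}$ over $\Sym^n(\mathbb{A}^1)$ by counting how many of the $n$ points sit at the origin: writing $n = n' + |\alpha|$ with $\alpha\vdash n-n'$ recording the partition type of the $x$-coordinates away from $0$,
\[
\bigl[Q_L^{\bullet,\nilp,\nilp,n}\xrightarrow{\HC^x}\Sym^n(\mathbb{A}^1)\bigr]_{\vir}=\sum_{n'+|\alpha|=n}\bigl[\mathcal{P}^{n',\alpha}\xrightarrow{\HC^x}\Sym^n(\mathbb{A}^1)\bigr]_{\vir}.
\]
Applying Lemma \ref{finalred1}(2) together with the sign identity \eqref{SymSign}, each stratum factorises under $\boxtimes_{\cup}$ into an \emph{origin part} $[\mathcal{P}^{n',\emptyset}]_{\vir}$ and a $G_{\alpha}$-symmetrised product of diagonal contributions $[\mathcal{P}^{0,(i)}]_{\vir}$. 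Two further identifications are needed: $\mathcal{P}^{n',\emptyset}=Q_L^{\nilp,\nilp,\nilp,n'}$, supported on $\{0\}\hookrightarrow\Sym^{n'}(\mathbb{A}^1)$, and, by Lemma \ref{finalred1}(1) specialised at $x=1$,
\[
\bigl[\mathcal{P}^{0,(i)}\to\mathbb{G}_m\bigr]_{\vir}=\bigl[Q_L^{\unip,\nilp,\nilp,i}\bigr]_{\vir}\boxtimes\bigl[\mathbb{G}_m\xrightarrow{\id}\mathbb{G}_m\bigr],
\]
using that $x=1$ imposes exactly the unipotency condition on the $x$-operator.

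Next, I would exploit the multiplicativity of $\prExp_{\cup}$ under addition of exponents to split the right-hand side into an origin factor and a non-origin factor. Because $\cup$ sends pairs of configurations supported at $\{0\}$ back to configurations at $\{0\}$, the origin factor $\prExp_{\cup}(\sum_n\Lambda_n^{\nilp}\boxtimes[0\hookrightarrow\Sym^n(\mathbb{A}^1)])$ collapses to the ordinary motivic exponential pushed forward along $0\hookrightarrow\Sym^{\bullet}(\mathbb{A}^1)$, so by the defining relation \eqref{tceq1} it equals $\sum_{n'}(-1)^{n'}[Q_L^{\nilp,\nilp,\nilp,n'}]_{\vir}\boxtimes[0\hookrightarrow\Sym^{n'}(\mathbb{A}^1)]$, matching precisely the $\mathcal{P}^{n',\emptyset}$ contributions. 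For the non-origin factor, I would invoke Proposition \ref{exp_prop} with $U=\mathbb{G}_m$ and $V=\pt$ to expand
\[
\prExp_{\cup}\Bigl(\sum_n\Lambda_n^{\unip}\boxtimes\bigl[\mathbb{G}_m\xrightarrow{\Delta_n|_{\mathbb{G}_m}}\Sym^n(\mathbb{A}^1)\bigr]\Bigr)
\]
as a sum over partitions $\alpha$; after substituting the absolute identity $\prExp(\sum_n\Lambda_n^{\unip})=\sum_n(-1)^n[Q_L^{\unip,\nilp,\nilp,n}]_{\vir}$, the resulting $\alpha$-terms are precisely the $\mathcal{P}^{0,\alpha}$ contributions produced by Lemma \ref{finalred1}(2).

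The main obstacle is the purely combinatorial one of checking that the partition-type stratification, combined with the $\boxtimes_{\cup}$-factorisation furnished by Lemma \ref{finalred1}, agrees term-by-term and with correct signs with the expansion of $\prExp_{\cup}$ of a sum supported on two disjoint loci. This is exactly the bookkeeping carried out in the proof of Corollary \ref{firstc} via Proposition \ref{exp_prop}, augmented by the essentially trivial observation that since $\{0\}$ and $\mathbb{G}_m$ are disjoint in $\mathbb{A}^1$, the $\cup$-convolution of any two motives supported respectively on $\Sym^{\bullet}\{0\}$ and $\Sym^{\bullet}\mathbb{G}_m$ is simply their juxtaposition, with no further mixing of partition types.
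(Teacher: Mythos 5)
Your proposal is correct and follows exactly the route the paper intends: the paper proves this corollary with the single sentence ``by the same argument as Corollary \ref{firstc}'', i.e.\ stratify by the strata $\mathcal{P}^{n',\alpha}$, apply Lemma \ref{finalred1} together with \eqref{SymSign}, and reassemble via Proposition \ref{exp_prop} and the multiplicativity of $\prExp_{\cup}$. Your additional observations --- that $\mathcal{P}^{n',\emptyset}=Q_L^{\nilp,\nilp,\nilp,n'}$, that the fibre of Lemma \ref{finalred1}(1) at $x=1$ is $Q_L^{\unip,\nilp,\nilp,i}$, and that the disjointness of $\{0\}$ and $\mathbb{G}_m$ makes the two exponential factors juxtapose without mixing --- are precisely the bookkeeping the paper leaves implicit.
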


\begin{corollary}
\label{thirdc2}
Define $\Lambda_n\in\mathcal{M}_{\mathbb{C}}$ via
\begin{equation}
\label{tceq2}
\sum_{n\geq 0}\,(-1)^n\bigl[Q_L^{\nilp,\unip,\nilp,n}\bigr]_{\vir}=\prExp\left(\sum_{n\geq 1}\Lambda_n\right).
\end{equation}
Then
\[
\sum_{n\geq 0}\,(-1)^n\bigl[Q_L^{\bullet,\unip,\nilp,n}\xrightarrow{\HC^x}\Sym^n(\mathbb{A}^1)\bigr]_{\vir}=\prExp_{\cup}\left(\sum_{n\geq 1}\Lambda_n\boxtimes \bigl[\A^1\xrightarrow{\Delta_n}\Sym^n(\A^1)\bigr]\right).
\]
\end{corollary}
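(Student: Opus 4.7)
The plan is to imitate the proof of Corollary \ref{firstc}, now using Lemma \ref{finalred2} in place of Lemma \ref{fixed_BBS} and applying Proposition \ref{exp_prop} in the degenerate case $V=\mathrm{pt}$. There is no new geometric input required beyond Lemma \ref{finalred2}, so the work is essentially bookkeeping and I anticipate no serious obstacle.

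First I would combine the scissor decomposition $Q_L^{\bullet,\unip,\nilp,n}=\coprod_{\alpha\vdash n}\mathcal{U}^\alpha$ (relative to $\Sym^n(\A^1)$) with Lemma \ref{finalred2}(2) and the sign identity \eqref{SymSign} to rewrite
\[
(-1)^n\bigl[\mathcal{U}^\alpha\xrightarrow{\HC^x}\Sym^n(\A^1)\bigr]_{\vir}=\cup_!\pi_{G_\alpha}\Biggl(\underset{i\,\mid\,\alpha_i\neq 0}{\Boxtimes}\bigl((-1)^i\bigl[\mathcal{U}^{(i)}\xrightarrow{\HC^x}\Sym^i(\A^1)\bigr]_{\vir}\bigr)^{\otimes\alpha_i}\Biggr)
\]
for every $\alpha\vdash n$. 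Summing over $\alpha$ and $n$ then expresses the full generating function on the left hand side of the claim as a plethystic sum built from the ``fully punctual'' contributions $[\mathcal{U}^{(i)}\to\Sym^i(\A^1)]_{\vir}$. Next, Lemma \ref{finalred2}(1) reinterprets each such contribution as the pushforward of an absolute class along the small diagonal: its content is that $\Delta_i^*[Q_L^{\bullet,\unip,\nilp,i}\to\Sym^i(\A^1)]_{\vir}$ is constant along $\A^1$ with fibre over $0\in\A^1$ equal to $[Q_L^{\nilp,\unip,\nilp,i}]_{\vir}$ (indeed $\mathcal{U}^{(i)}$ parametrises quotients whose $x$-support consists of a single point with multiplicity $i$, and the fibre over the origin imposes nilpotence of $\rho(a_1)$), so
\[
\bigl[\mathcal{U}^{(i)}\xrightarrow{\HC^x}\Sym^i(\A^1)\bigr]_{\vir}=[Q_L^{\nilp,\unip,\nilp,i}]_{\vir}\boxtimes\bigl[\A^1\xrightarrow{\Delta_i}\Sym^i(\A^1)\bigr].
\]

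At this point I would invoke Proposition \ref{exp_prop} with $U=\A^1$ and $V=\mathrm{pt}$, so that $\tilde{\iota}_i$ becomes $\Delta_i\colon \A^1\hookrightarrow \Sym^i(\A^1)$ and $\prExp_\cup$ over $\Sym(\mathrm{pt})\cong\mathbb{N}$ reduces to the ordinary power-series $\prExp$. Setting $B_i=(-1)^i[Q_L^{\nilp,\unip,\nilp,i}]_{\vir}$ and $A_i=\Lambda_i$, the defining relation \eqref{tceq2} is exactly $B=\prExp(A)$, and the proposition identifies the plethystic sum from the previous paragraph with $\prExp_\cup\bigl(\sum_{i\geq 1}\Delta_{i\,!}([\A^1\xrightarrow{\id}\A^1]\boxtimes\Lambda_i)\bigr)$, which equals $\prExp_\cup\bigl(\sum_{i\geq 1}\Lambda_i\boxtimes[\A^1\xrightarrow{\Delta_i}\Sym^i(\A^1)]\bigr)$, as required.

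The one point that requires verification is sign bookkeeping: applying \eqref{SymSign} factor by factor yields total sign $\prod_i(-1)^{i\alpha_i}=(-1)^{|\alpha|}=(-1)^n$, matching the signs prescribed by \eqref{tceq2}. Because the classes $\Lambda_n$ need not be effective, one should interpret $\prExp$ through the injectivity statement recalled in Section \ref{subsec:Exp} rather than literally as a sum of symmetric powers; this is harmless, as no step of the argument above relies on effectiveness.
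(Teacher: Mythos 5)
Your proposal is correct and is exactly the argument the paper intends: the paper proves Corollary \ref{thirdc2} only by the remark ``by the same argument as Corollary \ref{firstc}'', and your write-up spells out precisely that argument — stratify by partition type via Lemma \ref{finalred2}(2) together with \eqref{SymSign}, use Lemma \ref{finalred2}(1) to identify $[\mathcal{U}^{(i)}\to\Sym^i(\A^1)]_{\vir}$ with $[Q_L^{\nilp,\unip,\nilp,i}]_{\vir}\boxtimes[\A^1\xrightarrow{\Delta_i}\Sym^i(\A^1)]$, and assemble via Proposition \ref{exp_prop} (with $V=\pt$). The sign bookkeeping and the remark about using $\prExp$ (rather than $\Exp$) in the absence of effectiveness are both handled correctly.
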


\begin{lemma}
\label{sewing_lemma}
In the notation of Corollaries \ref{thirdc1} and \ref{thirdc2}, there is an equality
\[
\Lambda_n^{\unip}=\Lambda_n
\]
for all $n$.
\end{lemma}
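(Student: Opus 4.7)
The plan is to produce an involution of $Q^n_L$ that swaps the two loci and preserves the virtual motive. The existence of such an involution will immediately imply the equality of absolute virtual motives, and injectivity of $\prExp$ will then give the equality of the $\Lambda$'s.

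I would start by observing that the linear automorphism $\sigma\colon\A^3\rightarrow\A^3$ given by $(x,y,z)\mapsto(y,x,-z)$ preserves the line $L=V(x,y)$ and the ideal sheaf $\mathscr{I}_L$, hence induces an automorphism $\tau$ of $Q^n_L$. At the level of a quotient $[\mathscr{I}_L\onto N]$, pullback along $\sigma$ swaps the roles of multiplication by $x$ and by $y$ on $N$ while negating multiplication by $z$. Since negation preserves nilpotency, $\tau$ restricts to an isomorphism of schemes
\[
\tau\colon Q_L^{\unip,\nilp,\nilp,n}\,\widetilde{\ra}\,Q_L^{\nilp,\unip,\nilp,n}.
\]

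The substantive step is to show that $\tau$ preserves the virtual motive. For this I would lift it to an automorphism $\tilde\tau$ of the smooth ambient space $\mathcal N_n^{\circ\circ}$ from Section \ref{direct_QW} preserving the potential $h_n$. Under the identification of the proof of Proposition \ref{prop:critQL}, the loops $a''_1,a''_2,b''_1$ act on the module $N$ as multiplication by $x,y,z$, while the arrows $a'_1,a'_2$ record the generators $v_x=\theta(x), v_y=\theta(y)$. I would define $\tilde\tau$ on $Q_r$-representations by
\[
a'_1\leftrightarrow a'_2,\qquad a''_1\leftrightarrow a''_2,\qquad b'_1\mapsto -b'_1,\qquad b''_1\mapsto -b''_1.
\]
A direct substitution in $W_r=a''_1b'_1a'_2-a''_2b'_1a'_1+a''_1b''_1a''_2-a''_2b''_1a''_1$ then shows $\tilde\tau^*W_r=W_r$, hence $\tilde\tau^*h_n=h_n$. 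Stability being a condition on images and spans, $\tilde\tau$ preserves the $\zeta'$-semistable locus and restricts to $\tau$ on $Q^n_L=\crit(h_n)$.

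The invariance $\tilde\tau^*h_n=h_n$ yields $\tilde\tau^*\MF_{\mathcal N_n^{\circ\circ},h_n}=\MF_{\mathcal N_n^{\circ\circ},h_n}$ in $\mathcal M^{\hat\mu}_{Q^n_L}$. Denoting the inclusions of the two strata by $\iota_1,\iota_2$ and the restriction of $\tau$ by $\tau_0$, the commutation $\tau\circ\iota_1=\iota_2\circ\tau_0$ gives $\iota_1^*\MF=\tau_0^*\iota_2^*\MF$, and pushforward to a point through the isomorphism $\tau_0$ yields
\[
\bigl[Q_L^{\unip,\nilp,\nilp,n}\bigr]_{\vir}=\bigl[Q_L^{\nilp,\unip,\nilp,n}\bigr]_{\vir}\in\mathcal M_{\C}
\]
for every $n$. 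Substituting into the series identities \eqref{tceq1} and \eqref{tceq2} and invoking injectivity of $\prExp$ (cf.~the proof of Lemma \ref{pr_inj_lemma}) then forces $\Lambda_n^{\unip}=\Lambda_n$.

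The main subtlety is the sign verification: the naive swap $a'_1\leftrightarrow a'_2$, $a''_1\leftrightarrow a''_2$ alone sends $W_r$ to $-W_r$, so the additional sign flips $b'_1,b''_1\mapsto -b'_1,-b''_1$ are essential. Geometrically this compensating negation is precisely the $z\mapsto -z$ already built into $\sigma$, which explains why the full lift $\tilde\tau$ exists on the ambient space.
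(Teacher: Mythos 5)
Your proof is correct and follows essentially the same route as the paper: both construct an explicit automorphism of $\mathcal{N}_n^{\circ\circ}$ preserving $\Tr(W_r)$ that exchanges the two strata, deduce $\bigl[Q_L^{\unip,\nilp,\nilp,n}\bigr]_{\vir}=\bigl[Q_L^{\nilp,\unip,\nilp,n}\bigr]_{\vir}$, and conclude via the defining equations and injectivity of $\prExp$. The only difference is the choice of symmetry (the paper uses $a''_1\mapsto a''_2$, $a''_2\mapsto -a''_1$, $a'_1\mapsto a'_2$, $a'_2\mapsto -a'_1$ with the $b$'s fixed, i.e.~a rotation in the $xy$-plane, whereas you use the reflection swapping $x$ and $y$ and negating $z$); both visibly preserve $W_r$, and your variant has the mild advantage of sending unipotent operators to unipotent operators on the nose.
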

\begin{proof}
There is an automorphism $\mathsf S_n$ of $\mathcal{N}^{\circ\circ}_n$, for all $n$, defined by
\begin{align*}
    \rho(a''_1)&\mapsto \rho(a''_2)\\
    \rho(a''_2)&\mapsto -\rho(a''_1)\\
    \rho(a'_1)&\mapsto \rho(a'_2)\\
    \rho(a'_2)&\mapsto-\rho(a'_1)
\end{align*}
which leaves $\Tr(W_r)$ invariant, and so preserves $[Q_L^n\rightarrow Q_L^n]_{\vir}$.  Now
\[
\mathsf S_{n,!}\left[Q_L^{\unip,\nilp,\nilp,n}\rightarrow Q_L\right]_{\vir}=\left[Q_L^{\nilp,\unip,\nilp,n}\rightarrow Q_L\right]_{\vir}
\]
and so $\bigl[Q_L^{\unip,\nilp,\nilp,n}\bigr]_{\vir}=\bigl[Q_L^{\nilp,\unip,\nilp,n}\bigr]_{\vir}$.  The lemma then follows from the defining equations (\ref{tceq1}) and (\ref{tceq2}).
\end{proof}

\begin{definition}\label{defin:Fully_Punctual}
Let us set 
\[
\Omega_{\pt}^n=\Lambda_n^{\unip},\quad \Omega_{\curv}^n=\Lambda_n^{\nilp}.
\]
These fully punctual motives express the contribution of points away from the curve and embedded on the curve, respectively.
\end{definition}
 Let
\[
    \iota_{L}\colon L\hookrightarrow \A^3,\quad 
    \iota_{C_0}\colon C_0\hookrightarrow X
\]
denote the closed inclusions, and let
\[
    u_{L}\colon \A^3\setminus L\hookrightarrow \A^3,\quad 
    u_{C_0}\colon X\setminus C_0\hookrightarrow X
\]
denote the inclusions of the open complements.
\begin{theorem}
\label{punctual_thm}
There is an equality in $\mathcal{M}_{\Sym(\A^3)}$:
\begin{multline}
    \label{firstpu}
\sum_{n\geq 0}\,(-1)^n\bigl[Q^n_L\xrightarrow{\HC_{\A^3}}\Sym^n(\A^3)\bigr]_{\vir}\\
=\prExp_{\cup}\left(\sum_{n\geq 1}\left(\Omega_{\pt}^n\boxtimes \bigl[\A^3\setminus L\xrightarrow{\Delta_n u_L}\Sym^n(\A^3)\bigr]+\Omega_{\curv}^n\boxtimes\bigl[L\xrightarrow{\Delta_n\iota_L}\Sym^n(\A^3)\bigr]\right)\right),
\end{multline}
and an equality in $\mathcal{M}_{\Sym(X)}$:
\begin{multline}
    \label{secpu}
\sum_{n\geq 0}\,(-1)^n\bigl[Q^n_{C_0}\xrightarrow{\HC_X}\Sym^n(X)\bigr]_{\vir}\\
=\prExp_{\cup}\left(\sum_{n\geq 1}\left(\Omega_{\pt}^n\boxtimes \bigl[X \setminus C_0\xrightarrow{\Delta_n u_{C_0}}\Sym^n(X)\bigr]+\Omega_{\curv}^n\boxtimes\bigl[C_0\xrightarrow{\Delta_n\iota_{C_0}}\Sym^n(X)\bigr]\right)\right).
\end{multline}
\end{theorem}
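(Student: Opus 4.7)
My plan is to prove \eqref{firstpu} by iterating the three layers of motivic stratification set up in Corollaries \ref{firstc}, \ref{secondc}, \ref{thirdc1} and \ref{thirdc2}, and then to deduce \eqref{secpu} from \eqref{firstpu} via the invariance test provided by Corollary \ref{res_cor}.

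For \eqref{firstpu}, the key observation is that $\prExp_\cup$ is injective on any fixed $\Sym^n$: the degree-$n$ part determines the degree-$n$ generator up to known contributions coming from lower-degree generators (a grade-by-grade argument extending the proof of Lemma \ref{pr_inj_lemma} to the relative setting). Comparing Corollary \ref{firstc} with Corollary \ref{secondc} this yields
\[
\Phi_n = (\tilde{\kappa}_n\circ l_n)_!\Bigl(\Psi_n^{\unip}\boxtimes \bigl[\G_m\xrightarrow{\id}\G_m\bigr]\Bigr) + \kappa^0_{n,!}\Psi_n^{\nilp}.
\]
Comparing Corollary \ref{secondc} with Corollaries \ref{thirdc1} and \ref{thirdc2}, and applying Lemma \ref{sewing_lemma} to identify $\Lambda_n^{\unip}=\Lambda_n=\Omega_{\pt}^n$ and $\Lambda_n^{\nilp}=\Omega_{\curv}^n$, I similarly obtain
\[
\Psi_n^{\unip}=\Delta_{n,!}\Bigl(\Omega_{\pt}^n\boxtimes \bigl[\A^1\xrightarrow{\id}\A^1\bigr]\Bigr),
\]
\[
\Psi_n^{\nilp}=(\Delta_n|_{\G_m})_!\Bigl(\Omega_{\pt}^n \boxtimes \bigl[\G_m\xrightarrow{\id}\G_m\bigr]\Bigr)+\bigl(0\hookrightarrow\Sym^n(\A^1)\bigr)_!\Omega_{\curv}^n.
\]
Back-substitution into Corollary \ref{firstc} and composition of every pushforward with $\tilde{\iota}_n$ then produces the desired exponential formula.

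The remaining assembly step identifies the support of each generator in $\Sym^n(\A^3)$. Writing $L=\{x=y=0\}$, the $\Omega_{\pt}^n$-coefficient receives two contributions: via the $\Psi_n^{\unip}$-branch, the small diagonal of $\{(x,y,z)\in\A^3 : y\neq 0\}$; and via the $\Psi_n^{\nilp}/\Lambda_n^{\unip}$-branch, the small diagonal of $\{(x,y,z)\in\A^3 : y=0,\ x\neq 0\}$. By the scissor relations their sum is the small diagonal of $\A^3\setminus L$, i.e.~the map $\Delta_n u_L$. The $\Omega_{\curv}^n$-coefficient receives only the $\Psi_n^{\nilp}/\Lambda_n^{\nilp}$-contribution, supported on the small diagonal of $L$ itself, i.e.~$\Delta_n\iota_L$. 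This establishes \eqref{firstpu}.

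For \eqref{secpu}, I apply Corollary \ref{res_cor} to the difference of the two sides. Both sides are $\sigma_\lambda^*$- and $\xi_\lambda^*$-invariant: the left-hand side because $\overline{\sigma}_\lambda,\overline{\xi}_\lambda$ preserve $[\phi_{g_n}]_{Q_{C_0}^n}$ and are $\HC_X$-compatible with $\sigma_\lambda,\xi_\lambda$; the right-hand side because $\sigma_\lambda,\xi_\lambda$ preserve $C_0\subset X$ (the exceptional curve lies over the fixed conifold singular point), and $\prExp_\cup$ preserves invariance. Pullback of both sides along $\Sym^n(\A^3)\hookrightarrow\Sym^n(X)$ gives the corresponding sides of \eqref{firstpu}: for the left-hand side via base change applied to the Cartesian square \eqref{HCX_desc} together with Corollary \ref{cor:virn}; for the right-hand side since $\A^3\cap(X\setminus C_0)=\A^3\setminus L$, $\A^3\cap C_0=L$, and $\prExp_\cup$ commutes with pullback along open inclusions. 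The main obstacle is the combinatorial bookkeeping of the assembly step: one must carefully track the interplay of small diagonals, open/closed inclusions, and pushforwards through all three layers of stratification in order to verify that the resulting supports in $\A^3$ are precisely $\A^3\setminus L$ and $L$.
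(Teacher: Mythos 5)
Your proposal is correct and follows essentially the same route as the paper: the paper likewise chains Corollaries \ref{firstc}, \ref{secondc}, \ref{thirdc1} and \ref{thirdc2} together (packaging your ``extract generators by injectivity of $\prExp_\cup$ and back-substitute'' step via Proposition \ref{exp_prop}), uses Lemma \ref{sewing_lemma} to identify $\Lambda_n^{\unip}=\Lambda_n=\Omega_{\pt}^n$, and assembles the supports $\A^1\times\G_m\times\A^1$ and $\G_m\times\{0\}\times\A^1$ into $\A^3\setminus L$ by the scissor relations. The deduction of \eqref{secpu} from \eqref{firstpu} via the invariance under $\sigma_\lambda$, $\xi_\lambda$ and Corollary \ref{res_cor} is exactly the paper's argument.
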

\begin{proof}
By Corollary \ref{firstc} we have
\begin{multline*}
\sum_{n\geq 0}\,(-1)^n\bigl[Q^n_L\xrightarrow{\HC_{\A^3}}\Sym^n(\mathbb{A}^3)\bigr]_{\vir}\\
=\prExp_{\cup}\left(\sum_{n\geq 1}\left(\Sym^n(\mathbb{A}^2)\times\mathbb{A}^1\xrightarrow{\tilde{\iota}_n} \Sym^n(\mathbb{A}^3)\right)_!\bigl(\Phi_n\boxtimes\, \bigr[\mathbb{A}^1\xrightarrow{\id}\mathbb{A}^1\bigr]\bigr)\right).
\end{multline*}
Then by Corollary \ref{secondc} and Proposition \ref{exp_prop}, we deduce that
\begin{multline*}
\sum_{n\geq 0}\,(-1)^n\bigl[Q^n_L\xrightarrow{\HC_{\A^3}}\Sym^n(\mathbb{A}^3)\bigr]_{\vir} \\
=\prExp_{\cup}\left(\sum_{n\geq 1}\,\bigl(\Sym^n(\A^1)\times \mathbb{G}_m\times\A^1\hookrightarrow \Sym^n(\A^3)\bigr)_!\Psi_n^{\unip}\boxtimes \bigl[\mathbb{G}_m\times \A^1\xrightarrow{\id}\mathbb{G}_m\times \A^1\bigr] \right. \\
+\left. {} \left(\Sym^n(\A^1)\times\{0\}\times\A^1\hookrightarrow \Sym^n(\A^3)\right)_!\Psi_n^{\nilp}\boxtimes \bigl[\{0\}\times\A^1\xrightarrow{\id}\{0\}\times\A^1\bigr] \vphantom{} \right).
\end{multline*}
Then by Corollaries \ref{thirdc1} and \ref{thirdc2} and Proposition \ref{exp_prop} again, we deduce that 
\begin{multline*}
\sum_{n\geq 0}\,(-1)^n\bigl[Q^n_L\xrightarrow{\HC_{\A^3}}\Sym^n(\mathbb{A}^3)\bigr]_{\vir}=\\
\prExp_{\cup}\Biggl(\sum_{n\geq 1}\left(\A^1\times\mathbb{G}_m\times\A^1\hookrightarrow\Sym^n(\A^3)\right)_!\Omega_{\pt}^n\boxtimes \bigl[\A^1\times\mathbb{G}_m\times\A^1\xrightarrow{\id}\A^1\times\mathbb{G}_m\times\A^1\bigr] \\
+\left(\mathbb{G}_m\times\{0\}\times\A^1\hookrightarrow\Sym^n(\A^3)\right)_!\Omega_{\pt}^n\boxtimes \bigl[\mathbb{G}_m\times\{0\}\times\A^1\xrightarrow{\id}\mathbb{G}_m\times\{0\}\times\A^1\bigr]\\
+\left(\{0\}\times\{0\}\times\A^1\hookrightarrow\Sym^n(\A^3)\right)_!\Omega_{\curv}^n\boxtimes \bigl[\{0\}\times\{0\}\times\A^1\xrightarrow{\id}\{0\}\times\{0\}\times\A^1\bigr]\Biggr),
\end{multline*}
and the first statement follows.  For the second statement, we observe that the right hand side of (\ref{secpu}) satisfies conditions (2) and (3) of Lemma \ref{res_lemma}, and is obviously equal to the right hand side of (\ref{firstpu}) after restriction to $\Sym(\A^3)$.  Then the second statement follows from Corollary \ref{res_cor}.
\end{proof}

\section{The virtual motive of the Quot scheme}

In this section we define a virtual motive
\[
\bigl[Q^n_C\bigr]_{\vir} \in \mathcal M_{\C}
\]
for an arbitrary smooth curve $C\subset Y$ in a smooth $3$-fold $Y$. Before getting to this point, we give an explicit formula for the generating series
\begin{align*}
    \mathsf Q_{L/\A^3}(t) &= \sum_{n\geq 0}\,\bigl[Q_L^n\bigr]_{\vir}\cdot t^n \\
    \mathsf Q_{C_0/X}(t)  &= \sum_{n\geq 0}\,\bigl[Q_{C_0}^n\bigr]_{\vir}\cdot t^n
\end{align*}
encoding the local absolute virtual motives attached to $L\subset \A^3$ and $C_0\subset X$. We shall also prove Theorems \ref{thm:newB} and \ref{thm:thm2} from the introduction.

\subsection{The local absolute virtual motives}

Let $X$ be, as usual, the resolved conifold. Then Theorem \ref{punctual_thm} implies that for $\iota\colon x\hookrightarrow X$ the inclusion of a point, the absolute motive 
\[
(\Sym^n\iota)^*\bigl[Q^n_{C_0}\xrightarrow{\HC_{X}}\Sym^n(X)\bigr]_{\vir}
\]
only depends on whether $x$ is in $C_0$ or not.  We define \bitem
\item
$\bigl[\mathcal{P}_{\pt}^{n}\bigr]_{\vir}=(\Sym^n\iota)^*\bigl[Q^n_{C_0}\xrightarrow{\HC_{X}}\Sym^n(X)\bigr]_{\vir}$ for $x\notin C_0$,

\item
$\bigl[\mathcal{P}_{\curv}^{n}\bigr]_{\vir}=(\Sym^n\iota)^*\bigl[Q^n_{C_0}\xrightarrow{\HC_{X}}\Sym^n(X)\bigr]_{\vir}$ for $x\in C_0$.
\eitem
Explicitly, these motives are determined by the identities
\[
\sum_{n\geq 0}(-1)^n\bigl[\mathcal{P}_{\#}^{n}\bigr]_{\vir}\cdot t^n=\prExp\left(\sum_{n>0}\Omega_{\#}^n\cdot t^n\right)
\]
for $\# \in \set{\pt,\curv}$. 
We define the generating functions
\[
\mathsf F_{\#}(t)=\sum_{n\geq 0}\,\bigl[\mathcal{P}_{\#}^{n}\bigr]_{\vir}\cdot t^n.
\]
Via the morphisms defining power structures for varieties, Lemmas \ref{fixed_BBS}, \ref{twothirds}, \ref{finalred1}, \ref{finalred2} and \ref{sewing_lemma} can be neatly summed up in the following corollary.

\begin{corollary}\label{corollary_Punctual}
There are equalities in $\mathcal{M}_{\C}\llbracket t \rrbracket$
\begin{align*}
    \mathsf Q_{L/\A^3}(-t)&=\mathsf F_{\pt}(-t)_{\pr}^{\mathbb{L}^3-\mathbb{L}}\cdot \mathsf F_{\curv}(-t)_{\pr}^\mathbb{L}\\
    \mathsf Q_{C_0/X}(-t)&=\mathsf F_{\pt}(-t)_{\pr}^{\mathbb{L}^3+\mathbb{L}^2-\mathbb{L}-1}\cdot \mathsf F_{\curv}(-t)_{\pr}^{\mathbb{L}+1}.
\end{align*}
\end{corollary}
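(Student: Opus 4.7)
The plan is to deduce both identities by applying the tautological pushforward $\#_!\colon \mathcal{M}_{\Sym(\A^3)}\to \mathcal M_\C\llbracket t\rrbracket$ (respectively $\mathcal{M}_{\Sym(X)}\to \mathcal M_\C\llbracket t\rrbracket$) to the two equalities (\ref{firstpu}) and (\ref{secpu}) of Theorem \ref{punctual_thm}. Under the isomorphism (\ref{Iso_Power_series}), the pushforward of the left-hand side of (\ref{firstpu}) places $(-1)^n[Q^n_L]_{\vir}$ in degree $n$, so it is exactly $\mathsf Q_{L/\A^3}(-t)$; likewise the pushforward of (\ref{secpu}) yields $\mathsf Q_{C_0/X}(-t)$.

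To transform the right-hand sides I will use two compatibilities of $\prExp_\cup$. First, the identity $\prExp_\cup(A+B)=\prExp_\cup(A)\boxtimes_\cup \prExp_\cup(B)$ splits each expression into a convolution product of a ``point'' factor and a ``curve'' factor. Second, $\#_!$ intertwines the convolution product $\boxtimes_\cup$ with the ordinary product on $\mathcal M_\C\llbracket t\rrbracket$, so each factor can be analysed on its own. For each of them I will apply Proposition \ref{exp_prop} with $V=\pt$: taking $U=\A^3\setminus L$ (respectively $U=L$) makes the inclusion $\tilde\iota_n$ of the proposition coincide with the small diagonal $\Delta_n \circ u_L$ (resp.\ $\Delta_n\circ \iota_L$) appearing in the corresponding summand of (\ref{firstpu}); the hypothesis is met with $A_n=\Omega^n_\pt$ (resp.\ $A_n=\Omega^n_\curv$) and $B=\prExp(\sum_n A_n t^n)=\mathsf F_\#(-t)$. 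The conclusion $\#_!\mathsf Z=(\#_!B)^{[U]}_\pr$ then rewrites the ``point'' factor as $\mathsf F_\pt(-t)_\pr^{\L^3-\L}$ and the ``curve'' factor as $\mathsf F_\curv(-t)_\pr^{\L}$, using $[\A^3\setminus L]=\L^3-\L$ and $[L]=\L$; multiplying them gives the first claim. The second identity follows verbatim from (\ref{secpu}), this time with $U=X\setminus C_0$ and $U=C_0$: since $X$ is a rank-$2$ vector bundle over $\P^1$ we have $[X]=(\L+1)\L^2=\L^3+\L^2$, giving $[X\setminus C_0]=\L^3+\L^2-\L-1$ and $[C_0]=\L+1$.

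The main technical obstacle is verifying the effectivity hypothesis needed for Proposition \ref{exp_prop} to output the \emph{pre} power structure $(-)^{[U]}_\pr$ appearing in the statement of the corollary, i.e.\ that the coefficients of $\mathsf F_\pt(-t)$ and $\mathsf F_\curv(-t)$ are effective, or equivalently that $\Omega^n_\pt$ and $\Omega^n_\curv$ are effective. For the point contribution this should come from the identification $[\mathcal{P}^n_\pt]_{\vir}=[\Hilb^n(\A^3)_0]_{\vir}$ (Proposition \ref{prop:remove_pr_pt}) combined with Remark \ref{punctualBBS_effective}; for the curve contribution a parallel argument, relying on the circle-compact torus action of Section \ref{sec:equiv3729}, should ensure effectivity. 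Once this is granted the rest of the proof is a purely formal manipulation of the plethystic and power-structure machinery reviewed in Sections \ref{Sec:Power_Structures} and \ref{subsec:Exp}.
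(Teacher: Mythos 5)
The formal core of your argument is correct and is essentially the paper's route: Corollary \ref{corollary_Punctual} is obtained by pushing the two identities of Theorem \ref{punctual_thm} forward along $\#_!$, splitting $\prExp_\cup$ of the sum into a $\boxtimes_\cup$-product of a point factor and a curve factor, and identifying each factor with a pre-power $(\#_!B)^{[U]}_{\pr}$ via Proposition \ref{exp_prop} applied with $V=\pt$ (so that $\tilde\iota_n$ becomes the small diagonal) and $U=\A^3\setminus L$, $L$, $X\setminus C_0$, $C_0$ respectively; your computations $[\A^3\setminus L]=\L^3-\L$, $[X\setminus C_0]=\L^3+\L^2-\L-1$, $[C_0]=\L+1$ are right.

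Your final paragraph, however, inverts the logic of the $\pr$ decoration, and the fix you propose there would be circular. The pre-power $(-)^{[U]}_{\pr}$ is \emph{defined} by the explicit incidence-partition formula \eqref{power12} for arbitrary (not necessarily effective) coefficients, and the identity $\#_!\mathsf Z=(\#_!B)^{[U]}_{\pr}$ is exactly the content of that formula; no effectiveness is needed. Effectiveness of the $A_n$ is what one needs to replace $(-)_{\pr}^{[U]}$ by the honest lambda-ring power $(-)^{[U]}$, and the corollary is deliberately stated with the $\pr$ subscripts precisely because effectiveness of $(-1)^n[\mathcal P^n_{\curv}]_{\vir}$ is not yet known at this stage (see the remark immediately following the corollary). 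Indeed, the paper's proof of effectiveness of $\Omega^n_{\curv}$ (Lemma \ref{lemma_F(s)} and Corollary \ref{Omega_curv_calc}) takes the second identity of Corollary \ref{corollary_Punctual} as an input, together with the known motivic PT series of the resolved conifold from \cite{RefConifold}; there is no independent ``circle-compact torus action'' argument for the curve contribution, and trying to establish effectiveness before the corollary would make the argument circular. So you should simply delete the effectiveness step: the corollary as stated is a purely formal consequence of Theorem \ref{punctual_thm} and the definition of $(-)_{\pr}^{[U]}$.
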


\begin{remark}
Note that the exponents in the statement of the corollary are effective (despite the minus signs). Indeed, $\mathbb L^3-\mathbb L$ is the class of $\mathbb A^3\setminus L$, and $\mathbb{L}^3+\mathbb{L}^2-\mathbb{L}-1$ is the class of $X\setminus C_0$. The `$\pr$' subscripts are in the statement because we are yet to prove that the classes $(-1)^n[\mathcal{P}_{\pt}^{n}]_{\vir}$ and $(-1)^n[\mathcal{P}_{\curv}^{n}]_{\vir}$ are effective. 
As a result of Corollaries \ref{Omega_pt_calc} and \ref{Omega_curv_calc}, we will be able to remove them and the statements remain true, now expressed properly in terms of the power structures on the Grothendieck rings of varieties.
\end{remark}

The rest of this subsection is devoted to removing the decorations `$\pr$' from the formulas in Corollary \ref{corollary_Punctual}. We start with $\# = \pt$.

\begin{prop}\label{prop:remove_pr_pt}
For all $n\geq 0$ there is an identity
\[
\bigl[\mathcal{P}_{\pt}^{n}\bigr]_{\vir}=\bigl[\Hilb^n(\mathbb{A}^3)_0\bigr]_{\vir} \in \mathcal M_{\C}.
\]
\end{prop}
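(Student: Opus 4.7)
The plan is to compare the two critical structures directly, using the reduced-quiver description of Section \ref{direct_QW}. Fix the point $x_0 = (1,0,0) \in \A^3 \setminus L$, so that $[\mathcal{P}^n_{\pt}]_{\vir}$ is the restriction of the relative virtual motive of $Q^n_L \xrightarrow{\HC_{\A^3}} \Sym^n(\A^3)$ to the small diagonal at $x_0$. Under the identifications of Section \ref{direct_QW}, and using the injectivity of $\prExp$ applied to Theorem \ref{punctual_thm} and Corollary \ref{thirdc1}, this coincides with $[Q_L^{\unip,\nilp,\nilp,n}]_{\vir}$, where $Q_L^{\unip,\nilp,\nilp,n} \subset \mathcal{N}_n^{\circ\circ}$ is cut out by requiring $\rho(a''_1)$ to be unipotent and $\rho(a''_2), \rho(b''_1)$ to be nilpotent. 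Since $\mathscr{I}_L \cong \mathscr{O}_{\A^3}$ in a neighbourhood of $x_0$, there is an evident scheme isomorphism $\mathcal{P}^n_{\pt} \cong \Hilb^n(\A^3)_0$ induced by translation on $\A^3$ sending $x_0$ to the origin; the real content is the matching of critical structures.

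To achieve this, on the open subscheme $\mathcal U^\unip \subset \mathcal{N}_n^{\circ\circ}$ where $\rho(a''_1)$ is unipotent I would set $x \defeq a''_1 - \Id$ (nilpotent), $y \defeq a''_2$, $z \defeq b''_1$, and perform two $\GL_n$-equivariant manipulations of the potential
\[
W_r = a''_1 b'_1 a'_2 - a''_2 b'_1 a'_1 + a''_1 b''_1 a''_2 - a''_2 b''_1 a''_1.
\]
The shift $a''_1 \mapsto a''_1 + \Id$ turns the loop part into $-\Tr(x[y,z]) = -\Tr W_\BBS(x,y,z)$ and the mixed part into $\langle b'_1, (\Id+x) a'_2 - y a'_1 \rangle$. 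Then the substitution $a'_2 \mapsto (\Id + x)^{-1}(a'_2 + y a'_1)$ is $\GL_n$-equivariant (since $\Id$ is central and $(\Id+x), y, a'_1$ transform covariantly) and reduces the mixed part to the standard pairing $\langle b'_1, a'_2 \rangle$. The net effect is that on $\mathcal U^\unip$,
\[
\Tr W_r = \langle b'_1, a'_2 \rangle - \Tr W_\BBS(x, y, z),
\]
so that the forgetful map $\mathcal U^\unip \to \NCHilb^n$ sending a $Q_r$-representation to $(x, y, z, a'_1)$ presents $\mathcal U^\unip$ as a trivial rank-$2n$ vector bundle over the appropriate open subset of $\NCHilb^n$ in the coordinates $(b'_1, a'_2)$, with the potential decoupling as a Thom sum of $\Tr W_\BBS$ and a non-degenerate pairing.

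Applying the motivic Thom--Sebastiani isomorphism (Theorem \ref{mtseb190}) together with the computation $[-\phi_{\langle \cdot, \cdot \rangle}] = \L^n$ on $(\C^n)^{*} \times \C^n$ (obtained by iterating the basic case $[-\phi_{xy}] = \L$, which follows from Theorem \ref{thm:equivfam738}(i)), I would deduce
\[
[-\phi_{h_n}]\big|_{\mathcal{P}^n_\pt} = \L^n \cdot [-\phi_{\Tr W_\BBS}]\big|_{\Hilb^n(\A^3)_0}
\]
as absolute motives in $\mathcal M_\C$. Multiplying by the normalisations $\L^{-(2n^2+3n)/2}$ on the left and $\L^{-(2n^2+n)/2}$ on the right, the extra $\L^n$ precisely compensates the dimension discrepancy $\dim \mathcal N_n^{\circ\circ} - \dim \NCHilb^n = 2n$, which produces the required equality $[\mathcal{P}^n_\pt]_{\vir} = [\Hilb^n(\A^3)_0]_{\vir}$.

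The main technical obstacle will be justifying the Thom--Sebastiani decoupling in the $\GL_n$-quotient setting: one needs to check that the substitutions descend to $\mathcal U^\unip$ as a trivial rank-$2n$ vector bundle over an open subscheme of $\NCHilb^n$ containing $\Hilb^n(\A^3)_0$, and that the $\zeta'$-stability for $Q_r$-representations and the BBS stability for framed three-loop quiver representations agree on the relevant punctual loci. Monodromy considerations are not a concern, since Proposition \ref{prop:mcv10} already places everything in the non-monodromic ring $\mathcal M_\C$.
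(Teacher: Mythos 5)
Your proposal is correct and follows essentially the same route as the paper: the paper also restricts to an open set where the "unconstrained'' loop operators are invertible, performs a $\GL_n$-equivariant linear change of the framing coordinates so that $\Tr(W_r)$ splits as the BBS potential plus a nondegenerate pairing in the $2n$ extra coordinates $(b'_1,a'_2)$, and concludes by Thom--Sebastiani together with Lemma \ref{quad_lemma} (the paper works at the point $(1,1,1)$ and inverts both $\rho(a''_1)$ and $\rho(a''_2)$, whereas you work at $(1,0,0)$ and invert only the unipotent $\rho(a''_1)$ --- an immaterial difference). The stability-matching obstacle you flag is resolved exactly as you would hope: on the critical locus the relation $\rho(a''_1)\rho(a'_2)=\rho(a''_2)\rho(a'_1)$ coming from $\partial W_r/\partial b'_1$ shows that $\rho(a'_2)$ lies in the submodule generated by $\rho(a'_1)$ once $\rho(a''_1)$ is invertible, so $\zeta'$-stability implies the stronger BBS-framing stability there, and one may compute the vanishing cycle on the smaller open set where the vector bundle description over $\NCHilb^n$ holds.
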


\begin{proof}
By definition 
\[
\bigl[\mathcal{P}_{\pt}^{n}\bigr]_{\vir}=j_n^*\bigl[Q_L^n\xrightarrow{\HC}\Sym^n(\mathbb{A}^3)\bigr]_{\vir}
\]
where $j_n\colon p\hookrightarrow \Sym^n(\A^3)$ is the inclusion of the point $p=((1,1,1),\ldots,(1,1,1))$. Consider the open subvariety $\overline{U}_n\subset \Rep_{(n,1)}(Q_r)$ defined by the condition that $\rho(a''_1)$ and $\rho(a''_2)$ are invertible, and the image of $\rho(a'_1)$ generates $\rho_{\underline{2}}$ under the action of $\rho(a''_1)$, $\rho(a''_2)$ and $\rho(b''_1)$.  Note that this is a \textit{stronger} notion of stability than $\zeta'$-stability.  However, after restricting to the critical locus of $\Tr(W_r)$, using our invertibility assumptions, we have that for $\rho\in \overline{U}_n$ the relation
\[
\rho(a''_1)^{-1}\rho(a''_2)\rho(a'_1)=\rho(a'_2)
\]
holds, and so we deduce that $\rho$ satisfies our stronger notion of stability if it is $\zeta'$-stable, lies in $\crit(\Tr(W_r))$, and satisfies the above invertibility assumptions.  It follows that, after setting $U_n = \overline{U}_n/\GL_n$, one has
\[
\bigl[\mathcal{P}_{\pt}^{n}\bigr]_{\vir}=j_n^*\bigl[U_n\cap \crit(\Tr(W_r))\xrightarrow{\HC}\Sym^n(\mathbb{A}^3)\bigr]_{\vir}.
\]
Let $Q_{\BBS}$ denote the quiver obtained from $Q_r$ by removing $a'_2$ and $b'_1$, in other words the framed $3$-loop quiver of Figure \ref{3LoopQuiver}.  Then $\NCHilb^n\defeq \Rep_{(n,1)}^{\zeta'}(Q_{\BBS})/\GL_n$ is the noncommutative Hilbert scheme considered by Behrend, Bryan and Szendr\H{o}i in \cite{BBS} (cf.~Section \ref{Section:BBS}), and $U_n=\overline{U}_n/\GL_n$ is a vector bundle over $\NCHilb_{\circ}^n$, the open subscheme of $\NCHilb^n$ defined by invertibility of $\rho(a''_1)$ and $\rho(a''_2)$.  

We introduce a new set of matrix coordinates on $\overline{U}_n$ by considering the matrices
\begin{align*}
\mathbb{A}''_1=\rho(a''_1)&&\mathbb{A}''_2=\rho(a''_2)&&\mathbb{B}''_1=\rho(b''_1)\\
\mathbb{A}'_2=\rho(a''_2)\rho(a'_1)-\rho(a''_1)\rho(a'_2)&&\mathbb{A}'_1=\rho(a''_1)\rho(a'_2)&&\mathbb{B}'_1=\rho(b'_1).
\end{align*}
With respect to these coordinates, we can write
\[
\Tr(W_r)\lvert_{U_n}=F+\Tr(\mathbb{B}'_1\mathbb{A}'_2)
\]
where
\[
F=\Tr(\mathbb{A}''_1)\Tr(\mathbb{B}''_1)\Tr(\mathbb{A}''_2)-\Tr(\mathbb{A}''_2)\Tr(\mathbb{B}''_1)\Tr(\mathbb{A}''_1)
\]
is a function on $\NCHilb^n$ considered as a function on $U_n$ by composition with the vector bundle projection $U_n\rightarrow \NCHilb^n$.  It follows from Lemma \ref{quad_lemma}\,(2) and the motivic Thom--Sebastiani theorem that 
\begin{align*}
j_n^*\bigl[U_n\cap \crit(\Tr(W_r))\xrightarrow{\HC}\Sym^n(\A^3)\bigr]_{\vir}&=j_n^*\bigl[\NCHilb^n\cap \crit(\Tr(F))\xrightarrow{\HC}\Sym^n(\A^3)\bigr]_{\vir}\\
&=j^*_n\bigl[\Hilb^n(\mathbb{A}^3)\xrightarrow{\HC}\Sym^n(\mathbb{A}^3)\bigr]_{\vir}\\
&=\bigl[\Hilb^n(\mathbb{A}^3)_0\bigr]_{\vir}
\end{align*}
as required.
\end{proof}

\begin{corollary}
\label{Omega_pt_calc}
For all $n\geq 0$ there is an identity
\[
\Omega^n_{\pt}=(-1)^n\mathbb{L}^{-\frac{3}{2}}\frac{\mathbb{L}^{\frac{n}{2}}-\mathbb{L}^{-\frac{n}{2}}}{\mathbb{L}^{\frac{1}{2}}-\mathbb{L}^{-\frac{1}{2}}} \in \mathcal M_{\C}.
\]
In particular, $\Omega_{\pt}^n$ is effective.
\end{corollary}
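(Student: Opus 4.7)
The plan is to combine Proposition \ref{prop:remove_pr_pt} with the explicit exponential form of the Behrend--Bryan--Szendr\H{o}i punctual partition function, and then read off $\Omega_{\pt}^n$ by inverting $\prExp$. First I would note that Proposition \ref{prop:remove_pr_pt} yields the equality of generating series
\[
\mathsf F_{\pt}(-t) = \mathsf Z_0(-t),
\]
so that by equation \eqref{punctual_BBS} we have
\[
\prExp\!\left(\sum_{n\geq 1}\Omega_{\pt}^n t^n\right) = \Exp\!\left(\frac{-\L^{-\frac{3}{2}}t}{(1+\L^{-\frac{1}{2}}t)(1+\L^{\frac{1}{2}}t)}\right).
\]
By Remark \ref{punctualBBS_effective} the coefficients $(-1)^n[\mathcal P_{\pt}^n]_{\vir}$ are effective, so the left hand side lies in the sub-semigroup on which $\prExp$ and $\Exp$ agree; and by Lemma \ref{pr_inj_lemma} the operator $\prExp$ is injective. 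Therefore the series $\sum_{n\geq 1}\Omega_{\pt}^n t^n$ is uniquely identified with the rational function appearing as the argument of $\Exp$ on the right.

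Next I would carry out the routine partial fractions identity
\[
\frac{t}{(1+\L^{-\frac{1}{2}}t)(1+\L^{\frac{1}{2}}t)} = \frac{1}{\L^{\frac{1}{2}}-\L^{-\frac{1}{2}}}\left(\frac{1}{1+\L^{-\frac{1}{2}}t} - \frac{1}{1+\L^{\frac{1}{2}}t}\right),
\]
expand both geometric series, and extract the $t^n$-coefficient. Multiplying by $-\L^{-3/2}$, this produces exactly the formula
\[
\Omega_{\pt}^n = (-1)^n \L^{-\frac{3}{2}}\cdot \frac{\L^{\frac{n}{2}} - \L^{-\frac{n}{2}}}{\L^{\frac{1}{2}} - \L^{-\frac{1}{2}}}.
\]

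For effectiveness, I would expand the quantum integer
\[
\frac{\L^{\frac{n}{2}} - \L^{-\frac{n}{2}}}{\L^{\frac{1}{2}} - \L^{-\frac{1}{2}}} = \sum_{k=0}^{n-1}\L^{\frac{n-1}{2}-k}
\]
and rewrite each term via $\L^{m/2} = (-1)^m(-\L^{1/2})^m$. This expresses $\Omega_{\pt}^n$ as $\sum_{k=0}^{n-1}(-\L^{1/2})^{n-2k-4}$ with all coefficients equal to $+1$, which is a sum of classes of the form of Definition \ref{effectiveness_defn} and hence effective. No part of the argument is genuinely difficult; the only point requiring care is keeping the signs and exponents consistent between the two conventions (multiplication by $(-1)^n$ absorbed by the substitution $t\mapsto -t$, and the equality $\L^{m/2}=(-1)^m(-\L^{1/2})^m$).
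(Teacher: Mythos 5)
Your argument is correct and is essentially the paper's own proof: identify $\mathsf F_{\pt}(-t)$ with $\mathsf Z_0(-t)$ via Proposition \ref{prop:remove_pr_pt}, invoke \eqref{punctual_BBS}, expand by partial fractions, and conclude by injectivity of $\prExp$ together with the agreement of $\prExp$ and $\Exp$ on effective classes. One small point of logic: what licenses replacing $\Exp$ by $\prExp$ before applying injectivity is the effectiveness of the \emph{argument} of $\Exp$ --- i.e.\ of the coefficients $(-1)^n\L^{-\frac{3}{2}}\bigl(\L^{\frac{n}{2}}-\L^{-\frac{n}{2}}\bigr)/\bigl(\L^{\frac{1}{2}}-\L^{-\frac{1}{2}}\bigr)$ --- rather than the effectiveness of the values $(-1)^n[\mathcal P_{\pt}^n]_{\vir}$ as you state; since you verify exactly this effectiveness in your final paragraph, the proof is complete, but that verification logically belongs before the identification step rather than after it.
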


\begin{proof}
By definition we have
\begin{align*}
\sum_{n\geq 0}(-1)^n\bigl[\mathcal{P}_{\pt}^n\bigr]_{\vir}t^n=&\prExp\left(\sum_{n> 0}\Omega^n_{\pt}t^n\right).
\end{align*}
Since $[\mathcal{P}_{\pt}^n]_{\vir}=[\Hilb^n(\mathbb{A}^3)_0]_{\vir}$ we deduce (see Remark \ref{punctualBBS_effective}) that $(-1)^n[\mathcal{P}_{\pt}^n]_{\vir}$ is effective, and by Formula \eqref{punctual_BBS} we deduce also that
\begin{align*}
\sum_{n\geq 0}(-1)^n\bigl[\mathcal{P}_{\pt}^n\bigr]_{\vir}t^n=&\Exp\left(\sum_{n> 0}(-1)^n\mathbb{L}^{-\frac{3}{2}}\frac{\mathbb{L}^{\frac{n}{2}}-\mathbb{L}^{-\frac{n}{2}}}{\mathbb{L}^{\frac{1}{2}}-\mathbb{L}^{-\frac{1}{2}}}t^n\right).
\end{align*}
The result follows since $\prExp$ is injective, and agrees with $\Exp$ on effective motives.
\end{proof}

The corollary implies that in the right hand side of the formulas in Corollary \ref{corollary_Punctual}, we can remove the decoration `$\pr$' from the first factor. In other words, for all smooth quasi-projective $3$-folds $U$ one has 
\be\label{Points_no_pr_anymore}
\mathsf F_{\pt}(-t)^{[U]}_{\pr} = \mathsf F_{\pt}(-t)^{[U]} = \mathsf Z_0(-t)^{[U]} = \mathsf Z_U(-t).
\ee
Next we deal with $\# = \curv$.

\smallbreak
In \cite[Prop.~4.3]{RefConifold} the full motivic DT and PT theories of the resolved conifold $X$ are computed. The sign conventions in \emph{loc.~cit.}~are different from ours, but the discrepancy amounts to the substitution
\[
\L^{\frac{1}{2}} \ra -\L^{\frac{1}{2}}.
\]
After this change is done, the motivic partition function of the stable pair theory of $X$ reads
\[
\mathsf Z_{\PT}(-s,T) = \prod_{m\geq 1}\prod_{j=0}^{m-1}\left(1+\L^{-\frac{m}{2}+\frac{1}{2}+j}(-s)^mT\right).
\]
On the other hand, the DT partition $\mathsf Z_{\DT}$ function satisfies
\[
\mathsf Z_{\DT}(-s,T) = \mathsf Z_X(-s)\cdot \mathsf Z_{\PT}(-s,T),
\]
where $s$ is the point class and $T$ is the curve class.
Again, we have adjusted the sign in the point contribution $\mathsf Z_X$.
Extracting the coefficient of $T$ (which corresponds to picking out the contribution of the reduced curve class) and multiplying by $-s^{-1}$ (so that the $(-s)^n$ coefficient is $I_{n+1}(X,[C_0]) = Q^n_{C_0}$ on the DT side) yields an identity
\be\label{WC_formula}
\DDT_{C_0/X}(-s) = \mathsf Z_X(-s)\cdot \PPT_{C_0/X}(-s),
\ee
where 
\begin{align*}
\PPT_{C_0/X}(-s) &= \sum_{n\geq 0}\,(-1)^n\bigl[\Sym^n C_0\bigr]_{\vir}\cdot s^n \\
&= (1+\L^{-\frac{1}{2}}s)^{-(\L+1)} \\
&=(1-\mathbb{L}^{-\frac{1}{2}}s+\mathbb{L}^{-1}s^2-\cdots)^{\mathbb{L}+1}.
\end{align*}
On the other hand, 
\[
\DDT_{C_0/X}(-s) = \sum_{n\geq 0}\,(-1)^n\bigl[Q^n_{C_0}\bigr]_{\vir}\cdot s^n = \mathsf Q_{C_0/X}(-s),
\]
and the latter equals
\[
\mathsf Z_{X\setminus C_0}(-s) \cdot \mathsf F_{\curv}(-s)^{\L+1}_{\pr} = \mathsf Z_X(-s)\cdot \frac{\mathsf F_{\curv}(-s)^{\L+1}_{\pr}}{\mathsf Z_0(-s)^{\L+1}}
\]
by Corollary \ref{corollary_Punctual} and Proposition \ref{prop:remove_pr_pt}. It follows that
\begin{align} \label{F_factorized}
\mathsf F_{\curv}(-s)^{\L+1}_{\pr} &= \mathsf Z_0(-s)^{\L+1} \cdot (1-\L^{-\frac{1}{2}}s+\L^{-1} s^2-\cdots)^{\L+1},
\\
\nonumber
&=\left(\mathsf Z_0(-s) \cdot (1-\L^{-\frac{1}{2}}s+\L^{-1} s^2-\cdots)\right)^{\L+1}.
\end{align}
We shall need the following:

\begin{lemma}\label{lemma_F(s)}
There is an identity 
\[
\mathsf F_{\curv}(-s) = \mathsf Z_0(-s) \cdot (1+\L^{-\frac{1}{2}}s)^{-1} \in \mathcal{M}_{\mathbb{C}}\llbracket s\rrbracket.
\]
In particular, $\mathsf F_{\curv}(-s)$ is effective.
\end{lemma}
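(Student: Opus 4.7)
My plan is to deduce the lemma from the identity \eqref{F_factorized} by applying the injectivity of the power operation $(-)^{\L+1}_{\pr}$ supplied by Lemma \ref{pr_inj_lemma}. Essentially all of the substantive work has already been done to arrive at \eqref{F_factorized}; what remains is to massage both sides until they can be compared directly, and then to take an $(\L+1)$-th root.

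First I would use the elementary geometric series identity
\[
1 - \L^{-\frac{1}{2}}s + \L^{-1}s^2 - \cdots = (1+\L^{-\frac{1}{2}}s)^{-1},
\]
so that the right-hand side of \eqref{F_factorized} reads $(\mathsf Z_0(-s)\cdot(1+\L^{-1/2}s)^{-1})^{\L+1}$. I would then verify that the base of this exponentiation has effective coefficients: Remark \ref{punctualBBS_effective} gives effectiveness of $\mathsf Z_0(-s)$ coefficient-by-coefficient, while the $s^n$-coefficient of $(1+\L^{-1/2}s)^{-1}$ is $(-\L^{1/2})^{-n}$, which lies in the sub-semigroup $S_0(\Var_{\C})$ by Definition \ref{effectiveness_defn}. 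Since $S_0(\Var_{\C})$ is closed under products, the product $\mathsf Z_0(-s)\cdot(1+\L^{-1/2}s)^{-1}$ again has effective coefficients, so that (by the discussion at the end of Section \ref{Sec:Power_Structures}) one has $(-)^{\L+1} = (-)^{\L+1}_{\pr}$ applied to this series.

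At this point \eqref{F_factorized} reads
\[
\mathsf F_{\curv}(-s)^{\L+1}_{\pr} = \left(\mathsf Z_0(-s)\cdot (1+\L^{-\frac{1}{2}}s)^{-1}\right)^{\L+1}_{\pr},
\]
and I would invoke Lemma \ref{pr_inj_lemma} to cancel the exponent. This requires that $\L+1$ be invertible in $K_0(\Staff{\C})$, which is clear: both $\L-1$ and $\L^2-1$ are invertible by the definition of $K_0(\Staff{-})$, and $(\L+1)(\L-1)=\L^2-1$. Injectivity then yields the claimed identity
\[
\mathsf F_{\curv}(-s) = \mathsf Z_0(-s)\cdot (1+\L^{-\frac{1}{2}}s)^{-1},
\]
and effectiveness of $\mathsf F_{\curv}(-s)$ drops out immediately, since the right-hand side has been exhibited as a product of power series with effective coefficients.

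The main (minor) obstacle is the bookkeeping required to pass between $(-)^{\L+1}$ and $(-)^{\L+1}_{\pr}$; once it is confirmed that the relevant base is effective these two agree, and the argument is purely formal. No further geometric input is needed beyond what has been accumulated through \eqref{F_factorized}.
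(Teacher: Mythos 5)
Your proposal is correct and follows essentially the same route as the paper's own proof: both deduce the identity from \eqref{F_factorized} by observing that the base $\mathsf Z_0(-s)\cdot(1+\L^{-1/2}s)^{-1}$ is effective (so that $(-)^{\L+1}$ and $(-)^{\L+1}_{\pr}$ agree on it) and then cancelling the exponent via the invertibility of $\L+1$ in $K_0(\Staff{\C})$ together with Lemma \ref{pr_inj_lemma}. The extra bookkeeping you supply (effectiveness of the individual coefficients, the factorisation $(\L+1)(\L-1)=\L^2-1$) only makes explicit what the paper leaves implicit.
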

\begin{proof}
Since $\mathsf Z_0(-s) \cdot (1-\L^{-\frac{1}{2}}s+\L^{-1} s^2-\cdots)$ is effective we have
\[
\left(\mathsf Z_0(-s) \cdot (1-\L^{-\frac{1}{2}}s+\L^{-1} s^2-\cdots)\right)^{\mathbb{L}+1}=\left(\mathsf Z_0(-s) \cdot (1-\L^{-\frac{1}{2}}s+\L^{-1} s^2-\cdots)\right)_{\pr}^{\mathbb{L}+1}.
\]
Since $\L+1$ is invertible in $K(\Staff{\C})$, by Lemma \ref{pr_inj_lemma} and (\ref{F_factorized}) we deduce the lemma.
\end{proof}

\begin{corollary}
\label{Omega_curv_calc}
There is an equality of motives
\[
\Omega^n_{\curv}=
\begin{cases} 
-\mathbb{L}^{-\frac{1}{2}}-\mathbb{L}^{-\frac{3}{2}}&\textrm{if }n=1\\
(-1)^n\mathbb{L}^{-\frac{3}{2}}\displaystyle\frac{\L^{\frac{n}{2}}-\L^{-\frac{n}{2}}}{\L^{\frac{1}{2}}-\L^{-\frac{1}{2}}}&\textrm{otherwise.} 
\end{cases}
\]
In particular, $\Omega_{\curv}^n$ is effective.
\end{corollary}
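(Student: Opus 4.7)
The plan is to invert the factorization of $\mathsf F_{\curv}(-s)$ in Lemma \ref{lemma_F(s)} by passing through the motivic exponential, and then read off the coefficients on the nose. The point is that both factors in $\mathsf F_{\curv}(-s)=\mathsf Z_0(-s)\cdot(1+\mathbb{L}^{-\frac{1}{2}}s)^{-1}$ can be written as $\Exp$ of an explicit effective class: for the first factor, Corollary \ref{Omega_pt_calc} recasts the BBS identity (\ref{punctual_BBS}) as $\mathsf Z_0(-s)=\Exp(\sum_{n>0}\Omega^n_{\pt}s^n)$; for the second, $-\mathbb{L}^{-\frac{1}{2}}$ is a line element of the $\lambda$-ring $\mathcal{M}_{\mathbb{C}}$, so
\[
(1+\mathbb{L}^{-\frac{1}{2}}s)^{-1}=\sum_{n\geq 0}(-\mathbb{L}^{-\frac{1}{2}})^{n}s^n=\Exp(-\mathbb{L}^{-\frac{1}{2}}s).
\]

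Multiplying these two identities together rearranges Lemma \ref{lemma_F(s)} into
\[
\mathsf F_{\curv}(-s)=\Exp\Bigl(\sum_{n>0}\Omega^n_{\pt}s^n-\mathbb{L}^{-\frac{1}{2}}s\Bigr),
\]
and the argument of $\Exp$ on the right is effective, since each $\Omega^n_{\pt}$ is effective by Corollary \ref{Omega_pt_calc} and $-\mathbb{L}^{-\frac{1}{2}}=(-\mathbb{L}^{\frac{1}{2}})^{-1}[\pt]$ is effective by construction. Comparing with the defining equation $\mathsf F_{\curv}(-s)=\prExp(\sum_{n>0}\Omega^n_{\curv}s^n)$ coming from Corollary \ref{thirdc1}, and invoking injectivity of $\prExp$ on effective classes (Section \ref{subsec:Exp}), allows us to equate the two arguments termwise.

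Extracting the coefficient of $s^n$ then yields $\Omega^n_{\curv}=\Omega^n_{\pt}$ for $n\geq 2$, while $\Omega^1_{\curv}=\Omega^1_{\pt}-\mathbb{L}^{-\frac{1}{2}}=-\mathbb{L}^{-\frac{3}{2}}-\mathbb{L}^{-\frac{1}{2}}$, using the closed formula for $\Omega^n_{\pt}$ from Corollary \ref{Omega_pt_calc}. Effectiveness of every $\Omega^n_{\curv}$ is a byproduct, since each is a sum of the effective classes identified above. In reality all the geometric weight of the argument was already absorbed into Lemma \ref{lemma_F(s)}, whose proof leveraged the motivic DT/PT wall-crossing identity (\ref{WC_formula}) on the resolved conifold imported from \cite{RefConifold}; once that lemma is in place, the corollary is a short $\lambda$-ring manipulation, and the only step that requires care is keeping track of the distinction between $\Exp$ and $\prExp$.
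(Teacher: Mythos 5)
Your proposal is correct and follows essentially the same route as the paper: both write $\mathsf Z_0(-s)$ and $(1+\mathbb{L}^{-\frac{1}{2}}s)^{-1}$ as motivic exponentials of effective classes, combine them via Lemma \ref{lemma_F(s)}, and conclude by injectivity of $\prExp$ together with its agreement with $\Exp$ on effective motives. The coefficient extraction and the effectiveness bookkeeping match the paper's argument exactly.
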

\begin{proof}
From Lemma \ref{lemma_F(s)} and the equations
\begin{align*}
    \bigl(1+\mathbb{L}^{-\frac{1}{2}}s\bigr)^{-1}&=\Exp\bigl(-\mathbb{L}^{-\frac{1}{2}}s\bigr)\\
    \mathsf Z_0(-s)&=\Exp\left(\sum_{n\geq 1}\mathbb{L}^{-\frac{3}{2}}\displaystyle\frac{\L^{\frac{n}{2}}-\L^{-\frac{n}{2}}}{\L^{\frac{1}{2}}-\L^{-\frac{1}{2}}}(-s)^n\right)
\end{align*}
we deduce that $$\mathsf{F}_{\curv}(-s)=\Exp\left(P\right)$$
where 
\[
P=\left(\sum_{n\geq 1}\mathbb{L}^{-\frac{3}{2}}\displaystyle\frac{\L^{\frac{n}{2}}-\L^{-\frac{n}{2}}}{\L^{\frac{1}{2}}-\L^{-\frac{1}{2}}}(-s)^n\right)-\mathbb{L}^{-\frac{1}{2}}s.
\]
Since $P$ is effective, $\prExp(P)=\Exp(P)$.  On the other hand, by definition we have the equality $\mathsf F_{\curv}(-s)=\prExp\left(\sum \Omega_{\curv}^ns^n\right)$.  The result then follows by injectivity of $\prExp$.
\end{proof}

Corollary \ref{corollary_Punctual} can now be restated as follows:

\begin{theorem}\label{Thm:VM_local}
The absolute virtual motives of $L\subset \A^3$ and $C_0\subset X$ are given by
\begin{align*}
    \mathsf Q_{L/\A^3}(-t) &= \mathsf Z_{\A^3\setminus L}(-t) \cdot \mathsf F_{\curv}(-t)^{\L} \\
    \mathsf Q_{C_0/X}(-t) &= \mathsf Z_{X\setminus C_0}(-t) \cdot \mathsf F_{\curv}(-t)^{\L+1}.
\end{align*}
\end{theorem}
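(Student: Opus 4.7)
The plan is to assemble Theorem \ref{Thm:VM_local} directly from the results already established: Corollary \ref{corollary_Punctual} provides the two factorisations
\[
\mathsf Q_{L/\A^3}(-t) = \mathsf F_{\pt}(-t)^{\L^3-\L}_{\pr}\cdot \mathsf F_{\curv}(-t)^{\L}_{\pr},\qquad \mathsf Q_{C_0/X}(-t) = \mathsf F_{\pt}(-t)^{\L^3+\L^2-\L-1}_{\pr}\cdot \mathsf F_{\curv}(-t)^{\L+1}_{\pr},
\]
with the exponents matching the classes $[\A^3\setminus L]=\L^3-\L$ and $[X\setminus C_0]=\L^3+\L^2-\L-1$ respectively. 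The task is therefore twofold: replace $\mathsf F_{\pt}$ by a more geometric object, and remove the `$\pr$' decoration from both factors.

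For the first factor, I would invoke Proposition \ref{prop:remove_pr_pt}, which identifies $\mathsf F_{\pt}(-t)$ with the Behrend--Bryan--Szendr\H{o}i series $\mathsf Z_0(-t)$. Combined with the effectiveness of the $(-1)^n$-signed coefficients (Remark \ref{punctualBBS_effective}) and equation \eqref{Points_no_pr_anymore}, this yields $\mathsf F_{\pt}(-t)^{[U]}_{\pr}=\mathsf F_{\pt}(-t)^{[U]}=\mathsf Z_0(-t)^{[U]}=\mathsf Z_U(-t)$ for $U\in\{\A^3\setminus L,X\setminus C_0\}$ via formula \eqref{Hilb_Partition_Function}. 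For the second factor, I would use Corollary \ref{Omega_curv_calc}, which exhibits $\Omega_{\curv}^n$ explicitly and in particular establishes effectiveness. Since $\mathsf F_{\curv}(-t)=\prExp(\sum_n\Omega_{\curv}^n t^n)$ is then a plethystic exponential of an effective class, it is itself effective, and hence its power structure agrees with the `$\pr$' version against any exponent: $\mathsf F_{\curv}(-t)^{\L}_{\pr}=\mathsf F_{\curv}(-t)^{\L}$ and $\mathsf F_{\curv}(-t)^{\L+1}_{\pr}=\mathsf F_{\curv}(-t)^{\L+1}$.

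Substituting both reductions into the formulas from Corollary \ref{corollary_Punctual} yields precisely the two identities claimed in Theorem \ref{Thm:VM_local}. Since essentially all the technical content has been built up in the preceding sections, this final step is an assembly argument; the only delicate point is the justification that the `$\pr$'-power structure coincides with the honest one on each factor, which rests squarely on the effectiveness statements of Corollaries \ref{Omega_pt_calc} and \ref{Omega_curv_calc} together with the plethystic injectivity used repeatedly in those proofs.
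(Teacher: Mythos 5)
Your proposal is correct and follows essentially the same route as the paper: the published proof is exactly the assembly you describe, combining equation \eqref{Points_no_pr_anymore} for the point factor with the effectiveness of $\mathsf F_{\curv}(-t)$ (Lemma \ref{lemma_F(s)}) to drop the `$\pr$' from the curve factor in Corollary \ref{corollary_Punctual}. Your justification of that effectiveness via $\prExp$ of the effective classes $\Omega_{\curv}^n$ from Corollary \ref{Omega_curv_calc} is an equally valid (and non-circular) variant of the paper's direct appeal to the product formula of Lemma \ref{lemma_F(s)}.
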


\begin{proof}
Combining \eqref{Points_no_pr_anymore} with Lemma \ref{lemma_F(s)}, we get $\mathsf F_{\curv}(-t)^{\L+1} = \mathsf F_{\curv}(-t)^{\L+1}_{\pr}$.
\end{proof}

\subsection{Proof of Theorems \ref{thm:newB} and \ref{thm:thm2}}\label{sec:Thm_B&C}

\begin{proofof}{Theorem \ref{thm:newB}}
By \eqref{firstpu} there is an equality
\begin{multline*}
\sum_{n\geq 0}\,(-1)^n\bigl[Q^n_L\xrightarrow{\HC_{\A^3}}\Sym^n(\A^3)\bigr]_{\vir} \\
=\prExp_{\cup}\left(\sum_{n\geq 1}\left(\Omega_{\pt}^n\boxtimes \bigl[\A^3\setminus L\xrightarrow{\Delta_n u_L}\Sym^n(\A^3)\bigr]+\Omega_{\curv}^n\boxtimes\bigl[L\xrightarrow{\Delta_n\iota_L}\Sym^n(\A^3)\bigr]\right)\right).
\end{multline*}
Plugging in the results of Corollaries \ref{Omega_curv_calc} and \ref{Omega_pt_calc} we deduce
\begin{align*}
    \sum_{n\geq 0}\,(-1)^n&\bigl[Q^n_L\xrightarrow{\HC_{\A^3}}\Sym^n(\A^3)\bigr]_{\vir}\\
&=\prExp_{\cup}\Biggl(\sum_{n\geq 1}\Bigl(\Omega_{\BBS}^n\boxtimes \bigl[\A^3\setminus L\xrightarrow{\Delta_n u_L}\Sym^n(\A^3)\bigr] \\
& \qquad \qquad +\Omega_{\BBS}^n\boxtimes\bigl[L\xrightarrow{\Delta_n\iota_L}\Sym^n(\A^3)\bigr]\Bigr)-\mathbb{L}^{-\frac{1}{2}}\bigl[L\xrightarrow{\Delta_1\iota_L}\Sym^1(\mathbb{A}^3)\bigr]\Biggr)\\
&=\Exp_{\cup}\Biggl(\sum_{n\geq 2}\left(\Omega_{\BBS}^n\boxtimes \bigl[\A^3\xrightarrow{\Delta_n }\Sym^n(\A^3)\bigr]\right)\\
&\qquad\qquad -\mathbb{L}^{-\frac{1}{2}}\bigl[L\xrightarrow{\Delta_1\iota_L}\Sym^1(\mathbb{A}^3)\bigr]-\mathbb{L}^{-\frac{3}{2}}\bigl[\A^3\xrightarrow{\Delta_1 }\Sym^1(\A^3)\bigr]\Biggr)
\end{align*}
where the removal of the `$\pr$' comes from the effectiveness statements in Corollaries \ref{Omega_curv_calc} and \ref{Omega_pt_calc}.
\end{proofof}

Guided by Theorem \ref{Thm:VM_local}, we can now define a virtual motive for an arbitrary Quot scheme $Q^n_C$.

\begin{definition}\label{def:VM_arbitrary_curve}
Let $Y$ be a smooth $3$-fold. For a smooth curve $C\subset Y$, we define classes $[Q^n_{C}]_{\vir} \in \mathcal M_{\C}$ by the identity
\[
\sum_{n\geq 0}\,\bigl[Q^n_{C}\bigr]_{\vir} (-t)^n = \mathsf Z_{Y\setminus C}(-t)\cdot\mathsf F_{\curv}(-t)^{[C]}.
\]
We also define 
\[
\mathsf Q_{C/Y}(t) = \sum_{n\geq 0}\,\bigl[Q^n_C\bigr]_{\vir}\cdot t^n.
\]
\end{definition}

At this point, it is not yet clear that $[Q^n_C]_{\vir}$ is a virtual motive for $Q^n_C$, but we incorporate this in the proof of Theorem \ref{thm:thm2} from the introduction:

\begin{theorem}\label{thm:C}
For all $n\geq 0$, there is an equality of motives
\[
\bigl[Q^n_C\bigr]_{\vir} = \sum_{j=0}^n\,\bigl[\Hilb^{n-j}Y\bigr]_{\vir}\cdot \bigl[\Sym^jC\bigr]_{\vir}.
\]
In other words, we have a product decomposition
\[
\mathsf Q_{C/Y}(t) = \mathsf Z_Y(t)\cdot \mathsf Z_C(t)
\]
where for a smooth variety $X$ of dimension at most $3$, $\mathsf Z_X(t)$ denotes the motivic partition function of the Hilbert scheme of points of $X$.
\end{theorem}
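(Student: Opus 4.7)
The identity to prove is a purely formal consequence of Definition \ref{def:VM_arbitrary_curve}, the explicit computation of $\mathsf F_{\curv}$ carried out in Lemma \ref{lemma_F(s)}, and the multiplicativity of the power structure. My plan is therefore to rearrange the generating series in Definition \ref{def:VM_arbitrary_curve} into the product form $\mathsf Z_Y(t)\cdot \mathsf Z_C(t)$, and then extract the $t^n$-coefficient.

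The first step is to unfold the definition: by construction,
\[
\mathsf Q_{C/Y}(-t) \;=\; \mathsf Z_{Y\setminus C}(-t)\cdot \mathsf F_{\curv}(-t)^{[C]}.
\]
Next I would invoke Lemma \ref{lemma_F(s)} to write $\mathsf F_{\curv}(-t) = \mathsf Z_0(-t)\cdot(1+\L^{-\frac{1}{2}}t)^{-1}$, and use the multiplicativity axioms (3)--(4) of a power structure (Definition \ref{def:power1026}) to deduce
\[
\mathsf F_{\curv}(-t)^{[C]} \;=\; \mathsf Z_0(-t)^{[C]}\cdot (1+\L^{-\frac{1}{2}}t)^{-[C]}.
\]
Combining this with $[Y] = [Y\setminus C]+[C]$ and the Behrend--Bryan--Szendr\H{o}i formula \eqref{Hilb_Partition_Function}, which gives $\mathsf Z_X(-t)=\mathsf Z_0(-t)^{[X]}$ for every smooth quasi-projective $3$-fold $X$, I get
\[
\mathsf Z_{Y\setminus C}(-t)\cdot \mathsf Z_0(-t)^{[C]} \;=\; \mathsf Z_0(-t)^{[Y]} \;=\; \mathsf Z_Y(-t).
\]

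The remaining task is to recognise the factor $(1+\L^{-\frac{1}{2}}t)^{-[C]}$ as $\mathsf Z_C(-t)$. Using the Kapranov identity \eqref{eqn:kapranov1}, namely $(1-t)^{-[C]}=\zeta_{[C]}(t) = \sum_n[\Sym^n C]\,t^n$, together with the substitution rule for the extension of the power structure to $\mathcal M_\C$ (so that $t\mapsto \L^{-\frac{1}{2}}t$ sends $[\Sym^n C]$ to $\L^{-n/2}[\Sym^n C]=[\Sym^n C]_{\vir}$, the latter equality being Remark \ref{rmk:smvm} applied to the smooth variety $\Sym^n C$ of dimension $n$), I conclude that $(1-\L^{-\frac{1}{2}}t)^{-[C]} = \sum_n [\Sym^n C]_{\vir}\,t^n = \mathsf Z_C(t)$, and therefore $(1+\L^{-\frac{1}{2}}t)^{-[C]}=\mathsf Z_C(-t)$. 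Assembling the three identities yields $\mathsf Q_{C/Y}(-t)=\mathsf Z_Y(-t)\cdot \mathsf Z_C(-t)$, and after the harmless substitution $t\mapsto -t$ this is the desired product decomposition; reading off the $t^n$-coefficient gives the stated convolution formula. Finally, one can apply $\chi$ (with $\chi(\L^{-1/2})=-1$) to recover the wall-crossing identity \eqref{WCformula}, which confirms \emph{a posteriori} that the class constructed in Definition \ref{def:VM_arbitrary_curve} really is a virtual motive in the sense of Definition \ref{def:vm}.

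There is no serious obstacle left at this stage: all the genuine geometric input, in particular the critical-locus description of $Q^n_L$, the motivic stratification arguments and the comparison with the refined DT/PT partition function of the conifold in \cite{RefConifold}, has already been folded into Lemma \ref{lemma_F(s)}. The mildest point to check carefully is the compatibility of the power-structure substitution $t\mapsto \L^{-\frac{1}{2}}t$ with the Kapranov zeta function, which is exactly how the extension of the power structure to $\mathcal M_\C$ is defined in Section \ref{Sec:Power_Structures}.
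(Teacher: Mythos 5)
Your argument is correct and is essentially identical to the paper's proof: both unfold Definition \ref{def:VM_arbitrary_curve}, substitute Lemma \ref{lemma_F(s)}, use multiplicativity of the power structure together with $\mathsf Z_X(-t)=\mathsf Z_0(-t)^{[X]}$ to assemble $\mathsf Z_Y(-t)$, identify the remaining factor $(1+\L^{-\frac{1}{2}}t)^{-[C]}$ with $\mathsf Z_C(-t)$, and then apply $\chi$ together with \cite[Prop.~5.1]{LocalDT} to confirm the virtual-motive property. Your explicit justification of the last identification via the Kapranov zeta function is a detail the paper leaves implicit, but it is the intended one.
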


\begin{proof}
Combining the power structure with Lemma \ref{lemma_F(s)}, we find
    \begin{align*}
    \mathsf Q_{C/Y}(-t) &= \mathsf Z_{Y\setminus C}(-t)\cdot\mathsf F_{\curv}(-t)^{[C]} \\
    &=\mathsf Z_{0}(-t)^{[Y]-[C]}\cdot \mathsf Z_0(-t)^{[C]}\cdot \bigl(1+\L^{-\frac{1}{2}}t\bigr)^{-[C]} \\
    &= \mathsf Z_Y(-t) \cdot \mathsf Z_C(-t).
\end{align*}
In particular, the classes $[Q^n_{C}]_{\vir}$ are virtual motives, because
\[
\chi \mathsf Q_{C/Y}(t) = \chi\mathsf Z_Y(t) \cdot \chi \mathsf Z_C(t) = M(-t)^{\chi(Y)}\cdot (1+t)^{-\chi(C)}
\]
and this equals $\sum_n\widetilde\chi(Q^n_C)t^n$ by \cite[Prop.~5.1]{LocalDT}.
\end{proof}

The proof of Theorem \ref{thm:thm2} is complete.

\subsubsection{Equivalent formulations}
Using the power structure on $\mathcal M_\C$ and the explicit formulas for $\mathsf Z_X(t)$ available from \cite[Section 4]{BBS}, we find an identity
\[
\mathsf Q_{C/Y}(t) = \left( \prod_{m=1}^\infty\prod_{k=0}^{m-1}\left(1-\L^{k-1-\frac{m}{2}}t^m\right)^{-[Y]}\right)\cdot \left(1-\L^{-\frac{1}{2}}t\right)^{-[C]}.
\]
Another equivalent way to express the same identity is via motivic exponentials. If $t$ is the variable used in the definition of motivic exponential (cf.~Section \ref{subsec:Exp}), then
\begin{align*}
    \mathsf Q_{C/Y}(-t) &= \Exp\left(\frac{-t[Y]_{\vir}}{\bigl(1+\L^{\frac{1}{2}}t\bigr)\bigl(1+\L^{-\frac{1}{2}}t\bigr)}-t[C]_{\vir}\right) \\
    &=\Exp\left(-t[Y]_{\vir}\Exp(-t[\P^1]_{\vir})-t[C]_{\vir}\right).
\end{align*}

\subsection{Local Donaldson--Thomas invariants}\label{Sec:DT}
Let $Y$ be a projective Calabi--Yau $3$-fold, and let $C\subset Y$ be a smooth curve of genus $g$. Recall the $C$-local DT invariants
\[
\DDT^n_{C} = \int_{Q^n_C} \nu_I \,\mathrm{d}\chi
\]
defined by restricting the Behrend function of the Hilbert scheme $I=I_{1-g+n}(Y,[C])$ to its closed subset $|Q^n_C|\subset I$. The BPS number $n_{g,C}$ of $C\subset Y$ is the integer
\[
n_{g,C} = \nu_{I_{1-g}(Y,[C])}(\mathscr I_C) \in \Z.
\]

Theorem \ref{thm:C} immediately implies the following:

\begin{corollary}\label{cor_BPS}
Let $Y$ be a projective Calabi--Yau $3$-fold, $C\subset Y$ a smooth curve with BPS number $n_{g,C}=1$. Then 
\[
\chi\left[Q^n_C\right]_{\vir} = \DDT^n_{C}.
\]
\end{corollary}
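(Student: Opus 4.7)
The proof will be essentially a direct comparison between the motivic decomposition of Theorem \ref{thm:thm2} (after applying the Euler characteristic homomorphism $\chi$) and the numerical wall-crossing formula \eqref{formula:DTnC}, so I expect no real obstacle here: the hard work has already been absorbed into Theorem \ref{thm:thm2}.

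First, I would apply $\chi\colon \mathcal{M}_{\mathbb{C}}\to\mathbb{Z}$ to the identity
\[
\bigl[Q^n_C\bigr]_{\vir} = \sum_{j=0}^n\,\bigl[\Hilb^{n-j}Y\bigr]_{\vir}\cdot \bigl[\Sym^jC\bigr]_{\vir}
\]
from Theorem \ref{thm:C}, using that $\chi$ is a ring homomorphism, to obtain
\[
\chi\bigl[Q^n_C\bigr]_{\vir} = \sum_{j=0}^n\,\chi\bigl[\Hilb^{n-j}Y\bigr]_{\vir}\cdot \chi\bigl[\Sym^jC\bigr]_{\vir}.
\]

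Next, I would identify the two factors on the right. For the Hilbert scheme factor, the virtual motive $[\Hilb^{k}Y]_{\vir}$ constructed by Behrend--Bryan--Szendr\H oi is by definition a virtual motive in the sense of Definition \ref{def:vm}, so $\chi[\Hilb^{k}Y]_{\vir}=\widetilde{\chi}(\Hilb^kY)=\DDT^{k}_{0}$. For the symmetric product factor, $\Sym^jC$ is smooth of dimension $j$, so by Remark \ref{rmk:smvm} we have $\chi[\Sym^jC]_{\vir}=(-1)^j\chi(\Sym^jC)=\widetilde{\chi}(\Sym^jC)$.

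Finally, since $n_{g,C}=1$ by hypothesis, the definition $\PPT^j_C=n_{g,C}\cdot \widetilde{\chi}(\Sym^jC)$ recalled before the statement gives $\PPT^j_C=\widetilde{\chi}(\Sym^jC)=\chi[\Sym^jC]_{\vir}$. Substituting these equalities into the displayed identity yields
\[
\chi\bigl[Q^n_C\bigr]_{\vir} = \sum_{j=0}^n\,\DDT^{n-j}_{0}\cdot \PPT^j_{C},
\]
which, by the $C$-local DT/PT wall-crossing formula \eqref{formula:DTnC}, equals $\DDT^n_{C}$. This completes the argument; the only ingredients used beyond Theorem \ref{thm:thm2} are the definition of virtual motive, Macdonald's formula implicit in Remark \ref{rmk:smvm}, and the known wall-crossing identity \eqref{formula:DTnC}.
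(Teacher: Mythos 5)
Your proof is correct, but it is organised differently from the paper's. The paper's proof is two lines: it quotes the identity $\DDT^n_{C} = n_{g,C}\cdot \widetilde\chi(Q^n_C)$ from \cite{Ricolfi2018}, which with $n_{g,C}=1$ gives $\DDT^n_C=\widetilde\chi(Q^n_C)$, and then invokes the fact (established at the end of the proof of Theorem \ref{thm:C}) that $[Q^n_C]_{\vir}$ is a virtual motive in the sense of Definition \ref{def:vm}, i.e.\ $\chi[Q^n_C]_{\vir}=\widetilde\chi(Q^n_C)$. You instead apply $\chi$ term by term to the factorisation of Theorem \ref{thm:C}, identify $\chi[\Hilb^{n-j}Y]_{\vir}=\DDT^{n-j}_0$ and $\chi[\Sym^jC]_{\vir}=\widetilde\chi(\Sym^jC)=\PPT^j_C$ (using $n_{g,C}=1$), and then match the resulting sum against the term-wise wall-crossing formula \eqref{formula:DTnC}. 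The two arguments rest on the same external input from \cite{Ricolfi2018} — \eqref{formula:DTnC} and $\DDT^n_C=n_{g,C}\widetilde\chi(Q^n_C)$ are equivalent packagings of its main theorem, given \eqref{WCformula} — so neither buys anything the other lacks; yours makes the DT/PT structure of the identity more visible, while the paper's isolates the single point where the hypothesis $n_{g,C}=1$ enters. All the intermediate identifications you use (the virtual-motive property of the BBS classes, and Remark \ref{rmk:smvm} for the smooth varieties $\Sym^jC$) are valid.
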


\begin{proof}
The main result of  \cite{Ricolfi2018} proves that
\[
\DDT^n_{C} = n_{g,C}\cdot \widetilde\chi(Q^n_C).
\]
By the proof of Theorem \ref{thm:C} we know that $[Q^n_C]_{\vir}$ is a virtual motive, so the result follows.
\end{proof}
 
When $C$ is infinitesimally rigid in $Y$, i.e.~$H^0(C,N_{C/Y})=0$, the integer $\DDT^n_{C}$ is the degree of the virtual fundamental class 
\[
\left[Q^n_C\right]^{\vir}\in A_0(Q^n_C),
\]
naturally defined (by restriction) on the \emph{connected component} 
\[
Q^n_C\subset I_{1-g+n}(Y,[C]).
\]
So, by Corollary \ref{cor_BPS}, the class $[Q^n_{C}]_{\vir}\in\mathcal M_\C$ can be seen as a motivic Donaldson--Thomas invariant.

\begin{remark}\label{rem:vm3eq74}
In \cite[Example $5.7$]{CohDT} one can find an example of a cohomological DT invariant in the projective case.
We are not aware of other explicit examples of motivic DT invariants for \emph{projective} Calabi--Yau $3$-folds, in a setting where the moduli space parameterises \emph{curves and points}.
Without a curve in the picture, there is the virtual motive $[\Hilb^nY]_{\vir}$ constructed in \cite{BBS} for arbitrary $3$-folds, and if $Y$ is an \emph{open} Calabi--Yau there are plenty of examples, see for instance \cite{RefConifold,MorrNagao,BenSven2,BenSven3,Mozgovoy1}.
\end{remark}

\begin{remark}
The formula  $\mathsf Q_{L/\A^3} = \mathsf Z_{\A^3}\cdot \mathsf Z_L$ was conjectured in the second author's PhD thesis \cite{ThesisR}, where the problem was reduced to proving a motivic identity (with no ``virtualness'' left to deal with) involving essentially only the stack of finite length coherent sheaves over $\A^2$. As it turned out, such an identity could in principle be checked by hand after performing a complete classification of finite length modules over the polynomial ring $\C[x,y]$, which is known to be a \emph{wild} problem. This classification was accomplished by Moschetti and the second author in \cite{MR18} for modules of length $n\leq 4$. As a consequence, the motivic wall-crossing formula $\mathsf Q_{L/\A^3} = \mathsf Z_{\A^3}\cdot \mathsf Z_L$ could be proven by hand up to order $4$ using this classification.
\end{remark}

\section{Categorification}
In this final section, we outline a programme for future work on a \textit{categorified} DT/PT correspondence.  Just as, since any smooth threefold analytically locally looks like $\mathbb{A}^3$, one may patch together the full cohomological understanding of the DT theory of degree zero DT theory on $\A^3$ obtained in \cite{BenPreproj} to try to understand the degree zero cohomological DT theory of \textit{any} Calabi--Yau threefold (see \cite[Sec.~3.1]{TodaGVWCF}), analytically locally, the inclusion of a smooth curve inside a smooth Calabi--Yau threefold can be modelled by $L\subset \A^3$, and so the intention is that by proving the conjectures below we can approach a cohomological version of the DT/PT correspondence.
\subsection{Further directions I: categorification}
\label{further_sec_1}
By our Theorem \ref{thm:thm1}, the moduli space $Q_L^n$ arises as the critical locus of the function $\Tr(W_r)$ on the moduli space $\mathcal{N}^{\circ\circ}_n$.  Shifting the associated virtual motives, we define the generating function
\[
Q_{L/\mathbb{A}^3}^{\twist}=\sum_{n\geq 0}\mathbb{L}^{-\frac{n}{2}}\bigl[Q_L^n\xrightarrow{\HC_n}\Sym^n\A^3\bigr]_{\vir}\in\mathcal{M}_{\Sym(\A^3)}.
\]
Then by Theorem \ref{thm:newB} we can write
\begin{equation}
    \label{pre_sym}
Q_{L/\mathbb{A}^3}^{\twist}=\Exp_\cup\left(\sum_{n>0} \Delta_{n\,!}\left(\Omega_n^{\twist}\boxtimes \bigl[\A^3\xrightarrow{\id}\A^3\bigr]   \right)\right) \boxtimes_{\cup} 
\Exp_\cup\left(\Delta_{1\,!}\left(\mathbb{L}^{-1}\boxtimes\left[L\hookrightarrow \A^3\right]\right)\right)
\end{equation}
where 
\[
\Omega^{\twist}_n=\mathbb{L}^{-2}(1+\mathbb{L}^{-1}+\cdots+\mathbb{L}^{1-n})
\]
and $\Delta_n:\mathbb{A}^3\rightarrow \Sym^n(\A^3)$ is, as ever, the inclusion of the small diagonal.

For the rest of this section we will make free use of the language and foundational results concerning monodromic mixed Hodge modules, see \cite{Saito89,Saito90} for background on mixed Hodge modules, \cite{KSCOHA} for an introduction to monodromic mixed Hodge structures, and \cite{BenSvenQEA} for the theory of monodromic mixed Hodge modules in cohomological Donaldson--Thomas theory.  In particular, we will use the same functor 
\[
\phim{\Tr(W_r)}\colon \MHM(\mathcal{N}^{\circ\circ}_n)\rightarrow \MMHM(\mathcal{N}^{\circ\circ}_n)
\]
considered in \cite[Sec.2.1]{BenSvenQEA}, following the discussion of a monodromic mixed Hodge structure on vanishing cycle cohomology in \cite[Sec.~7.4]{KSCOHA}.

In what follows, for a space $X$ we write $\underline{\mathbb{Q}}_X$ for the constant complex of mixed Hodge modules on $X$, which we think of as a complex of monodromic mixed Hodge modules by endowing it with the trivial monodromy operator.  Where the choice of space $X$ is clear, we may drop the subscript.

Given an element $[Y\xrightarrow{f}Z]\in \mathcal{M}^{\muhat}_Z$, where $Z$ is a variety, pick $n\in \mathbb{N}$ such that the $\muhat$ action on $Y$ factors through the projection $\muhat\rightarrow \mu_n$.  Then we form the mapping torus 
\[
Y\times_{\mu_n}\mathbb{G}_m=(Y\times \mathbb{G}_m)/\mu_n
\]
where $\mu_n$ acts via $z\cdot (y,z')=(z\cdot y,z^{-1}z')$.  We define
\begin{align*}
    \tilde{f}\colon Y\times_{\mu_n}\mathbb{G}_m & \rightarrow Z\times\mathbb{A}^1\\
    (y,z') & \mapsto (f(y),z'^n).
\end{align*}
Set $\mathcal{F}=\tilde{f}_!\underline{\mathbb{Q}}_{Y\times_{\mu_n}\mathbb{G}_m}\in \Db{\MHM(Z\times \mathbb{A}^1)}$.  By construction, the cohomology sheaves of this direct image are locally constant along the fibers of the projection $Z\times\mathbb{A}^1\rightarrow Z$, away from the zero fiber.  In particular, $\mathcal{F}$ is a bounded complex of \textit{monodromic mixed Hodge modules}, and so $[\mathcal{F}]\in\KK_0(\MMHM(Z))$.  Sending $[Y\rightarrow Z]\mapsto [\mathcal{F}]$ defines a group homomorphism
\[
\Psi\colon\mathcal{M}^{\muhat}_Z\rightarrow \KK_0(\MMHM(Z))
\]
which is a $\lambda$-ring homomorphism in the event that $Z$ is also a commutative monoid.

Now consider $\Psi(Q^{\twist}_{L/\A^3})$.  This is the class of the complex of monodromic mixed Hodge modules
\be\label{relDef}
\mathcal{F}_{L/\A^3}\defeq \bigoplus_n \HC_{n!}\phim{\Tr(W_r)}\underline{\mathbb{Q}}_{\mathcal{N}^{\circ\circ}_n}\otimes (\mathbb{T}^{1/2})^{-4n-2n^2}
\ee
where $\mathbb{T}^{1/2}$ is a tensor square root of the complex of mixed Hodge structures $\mathrm{H}_c(\A^1,\mathbb{Q})$ --- i.e.~a half Tate twist, concentrated in cohomological degree one.  From (\ref{pre_sym}) we deduce that
\begin{align}
    \nonumber
\Psi(Q^{\twist}_{L/\A^3})&=[\mathcal{F}_{L/\A^3}]\\
\label{pre_conj}
&=\left[\Sym\left((\Delta_{1\,!}\underline{\mathbb{Q}}_{\A^1}\otimes\mathbb{T}^{-1})\oplus \left(\bigoplus_{n\geq 1}\Delta_{n\,!}\underline{\mathbb{Q}}_{\mathbb{A}^3}\otimes \left(\bigoplus_{i=-2}^{-1-n}\mathbb{T}^i\right)\right)\right)\right].
\end{align}
It follows as in \cite[Prop.~3.5]{BenSvenQEA} from finiteness of the map $\cup \colon \Sym(\A^3)\times\Sym(\A^3)\rightarrow\Sym(\A^3)$ that the object in the square brackets on the right hand side is \textit{pure}, in the sense that its $i$th cohomology is pure of weight $i$.  From semisimplicity of the category of pure mixed Hodge modules (proved by Saito, see above references), the following two statements are equivalent:
\begin{itemize}
    \item The complex of monodromic mixed Hodge modules $\mathcal{F}_{L/\A^3}$ is pure.
    \item
    There is an isomorphism 
    \begin{equation}
        \label{cat_thm_B}
    \mathcal{F}_{L/\A^3}\cong \Sym\left((\Delta_{1\,!}\underline{\mathbb{Q}}_{\A^1}\otimes\mathbb{T}^{-1})\oplus \left(\bigoplus_{n\geq 1}\Delta_{n\,!}\underline{\mathbb{Q}}_{\mathbb{A}^3}\otimes \left(\bigoplus_{i=-2}^{-1-n}\mathbb{T}^i\right)\right)\right).
    \end{equation}
\end{itemize}
\begin{conjecture}
\label{PurityConj}
The above two statements are true.  In particular, there is an isomorphism of $\mathbb{Z}$-graded mixed Hodge structures
\[
\bigoplus_{n\geq 0}\HO_c(Q_L^n,\phim{\Tr(W_r)}\underline{\mathbb{Q}}_{\mathcal{N}^{\circ\circ}_n})\otimes\Tate^{-2n-n^2}\cong \Sym\left(\bigoplus_{n\geq 1}\left(\bigoplus_{i=-2}^{-n-1}\mathbb{T}^i\right)\right)\otimes \Sym(V),
\]
with $n$ keeping track of the degree on both sides, and where $V$ is a one dimensional pure weight zero Hodge structure placed in degree 1.  The $\mathbb{Z}$-grading on the right hand side comes from the grading on the object we are taking the symmetric algebra of (i.e.~we ignore the fact that $\Sym$ of any object acquires an extra $\mathbb{Z}$-grading).
\end{conjecture}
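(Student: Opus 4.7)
The two bulleted statements in the conjecture are equivalent because the right hand side of \eqref{cat_thm_B} is manifestly pure: it is the symmetric algebra (with respect to the finite monoid map $\cup$) on a direct sum of Tate twists of constant mixed Hodge modules pushed forward along the small diagonal inclusions $\Delta_n$, all of which are finite. By Saito's semisimplicity theorem, a pure complex in $\Db{\MMHM(\Sym(\A^3))}$ is a direct sum of simple summands determined up to isomorphism by its class in $\KK_0$. Since Theorem \ref{thm:newB}, combined with the $\lambda$-ring homomorphism property of $\Psi$, already equates $[\mathcal{F}_{L/\A^3}]$ with the class of the right hand side of \eqref{cat_thm_B}, the entire conjecture reduces to proving the purity of $\mathcal{F}_{L/\A^3}$.

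To prove purity, I would work in two stages. \emph{First}, establish purity of $\phim{\Tr(W_r)}\underline{\mathbb{Q}}_{\mathcal{N}_n^{\circ\circ}}$ as a monodromic mixed Hodge module on the smooth ambient variety. The key ingredients already appear in the proofs of Propositions \ref{thmequiva792} and \ref{prop:mcv10}: the four-torus action for which $\Tr(W_r)$ scales by a primitive character, the existence of a circle-compact one-parameter subgroup, and the reducedness of $\Tr(W_r)^{-1}(0)$. Categorifying these into purity is a standard weight argument. The real work lies in lifting the dimensional reduction of Lemma \ref{quad_lemma}(2) and the stratification of Lemma \ref{fixed_BBS} to the sheaf level: using the direct sum decomposition $\tilde{S}^{\circ} \cong \tilde{P}\times(V_-\oplus V_+)$ and the quadratic dependence of $\Tr(W_\alpha)$ on the fibre coordinates, one seeks a canonical quasi-isomorphism (not just a Grothendieck-group equality) between the vanishing cycle sheaf and a Tate-twisted shift of the constant sheaf on a simpler critical locus, in the spirit of the sheaf-level dimensional reduction of Davison \cite{BenPreproj}. \emph{Second}, push forward along the projective Hilbert--Chow morphism $\HC_n\colon Q_L^n \hookrightarrow \mathcal{N}_n^{\circ\circ} \to \Sym^n(\A^3)$; Saito's direct image theorem for projective maps preserves purity, and assembling the result over $n$ produces the pure object $\mathcal{F}_{L/\A^3}$. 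The cohomological corollary then follows by taking global compactly supported sections, using $\HO_c(\A^r,\underline{\mathbb{Q}})\cong \mathbb{T}^r$ and the Künneth compatibility of $\Sym$.

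The principal obstacle is executing this categorified dimensional reduction cleanly. While Lemma \ref{quad_lemma}(2) provides the required identity at the motivic level, its sheaf-level enhancement depends on a canonical quasi-isomorphism compatible with the Tate structure, and hence on compatible choices of square roots of canonical bundles, i.e.\ orientation data in the sense of Brav--Bussi--Dupont--Joyce--Meinhardt; one must verify that these choices can be made compatibly with the layered stratifications of Section \ref{sec:mstr}. An alternative route, which I would pursue in parallel, interprets $Q_L^n$ as a local moduli space of objects in the $3$-Calabi--Yau subcategory of $\Db{\Coh X}$ generated by $\mathscr{E}\oplus\mathscr{O}_{C_0}$, and invokes the BPS purity and cohomological integrality theorems of Davison--Meinhardt. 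These would yield directly a symmetric-algebra decomposition of $\mathcal{F}_{L/\A^3}$ into (shifted) BPS sheaves, whose classes, by Corollaries \ref{Omega_pt_calc} and \ref{Omega_curv_calc}, must match the summands of \eqref{cat_thm_B} term by term, distinguishing the curve-supported contribution $\Omega_{\curv}^n$ from the point-supported contribution $\Omega_{\pt}^n$.
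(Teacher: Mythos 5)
This statement is a conjecture that the paper explicitly leaves open (``we leave this conjecture to future work''), so there is no proof in the paper to measure your attempt against. The only part the paper actually establishes --- that purity of $\mathcal{F}_{L/\A^3}$ is equivalent to the isomorphism \eqref{cat_thm_B}, via purity of the right-hand side (finiteness of $\cup$, as in \cite{BenSvenQEA}), Saito's semisimplicity theorem, and the $\KK_0$-identity supplied by Theorem \ref{thm:newB} through $\Psi$ --- is exactly the reduction you carry out in your first paragraph, and that part is fine. Everything after it is a research programme, not a proof, and it contains genuine gaps.

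The central gap is your ``first stage''. You propose to prove purity of $\phim{\Tr(W_r)}\underline{\mathbb{Q}}_{\mathcal{N}_n^{\circ\circ}}$ \emph{upstairs}, on the ambient moduli space, and then push forward. That is both stronger than what is conjectured and not the right intermediate statement: the conjecture concerns purity of the direct image $\HC_{n!}\phim{\Tr(W_r)}\underline{\mathbb{Q}}$ on $\Sym^n(\A^3)$, and vanishing-cycle complexes are essentially never pure before pushforward. The known purity results in this area (e.g.\ the degree-zero case treated in \cite{BenPreproj}) are purity statements for the pushforward to the symmetric base, obtained via support lemmas and cohomological integrality, not via purity upstairs plus Saito's direct image theorem --- which in any case only preserves purity of a \emph{pure} input, so your second stage has nothing to feed it. Likewise, the claim that circle-compactness and reducedness of the zero fibre ``categorify into purity by a standard weight argument'' is unsubstantiated: what Theorem \ref{thm:equivfam738} delivers is the Grothendieck-group identity $[\phi_f]=[f^{-1}(1)]-[f^{-1}(0)]$ and triviality of monodromy, neither of which lifts to control of the weight filtration on the vanishing-cycle complex. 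Your alternative route via cohomological integrality and BPS sheaves is much closer to what the authors themselves indicate (they say purity should follow from a relative, over-$\Sym(\A^3)$, version of Conjecture \ref{UEAconj} as in \cite{BenPreproj}), but as stated it is only a pointer: the existing integrality theorems do not apply off the shelf to the framed quiver $Q_r$ relative to $\Sym(\A^3)$, and proving that relative statement is precisely the open problem. In short: the reduction to purity is correct and agrees with the paper; the purity itself is not established by your argument.
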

The isomorphism in (\ref{cat_thm_B}) would categorify our Theorem \ref{thm:newB}, and would lift it from a formula to an isomorphism (i.e. categorify it).  Since it would take quite some time to even fill in the requisite definitions for the above discussion, we leave this conjecture to future work.
\subsection{Further directions II: A new CoHA module}
We recall some very general theory regarding critical cohomological Hall algebras and their representations.  

First, we fix a quiver $Q$ and a potential $W$.  The Euler form for $Q$ is defined by
\begin{align*}
\chi_Q\colon \mathbb{Z}^{Q_0}\times\mathbb{Z}^{Q_0}&\rightarrow\mathbb{Z}\\
(\gamma,\gamma')&\mapsto \sum_{i\in Q_0}\gamma_i\gamma'_i-\sum_{a\in Q_1}\gamma_{s(a)}\gamma'_{t(a)}.
\end{align*}
A stability condition $\zeta\in\mathbb{Q}^{Q_0}$ is called generic if for any two nonzero dimension vectors of the same slope, we have $\langle \gamma,\gamma'\rangle=0$.  Recall that a quiver is called symmetric if for all vertices $i,j\in Q_0$ we have
\[
\#\bigl\{a\;\lvert\;s(a)=i,\;t(a)=j\bigr\}\;=\;\#\bigl\{a\;\lvert\;s(a)=j,\;t(a)=i\bigr\}.
\]
Note that the genericity condition is vacuous for symmetric quivers such as the three loop quiver obtained by removing the framing from the quiver $Q_{\BBS}$ from Section \ref{Section:BBS}, and the quiver $Q_{\con}$ from Section \ref{conifold_section}.

Fix a  generic stability condition $\zeta\in \mathbb{Q}^{Q_0}$ and a slope $\theta\in (0,\infty)$.  We set $\Lambda_{\theta}^{\zeta}\subset\mathbb{N}^{Q_0}$ to be the submonoid of dimension vectors of slope $\theta$.  Then as in \cite{KSCOHA} we endow the graded monodromic mixed Hodge structure
\be
    \label{COHA_under}
\mathcal{H}_{Q,W,\theta}^{\zeta}\defeq \bigoplus_{\dd\in\Lambda_{\theta}^{\zeta}}\HO(\Rep^{\zeta}_{\dd}(Q)/\GL_{\dd},\phim{\Tr(W)}\underline{\mathbb{Q}})\otimes\Tate^{\chi(\dd,\dd)/2}
\ee
with the Hall algebra product arising from the stack of short exact sequences of right $\mathbb{C}Q$-modules.  The multiplication respects the monodromic mixed Hodge structure on (\ref{COHA_under}) --- in other words (\ref{COHA_under}) is made into an algebra object in the tensor category of $\Lambda_{\theta}^{\zeta}$-graded monodromic mixed Hodge structures.

In fact we will only consider the cohomological Hall algebra for which $Q$ is symmetric, and $\zeta=(0,\ldots,0)$ is the \textit{degenerate} stability condition, with $\theta=0$.  So we will drop $\zeta$ and $\theta$ from the notation and just write $\mathcal{H}_{Q,W}$ for the cohomological Hall algebra associated to $Q$ and $W$.

Let $Q\subset Q_{\fra}$ be an inclusion of quivers, with $Q$ a full subquiver.  We do not assume that $Q_{\fra}$ is symmetric.  Let $I\subset \mathbb{C}Q_{\fra}$ be the two-sided ideal generated by all paths in $Q_{\fra}$ not contained in $Q$.  Then $\mathbb{C}Q\cong\mathbb{C}Q_{\fra}/I$, and we let $q\colon \mathbb{C}Q_{\fra}\rightarrow\mathbb{C}Q$ be the induced surjection.

Let $W_{\fra}$ be a potential extending $W$, in the sense that $qW_{\fra}=W$.  Let $\zeta\in\mathbb{Q}^{Q_{\fra,0}}$ be a stability condition extending the degenerate stability condition, i.e. such that $\zeta\lvert_{Q_0}=0$.  Let $\ff\in\mathbb{N}^{Q_{\fra,0}\setminus Q_0}$ be a \textit{framing} dimension vector.  Define 
\be
\label{N_module_def}
\mathcal{N}_{Q_{\fra},\ff}^{\zeta}\defeq \bigoplus_{\dd\in\mathbb{N}^{Q_0}}\HO(\Rep^{\zeta}_{(\dd,\ff)}(Q_{\fra})/\GL_{\dd},\phim{\Tr(W_{\fra})}\underline{\mathbb{Q}})\otimes\Tate^{\chi_{Q}(\dd,\dd)/2-\chi_{Q_{\fra}}((\dd,0),(0,\ff))}.
\ee
Via the usual correspondences, it is standard to check that $\mathcal{N}^{\zeta}_{Q_{\fra},\ff}$ is a module for $\mathcal{H}_{Q,W}$.

Now we make this setup more specific.  Let $Q$ be the three loop quiver, obtained by removing $\underline{\infty}$ and all arrows containing it from $Q_r$.  Considering the inclusion $Q\subset Q_{\BBS}$, and setting $\zeta_{\infty}=-1$ and $\ff=1$, the module $\mathcal{N}_{Q_{\fra},1}^{\zeta}$ is precisely the vanishing cycle cohomology of $\Hilb(\mathbb{A}^3)$.  It was shown in version one of \cite{BenSvenQEA} that in fact this module is cyclic over the CoHA $\mathcal{H}_{Q,W}$.  For a more recent example of a geometrically motivated class of modules for this CoHA, that falls under the general construction above, the reader may consult \cite{RSYZ19}, where an action on the space of spiked instantons is considered.  

The purpose of this subsection is to add one more geometrically motivated example to the list, namely, we consider the inclusion $Q\subset Q_r$, the framing vector $\ff=1$, and the stability condition $\zeta_{\underline{\infty}}=-1$.  Then by Proposition \ref{prop:critQL}, we obtain 
\be
    \label{ModuleDef}
\mathcal{N}_{Q_{\fra},\ff}^{\zeta}=\bigoplus_{n\geq 0}\HO(Q_L^n,\phim{\Tr(W_r)}\underline{\mathbb{Q}}_{\mathcal{N}^{\circ\circ}_n})\otimes\Tate^{-n-n^2}
\ee
i.e. the module constructed for the above data is precisely the vanishing cycle cohomologies of all of the quot schemes $Q_L$.  Keeping track of the various Tate twists, (\ref{ModuleDef}) is the hypercohomology of the Verdier dual of the monodromic mixed Hodge module 
\[
\phim{\Tr(W_r)}\underline{\mathbb{Q}}_{\mathcal{N}^{\circ\circ}_n}\otimes \mathbb{T}^{-2n-n^2}
\]
considered in (\ref{relDef}).  According to the conjecture of the previous subsection, this module should itself be isomorphic to the underlying graded mixed Hodge module of a symmetric algebra.  In fact by utilising the factorization sheaf structure implicit in our calculations in Section \ref{sec:mstr} it is not hard to find a cocommutative coproduct on (\ref{ModuleDef}), and a candidate for a compatible product, leading to our final conjecture:
\begin{conjecture}
\label{UEAconj}
The graded mixed Hodge structure $\mathcal{N}_{Q_{\fra},\ff}^{\zeta}$ is a universal enveloping algebra.
\end{conjecture}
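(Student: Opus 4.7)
The plan is to establish Conjecture \ref{UEAconj} by exhibiting a cocommutative graded Hopf algebra structure on $\mathcal{N}_{Q_{\fra},\ff}^{\zeta}$ whose underlying graded mixed Hodge structure is (after purity) polynomial, and then invoking the Milnor--Moore theorem. The whole story is controlled by the factorisation structure over the symmetric products $\Sym(\A^3)$ that we have already exploited in Section \ref{sec:mstr} to prove Theorem \ref{thm:newB}.

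First I would construct the coproduct. The morphism $\cup\colon \Sym(\A^3)\times \Sym(\A^3)\to \Sym(\A^3)$ is finite, and the system of sheaves $\HC_{n,!}\phim{\Tr(W_r)}\underline{\mathbb Q}_{\mathcal N^{\circ\circ}_n}$ assembles into a \emph{factorisation sheaf} over $\Sym(\A^3)$: the key input is the motivic Thom--Sebastiani identification of vanishing cycles along disjoint support, essentially the content of Lemma \ref{fixed_BBS} upgraded to the sheaf level as in \cite{BenSvenQEA}. Pulling back along $\cup$ and using proper base change, one obtains a cocommutative coproduct
\[
\Delta\colon \mathcal F_{L/\A^3}\to \cup_!\bigl(\mathcal F_{L/\A^3}\boxtimes \mathcal F_{L/\A^3}\bigr),
\]
which after taking hypercohomology produces the desired cocommutative coproduct on $\mathcal N_{Q_{\fra},\ff}^{\zeta}$.

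Next I would construct the product. Here the plan is to use a Hall-style correspondence built from short exact sequences $0\to \mathscr I_L\twoheadrightarrow \mathscr F'\to 0\to\mathscr I_L\twoheadrightarrow \mathscr F\to 0\to \mathscr F''\to 0$, or equivalently from concatenation of framed $Q_r$-representations along a common framing vector, refining the $\mathcal H_{Q,W}$-module structure on $\bigoplus_n \HO(Q_L^n,\phim{\Tr(W_r)}\underline{\mathbb Q})$ recalled around \eqref{N_module_def}. By construction, this product is supported on incidence loci in $\Sym(\A^3)\times \Sym(\A^3)$ that are compatible with $\cup$, so that verifying the bialgebra axiom amounts to comparing two pullbacks to the stack of two-step filtered quotients, following the same template that establishes the Hopf algebra structure on the critical CoHA in \cite{KSCOHA,BenSvenQEA}.

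Having produced a cocommutative graded Hopf algebra in monodromic mixed Hodge structures, the final step is to apply the Milnor--Moore theorem: any connected, cocommutative, graded bialgebra in characteristic zero whose underlying graded vector space is a polynomial algebra is the universal enveloping algebra of its subspace of primitives. The polynomiality would come from Conjecture \ref{PurityConj}: the decomposition \eqref{cat_thm_B} identifies $\mathcal F_{L/\A^3}$ with $\Sym$ of the shifted factorisation generators supported on the small diagonals, with generators precisely the $\Omega_n^{\twist}$ classes and the additional $\mathbb L^{-1}\boxtimes[L\hookrightarrow \A^3]$ term responsible for the curve contribution, and these generators would be the primitives. The main obstacle is therefore two-fold: (i) proving Conjecture \ref{PurityConj}, which requires a purity/weight argument for $\mathcal F_{L/\A^3}$ analogous to \cite[Prop.~3.5]{BenSvenQEA} and ultimately rests on the fact that $\cup$ is finite; and (ii) the compatibility of the Hall-type product with the factorisation coproduct along the non-small-diagonal strata, where the stratification arguments of Section \ref{sec:mstr} must be lifted from the level of motives to the level of (monodromic) mixed Hodge modules. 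Once these two ingredients are in place, the structure of $\mathcal N_{Q_{\fra},\ff}^{\zeta}$ as a universal enveloping algebra follows automatically, and one moreover reads off the Lie algebra of primitives from the generators in \eqref{cat_thm_B}.
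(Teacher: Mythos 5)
The statement you are trying to prove is Conjecture \ref{UEAconj}: the paper does not prove it, and explicitly leaves it to future work, noting only that the factorisation structure from Section \ref{sec:mstr} yields a cocommutative coproduct and a \emph{candidate} for a compatible product. Your text is therefore a strategy outline rather than a proof, and as written it has two genuine gaps. First, the bialgebra compatibility between your Hall-type product and the factorisation coproduct is precisely the unresolved point --- the paper itself only claims a ``candidate'' product, and $\mathcal{N}_{Q_{\fra},\ff}^{\zeta}$ is a priori only a \emph{module} over $\mathcal{H}_{Q,W}$, not an algebra; asserting that the compatibility ``amounts to comparing two pullbacks'' following \cite{KSCOHA,BenSvenQEA} does not discharge it, since in those references the relevant coproducts on critical CoHAs are generally only defined after localisation, and no such verification exists for this module. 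Second, you condition the polynomiality input on Conjecture \ref{PurityConj}, which is itself open; worse, the paper's intended logical flow runs in the opposite direction --- it states that proving a version of Conjecture \ref{UEAconj} \emph{over the base} $\Sym(\mathbb{A}^3)$ would \emph{imply} the purity conjecture, as in \cite[Thm.~A]{BenPreproj}. So deriving the UEA structure from purity, which you then propose to obtain from the finiteness of $\cup$, either begs the question or requires an independent purity argument you have not supplied.

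A smaller point: Milnor--Moore gives that a connected graded cocommutative Hopf algebra over a field of characteristic zero is the enveloping algebra of its primitives; the polynomiality of the underlying object is not a hypothesis of that theorem but rather the PBW-type \emph{consequence} one wants, so your appeal to \eqref{cat_thm_B} to ``identify the primitives'' conflates the hypothesis and the conclusion. Your identification of the obstacles (i) and (ii) is accurate and matches the paper's own assessment of what remains to be done, but neither obstacle is overcome here, so the conjecture remains open.
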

The connection with Conjecture \ref{PurityConj} is that by proving a version of Conjecture \ref{UEAconj} over the base $\Sym(\mathbb{A}^3)$ one would deduce the purity conjecture as in \cite[Thm.~A]{BenPreproj}.
\begin{remark}
The above module $\mathcal{N}^{\zeta}_{Q_{\fra},1}$ is in fact the third in a natural sequence, the first two elements of which will be well-known to the reader.  Firstly, we can remove the arrow $a'_2$ from $Q_r$ to obtain a new quiver $Q'_r$.  The pullback of $\Tr(W_r)$ along the extension by zero morphism
\[
\Rep^{\zeta}_{(n,1)}(Q'_r)/\GL_n\rightarrow \Rep^{\zeta}_{(n,1)}(Q_r)/\GL_n 
\]
is induced by the potential $W_r'=a''_1b''_1a''_2-a''_2b''_1a''_1-a''_2b'_1a'_1$, and it is easy to check that there is an isomorphism
\[
\HO\left(\Rep^{\zeta}_{(n,1)}(Q'_r)/\GL_n,\phim{\Tr(W'_r)}\underline{\mathbb{Q}}\right)\otimes \Tate^{-n-n^2}\cong \HO(\Hilb^n\mathbb{A}^2,\mathbb{Q})\otimes\Tate^{-n}
\]
with the \textit{usual} cohomology of the Hilbert scheme for $\mathbb{A}^2$.  Going further, we can remove the arrow $b'_1$, recovering the framed BBS quiver, with its usual potential, and the MacMahon module provided by the (vanishing cycle) cohomology of $\Hilb^n\mathbb{A}^3$.  That the cohomology of $\Hilb^n\mathbb{A}^2$ finds itself sandwiched between the vanishing cycle cohomology of the Hilbert scheme of $\mathbb{A}^3$ and the quot scheme $Q_L^n$ in this way is a mystery that we leave to future research to understand properly.
\end{remark}

\clearpage
\bibliographystyle{amsplain-nodash}
\bibliography{bib}

\end{document}